\newcommand{\ssize}{\scriptstyle} 
\DeclareMathOperator{\rAut}{r-Aut}
\DeclareMathOperator{\Aut}{Aut}
\DeclareMathOperator{\Ext}{Ext}
\DeclareMathOperator{\End}{End}
\DeclareMathOperator{\rad}{rad}
\DeclareMathOperator{\add}{add}
\DeclareMathOperator{\Tr}{Tr}
\DeclareMathOperator{\mo}{mod}
\DeclareMathOperator{\Hom}{Hom}
\DeclareMathOperator{\bdim}{\mathbf{dim}}
\DeclareMathOperator{\btype}{\mathbf{type}}
\DeclareMathOperator{\Ker}{Ker}
\DeclareMathOperator{\Cok}{Cok}
\DeclareMathOperator{\soc}{soc}
\DeclareMathOperator{\tp}{top}
\DeclareMathOperator{\Img}{Im}
\newtheorem{Thm}{Theorem}[section]
\newtheorem{Lem}[Thm]{Lemma}
\newtheorem{Cor}[Thm]{Corollary}
\newtheorem{Prop}[Thm]{Proposition}
\newcommand{\Rahmen}[1]%
   {$$\vbox{\hrule\hbox%
                  {\vrule%
                       \hskip0.5cm%
                            \vbox{\vskip0.3cm\relax%
                               \hbox{$\displaystyle{#1}$}%
                                  \vskip0.3cm}%
                       \hskip0.5cm%
                  \vrule}%
           \hrule}$$}
\newcommand{\arr}[2]
  {\arrow <1.5mm> [0.25,0.75] from #1 to #2}
\begin{document}
\title{
The Auslander bijections:\\
How morphisms are determined by modules.}
\author{Claus Michael Ringel \\
\small Department of Mathematics, Shanghai Jiao Tong University \\
\small Shanghai 200240, P. R. China, and \\
\small King Abdulaziz University, P O Box 80200\\ 
\small Jeddah, Saudi Arabia\\
\texttt{\small ringel@math.uni-bielefeld.de}}\date{}
\maketitle

\begin{abstract} 
Let $\Lambda$ be an artin algebra. In his seminal Philadelphia Notes
published in 1978, 
M.~Auslander introduced the concept of morphisms being determined by modules.
Auslander was very passionate about these investigations (they also form part of the final chapter of the Auslander-Reiten-Smal\o{} book and could and should be seen as its culmination). The theory presented by Auslander has to be considered as an exciting frame for working with the category of $\Lambda$-modules, incorporating all what is known about irreducible maps (the usual Auslander-Reiten theory), but the frame is much wider and allows for example to take into account families of modules --- an important feature of module categories. What Auslander has achieved 
is a clear description of the poset structure of the category of $\Lambda$-modules 
as well as a blueprint for interrelating individual modules and families of modules. Auslander has subsumed his considerations under the heading of ``morphisms being determined by modules''. Unfortunately, the wording in itself seems to be somewhat misleading, and the basic definition may look quite technical and unattractive, at least at first sight. This could be the reason that for over 30 years, Auslander's powerful results did not gain the attention they deserve. The aim of this survey is to outline the general setting for Auslander's ideas and to show the wealth of these ideas by exhibiting 
many examples.
\end{abstract}

\section{Introduction}
\label{sec:1}
%==========================================================
There are 
two basic mathematical structures: groups and lattices, or, more generally,
semigroups and posets. 
A first glance at any category should focus the attention on these two structures: 
to symmetry groups (for example the automorphism groups of the individual objects), 
as well as to the posets 
given by suitable sets of morphisms, for example by looking at inclusion maps (thus
dealing with the poset of all subobjects of an object), or at the possible
factorizations of morphisms. In this way, one distinguishes
between local symmetries and global directedness.

The present survey deals with the category $\mo\Lambda$ of finite length modules
over an artin algebra $\Lambda$. Its aim is to report on the work of
M. Auslander in his seminal Philadelphia Notes published in 1978. 
Auslander was very passionate about these investigations and they also form part of
the final chapter of the Auslander-Reiten-Smal\o{} book: there, they could (and should) be seen as a kind of culmination. 
It seems to be surprising that the feedback until now is quite meager.
After all, the theory presented by Auslander has to be considered as 
an exciting frame for working with the category $\mo\Lambda$,
incorporating what is called the Auslander-Reiten theory 
(to deal with the irreducible maps),
but this frame is much wider and allows for example to take into account 
families of modules --- an important feature of a module category.
Indeed, many of the concepts which are relevant when considering the categories 
$\mo\Lambda$ fit into the frame!
What Auslander has achieved (but he himself may not have realized it) was
a clear description of the poset structure of $\mo \Lambda$ and of the
interplay between families of modules. 

Auslander's considerations are subsumed under the heading of morphisms being
determined by modules, but the wording in itself seems to be somewhat misleading,
and the basic definition looks quite technical and unattractive, 
at least at first sight. This could be the reason that for over 30 years, Auslander's powerful 
results did not gain the attention they deserve. 
	\medskip

Here is a short summary: Let $\Lambda$ be an artin algebra. The modules which we
consider will be left $\Lambda$-modules of finite length, and maps (or morphisms)
will be $\Lambda$-module homomorphisms, unless otherwise specified.
Auslander asks for a description
of the class of maps ending in a fixed module $Y$. 
Two maps
$f: X \to Y$ and $f': X' \to Y$ are said to be {\it right equivalent} provided
there are maps $h: X \to X'$ and $h': X'\to X$ such that $f = f'h$ and $f'= fh'$.
The right equivalence class of $f$ will be denoted by $[f\rangle.$
The object studied
by Auslander is 
 the set of right equivalence classes of maps ending in $Y$, we denote this set 
by 
$$
 [\to Y\rangle.
$$
It is a poset via the relation $\le$ which is defined as follows:
$$
 [f: X \to Y\rangle \ \le \ [f': X' \to Y\rangle
$$ 
provided there is a homomorphism $h: X \to X'$ with $f = f'h,$ thus provided
the following diagram commutes: 
$$
{\beginpicture
\setcoordinatesystem units <1cm,1cm>
%============================
\put{$X$} at 0 0 
\put{$X'$} at 1 -.9
\put{$Y$} at 3 0
\put{$f$} at 1.5 0.2
\put{$f'$} at 2.1 -0.75
\put{$h$} at 0.25 -0.6
\arrow <1.5mm> [0.25,0.75] from 0.3 0 to 2.7 0
\arrow <1.5mm> [0.25,0.75] from 0.25 -.25 to 0.75 -.8
\arrow <1.5mm> [0.25,0.75] from 1.4 -.8 to 2.7 -.2
\endpicture}
$$
It is easy to see that the poset $[\to Y\rangle$ is a lattice, thus we call it 
the {\it right factorization lattice} for $Y$.

Looking at maps $f: X \to Y$,
we may (and often will) assume that $f$ is {\it right minimal}, thus that there is no 
non-zero direct summand $X'$ of $X$ with $f(X') = 0.$ Note that any right equivalence
class contains a right minimal map, and if $f: X \to Y$ and
$f': X' \to Y$ are right minimal maps, then any $h: X \to X'$ with $f = f'h$
has to be an isomorphism. 

Of course,
to analyze the poset $[\to Y\rangle$ is strongly related to a study of the
contravariant $\Hom$-functor $\Hom(-,Y)$, however the different nature of these
two mathematical structures should be stressed: $\Hom(-,Y)$ is an additive
functor whereas $[\to Y\rangle$ is a poset, and it is 
the collection of these posets $[\to Y\rangle$
which demonstrates the global directedness.  
	\medskip 

In general, the right factorization lattice $[\to Y\rangle$ 
is very large and does not satisfy
any chain condition. The main idea of Auslander is 
to write $[\to Y\rangle$ as the filtered union of the subsets
${}^C[\to Y\rangle$, where ${}^C[\to Y\rangle$ is given by those maps $f$
which are ``right $C$-determined''. These posets are again lattices and they are of finite
height, we call ${}^C[\to Y\rangle$ the {\it right $C$-factorization lattice} for $Y$. 
Since the concept of ``right determination'' looks
(at least at first sight) technical and unattractive, let us first describe the set
${}^C[\to Y\rangle$ only in the important case when $C$ is a generator: in this
case, ${}^C[\to Y\rangle$ consists of the (right equivalence classes of the
right minimal) maps $f$ ending in $Y$ with kernel in $\add\tau C$ (we denote by
$\tau = D\Tr$ and $\tau^{-} = \Tr D$
the Auslander-Reiten translations). Here is Auslander's first main assertion:
$$
 (1)\qquad  [\to Y\rangle = \bigcup\nolimits_C {}^C[\to Y\rangle, 
$$
where $C$ runs through all isomorphism classes of $\Lambda$-modules;
or, in the formulation of Auslander: any map in $\mo\Lambda$ is right
determined by some module $C$.  Note that the inclusion of the lattice 
${}^C[\to Y\rangle$ into the lattice $[\to Y\rangle$ preserves meets, but
usually not joins.

Auslander's second main assertion describes the right $C$-factorization 
lattice ${}^C[\to Y\rangle$ as follows:
There is a lattice isomorphism 
$$
 (2)\qquad \eta_{CY}: {}^C[\to Y\rangle \longrightarrow \mathcal S\Hom(C,Y), 
$$
where $\Hom(C,Y)$ is considered as an $\End(C)^{\text{op}}$-module, and where $\mathcal SM$
denotes the submodule lattice of a module $M$. Actually, the map
$\eta_{CY}$ is easy to describe, namely $\eta_{CY}(f) = \Img \Hom(C,f)$
for $f$ a morphism ending in $Y$. The essential assertion is the surjectivity
of $\eta_{CY}$, thus to say that any submodule of $\Hom(C,Y)$ is of the form
$\Img\Hom(C,f)$ for some $f$. 
 
What is the relevance? 
As we have mentioned, usually the lattice $[\to Y\rangle$ 
itself will not satisfy any chain conditions, but all the lattices ${}^C[\to Y\rangle$
are of finite height and often can be displayed very nicely: according to
(2) we deal with the submodule lattice $\mathcal SM$ of some finite length module
$M$ over an artin algebra (namely over $\Gamma(C) = \End(C)^{\text{op}}$) and it is easy
to see that any submodule lattice arises in this way.
Using the Auslander bijections $\eta_{CY}$,
one may transfer properties of submodule lattices to the right $C$-factorization
lattices ${}^C[\to Y\rangle,$ this will be one of the aims of this paper. Given a
submodule $U$ of $\Hom(C,Y)$, let $f$ be a right $C$-determined map ending in $Y$ 
such that 
$\eta_{CY}(f) = U$. The composition series of the factor module $\Hom(C,Y)/U$
correspond to certain factorizations of $f$ (to the ``maximal $C$-factorizations''),
and we may define the $C$-type of $f$ so that it is equal to the 
dimension vector of the module $\Hom(C,Y)/U$ (recall that the dimension
vector of a module $M$ has as coefficients the Jordan-H\"older multiplicities of the
various simple modules occurring in $M$).

Submodule lattices have interesting combinatorial features, and it seems to 
interesting that Auslander himself looked mainly at
combinatorial properties (for example at waists in submodule lattices). 
But we should stress that we really are in the realm of algebraic geometry.
Thus, let us assume for a moment
that $\Lambda$ is a $k$-algebra where $k$ is an algebraically
closed field. If $M$ is a finite-dimensional $\Lambda$-module,
the set $\mathcal SM$ of all submodules of $M$ is the disjoint union of the 
sets $\mathbb G_{\mathbf e}(M)$ consisting of all
submodules of $M$ with fixed dimension vector $\mathbf e$. 
It is well-known that $\mathbb G_{\mathbf e}(M)$
is in a natural way a projective variety, called nowadays a {\it quiver Grassmannian}. 
Given $\Lambda$-modules $C$ and $Y$, the Auslander bijections draw the
attention on the $\End(C)^{\text{op}}$-module $M = \Hom(C,Y)$, let $\mathbf d$ be its dimension vector and 
let $\mathbf e, \mathbf e'$ be dimension vectors with $\mathbf e+\mathbf e' = \mathbf d.$
The quiver Grassmannians 
$\mathbb G_{\mathbf e'}(\Hom(C,Y))$ corresponds under the Auslander bijection $\eta_{CY}$
to the set ${}^C[\to Y\rangle^{\mathbf e}$ of
all right equivalence classes of right $C$-determined maps which end in $Y$ and have
type $\mathbf e$. We call ${}^C[\to Y\rangle^{\mathbf e}$ an {\it Auslander variety.} 
These Auslander varieties have to be considered as an important tool for studying the right
equivalence classes of maps ending in a given module.
	\medskip 

We end this summary by an outline in which way the Auslander bijections (2)
incorporate the existence of minimal right
almost split maps: We have to look at the special case where $Y$ is indecomposable
and $C = Y$ and to deal with the submodule $\rad(Y,Y)$ of $\Hom(Y,Y)$. 
The bijection (2) yields an element $f: X\to Y$
in ${}^Y[\to Y\rangle$ such that $\eta_{YY}(f) = \rad(Y,Y)$; to 
say that $f$ is right $Y$-determined means that $f$ is right
almost split. 
	\bigskip\bigskip\bigskip 

\centerline{\huge \bf I. The setting.}
	
%====================================
\section{The right factorization lattice $[\to Y\rangle$.}
\label{sec:2}

Let $Y$ be a $\Lambda$-module. 
Let $\bigsqcup_X \Hom(X,Y)$ be the class of all homomorphisms $f: X \to Y$ with
arbitrary modules $X$ (such homomorphisms will be said to be the homomorphisms 
{\it ending in} $Y$).
We define a preorder $\preceq$ on this class as follows: Given $f: X \to Y$
and $f': X' \to Y$, we write $f \preceq f'$ provided there is a homomorphism $h: X \to X'$
such that $f = f'h$ (clearly, this relation is reflexive and transitive). As usual,
such a preorder defines an equivalence relation (in our setting, we call it {\it right equivalence})
by saying that $f,f'$ are {\it right equivalent} provided we have both $f \preceq f'$ and
$f'\preceq f$, and it induces a poset relation $\le$ on the set $[\to Y\rangle$ of right
equivalence classes of homomorphisms ending in $Y$.
Given a morphism $f: X \to Y$,
we denote its right equivalence class by $[f\rangle$ and by definition
$[f\rangle \le [f'\rangle$ if and only if $f \preceq f'.$  
As we will see in proposition \ref{lattice}, the poset $[\to Y\rangle$ is a lattice, thus we will call it
the {\it right factorization lattice} for $Y$.

It should be stressed that $[\to Y\rangle$ is a set, not only a class: namely, the
isomorphism classes of $\Lambda$-modules form a set and for every module $X$, the
homomorphisms $X \to Y$ form a set; we may choose a representative from each
isomorphism class of $\Lambda$-modules and 
given a homomorphism $f: X \to Y$, then there is an isomorphism 
$h: X' \to X$ where $X'$ is such a representative, and $f$ is right equivalent to
$fh.$
	
Recall that a map $f: X \to Y$ is said to be {\it right minimal} provided any direct
summand $X'$ of $X$ with $f(X') = 0$ is equal to zero. If $f: X \to Y$ is a
morphism and $X = X'\oplus X''$ such that $f(X'') = 0$ and $f|X': X' \to Y$
is right minimal, then $f|X'$ is called a {\it right minimalisation of $f$.}
The kernel of a right minimalisation of $f$ will be called the {\it intrinsic kernel
of $f$}, it is unique up to isomorphism. 
	\medskip

\begin{Prop}\label{minimal} Every right equivalence class $[f\rangle$ 
in $[\to Y\rangle$ contains a right
minimal morphism, namely $[f'\rangle$, where $f'$ is a right minimalisation of $f$.
Given right minimal morphisms $f:  X \to Y$ and
$f': X' \to Y$, then $f,f'$ are right equivalent if and only if there is an
isomorphism $h: X \to X'$ such that $f = f'h$.
\end{Prop}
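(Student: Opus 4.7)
The plan is to handle the two assertions separately. For the existence of a right minimal representative in each right equivalence class, I would use Krull--Schmidt; for the uniqueness, the key is a Fitting-lemma argument that turns right minimality into a rigidity statement about self-maps.

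For existence, given $f : X \to Y$, decompose $X = \bigoplus_i X_i$ into indecomposables and let $X''$ be the direct sum of those $X_i$ on which $f$ vanishes and $X'$ the direct sum of the remaining ones. Then $X = X' \oplus X''$ with $f(X'') = 0$, and setting $g = f|_{X'}$ one checks that $g$ is right minimal: any non-zero direct summand of $X'$ killed by $g$ would, by Krull--Schmidt, contribute a new indecomposable summand of $X'$ annihilated by $f$, contradicting the construction. Right equivalence of $f$ and $g$ is witnessed by the inclusion $\iota : X' \to X$ (giving $g = f\iota$) and the projection $\pi : X \to X'$ along $X''$ (giving $f = g\pi$, since $f$ vanishes on $X''$). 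This produces the desired right minimalisation.

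For uniqueness, suppose $f : X \to Y$ and $f' : X' \to Y$ are both right minimal and right equivalent, witnessed by $h : X \to X'$ and $h' : X' \to X$ with $f = f'h$ and $f' = fh'$. Then $f(h'h) = (fh')h = f'h = f$, and symmetrically $f'(hh') = f'$. The decisive step is the lemma: if $f : X \to Y$ is right minimal and $\varphi : X \to X$ satisfies $f\varphi = f$, then $\varphi$ is an automorphism. I would prove it by Fitting's lemma on the finite length module $X$: for some $n$ one has $X = \Img(\varphi^n) \oplus \Ker(\varphi^n)$; iteration of $f\varphi = f$ gives $f\varphi^n = f$, so $f$ vanishes on $\Ker(\varphi^n)$, and right minimality forces $\Ker(\varphi^n) = 0$. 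Then $\varphi$ is injective, hence an automorphism by finite length. Applying the lemma to $\varphi = h'h$ on $X$ and to $hh'$ on $X'$ shows that both composites are automorphisms, from which it is immediate that $h$ itself is an isomorphism with $f = f'h$. The converse direction is obvious.

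The main obstacle is really just locating the right lemma: everything hinges on the fact that Fitting's lemma produces an honest direct summand of $X$ on which the given endomorphism acts nilpotently, so that right minimality can be applied. Once that is in hand, both halves of the proposition reduce to a few lines of diagram chasing, together with the standard Krull--Schmidt decomposition of modules in $\mo\Lambda$.
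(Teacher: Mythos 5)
Your uniqueness argument is correct and is essentially the paper's, except that where the paper simply cites \cite{[ARS]} I.2 for the key fact that $f\varphi = f$ with $f$ right minimal forces $\varphi$ to be an automorphism, you supply a self-contained proof via Fitting's lemma ($X = \Img(\varphi^n)\oplus\Ker(\varphi^n)$, and $f\varphi^n=f$ kills the second summand). That part is fine and even a little more informative than the original.

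The existence half, however, has a genuine gap. Discarding from a fixed Krull--Schmidt decomposition $X=\bigoplus_i X_i$ only those indecomposable summands on which $f$ vanishes does \emph{not} produce a right minimal map. Take $X = M\oplus M$ with $M$ indecomposable and $f = [g,g]$ for some non-zero $g\colon M\to Y$. Neither copy of $M$ is annihilated by $f$, so your construction gives $X''=0$ and $X'=X$; but the anti-diagonal $W=\{(m,-m)\mid m\in M\}$ is a non-zero direct summand of $X$ (with complement $M\oplus 0$) on which $f$ vanishes, so $f$ is not right minimal --- its right minimalisation is a single copy of $g$. The error is in the sentence ``any non-zero direct summand of $X'$ killed by $g$ would, by Krull--Schmidt, contribute a new indecomposable summand of $X'$ annihilated by $f$'': Krull--Schmidt only tells you such a summand is \emph{isomorphic} to one of the $X_i$, not that it equals one of them, and $f$ may kill a diagonally embedded copy without killing any $X_i$. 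The standard repair is to choose a direct summand $X''\subseteq\Ker(f)$ of \emph{maximal length} (possible since $X$ has finite length) with complement $X'$; if $X'=U\oplus V$ with $U\neq 0$ and $f(U)=0$, then $U\oplus X''$ is a longer direct summand of $X$ killed by $f$, a contradiction. With that decomposition in hand, your verification of right equivalence via the inclusion $\iota$ and the projection $\pi$ is exactly the paper's argument and is correct.
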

	\medskip
\begin{proof} 
Let $f: X \to Y$ be a homomorphism ending in $Y$. Write $X = X_1\oplus X_2$
such that $f(X_2) = 0$ and $f|X_1: X_1 \to Y$ is right minimal. Let 
$u: X_1\to X_1\oplus X_2$
be the canonical inclusion, $p: X_1\oplus X_2 \to X_1$ the canonical projection. Then
$pu = 1_{X_1}$ and $f = fup$ (since $f(X_2) = 0$). We see that $fu \preceq f$
and $f = fup \preceq fu$, thus $f$ and $fu$ are right equivalent and
$fu = f|X_1$ is right minimal. If the right minimal morphisms
$f: X \to Y$ and $f': X' \to Y$ are right equivalent, then there are 
morphisms $h: X \to X'$ and $h': X' \to Y$ such that $f = f'h$ and $f' = fh'$.
But $f = fh'h$ implies that $h'h$ is an automorphism, and $f' = f'hh'$ implies
that $hh'$ is an automorphism, thus $h, h'$ have to be isomorphisms (see \cite{[ARS]} I.2).
\end{proof}
	\medskip

\noindent
{\bf Remark.} Monomorphisms $X \to Y$ are always right minimal, and the right equivalence
classes of monomorphisms ending in $Y$ may be identified with the submodules of $Y$
(here, we identify the right equivalence class of the monomorphism $f: X \to Y$
with the image of $X$).
	
\begin{Prop}\label{lattice}
The poset $[\to Y\rangle$ is a lattice with zero and one. 
Given $f_1: X_1\to Y$ and $f_2: X_2\to Y$, say with pullback 
$g_1: X \to X_1, g_2: X \to X_2$, the 
meet of $[f_1\rangle$ and $[f_2\rangle$ is given by the map $f_1g_1: X \to Y$, the
join of $[f_1\rangle$ and $[f_2\rangle$ is given by $[f_1,f_2]: X_1\oplus X_2 \to Y$.
\end{Prop}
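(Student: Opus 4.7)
The plan is to verify the four claims in order: existence of $0$, existence of $1$, and the explicit formulas for meet and join.

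\textbf{Zero and one.} The zero map $0 \to Y$ is the minimum, since for any $f\colon X \to Y$ the unique map $0 \to X$ satisfies the factorization trivially. The identity $1_Y\colon Y \to Y$ is the maximum, because any $f\colon X \to Y$ factors as $f = 1_Y \circ f$, witnessing $f \preceq 1_Y$. These take essentially no work.

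\textbf{Meet.} Given $f_1\colon X_1 \to Y$ and $f_2\colon X_2 \to Y$, form the pullback with projections $g_i\colon X \to X_i$, so that $f_1 g_1 = f_2 g_2$. The claim is that $[f_1 g_1\rangle$ is the meet. First, $f_1 g_1 \preceq f_1$ via $g_1$ and $f_1 g_1 = f_2 g_2 \preceq f_2$ via $g_2$. For the universal property, suppose $f'\colon X' \to Y$ satisfies $f' \preceq f_1$ and $f' \preceq f_2$ via maps $h_i\colon X' \to X_i$ with $f' = f_i h_i$. Then $f_1 h_1 = f_2 h_2$, so the universal property of the pullback yields $k\colon X' \to X$ with $g_i k = h_i$; hence $f_1 g_1 k = f_1 h_1 = f'$, proving $f' \preceq f_1 g_1$.

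\textbf{Join.} For $[f_1, f_2]\colon X_1 \oplus X_2 \to Y$ (the map with components $f_1, f_2$), composing with the coproduct injection $\iota_i\colon X_i \to X_1 \oplus X_2$ gives $f_i$, so $f_i \preceq [f_1,f_2]$ for $i=1,2$. Conversely, if $f'\colon X' \to Y$ dominates both, say $f_i = f' h_i$ with $h_i\colon X_i \to X'$, then $[f_1, f_2] = f' \circ [h_1, h_2]$, so $[f_1,f_2] \preceq f'$. This is the universal property of the join.

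\textbf{Well-definedness.} The only mildly subtle point is that meet and join are defined on right equivalence classes, not on morphisms; one must check independence of the chosen representatives. For the join, replacing $f_i$ by a right equivalent $\tilde f_i\colon \tilde X_i \to Y$ yields maps $\tilde X_i \to X_i$ and $X_i \to \tilde X_i$ that intertwine the $f_i$ and $\tilde f_i$; these assemble (block-diagonally) into mutually inverse-up-to-right-equivalence maps between $X_1 \oplus X_2$ and $\tilde X_1 \oplus \tilde X_2$. For the meet, one uses the universal property of the pullback directly to produce morphisms between the two pullbacks that exhibit right equivalence of $f_1 g_1$ and $\tilde f_1 \tilde g_1$. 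This is the only step that requires a moment's thought, and it is the closest thing to an obstacle; the rest is formal from the pullback/coproduct universal properties.
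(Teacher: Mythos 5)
Your proof is correct and follows essentially the same route as the paper's: the same pullback argument for the meet and the same coproduct/direct-sum argument for the join. The well-definedness concern you raise at the end is in fact automatic, since once $[f_1g_1\rangle$ is shown to be a greatest lower bound of the two classes (a property that depends only on the classes, not the representatives), uniqueness of meets in a poset settles it — which is presumably why the paper omits that step.
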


\begin{proof}[a trivial verification] Write $f = f_1g_1 = f_2g_2$. 
We have $f = f_1g_1 \preceq f_1$ and
$f = f_2g_2 \preceq f_2$, thus $[f\rangle \le [f_1\rangle$ and
$[f\rangle \le [f_2\rangle$. If $f': X' \to Y$ is a morphism with
$[f'\rangle \le [f_1\rangle$ and $[f'\rangle \le [f_2\rangle$, then 
$f' \preceq f_1$ and $f' \preceq f_2$, thus there are morphisms $\phi_i$ with
$f' = f_i\phi_i$, for $i=1,2$ Since $f_1\phi_1 = f_2\phi_2$, the pullback property
yields a morphism $\phi: X' \to X$ such that $\phi_i = g_i\phi$ for $i=1,2$.
Thus $f' = f_1\phi_1 = f_1g_1\phi = f\phi$ shows that $f' \preceq f$, thus
$[f'\rangle \le [f\rangle.$ This shows that $[f\rangle$ is the meet of 
$[f_1\rangle$ and $[f_2\rangle$.

Second, denote the canonical inclusion maps $X_i \to X_1\oplus X_2$ by $u_i$,
for $i=1,2$, thus $[f_1,f_2]u_i = f_i$ and therefore 
$[f_i\rangle \le [[f_1,f_2]\rangle$ for $i=1,2.$ 
Assume that there is given a morphism $g: X'' \to  Y$ with
$[f_i\rangle \le [g\rangle$ for $i=1,2$. This means that there are morphisms
$\psi_i: X_i \to X''$ such that $f_i = g\psi_i$ for $i=1,2$. Let 
$\psi = [\psi_1,\psi_2]: X_1\oplus X_2 \to X''$ (with $\psi u_i = \psi_i$).
Then $[f_1,f_2] = g[\psi_1,\psi_2] = g\psi$ shows that $[f_1,f_2] \preceq g$,
thus $[[f_1,f_2]\rangle \le [g\rangle.$ This shows that $[[f_1,f_2]\rangle$
is the join of $[f_1\rangle$ and $[f_2\rangle$.

It is easy to check that the map $0 \to Y$ is the zero element of $[\to Y\rangle$
and that the identity map $Y \to Y$ is its unit element.
\end{proof} 
	
It should be stressed that {\it if $f_1: X_1\to Y$ and $f_2: X_2\to Y$ are right minimal, 
say with pullback $g_1: X \to X_1, g_2: X \to X_2$, then neither 
the map $f_1g_1$ nor the direct sum map $[f_1,f_2]: X_1\oplus X_2 \to Y$ 
will be right minimal, in general.}
Thus if one wants to work with right minimal maps, one has to right minimalise the 
maps in question. Here are corresponding examples:
	\medskip 

\noindent 
{\bf Examples 1.} 
Let $\Lambda$ be the path algebra of the quiver 
$$
 a \longleftarrow b
$$ 
of type $\mathbb A_2$.

All path algebras of quivers considered in the paper will have coefficients 
in an arbitrary field $k$, unless we specify some further conditions. 
When dealing with the path algebra of a quiver $\Delta$, 
and $x$ is a vertex of $\Delta$, we denote by $S(x)$ (or also just by $x$) the
simple module corresponding to $x$, by $P(x)$ and $Q(x)$ the projective cover or
injective envelope of $S(x)$, respectively. 

Take as maps $f_1, f_2$ the canonical projection $f_1 = f_2: P(b) \to S(b)$, 
this is a right minimal
map. The pullback $U$ of $f_1$ and $f_2$ is a submodule of $P(b)\oplus P(b)$ 
which is isomorphic to $S(a)\oplus P(b)$. 
Since any map $S(a) \to S(b)$ is zero, there is no right minimal map $U \to S(b)$. 

Also, the map $[f_1,f_2]: P(b)\oplus P(b) \to S(b)$ is not right minimal,
since we have $\dim\Hom(P(b),S(b)) = 1.$ 
	\medskip

As we have seen, the poset $[\to Y\rangle$ is a lattice. What will be important in the
following discussion is the fact that we deal with a meet-semilattice 
(these are the posets such that any pair of elements has a meet). Note that all the 
semilattices 
which we deal with turn out to be lattices, however the poset maps to be considered 
will preserve meets, but usually not joins, thus we really work in the category
of meet-semilattices.
	
\begin{Prop} The lattice $[\to Y\rangle$ is modular.
\end{Prop}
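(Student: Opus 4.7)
The plan is to verify the modular law $[a\rangle \vee ([b\rangle \wedge [c\rangle) = ([a\rangle \vee [b\rangle) \wedge [c\rangle$ whenever $[a\rangle \le [c\rangle$, using the explicit descriptions of meets and joins supplied by Proposition \ref{lattice}. One inequality,
$$[a\rangle \vee ([b\rangle \wedge [c\rangle) \le ([a\rangle \vee [b\rangle) \wedge [c\rangle,$$
holds in every lattice, so the task is to prove the reverse inequality.

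I would pick representatives $a\colon A \to Y$, $b\colon B \to Y$, $c\colon C \to Y$ with a witness $\alpha\colon A \to C$ for $[a\rangle\le[c\rangle$, i.e.\ $a=c\alpha$. The join $[a\rangle \vee [b\rangle$ is represented by $[a,b]\colon A\oplus B \to Y$, and the outer meet $([a\rangle \vee [b\rangle) \wedge [c\rangle$ is then the pullback of $[a,b]$ along $c$, i.e.\ the object
$$P = \{(a',b',c')\in A\oplus B\oplus C \mid a(a')+b(b') = c(c')\}$$
together with the map $\pi\colon P\to Y$, $(a',b',c')\mapsto c(c')$. Similarly, $[b\rangle \wedge [c\rangle$ is given by the pullback
$$Q = \{(b',c'')\in B\oplus C \mid b(b') = c(c'')\}$$
with map $q\colon Q\to Y$, $(b',c'')\mapsto c(c'')$, and then $[a\rangle \vee ([b\rangle \wedge [c\rangle)$ is represented by $[a,q]\colon A\oplus Q \to Y$.

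The key step, and really the whole content of the proof, is to write down an honest morphism $\Phi\colon P\to A\oplus Q$ showing that $\pi$ factors through $[a,q]$. The relation $a=c\alpha$ suggests the formula
$$\Phi(a',b',c') = \bigl(a',\,(b',\,c'-\alpha(a'))\bigr),$$
which is well defined because the pullback equation $a(a')+b(b') = c(c')$ rewrites as $b(b') = c(c'-\alpha(a'))$, so that $(b',c'-\alpha(a'))$ does lie in $Q$. Then a direct calculation,
$$[a,q]\,\Phi(a',b',c') = a(a') + c\bigl(c'-\alpha(a')\bigr) = a(a') + c(c') - a(a') = c(c') = \pi(a',b',c'),$$
gives $\pi = [a,q]\,\Phi$, hence $[\pi\rangle \le [[a,q]\rangle$, which is the desired inequality.

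There is no real obstacle: once one commits to the explicit pullback formulas from Proposition \ref{lattice} and uses the given factorization $a=c\alpha$, the required morphism $\Phi$ essentially writes itself, and the only thing to check is that it lands in the pullback $Q$ and that the composite with $[a,q]$ recovers $\pi$. The only mild subtlety worth mentioning is that one must use representatives of the classes (not the classes themselves) to produce the factorization, but this is harmless since the resulting inequality of right equivalence classes is independent of the choice of representatives.
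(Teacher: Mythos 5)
Your proof is correct and follows essentially the same route as the paper's: both reduce to exhibiting a factorization of the outer pullback $([a\rangle\vee[b\rangle)\wedge[c\rangle$ through $[a\rangle\vee([b\rangle\wedge[c\rangle)$, and your explicit map $\Phi(a',b',c')=\bigl(a',(b',c'-\alpha(a'))\bigr)$ is exactly the morphism the paper obtains from the universal property of the pullback of $b$ and $c$ (your $c'-\alpha(a')$ is its $g_3-hg_1$). The only difference is that you work with elements of the pullback modules rather than with the universal property, which is harmless here.
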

	
\begin{proof} 
Let $f_i:X_i\to Y$ be maps with target $Y$, where $1\le i \le 3$, 
such that $f_1\preceq f_3$. We want to show that 
$$
   ([f_1\rangle \vee [f_2\rangle)\wedge [f_3\rangle\ \le \ 
    [f_1\rangle \vee ([f_2\rangle)\wedge [f_3\rangle),
$$
the reverse inequality being trivial. Since  $f_1\preceq f_3$, there is $h: X_1\to X_3$ 
such that $f_1 = f_3h.$ 

First, let us construct an element $f$ in $[f_1\rangle \vee ([f_2\rangle)\wedge [f_3\rangle).$
A pullback diagram
$$
{\beginpicture
\setcoordinatesystem units <2cm,1.2cm>
\arr{0.25 1}{0.75 1}
\arr{0.25 0}{0.75 0}
\arr{0 .7}{0 .3}
\arr{1 .7}{1 .3}

\put{$P$} at 0 1
\put{$X_2$} at 1 1
\put{$X_3$} at 0 0
\put{$Y$} at 1 0

\put{$\ssize p_2$} at 0.5 1.2
\put{$\ssize f_3$} at 0.5 .2
\put{$\ssize p_3$} at -.15 .5
\put{$\ssize f_2$} at 1.15 .5
\endpicture}
$$

yields a map $f_3p_3$ such that $[f_3p_3\rangle = [f_2\rangle)\wedge [f_3\rangle,$ thus
the map $f = [f_3h,f_3p_3]: X_1\oplus P \to Y$ belongs to 
$[f_3h\rangle \vee ([f_2\rangle)\wedge [f_3\rangle)$

Next, we construct an element $f'$ in $([f_1\rangle \vee [f_2\rangle)\wedge [f_3\rangle$.
The map $[f_3h,f_2]: X_1\oplus X_2 \to Y$ belongs to $[f_1\rangle \vee [f_2\rangle$,
now we form the pullback 
$$
{\beginpicture
\setcoordinatesystem units <2.5cm,1.2cm>
\arr{0.25 1}{0.75 1}
\arr{0.25 0}{0.75 0}
\arr{0 .7}{0 .3}
\arr{1.1 .7}{1.1 .3}

\put{$P'$} at 0 1
\put{$X_1\oplus X_2$} at 1.1 1
\put{$X_3$} at 0 0
\put{$Y$} at 1.1 0

\put{$ \bmatrix g_1\cr g_2\endbmatrix$} at 0.5 1.5
\put{$ f_3$} at 0.5 .2
\put{$ g_3$} at -.15 .5
\put{$ \bmatrix f_3h& f_2\endbmatrix$} at 1.5 .5
\endpicture}
$$
the map $f' = f_3g_3 = [f_3h,f_2]\bmatrix g_1\cr g_2\endbmatrix$ is an element of
$([f_3h\rangle \vee [f_2\rangle)\wedge [f_3\rangle$.

It follows from $f_3g_3 = [f_3h,f_2]\bmatrix g_1\cr g_2\endbmatrix = f_3hg_1 + f_2g_2$
that $f_2g_2 = f_3(g_3-hg_1)$, thus the pair of maps $g_2, g_3-hg_1$ factors through
the pullback $P$ of $f_2,f_3$. Thus there is $g: P' \to P$ such that $p_2g = g_2$
and $p_3g = g_3-hg_1.$ It follows that $g_3 = hg_1+p_3g$ and therefore
$$
 f = f_3g_3 = f_3hg_1+f_3p_3g = [f_3h,f_3p_3]\bmatrix g_1\cr g\endbmatrix = f'\bmatrix g_1\cr g\endbmatrix.
$$
This shows that $f \preceq f'$.
\end{proof}

\noindent
%=========================================
{\bf Examples 2. Failure of the chain conditions.}
Here are examples which show that in general  {\it $[\to Y\rangle$ neither satisfies the 
ascending nor the descending chain
condition.} 
Let $\Lambda$ be the Kronecker algebra, this is the path algebra of the quiver
$$
{\beginpicture
\setcoordinatesystem units <1.5cm,1cm>
\put{$a$} at 0 0
\put{$b$} at 1 0
\arr{0.8 0.1}{0.2 0.1}
\arr{0.8 -.1}{0.2 -.1}
\endpicture}
$$
The $\Lambda$-modules are also called {\it Kronecker
modules} (basic facts concerning the Kronecker modules will be recalled in section \ref{sec:14}).
Let $Y = S(b)$, the simple injective module. 

We denote by $Q_n$ the indecomposable preinjective module of length $2n+1$
(thus $Q_0 = S(b) = Q(b),\ Q_1 = Q(a)$). There is a chain of epimorphisms 
$$
   \cdots \longrightarrow Q_2 \overset {f_2} \longrightarrow
 Q_1 \overset {f_1} \longrightarrow   Q_0 = Y,
$$
thus we have the descending chain
$$
 \cdots [f_1f_2f_3\rangle < [f_1f_2\rangle < [f_1\rangle < [1_Y\rangle
$$
in $[\to Y\rangle.$ 

Here, we can assume that all the kernels $f_n:  Q_{n} \to Q_{n-1}$ 
are equal to $R$, where $R$ is a fixed indecomposable module of length $2$. 
Also, if the ground field $k$ is infinite, then there is such a chain of epimorphisms 
such that all the kernels are pairwise different and of length $2$.
In the first case, the kernels of the maps $f_1f_2\cdots f_n$ are all indecomposable
(namely of the form $R[n]$ for $n\in \mathbb N$), in the second, they are direct sums
of pairwise non-isomorphic modules of length 2.
	\bigskip

In order to look at the ascending chain condition, let $P_n$ be indecomposable
preprojective of length $2n+1$. For $i\ge 1$, there are 
epimorphisms $f_i: P_i \to Y$ and monomorphisms $u_i: P_i \to P_{i+1}$
such that $f_{i+1}u_{i} = f_{i}.$ Thus, there is a commutative diagram of maps
$$
{\beginpicture
\setcoordinatesystem units <1.5cm,1.3cm>
\put{$P_1$} at 0 0 
\put{$P_2$} at 1 0 
\put{$\cdots$} at 2 0 
\put{$P_i$} at 3 0 
\put{$P_{i+1}$} at 4 0
\put{$u_1$} at 0.6 0.2
\put{$u_i$} at 3.4 0.2
\arr{0.3 0}{0.7 0}
\arr{3.3 0}{3.7 0}
\arr{4.3 0}{4.7 0}
\put{$\cdots$} at 5 0 

\plot 1.3 0 1.5 0 /
\arr{2.5 0}{2.7 0}
\put{$Y$} at 4 -1 
\arr{0.3 -.2}{3.75 -1} 
\arr{1.3 -.2}{3.8 -.9} 
\arr{3.2 -.2}{3.9 -.8} 
\arr{4 -.2}{4 -.8} 
\put{$f_1$} at  1.5 -.7 
\put{$f_2$} at 2.3 -.3
\put{$f_i$} at 3.3 -.5
\put{$f_{i+1}$} at 4.25 -.5
\endpicture}
$$
and we obtain an ascending chain in $[\to Y\rangle.$ 
$$
 [f_1\rangle < [f_2\rangle < [f_3\rangle < \cdots.
$$

Thus, looking at the sequence of maps
$$
 P_1 \overset {u_1} \longrightarrow P_2 \overset {u_2} \longrightarrow \cdots 
 \longrightarrow Y
$$
we obtain an ascending chain in $[\to Y\rangle.$ 
$$
 [f_1\rangle < [f_2\rangle < [f_3\rangle < \cdots.
$$
	\medskip 

There are similar chains in $[\to Y\rangle$ consisting of elements $[f: X \to Y\rangle$
with $X$ regular Kronecker modules. 
	\bigskip

\noindent 
{\bf Remark.} If we fix $Y$ (as we usually do),
we may consider the class of right minimal morphisms $f: X \to Y$ as the
objects of a category, with maps from $(f: X \to Y)$ to $(f': X'\to Y)$
being given by the maps $h: X \to X'$ such that $f = f'h$. According to
Proposition \ref{minimal}, this category is a groupoid (this means that all morphisms
are isomorphisms), and any connected component of the category is just the class
of all the right minimal maps which belong to a right equivalence class. 
If we work with a skeleton of the category $\mo\Lambda$, then we only have
to consider the sets 
$$
 \rAut(f) = \{h\in \End(X) \mid fh = h\},
$$
this is a subgroup of $\Aut(X)$; we may call it the {\it right automorphism
group} of $f$. The classification problem for the right minimal maps ending in $Y$
is divided in this way into two problems: to determine, on the one hand, 
the structure of the right factorization lattice $[\to Y\rangle$ and, on the 
other hand, to determine $\rAut(f)$ for every 
right minimal map $f$ ending in $Y$. This provides a nice separation of 
the local symmetries and the global directedness, 
as mentioned at the beginning of the paper. 

\section{Morphisms determined by modules: Auslander's First Theorem.}
\label{sec:3}
%==========================================================

Here is the decisive definition. Let $f: X \to Y$ be a morphism and $C$ a
module. Then $f$ is said to be {\it right $C$-determined} (or right determined by $C$)
provided the following condition
is satisfied: given any morphism
$f': X' \to Y$ such that $f'\phi$ factors through $f$ for
all $\phi: C \to X'$, then $f'$ itself factors through $f$. 
Thus one deals with the following diagrams:
$$
{\beginpicture
\setcoordinatesystem units <1.3cm,1cm>
%============================
\put{\beginpicture
\put{$X$} at 0.3 0
\put{$Y$} at 1.5 0
\put{$X'$} at 0 1
\put{$f\strut$} at .85 -.22
\put{$f'$} at 0.75 0.85
\arr{0.5 0}{1.3 0}
\arr{.2 .9}{1.3 .2}

\put{$\phi$} at -.5 1.2
\put{$\phi'$} at -.4 0.1
\put{$C$} at -1 1
\arr{-.7 1}{-.3 1}
\setdashes <1mm>
\arr{-.7 0.7}{.1 0.1}

\endpicture} at 0 0
%============================
\put{\beginpicture
\put{$X$} at 0.3 0
\put{$Y$} at 1.5 0
\put{$X'$} at 0 1
\put{$f\strut$} at 0.85 -.22
\put{$f'$} at 0.75 0.85
\arr{0.5 0}{1.3 0}
\arr{.2 .9}{1.3 .2}

\put{$h$} at -.02 0.5
\setdashes <1mm>
\arr{.05 0.75}{.25 0.25}

\endpicture} at 4 0

\endpicture}
$$
The existence of the dashed arrow $\phi'$ on the left for all possible
maps $\phi: C \to X'$ shall imply the existence of the dashed arrow $h$ on the right
(of course, the converse implication always holds true: If $f' = fh$ for some morphism
$h$, then $f'\phi = f(h\phi)$ for all morphisms $\phi: C \to X'$). 

\begin{Prop}\label{determined}
Let $: X \to Y$ be a morphism, let $C, C'$ be modules.

\item{\rm (a)} Assume that $\add C = \add C'$. Then
$f$ is right $C$-determined if and only if $f$ is right $C'$-determined.
\item{\rm (b)}  
If $f$ is right $C$-determined, then $f$ is also right $(C\oplus C')$-determined.
\end{Prop}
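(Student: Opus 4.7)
Both parts follow the same pattern: to show $f$ is right $D$-determined for the new test module $D$ (namely $C\oplus C'$ in (b), and $C'$ in (a)), take $f': X' \to Y$ satisfying the hypothesis of right $D$-determination, verify that every $\phi: C \to X'$ still has $f'\phi$ factoring through $f$, and then invoke the given right $C$-determination of $f$ to conclude that $f'$ factors through $f$. The only work is to extend each such $\phi: C \to X'$ to a test map from $D$ to which the hypothesis applies, and then restrict back to $C$ via a canonical retraction.

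For (b), I choose the extension $\phi p_C: C\oplus C' \to X'$, where $p_C$ is the canonical projection onto $C$. The hypothesis delivers $f'\phi p_C = f\alpha$ for some $\alpha: C\oplus C' \to X$; composing with the inclusion $\iota_C: C \to C\oplus C'$ and using $p_C\iota_C = 1_C$ gives $f'\phi = f(\alpha\iota_C)$, as required.

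For (a), by symmetry it suffices to assume $f$ right $C$-determined and prove $f$ right $C'$-determined. From $\add C = \add C'$ I choose $n$ and morphisms $\iota: C \to (C')^n$, $\pi: (C')^n \to C$ with $\pi\iota = 1_C$, realising $C$ as a direct summand of $(C')^n$. Given $\phi: C \to X'$, I form $\phi\pi: (C')^n \to X'$; each coordinate restriction $\phi\pi\iota_i: C' \to X'$ (with $\iota_i$ the $i$-th canonical inclusion) yields $f'\phi\pi\iota_i = f\alpha_i$ by hypothesis, and the $\alpha_i$ assemble to a single $\alpha: (C')^n \to X$ with $f'\phi\pi = f\alpha$. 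Precomposing with $\iota$ and using $\pi\iota = 1_C$ delivers $f'\phi = f(\alpha\iota)$.

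The whole argument is a bookkeeping exercise with projections, inclusions, and retractions, and no step is genuinely hard. The one place to be careful is the choice of extension of $\phi$ in each part ($\phi p_C$ for (b), $\phi\pi$ for (a)) that makes the available hypothesis directly applicable; beyond that, there is no substantive obstacle.
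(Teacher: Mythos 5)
Your argument is correct, and it fills in exactly the ``trivial verification'' that the paper leaves to the reader: in each case one extends a test map $\phi$ from $C$ to a test map from the larger (or equivalent) module, applies the hypothesis, and retracts back using a splitting. There is no substantive difference from the intended proof.
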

	
\begin{proof}
Trivial verification.
\end{proof}

We denote by ${}^C[\to Y\rangle$ the set of the right equivalence 
classes of the morphisms ending in $Y$ which are
right $C$-determined. We will see below that also ${}^C[\to Y\rangle$ is a lattice,
thus we call it the {\it right $C$-factorization lattice} for $Y$. 
	\bigskip

Note that ${}^C[\to Y\rangle$ is usually not closed under predecessors or
successors inside $[\to Y\rangle$. But there is the following important property:
	
\begin{Prop}\label{closed-meets}
The subset ${}^C[\to Y\rangle$ of $[\to Y\rangle$ is closed
under meets.
\end{Prop}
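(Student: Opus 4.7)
The plan is to take the explicit description of the meet from Proposition \ref{lattice} and verify the defining factorization property of right $C$-determined maps directly, using the universal property of the pullback.

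So let $f_1 \colon X_1 \to Y$ and $f_2 \colon X_2 \to Y$ be right $C$-determined, and form the pullback
$$
\begin{array}{ccc}
X & \xrightarrow{g_2} & X_2 \\
{\scriptstyle g_1}\downarrow & & \downarrow{\scriptstyle f_2} \\
X_1 & \xrightarrow{f_1} & Y
\end{array}
$$
so that by Proposition \ref{lattice} the meet $[f_1\rangle \wedge [f_2\rangle$ is represented by $f := f_1 g_1 = f_2 g_2$. I need to show that $f$ itself is right $C$-determined.

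To that end, suppose $f'\colon X' \to Y$ is a morphism with the property that $f'\phi$ factors through $f$ for every $\phi \colon C \to X'$. Since $f = f_i g_i$ for $i = 1, 2$, composing any such factorization with $g_i$ shows that $f'\phi$ factors through $f_i$ for every $\phi \colon C \to X'$. Because $f_i$ is right $C$-determined, this yields $h_i \colon X' \to X_i$ with $f' = f_i h_i$, for each $i = 1, 2$. Now $f_1 h_1 = f' = f_2 h_2$, so the pullback property produces a morphism $h \colon X' \to X$ with $g_1 h = h_1$ and $g_2 h = h_2$. Then $fh = f_1 g_1 h = f_1 h_1 = f'$, so $f'$ factors through $f$. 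This is exactly the definition of $f$ being right $C$-determined.

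There is essentially no hard step: the whole argument is an exercise in chasing the pullback diagram, and the right $C$-determinacy of $f_1, f_2$ is used just once each, to produce the two maps $h_1, h_2$ that one then glues via the pullback. The only point worth double-checking is that taking meets in $[\to Y\rangle$ is well-defined on representatives (since one may replace $f_1, f_2$ by right equivalent maps), but this is immediate from Proposition \ref{lattice}, which shows that the meet is described canonically in terms of any chosen representatives. Consequently ${}^C[\to Y\rangle$ is closed under the meet operation inherited from $[\to Y\rangle$.
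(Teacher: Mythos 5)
Your proof is correct and follows essentially the same route as the paper's own argument: represent the meet via the pullback, push the hypothesis down to each $f_i$ to obtain the factorizations $f' = f_i h_i$, and then glue $h_1, h_2$ through the universal property of the pullback. No gaps.
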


\begin{proof} Let $f_1: X_1 \to Y$ and $f_2: X_2 \to Y$ be right $C$-determined. 
As we know, the meet of $[f_1\rangle$ and $[f_2\rangle$ is given by
forming the pullback of $f_1$ and $f_2$. Thus assume that $X$ is the pullback with
maps $g_1: X \to X_1$ and $g_2: X \to X_2$ and let $f = f_1g_1 = f_2g_2.$ 
We want to show that $f$ is right $C$-determined. 
Thus, assume that there is
given $f': X' \to Y$ such that  for
any $\phi: C \to X'$, there exists $\phi': C \to X$  such that
$f'\phi = f\phi'$. Then we see that for any $\phi: C \to X'$, we have
$f'\phi = f\phi = f_1(g_1\phi)$, thus $f'\phi$ factors through $f_1$. Since
$f_1$ is right $C$-determined, it follows that $f'$ factors through $f_1$, say
$f' = f_1h_1$ for some $h_1: X' \to X_1$. Similarly, for any $\phi: C \to X'$,
the morphism $f'\phi$ factors through $f_2$ and therefore 
$f' = f_2h_2$ for some $h_2: X' \to X_2$. Now $f_1h_1 = f' = f_2h_2$ implies that
there is $h: X' \to X$ such that $g_1h = h_1,$ and $g_2h = h_2$. Thus
$f' = f_1h_1 = f_1g_1h   = fh$ shows that $f'$ factors through $f$.
\end{proof} 
	
We should stress that ${}^C[\to Y\rangle$ usually is not closed under joins,
see the examples at the end of the section. One of these examples is chosen
in order to convince the reader that this is not at all a drawback, 
but an important feature if we want to work with lattices of finite height. 
	
\begin{Thm}[Auslander's First Theorem]\label{first}
For any $\Lambda$-module $Y$, one has
\Rahmen{
 [\to Y\rangle = \bigcup\nolimits_C {}^C[\to Y\rangle,
}
where $C$ runs through all the $\Lambda$-modules (or just through representatives of all
multiplicity-free $\Lambda$-modules) and this is a filtered union of meet-semilattices.
\end{Thm}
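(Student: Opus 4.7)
Two of the three assertions follow quickly from what has already been set up: Proposition~\ref{closed-meets} gives that each ${}^C[\to Y\rangle$ is closed under meets in $[\to Y\rangle$, hence is a meet-subsemilattice; Proposition~\ref{determined}(b) shows that whenever $[f_i\rangle \in {}^{C_i}[\to Y\rangle$ for $i=1,2$, both $[f_1\rangle, [f_2\rangle$ lie in ${}^{C_1\oplus C_2}[\to Y\rangle$, so the collection $\{{}^C[\to Y\rangle\}_C$ is directed under inclusion; and Proposition~\ref{determined}(a) permits replacing any $C$ by a multiplicity-free additive representative without shrinking ${}^C[\to Y\rangle$. The substantive content left to prove is therefore the \emph{pointwise} assertion: every morphism $f:X\to Y$ is right $C$-determined for some module $C$.

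By Proposition~\ref{minimal} I may replace $f$ by a right minimal representative in its right equivalence class; so assume $f$ is right minimal and write $K=\ker f$. My proposal is to take
$$C \;=\; X \,\oplus\, \tau^{-} K \,\oplus\, \Lambda.$$
The summand $X$ is there to control the pullback comparison morphism into $X'$; the summand $\tau^{-}K$ is there to detect the obstruction to splitting the resulting kernel extension via the Auslander--Reiten sequence $0 \to K \to E \to \tau^{-}K \to 0$; and the summand $\Lambda$ absorbs any projective direct summand of $K$, on which $\tau^{-}$ acts trivially.

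To verify right $C$-determination I take any $f':X'\to Y$ such that $f'\phi$ factors through $f$ for every $\phi:C\to X'$, form the pullback of $f$ and $f'$ giving $g:P\to X$ and $g':P\to X'$ with $fg = f'g'$, and observe via the universal property of the pullback that the hypothesis rephrases as: every $\phi:C\to X'$ factors through $g'$. Applied to $\phi$'s coming from the summand $X$ of $C$, this forces $g'$ to be surjective, so there is an exact sequence
$$0 \longrightarrow K \longrightarrow P \longrightarrow X' \longrightarrow 0$$
representing a class $\xi \in \Ext^1(X',K)$. Splitting $K = K_0 \oplus K_{\mathrm{proj}}$ into non-projective and projective parts, the $\Lambda$-summand of $C$ handles $K_{\mathrm{proj}}$ directly (maps out of projectives automatically lift), while for $K_0$ the Auslander--Reiten formula identifies $\Ext^1(X',K_0)$ with $D\overline{\Hom}(\tau^{-}K_0,X')$, turning the factorization hypothesis applied to $\phi:\tau^{-}K_0 \hookrightarrow C \to X'$ into the vanishing of $\xi$.

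The step I expect to be the main obstacle is exactly this last equivalence: converting ``every $\phi:\tau^{-}K\to X'$ lifts along $g'$'' into ``$\xi$ splits''. This is the technical heart of Auslander's theorem; it is where the almost split sequence of $K$ does its real work, and where one must keep careful track of the interplay between projective and non-projective summands of $K$ (the reason for the auxiliary $\Lambda$-summand in $C$). Once this pointwise claim is secured, combining it with the observations of the first paragraph yields the full theorem.
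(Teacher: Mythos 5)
Your opening paragraph matches the paper exactly: the filtered-union and meet-semilattice statements are just Propositions \ref{determined} and \ref{closed-meets}, and the only substantive content is the pointwise claim that every morphism is right determined by some module. The paper, however, does not prove that claim directly either: it deduces it from the Determiner Formula (Theorem \ref{determiner}), whose proof is cited from \cite{[ARS]} and \cite{[R9]}, obtaining the determiner $\tau^{-}\Ker(f)\oplus P(\soc\Cok(f))$, resp.\ $\tau^{-}\Ker(f)\oplus\Lambda$. Your candidate $C=X\oplus\tau^{-}K\oplus\Lambda$ contains the latter, so it is certainly a determiner, but two of your supporting remarks are incorrect. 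First, the surjectivity of $g'$ is forced by the $\Lambda$-summand, not by the $X$-summand: since $\Lambda$ is a generator, finitely many maps $\Lambda\to X'$ are jointly surjective, and each factors through $g'$; maps $X\to X'$ give you nothing, and the summand $X$ in fact plays no role anywhere in the argument. Second, the decomposition $K=K_0\oplus K_{\mathrm{proj}}$ addresses a non-issue while missing the real one: $\tau^{-}$ vanishes on \emph{injective} modules, not on projective ones, and ``maps out of projectives lift'' controls $\Ext^1(K_{\mathrm{proj}},-)$ rather than the component of $\xi$ in $\Ext^1(X',K_{\mathrm{proj}})$, which need not vanish. What is actually needed is that $\Ker(f)$ has no injective direct summand --- this follows from the right minimality of $f$ --- so that $\tau^{-}K_i\neq 0$ and the Auslander--Reiten formula applies uniformly to every indecomposable summand $K_i$ of $K$.

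The genuine gap is the step you flag yourself and then do not carry out: deducing $\xi=0$ in $\Ext^1(X',K)$ from the hypothesis that $\phi^{*}\xi=0$ in $\Ext^1(\tau^{-}K,K)$ for every $\phi\colon\tau^{-}K\to X'$. This amounts to the injectivity of the map sending $\xi$ to the family $(\phi^{*}\xi)_{\phi}$, equivalently to the non-degeneracy of the pairing underlying the isomorphism $\Ext^1(X',K_i)\simeq D\underline{\Hom}(\tau^{-}K_i,X')$ together with the identification of the linear form attached to $\xi_i$ as $\phi\mapsto\phi^{*}\xi_i$ evaluated against the class of the almost split sequence of $K_i$. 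That identification is precisely where the almost split sequence ``does its real work,'' and it is exactly the content of Theorem \ref{determiner}; it is not a formal consequence of the existence of the duality. As written, your argument therefore proves nothing beyond what the paper's soft reductions already give: to close it you must either perform this computation or cite the Determiner Formula, at which point the detour through the pullback and the ad hoc summand $X$ become unnecessary, since the paper's Corollary already exhibits $\tau^{-}\Ker(f)\oplus\Lambda$ as a determiner.
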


By definition, the sets ${}^C[\to Y\rangle$ are subsets of $[\to Y\rangle$.
By \ref{determined}(a), we know that ${}^C[\to Y\rangle$ only depends on $\add C$, thus we
may restrict to look at representatives of multiplicity-free $\Lambda$-modules $C$.
Proposition \ref{determined}(b) asserts that both ${}^C[\to Y\rangle$ and ${}^{C'}[\to Y\rangle$ are 
contained
in ${}^{C\oplus C'}[\to Y\rangle$, thus we deal with a filtered union.
According to \ref{closed-meets}, we deal with embeddings of meet-semilattices. 
The essential assertion of Theorem \ref{first} is 
that any morphism is right determined by some module, the usual formulation of 
Auslander's First Theorem.
A discussion of this assertion and its proof follows.
	\medskip 

There is a precise formula which yields for $f$ the smallest possible
module $C(f)$ which right determines $f$. We will call it the minimal right
determiner of $f$, any other right determiner of $f$ will have $C(f)$ as
a direct summand. 
	
We need another definition.  An indecomposable projective module $P$ is said
to {\it almost factor through $f$,} provided there is a commutative diagram
of the following form
$$
{\beginpicture
\setcoordinatesystem units <2cm,1.2cm>
\arr{0.35 1}{0.75 1}
\arr{0.25 0}{0.75 0}
\arr{0 .7}{0 .3}
\arr{1 .7}{1 .3}

\put{$\rad P$} at 0 1
\put{$P$} at 1 1
\put{$X$} at 0 0
\put{$Y$} at 1 0

\put{$\ssize \iota$} at 0.5 1.2
\put{$\ssize f$} at 0.5 .2
\put{$$} at -.15 .5
\put{$\ssize \eta$} at 1.15 .5
\endpicture}
$$
where $\iota$ is the inclusion map, 
such that the image of $\eta$ is not contained in the image of $f$.
Let us mention the following: {\it If the indecomposable projective module $P$
almost factors through $f$, then $P/\rad P$ embeds into the cokernel $\Cok(f).$}
Namely, given a map $\eta: P \to Y$ 
such that the image of $\eta$ is not contained in the image of $f$, as well as
the commutative diagram above, we may complete the diagram by adding the cokernels 
of the horizontal maps:
$$
{\beginpicture
\setcoordinatesystem units <2cm,1.2cm>
\arr{0.35 1}{0.75 1}
\arr{0.25 0}{0.75 0}
\arr{1.25 1}{1.55 1}
\arr{1.25 0}{1.55 0}
\arr{2.45 1}{2.75 1}
\arr{2.45 0}{2.75 0}
\arr{0 .7}{0 .3}
\arr{1 .7}{1 .3}
\arr{2 .7}{2 .3}

\put{$\rad P$} at 0 1
\put{$P$} at 1 1
\put{$X$} at 0 0
\put{$Y$} at 1 0
\put{$P/\rad P$} at 2 1
\put{$0$} at 3 1
\put{$\Cok(f)$} at 2 0
\put{$0$} at 3 0

\put{$\ssize \iota$} at 0.5 1.2
\put{$\ssize f$} at 0.5 .2
\put{$$} at -.15 .5
\put{$\ssize \eta$} at 1.15 .5
\put{$\ssize \eta'$} at 2.15 .5
\endpicture}
$$
Since the image of $\eta$ is not contained in the image of $f$, we see
that $\eta'$ is non-zero, thus $P/\rad P$ is a submodule of $\Cok(f).$

\begin{Thm}[Determiner formula of Auslander-Reiten-Smal\o]\label{determiner} 
Let $f$ be a morphism ending in $Y$.
Let $C(f)$ be the direct sum of the
indecomposable modules of the form $\tau^{-}K$, where $K$ is an indecomposable 
direct summand of the intrinsic kernel of $f$ and of the
indecomposable projective modules which almost factor through $f$, one from each
isomorphism class. 
Then $f$ is right $C$-determined if and only if $C(f) \in  \add C$.
\end{Thm}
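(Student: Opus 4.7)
The plan is to assume $f$ is right minimal (Proposition~\ref{minimal}), so the intrinsic kernel is $\Ker f$, and to reformulate ``$f'$ factors through $f$'' via the pullback. Form $g\colon W\to X$ and $g'\colon W\to X'$ with $fg=f'g'$. A routine diagram chase shows that $f'$ factors through $f$ iff $g'$ is a split epimorphism, equivalently iff (i)~$g'$ is surjective, i.e.\ $\Img f'\subseteq\Img f$, and (ii)~the exact sequence $\eta\colon 0\to\Ker f\to W\to X'\to 0$ splits. The hypothesis that $f'\phi$ factors through $f$ for every $\phi\colon C\to X'$ translates via the pullback's universal property into: every such $\phi$ lifts along $g'$ to $W$.

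For the sufficiency direction ($C(f)\in\add C$ implies $f$ is right $C$-determined), I would tackle (i) and (ii) using the two types of summands of $C(f)$. For (i): if $\Img f'\not\subseteq\Img f$, then the composite $X'\xrightarrow{f'}Y\twoheadrightarrow\Cok f$ is non-zero, and by lifting a projective cover of its image along $Y\to\Cok f$, extracting an indecomposable summand $P_\alpha$, and adjusting by an element of $\Img f$ so that the restriction to $\rad P_\alpha$ lifts through $f$, one obtains $P_\alpha$ almost factoring through $f$. Hence $P_\alpha\in\add C(f)\subseteq\add C$, and the corresponding map $\phi\colon P_\alpha\to X'$ arising from the construction yields an element of $\Hom(C,X')$ for which $f'\phi$ fails to factor through $f$, contradicting the hypothesis.

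For (ii): decompose $\Ker f=\bigoplus K_j$ and split $[\eta]=\sum[\eta_j]\in\bigoplus\Ext^{1}(X',K_j)$; it suffices to show each $[\eta_j]=0$. For non-projective $K_j$, the Auslander-Reiten formula provides the natural isomorphism
\[
\Ext^{1}(X',K_j)\;\cong\;D\,\underline{\Hom}(\tau^{-}K_j,X'),
\]
under which $[\eta_j]$ corresponds to a linear form whose value on the class of $\phi\colon\tau^{-}K_j\to X'$ records whether $\phi^{*}\eta_j$ splits, equivalently whether $\phi$ lifts along $g'$ to $W$. Because $\tau^{-}K_j\in\add C(f)\subseteq\add C$, every such $\phi$ lifts by hypothesis, the form vanishes, and so $[\eta_j]=0$. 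Combining (i) and (ii) yields the splitting of $\eta$ and the required factorisation of $f'$ through $f$.

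For the converse I argue contrapositively. If $\tau^{-}K_j\not\in\add C$ for some non-projective indecomposable summand $K_j$ of $\Ker f$, then pushing the almost split sequence $0\to K_j\to E_j\to\tau^{-}K_j\to 0$ along $K_j\hookrightarrow\Ker f$ and reassembling with $f$ produces a map $f'\colon X'\to Y$ for which every $\phi\colon C\to X'$ satisfies $f'\phi$ factoring through $f$ (since $C$ has no summand isomorphic to $\tau^{-}K_j$, so every such $\phi$ lifts via the almost split property) while $f'$ itself does not factor through $f$; a parallel construction using the defining commutative diagram of an almost-factoring projective $P$ forces $P\in\add C$. I expect the main obstacle to be (ii): rigorously deriving $[\eta_j]=0$ from vanishing of $\phi^{*}\eta_j$ for all $\phi$ through the non-degeneracy of AR duality, together with a careful treatment of any projective direct summand of $\Ker f$ (for which $\tau^{-}$ degenerates), whose contribution must be absorbed through the almost-factoring-projective clause via the pullback structure of $W$.
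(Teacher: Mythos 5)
The paper itself gives no proof of Theorem~\ref{determiner}; it defers to \cite{[ARS]} and \cite{[R9]}, so there is nothing in the text to compare your argument against. Your architecture is nevertheless the standard one, and two of its pieces are sound: the pullback reformulation ($f'$ factors through $f$ iff the induced map $g'\colon W\to X'$ is a split epimorphism, the hypothesis on $C$ saying exactly that every $\phi\colon C\to X'$ lifts along $g'$), and the Auslander--Reiten duality argument in step~(ii) --- by naturality, splitness of $\phi^{*}\eta_j$ for all $\phi\colon\tau^{-}K_j\to X'$ forces the linear form corresponding to $[\eta_j]$ to vanish on all of $\underline{\Hom}(\tau^{-}K_j,X')$, hence $[\eta_j]=0$. (Your closing worry about projective summands of $\Ker f$ is backwards: $\tau^{-}$ vanishes on injectives, not projectives, and an injective summand of the kernel of a right minimal map would split off, so this case does not arise; projective summands of the kernel are handled by $\tau^{-}$ without degeneration.)

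The genuine gap is step~(i). To contradict $\Img f'\not\subseteq\Img f$ you must produce an indecomposable projective $P$ that almost factors through $f$ in the technical sense: a map $\eta\colon P\to Y$ with $\Img\eta\not\subseteq\Img f$ whose restriction $\eta|_{\rad P}$ \emph{factors through} $f$. Lifting the projective cover of $Z=\Img(X'\to\Cok f)$ gives $\eta$ with $\Img\eta\not\subseteq\Img f$, but then $\eta(\rad P_\alpha)$ maps onto $\rad Z$, which is in general nonzero in $\Cok f$; since adding a map with image inside $\Img f$ does not change the composite $P_\alpha\to\Cok f$, your ``adjusting by an element of $\Img f$'' cannot even force $\eta(\rad P_\alpha)\subseteq\Img f$, let alone a factorization through $f$. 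Covering a simple submodule of $Z$ instead repairs the first defect but not the second: one must still lift $\eta|_{\rad P}$ through the non-split surjection $X\to\Img f$, an obstruction in $\Ext^{1}(\rad P,\Ker f)$ whose annihilation is where the $\tau^{-}K$ summands of $C(f)$ re-enter --- so (i) and (ii) cannot be decoupled as you propose, and this interplay is precisely the content of the Auslander--Reiten--Smal\o{} refinement. (Your argument does prove the weaker corollary in which $C$ contains $\Lambda$, since then a projective cover of $X'$ lies in $\add C$ and (i) is immediate.) The converse direction has the same soft spot: the almost split property controls maps \emph{into} $\tau^{-}K_j$, not pullbacks of your constructed extension class along arbitrary maps from $C$, so the claim that every $f'\phi$ factors through $f$ again needs the duality/naturality computation rather than a direct appeal to the lifting property of the almost split sequence.
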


The theorem suggests to call $C(f)$ the {\it minimal right determiner of $f$}.
For the proof of Theorem \ref{determiner}, see \cite{[ARS]} and also \cite{[R9]}.
	
\begin{Cor} 
Any morphism $f$ is right $C$-determined 
by some $C$, for example by the module
\Rahmen{\tau^{-} \Ker(f) \oplus P(\soc\Cok(f)).}
\end{Cor}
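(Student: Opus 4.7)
The plan is to invoke the Determiner formula (Theorem \ref{determiner}): it suffices to show that $C(f) \in \add\bigl(\tau^{-}\Ker(f) \oplus P(\soc\Cok(f))\bigr)$. Recall that $C(f)$ has two types of indecomposable summands, so I will handle each separately.

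First I would take care of the kernel contribution. The intrinsic kernel of $f$ is the kernel of a right minimalisation $f|X_1$ where $X = X_1 \oplus X_2$ and $f(X_2) = 0$; in particular $\Ker(f) = \Ker(f|X_1) \oplus X_2$, so every indecomposable summand $K$ of the intrinsic kernel is also an indecomposable summand of $\Ker(f)$. Applying $\tau^{-}$ (and remembering that $\tau^{-}$ kills projectives, which only removes summands from $C(f)$), we conclude that each $\tau^{-}K$ appearing in $C(f)$ is a summand of $\tau^{-}\Ker(f)$.

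Next I would handle the projective contribution. Here I would use the observation made just before the statement of Theorem \ref{determiner}: if an indecomposable projective $P$ almost factors through $f$, then $P/\rad P$ embeds into $\Cok(f)$. Being simple, $P/\rad P$ is then a summand of $\soc\Cok(f)$, so $P = P(P/\rad P)$ is a summand of $P(\soc\Cok(f))$. Collecting one $P$ from each isomorphism class, we obtain that the projective part of $C(f)$ lies in $\add P(\soc\Cok(f))$.

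Combining the two observations, $C(f) \in \add\bigl(\tau^{-}\Ker(f) \oplus P(\soc\Cok(f))\bigr)$, and Theorem \ref{determiner} finishes the proof. There is no real obstacle: the only subtlety is the distinction between $\Ker(f)$ and the intrinsic kernel, which is harmless because the intrinsic kernel is a direct summand of $\Ker(f)$, and remembering that passing from simple submodules of $\Cok(f)$ to their projective covers is exactly what $P(\soc -)$ produces.
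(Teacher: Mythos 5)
Your proposal is correct and follows essentially the same route as the paper: reduce via the determiner formula to showing $C(f)\in\add\bigl(\tau^{-}\Ker(f)\oplus P(\soc\Cok(f))\bigr)$, handle the non-projective summands by noting the intrinsic kernel is a direct summand of $\Ker(f)$, and handle the projective summands via the observation that $P/\rad P$ embeds into $\Cok(f)$ when $P$ almost factors through $f$. The paper's proof is a slightly terser version of exactly this argument.
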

	
\begin{proof} 
We have to show that $C(f)$ is a direct summand of
$\tau^{-} \Ker(f) \oplus P(\soc\Cok(f))$. The intrinsic kernel of
$f$ is a direct summand of $\Ker(f)$, thus if $K$ is an indecomposable 
direct summand of the intrinsic kernel of $f$, then $\tau^{-}K$ is a
direct summand of $\tau^{-} \Ker(f).$ Now assume that $S$ is a simple
module such that $P(S)$  almost factors through $f$. Then $S$ is a submodule
of $\Cok(f)$, thus $P(S)$ is a direct summand of $P(\soc\Cok(f)).$
\end{proof}

\begin{Cor}[Auslander]
The module $\tau^{-}\Ker(f)\oplus \Lambda$ right determines $f$.
\end{Cor}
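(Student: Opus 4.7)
The plan is to deduce this immediately from the previous Corollary, which already identifies the module $\tau^{-}\Ker(f)\oplus P(\soc\Cok(f))$ as a right determiner of $f$. It suffices to enlarge the second summand from $P(\soc\Cok(f))$ to $\Lambda$, and then appeal to the two parts of Proposition \ref{determined} to conclude.

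First I would recall that $\Lambda$, viewed as a left $\Lambda$-module, decomposes as a direct sum of indecomposable projective modules, and every isomorphism class of indecomposable projectives occurs among these summands (since $\Lambda = \bigoplus_S P(S)^{n_S}$ with $n_S\ge 1$ as $S$ runs over the simples). In particular, for any semisimple module $T$ the projective cover $P(T)$ is a direct sum of indecomposable projectives $P(S)$, each of which is a direct summand of $\Lambda$. Applied to $T=\soc\Cok(f)$, this gives $P(\soc\Cok(f))\in\add\Lambda$, and hence
\[
\add\bigl(\tau^{-}\Ker(f)\oplus P(\soc\Cok(f))\bigr)\ \subseteq\ \add\bigl(\tau^{-}\Ker(f)\oplus\Lambda\bigr).
\]

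Second, I would invoke the previous Corollary to obtain that $f$ is right $(\tau^{-}\Ker(f)\oplus P(\soc\Cok(f)))$-determined, then use Proposition \ref{determined}(b) to adjoin $\Lambda$ as an extra summand, and finally use Proposition \ref{determined}(a) to conclude that $f$ is right $(\tau^{-}\Ker(f)\oplus\Lambda)$-determined, since this property depends only on the additive closure of the determiner.

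There is no real obstacle here: the argument is purely formal once the previous Corollary is in hand, and the only substantive ingredient is the elementary observation that every indecomposable projective lies in $\add\Lambda$. Alternatively, one could argue directly from the determiner formula (Theorem \ref{determiner}): the minimal right determiner $C(f)$ is by construction a direct sum of modules $\tau^{-}K$ with $K$ an indecomposable summand of the intrinsic kernel (hence in $\add\tau^{-}\Ker(f)$) together with certain indecomposable projectives (hence in $\add\Lambda$), so $C(f)\in\add(\tau^{-}\Ker(f)\oplus\Lambda)$, and Theorem \ref{determiner} then gives the conclusion.
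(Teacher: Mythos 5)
Your argument is correct and is essentially the proof the paper intends (the corollary is stated without an explicit proof, being an immediate consequence of the preceding corollary): since $P(\soc\Cok(f))\in\add\Lambda$, Proposition \ref{determined} lets you replace the determiner $\tau^{-}\Ker(f)\oplus P(\soc\Cok(f))$ by $\tau^{-}\Ker(f)\oplus\Lambda$. Your alternative route directly via the determiner formula of Theorem \ref{determiner} is equally valid and is the same mechanism underlying both corollaries.
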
 
	
\begin{Cor}\label{projective}
Let $P$ be a projective module and $f: X \to Y$
a right minimal morphism. 
Then $f$ is right $P$-determined if and only if $f$ is a monomorphism
and the socle of the cokernel of $f$ is generated by $P$.
\end{Cor}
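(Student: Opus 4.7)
The plan is to apply the determiner formula (Theorem~\ref{determiner}) and analyse the two summands of $C(f)$ separately, using that $P$ projective forces $\add P$ to consist of projective modules.

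Since $f$ is already right minimal, its intrinsic kernel coincides with $\Ker(f)$, so
\[
  C(f) \;=\; \tau^{-}\Ker(f)\ \oplus\ \bigoplus\nolimits_i P_i,
\]
where the $P_i$ are the indecomposable projectives that almost factor through $f$. By Theorem~\ref{determiner}, $f$ is right $P$-determined iff $C(f)\in\add P$, and since $P$ is projective this forces $C(f)$ to be projective.

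First I would show $\Ker(f)=0$. Let $K$ be an indecomposable direct summand of $\Ker(f)$. If $K$ were injective, then the inclusion $K\hookrightarrow X$ would split, giving a nonzero direct summand of $X$ annihilated by $f$ --- contradicting right minimality. So every indecomposable summand of $\Ker(f)$ is non-injective, hence every summand $\tau^{-}K$ is a nonzero non-projective indecomposable. For $\tau^{-}\Ker(f)\in\add P$ this is only possible if $\Ker(f)=0$, i.e. $f$ is a monomorphism.

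Next I would identify the projectives almost factoring through $f$. Let $P'$ be indecomposable projective with top $S'=P'/\rad P'$. An almost-factoring diagram for $P'$ provides $\eta\colon P'\to Y$ with $\eta(\rad P')\subseteq \Img f$ but $\eta(P')\not\subseteq\Img f$; composing with the projection $Y\to\Cok(f)$ gives a nonzero map $S'\to\Cok(f)$, necessarily an embedding of $S'$ into $\soc\Cok(f)$. Conversely, given any such embedding $S'\hookrightarrow\soc\Cok(f)$, projectivity of $P'$ lifts the composite $P'\twoheadrightarrow S'\hookrightarrow\Cok(f)$ to some $\eta\colon P'\to Y$, and by construction $\rad P'$ maps into $\Img f$ while the image of $\eta$ is not contained in $\Img f$. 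Hence the indecomposable projectives almost factoring through $f$ are precisely the $P(S)$ with $S$ a simple submodule of $\Cok(f)$, i.e. a simple summand of $\soc\Cok(f)$.

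Putting these together: assuming $f$ is a monomorphism, $C(f)=\bigoplus_S P(S)$ where $S$ runs over the isomorphism classes of simple summands of $\soc\Cok(f)$, and $C(f)\in\add P$ is equivalent to $P(S)\in\add P$ for every such $S$, which is the same as saying $\soc\Cok(f)$ is generated by $P$. The only nontrivial step is the right-minimality/injectivity argument that eliminates the $\tau^{-}\Ker(f)$ summand; the remainder is a direct translation via the determiner formula and a standard projective lifting.
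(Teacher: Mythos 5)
Your proposal is correct and follows essentially the same route as the paper: apply the determiner formula, observe that the $\tau^{-}\Ker(f)$ part of $C(f)$ is non-projective and hence forces $\Ker(f)=0$, and identify the indecomposable projectives almost factoring through $f$ with the projective covers $P(S)$ of the simple submodules $S$ of $\Cok(f)$. You merely spell out in more detail the two implications of that last identification (the embedding $P'/\rad P'\hookrightarrow\Cok(f)$ and the projective lifting for the converse), which the paper treats as immediate from the discussion preceding Theorem~\ref{determiner}.
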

	
\begin{proof} 
This is an immediate consequence of the determiner formula: First, assume that
$f$ is right $P$-determined. Then the intrinsic kernel of $f$ has to be zero.
Since we assume that $f$ is right minimal, $f$ must be a monomorphism. If $S$
is a simple submodule of the cokernel of $f$, then $P(S)$ almost factors through
$f$, thus $P(S)$ is a direct summand of $P$. This shows that the socle of the
cokernel of $f$ is generated by $P$. Conversely, assume that $f$ is a monomorphism
and the socle of the cokernel of $f$ is generated by $P$. Since $f$ is a
monomorphism, $C(f)$ is the direct sum of all 
indecomposable projective modules $P'$ which almost factor through $f$. Such
a module $P'$ is the projective cover of a simple submodule of $\Cok(f)$.
Since $P$ generates the socle of the cokernel of $f$, it follows that $P'$ is
a direct summand of $P$. Thus $C(f)$ is in $\add P$, therefore 
$f$ is right $P$-determined. 
\end{proof} 

\begin{Cor}\label{monos}
A right minimal 
morphism $f: X \to Y$ is a monomorphism if and only if it is
right $\Lambda$-determined.
\end{Cor}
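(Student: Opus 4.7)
The plan is to deduce this corollary as an immediate specialization of Corollary \ref{projective} with $P = \Lambda$.

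First, I would verify the easy direction: given a right minimal right $\Lambda$-determined morphism $f:X\to Y$, Corollary \ref{projective} (applied to $P=\Lambda$) directly yields that $f$ is a monomorphism. So this direction is free.

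For the converse, suppose $f:X\to Y$ is right minimal and monic. By Corollary \ref{projective} again, it suffices to show that the socle of $\Cok(f)$ is generated by $\Lambda$. The key (and essentially only) point is that the regular module $\Lambda$ generates every finite length $\Lambda$-module: indeed $\Lambda$ contains (a copy of) every indecomposable projective as a direct summand, so every simple module $S$ is a quotient of $\Lambda$, and consequently any semisimple module $\soc\Cok(f)$ is an epimorphic image of a direct sum of copies of $\Lambda$. Hence the hypothesis of Corollary \ref{projective} is satisfied, and $f$ is right $\Lambda$-determined.

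I do not anticipate any real obstacle: the content of the statement has already been absorbed into Corollary \ref{projective}, and the only substantive remark needed is that $\Lambda$ is a generator for $\mo\Lambda$, which makes the side condition on the socle of the cokernel automatic. The proof therefore reduces to two one-line citations of the preceding corollary.
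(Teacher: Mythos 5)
Your proposal is correct and matches the paper's intended argument: the corollary is stated immediately after Corollary \ref{projective} with no separate proof, precisely because it is the specialization $P = \Lambda$, where the socle condition becomes vacuous since $\Lambda$ generates every module. Nothing is missing.
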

	
\noindent 
%==============================================
{\bf Example 3.} 
{\it The subset ${}^C[\to Y\rangle$ of $[\to Y\rangle$ is usually not closed
under joins,} as the following example shows: Let $\Lambda$ be the path algebra
of the quiver of type $\mathbb A_3$ with two sources, namely of 
$$
{\beginpicture
\setcoordinatesystem units <1cm,.5cm>
\put{$a$} at 0 0
\put{$b_1$} at 1 1
\put{$b_2$} at 1 -1
\arr{0.75 0.85}{0.2 0.2} 
\arr{0.75 -.85}{0.2 -.2} 
\endpicture}
$$
Let $f_i: P(b_i) \to Q(a)$ be non-zero maps for $i=1,2$,
these are monomorphisms, thus they are right $\Lambda$-determined.
The join of $[f_1\rangle$ and $[f_2\rangle$ in $[\to Y\rangle$
is given by the map $[f_1,f_2]: P(b_1)\oplus P(b_2) \to Q(a)$. Clearly, this
map is right minimal, but it is not injective. Thus $[f_1,f_2]$ is not
right $\Lambda$-determined. 
	\medskip

\noindent 
{\bf Example 4.} This example indicates that in general, 
it may not be advisable to ask for 
closure under joins. Consider again the Kronecker algebra $\Lambda$ as
exhibited in example 2. Let $Y = Q(a)$ and $C = \Lambda$,
thus all the maps in ${}^C[\to Y\rangle$ are given by inclusion maps
$f: X \to Y$, where $X$ is a submodule of $Y$. In fact, we may identify 
${}^C[\to Y\rangle$ with the submodule lattice of $Y$, it 
is a lattice of
height $3$ which looks as
follows:
$$
{\beginpicture
\setcoordinatesystem units <1.2cm,.9cm>
%============================
\put{\beginpicture

\put{$Q(a)$} at 2 2
\put{$R$} at 1 1 
%\put{$\bullet$} at 3 1 
\put{$R'$} at 1.5 1  
\put{$R''$} at 2 1  
\put{$S(a)$} at 2 0
\put{$0$} at 2 -1  
\put{$\cdots$} at 2.7 1 
\arr{1.7 0.3}{1.3 0.7}
\arr{2.3 0.3}{2.7 0.7}
\arr{2.7 1.3}{2.3 1.7}
\arr{1.3 1.3}{1.7 1.7}
\arr{1.85 0.3}{1.65 0.7}  
\arr{1.65 1.3}{1.85 1.7}
\arr{2 0.3}{2 0.7}
\arr{2 1.3}{2 1.7}
\arr{2.15 0.3}{2.35 0.7}
\arr{2.35 1.3}{2.15 1.7}
\arr{2 -0.7}{2 -.3}
\endpicture} at 0 0
%============================
\put{\beginpicture
\multiput{$\bullet$} at  1 1  2 0  3 1  2 2   1.5 1  2 1  2 -1  /
\put{$\cdots$} at 2.5 1 
\plot 1 1  2 0  3 1  2 2  /
\plot 1 1  2 2 /
\plot 2 0  1.5 1  2 2 /
\plot 2 0  2 1  2 2 /
\plot 2 0  2.25 .5   /
\plot 2.25 1.5  2 2 /
\plot 2 -1  2 0 /
\endpicture} at 4 0
\endpicture}
$$
Here, the modules  $R,R',R'',\dots$ are the indecomposable
representations of length 2, one from each isomorphism class and all
the arrows are inclusion maps.

The join in ${}^C[\to Y\rangle$ 
of two different maps $f_1,f_2$ in the height 2 layer is just the identity map $Y \to Y$,
whereas the join of $f_1,f_2$ 
in $[\to Y\rangle$ is the direct sum map $[f_1,f_2]: R_1\oplus R_2 \to Y.$ 
More generally, if there are given
$n$ pairwise different regular modules $R_1,\dots, R_n$ of length 2
with inclusion maps $f_i: 
R_i \to Y$, then the join in $[\to Y\rangle$ 
is the direct sum map 
$[f_1,\dots,f_n]: R_1\oplus\cdots\oplus R_n \to Y$. Let us stress
that all these direct sum maps are right minimal (thus here we deal
with a cofork as defined in section \ref{sec:13}).  
Thus, if the base field $k$ is infinite, the smallest subposet of 
$[\to Y\rangle$ closed under meets and joins and containing the inclusion maps
$R \to Y$ with $R$ regular of length 2 will have infinite height. 
	
\begin{Prop}\label{non-zero-map}
Let $f: X \to Y$ be a morphism. If $C'$
is an indecomposable direct summand of $C(f)$, then $\Hom(C',Y) \neq 0.$
\end{Prop}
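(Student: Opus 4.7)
The plan is to treat separately the two kinds of indecomposable summands of $C(f)$ described in Theorem \ref{determiner}. If $C'$ is an indecomposable projective that almost factors through $f$, then the very definition of ``almost factors through'' produces a map $\eta : C' \to Y$ whose image is not contained in $\Img f$; in particular $\eta \neq 0$, so $\Hom(C',Y) \neq 0$. This case is immediate, and all the real work lies in the other case.

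Suppose instead that $C' = \tau^{-}K$, where $K$ is an indecomposable direct summand of the intrinsic kernel of $f$. First I would replace $f$ by its right minimalisation, which leaves both $\Img f$ and the intrinsic kernel unchanged, so that without loss of generality $f$ is right minimal and $K$ is a direct summand of $\Ker f$. Write $\Ker f = K \oplus K''$ and let $\pi : \Ker f \to K$ denote the projection onto $K$. The crucial point is that $\pi$ does not extend to any morphism $X \to K$: an extension $\sigma$, precomposed with the inclusion $K \hookrightarrow \Ker f \hookrightarrow X$, would yield a retraction of $X$ onto $K$, exhibiting $K$ as a direct summand of $X$ annihilated by $f$, contradicting right minimality. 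Applying $\Hom(-,K)$ to the short exact sequence $0 \to \Ker f \to X \to \Img f \to 0$ then gives the exact segment
$$\Hom(X,K) \longrightarrow \Hom(\Ker f, K) \longrightarrow \Ext^{1}(\Img f, K),$$
in which $\pi$ has nonzero image; hence $\Ext^{1}(\Img f, K) \neq 0$.

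The Auslander-Reiten formula identifies $D\Ext^{1}(\Img f, K)$ with a quotient of $\Hom(\tau^{-}K, \Img f)$, so the latter must also be nonzero, and composing any nonzero such map with the monomorphism $\Img f \hookrightarrow Y$ produces a nonzero element of $\Hom(C',Y)$. The main obstacle I anticipate is precisely the right-minimality step: one has to convert the abstract notion of ``intrinsic kernel'' into the concrete non-liftability of $\pi$, and this is where the hypothesis on $f$ does all the work; the remainder is a standard combination of the long exact $\Ext$ sequence and Auslander-Reiten duality.
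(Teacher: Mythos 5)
Your proof is correct, but it takes a genuinely different route from the paper's. The paper splits into the same two cases and disposes of the projective summands in the same one line; for $C'=\tau^{-}K'$, however, it works directly with the almost split sequence $0\to K'\to M\to C'\to 0$: since the inclusion $K'\to X$ is not split mono (right minimality again), it lifts through $M$, and the induced map $\phi'\colon C'\to Y$ on cokernels is shown to be nonzero, because $\phi'=0$ would force the lift to factor through $\Ker(f)$ and hence the almost split sequence to split. You instead apply $\Hom(-,K)$ to $0\to\Ker(f)\to X\to\Img(f)\to 0$, use the non-liftability of the projection $\pi\colon\Ker(f)\to K$ (the same right-minimality observation, correctly justified) to get $\Ext^1(\Img(f),K)\neq 0$, and then invoke the Auslander--Reiten formula $\Ext^1(\Img(f),K)\simeq D\underline{\Hom}(\tau^{-}K,\Img(f))$ before composing with the monomorphism $\Img(f)\to Y$. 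Both arguments are sound and both ultimately rest on the existence of almost split sequences; yours is shorter and yields the slightly stronger conclusion $\underline{\Hom}(C',\Img(f))\neq 0$, but it is non-constructive, whereas the paper's lifting argument exhibits an explicit nonzero map $C'\to Y$ --- the distinguished map ``almost factoring through $f$'' referred to in the Remark immediately following the proposition, which is the reason the paper prefers that construction.
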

	
\begin{proof} By definition, there are two kinds of indecomposable direct summands of $C(f)$,
the non-projective ones are of the form $\tau^{-}K'$, where $K'$ is an indecomposable
direct summand of the intrinsic kernel of $f$, the remaining ones are the
indecomposable projective modules which almost factor through $f$. Of course,
if $P$ is an indecomposable projective module which almost factors through $f$,
then $\Hom(P,Y) \neq 0$. 

Thus, we have to consider a module of the form $C' = \tau^{-}K'$
with $K'$ an indecomposable direct summand of the intrinsic kernel $K$ of $f$ and 
note that $K'$ cannot be injective. 
Let us denote by $u: K \to X$ and $u': K'\to K$ the inclusion maps. Since $u'$ is split mono, there is $r: K \to K'$ with $ru' = 1_{K'}.$ Let 
$$ 
  0 \longrightarrow K' \overset\mu\longrightarrow  M  \overset\epsilon
 \longrightarrow C' \longrightarrow  0
$$ 
be the Auslander-Reiten sequence
starting with $K'$. Since the composition $uu'$ is not split mono, there is 
a map $\phi: M \to X$ with $\phi\mu = uu'.$ Thus, 
there is the following commutative diagram with exact rows
$$
{\beginpicture
\setcoordinatesystem units <2cm,1.2cm>
\arr{-0.7 1}{-0.3 1}
\arr{-0.7 0}{-0.3 0}
\arr{0.35 1}{0.75 1}
\arr{0.25 0}{0.75 0}
\arr{1.25 1}{1.75 1}
\arr{1.25 0}{1.75 0}
\arr{2.25 1}{2.75 1}
\arr{0 .7}{0 .3}
\arr{1 .7}{1 .3}
\arr{2 .7}{2 .3}

\put{$0$} at -1 1
\put{$0$} at -1 0
\put{$K'$} at 0 1
\put{$M$} at 1 1
\put{$K$} at 0 0
\put{$X$} at 1 0
\put{$C'$} at 2 1
\put{$0$} at 3 1
\put{$Y$} at 2 0

\put{$\ssize \mu$} at 0.5 1.2
\put{$\ssize u$} at 0.5 .2
\put{$\ssize \epsilon$} at 1.5 1.2
\put{$\ssize f$} at 1.5 .2
\put{$\ssize u'$} at -.15 .5
\put{$\ssize \phi$} at 1.15 .5
\put{$\ssize \phi'$} at 2.15 .5
\endpicture}
$$
If we assume that $\phi'= 0$, then $f\phi = 0$, thus $\phi$ factors through the
kernel of $f$, say $\phi = u\phi''.$ Consequently, $uu' = \phi\mu = u\phi''\mu$.
But $u$ is injective, thus $u' = \phi''\mu$ and therefore $1_{K'} = 
ru' = r\phi''\mu$. But this means that $\mu$ is split mono, a contradiction.
It follows that $\phi' \neq 0$, thus $\Hom(C',Y)\neq 0.$ 
\end{proof} 

\noindent 
{\bf Remark.} Proposition \ref{non-zero-map} asserts that all the indecomposable direct summands $C'$
of the minimal right determiner $C(f)$ of a map $f: X\to Y$ satisfy $\Hom(C',Y)\neq 0$.
Actually, according to \cite{[ARS]}, Proposition XI.2.4 
(see also \cite{[R9]}), such a module $C'$
is equipped with a distinguished non-zero map $C'\to Y$ which is said to 
``almost factor through'' $f$. At the beginning of this section we gave a corresponding definition in the special case when $C'$ is projective. 
See also the Remark 3 at the end of section \ref{sec:4}. 

\section{The Auslander bijection. Auslander's Second Theorem.}
\label{sec:4}
%==========================================================

Let $C,Y$ be objects. Let $\Gamma(C)  = \End(C)^{\text{op}}$. We always will consider 
$\Hom(C,Y)$ as a $\Gamma(C)$-module. For any module $M$, we denote by $\mathcal  SM$ the
set of all submodules (it is a lattice with respect to intersection and sum of
submodules).

Define 
$$
 \eta_{CY}: \bigcup\nolimits_X \Hom(X,Y) \ \longrightarrow\ \mathcal  S(\Hom(C,Y))
$$ 
by $\eta_{CY}(f) = \Img\Hom(C,f) = f\cdot \Hom(C,X)$ for $f: X \to Y$
(note that $f\cdot \Hom(C,X)$ clearly is a $\Gamma(C)$-submodule). 
	\medskip 

Here is a reformulation of the definition of $\eta_{CY}$.
	
\begin{Prop} Let $f: X \to Y$. Then $\eta_{CY}(f)$ is the set 
of all $h\in \Hom(C,Y)$ which factor through $f$. This subset of $\Hom(C,Y)$
is a $\Gamma(C)$-submodule.
\end{Prop}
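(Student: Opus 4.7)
The proposition is a direct unpacking of the definition of $\eta_{CY}$, so the plan is essentially to spell out what the image of $\Hom(C,f)$ is as a subset of $\Hom(C,Y)$ and then verify the module closure conditions.

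First, I would recall that by definition $\eta_{CY}(f) = \Img\Hom(C,f) = f\cdot\Hom(C,X)$, that is, the set of all morphisms of the form $f\circ\phi$ where $\phi$ ranges over $\Hom(C,X)$. Saying that $h\in\Hom(C,Y)$ belongs to this image is the same as saying that there exists $\phi\colon C\to X$ with $h=f\phi$, which is exactly the assertion that $h$ factors through $f$. This gives the first claim on the nose, without any further argument.

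For the second claim, I need to check that the subset $\eta_{CY}(f)\subseteq\Hom(C,Y)$ is closed under addition and under the action of $\Gamma(C)=\End(C)^{\text{op}}$. Additivity is immediate: if $h_1=f\phi_1$ and $h_2=f\phi_2$, then $h_1+h_2=f(\phi_1+\phi_2)$. For the $\Gamma(C)$-action, the right $\Gamma(C)$-module structure on $\Hom(C,Y)$ is given by precomposition, i.e.\ $h\cdot\alpha = h\circ\alpha$ for $\alpha\in\End(C)$; if $h=f\phi$ factors through $f$, then $h\circ\alpha = f\circ(\phi\alpha)$ again factors through $f$ via the morphism $\phi\alpha\colon C\to X$. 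Hence $\eta_{CY}(f)$ is a $\Gamma(C)$-submodule of $\Hom(C,Y)$.

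There is really no obstacle here; the only thing to be careful about is the convention that one uses $\End(C)^{\text{op}}$ so that precomposition gives a genuine left action of $\Gamma(C)$ (equivalently, a right action of $\End(C)$). Once that convention is fixed, both assertions are one-line verifications, and the statement is essentially a reformulation bridging the ``image of $\Hom(C,f)$'' viewpoint with the ``factors through $f$'' viewpoint that will be used in the sequel.
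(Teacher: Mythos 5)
Your proof is correct and follows the same route as the paper: unpack $\eta_{CY}(f)=f\cdot\Hom(C,X)$ to see that membership is precisely factorization through $f$, and note closure under addition and precomposition for the $\Gamma(C)$-module structure. The paper dismisses the submodule check with a parenthetical remark at the point where $\eta_{CY}$ is defined, so your explicit verification is the same argument, just written out.
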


\begin{proof}We have mentioned already, that
$\eta_{CY}(f)$ is a $\Gamma(C)$-submodule of $\Hom(C,Y)$.
Also, if $h\in \eta_{CY}(f) = f\Hom(C,X)$, then $h$ factors through $f$.
And conversely, if $h$ 
factors through $f$, then $h$ belongs to $f\Hom(C,X) = \eta_{CY}(f).$
\end{proof} 
	
%====================================================
\begin{Lem}
If $X = X_0\oplus X_1$ and $f(X_0) = 0$, then $\eta_{CY}(f) = \eta_{CY}(f|X_1).$
\end{Lem}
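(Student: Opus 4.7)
The plan is to exploit the splitting $X = X_0 \oplus X_1$ via the canonical inclusion $u : X_1 \to X$ and projection $p : X \to X_1$, which satisfy $pu = 1_{X_1}$ and (since $f(X_0) = 0$) the identity $f = fup$. Writing $f_1 = f|X_1 = fu$, this gives the twin factorizations $f_1 = fu$ and $f = f_1 p$, which will immediately yield the two inclusions.

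For the inclusion $\eta_{CY}(f_1) \subseteq \eta_{CY}(f)$: given $h \in \eta_{CY}(f_1) = f_1 \cdot \Hom(C, X_1)$, write $h = f_1 \phi$ with $\phi : C \to X_1$. Then $h = (fu)\phi = f(u\phi)$ exhibits $h$ as an element of $f \cdot \Hom(C, X) = \eta_{CY}(f)$.

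For the reverse inclusion $\eta_{CY}(f) \subseteq \eta_{CY}(f_1)$: given $h \in \eta_{CY}(f)$, write $h = f\psi$ with $\psi : C \to X$. Using $f = f_1 p$, we get $h = f_1(p\psi)$ with $p\psi : C \to X_1$, so $h \in f_1 \cdot \Hom(C, X_1) = \eta_{CY}(f_1)$.

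There is no real obstacle here; the statement is essentially a bookkeeping consequence of the fact that the summand $X_0$ is killed by $f$, so every map $C \to X$ can be replaced (up to the action of $f$) by its $X_1$-component without changing the composite with $f$. The only thing to be careful about is to verify at the outset that $f = fup$ actually holds on all of $X$, which is immediate since $p$ acts as the identity on $X_1$ and as zero on $X_0$, and $f$ vanishes on $X_0$.
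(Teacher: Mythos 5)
Your proof is correct and is essentially the paper's own argument: the paper writes $f=\bmatrix 0 & f_1\endbmatrix$ and a map $C\to X$ as $\bmatrix \phi_0\cr\phi_1\endbmatrix$ so that $f\cdot\bmatrix \phi_0\cr\phi_1\endbmatrix=f_1\phi_1$, which is exactly your identity $f\psi=f_1(p\psi)$ together with $f_1\phi=f(u\phi)$. No issues.
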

	
\begin{proof} 
Let $X = X_0\oplus X_1$ and write $f = \bmatrix f_0&f_1\endbmatrix
= \bmatrix 0&f_1\endbmatrix$ with 
$f_i: X_i \to Y.$ Then, for $\phi_i: C \to X_i$, we have
$\bmatrix 0&f_1\endbmatrix \bmatrix \phi_0 \cr \phi_1\endbmatrix = f_1\phi_1.$
Thus
$$
 \eta_{CY}(f) = f\Hom(C,X_0\oplus X_1) = f_1\Hom(C,X_1) = \eta_{CY}(f_1).
$$
\end{proof}

In particular: {\it If $f_1$ is a right minimal version of $f$, 
then $\eta_{CY}(f) = \eta_{CY}(f_1)$.} Thus, 
$\eta_{CY}$ is constant on right equivalence
classes and we can define $\eta_{CY}([f\rangle) = \eta_{CY}(f)$. We obtain 
in this way a map
$$
 \eta_{CY}: [\to Y\rangle \to \mathcal  S(\Hom(C,Y)).
$$ 
Of special interest is the restriction of $\eta_{CY}$ to ${}^C[\to Y\rangle.$

\begin{Prop} Let $C,Y$ be modules. The map
$$
 \eta_{CY}: {}^C[\to Y\rangle \to \mathcal  S\Hom(C,Y).
$$
is injective and preserves meets. As a consequence, it preserves and reflects the ordering.
\end{Prop}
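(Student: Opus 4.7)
The plan is to prove the three claims in order: injectivity, meet-preservation, and then deduce the preservation/reflection of the ordering as a formal consequence.

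For injectivity, I would take right $C$-determined morphisms $f\colon X\to Y$ and $f'\colon X'\to Y$ with $\eta_{CY}(f)=\eta_{CY}(f')$ and aim to show $f\preceq f'$ and $f'\preceq f$. To get $f'\preceq f$, the key observation is that for every $\phi\colon C\to X'$, the composite $f'\phi$ lies in $f'\Hom(C,X')=\eta_{CY}(f')=\eta_{CY}(f)=f\Hom(C,X)$, so $f'\phi$ factors through $f$. Since $f$ is right $C$-determined, the very definition then yields a factorization $f'=fh$ for some $h\colon X'\to X$, i.e.\ $f'\preceq f$. The reverse inequality follows by symmetry (using that $f'$ is right $C$-determined). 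Thus $[f\rangle=[f'\rangle$. This is the step that genuinely uses the right $C$-determination hypothesis and is the heart of the argument; once it is set up, the verification itself is routine.

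For meet preservation, I would use Proposition \ref{lattice} to represent the meet $[f_1\rangle\wedge[f_2\rangle$ by the pullback morphism $f=f_1g_1=f_2g_2\colon X\to Y$, and Proposition \ref{closed-meets} to ensure $[f\rangle$ still lies in ${}^C[\to Y\rangle$ whenever $f_1,f_2$ do. It then suffices to show as subsets of $\Hom(C,Y)$ that
\[
\eta_{CY}(f) \;=\; \eta_{CY}(f_1)\cap\eta_{CY}(f_2).
\]
The inclusion $\subseteq$ is immediate because any $h=f\psi$ equals $f_1(g_1\psi)=f_2(g_2\psi)$. For $\supseteq$, given $h\colon C\to Y$ with $h=f_1\psi_1=f_2\psi_2$, the universal property of the pullback provides $\psi\colon C\to X$ with $g_i\psi=\psi_i$, and then $f\psi=f_1g_1\psi=f_1\psi_1=h$. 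Both inclusions are formal; the only subtlety is having already invoked \ref{closed-meets} to keep the meet inside ${}^C[\to Y\rangle$.

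For the order assertion, I would note the standard lemma in any meet-semilattice: $a\le b$ iff $a=a\wedge b$. Combining this with injectivity and meet-preservation, one gets
\[
[f_1\rangle\le[f_2\rangle \iff [f_1\rangle=[f_1\rangle\wedge[f_2\rangle \iff \eta_{CY}(f_1)=\eta_{CY}(f_1)\cap\eta_{CY}(f_2) \iff \eta_{CY}(f_1)\subseteq\eta_{CY}(f_2),
\]
which simultaneously establishes preservation and reflection of the order. The main obstacle throughout is really just the injectivity step, because it is the single place where the right $C$-determination hypothesis is invoked essentially; meet-preservation and the order statement are then soft consequences.
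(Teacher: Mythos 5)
Your proposal is correct and follows essentially the same route as the paper: injectivity by applying the definition of right $C$-determination in both directions, meet-preservation via the two inclusions for the pullback map, and the order statement from the standard fact that in a meet-semilattice $a\le b$ iff $a=a\wedge b$ combined with injectivity. Your write-up is somewhat more explicit than the paper's (which calls the whole thing "a trivial verification"), but there is no substantive difference.
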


\begin{proof} A trivial verification: First, let us show that $\eta_{CZ}$ is injective.
Consider maps $f: X \to Y$ and $f': X' \to Y$ such that $f\Hom(C,X) = f'\Hom(C,X').$
Since $f$ is right $C$-determined and $f'\Hom(C,X') \subseteq f\Hom(C,X)$,
we see that $f'\in f\Hom(X',X)$. Since 
$f'$ is right $C$-determined and $f\Hom(C,X) \subseteq f'\Hom(C,X')$,
we see that $f\in f\Hom(X,X')$. But this means that $f'\preceq f \preceq f'$, thus 
$[f\rangle = [f'\rangle.$ 
\end{proof} 

Next, consider the following pullback diagram
$$
{\beginpicture
\setcoordinatesystem units <2.5cm,1.2cm>
\arr{0.25 1}{0.75 1}
\arr{0.25 0}{0.75 0}
\arr{0 .7}{0 .3}
\arr{1 .7}{1 .3}

\put{$X$} at 0 1
\put{$X_1$} at 1 1
\put{$X_2$} at 0 0
\put{$Y$} at 1 0

\put{$\ssize g_1$} at 0.5 1.2
\put{$\ssize f_2$} at 0.5 .2
\put{$\ssize g_2$} at -.15 .5
\put{$\ssize f_1$} at 1.15 .5
\endpicture}
$$
such that $f_1,f_2$ both are right $C$-determined. Let $f = f_1g_1 = f_2g_2,$
thus the meet of $[f_1\rangle$ and $[f_2\rangle$  is $[f\rangle$.
Now $f = f_1g_1$ shows that $f\Hom(C,X) \subseteq f_1\Hom(C,X_1).$
Similarly, $f = f_2g_2$ shows that $f\Hom(C,X) \subseteq f_2\Hom(C,X_2).$
Both assertions together yield 
$$
 f\Hom(C,X) \subseteq f_1\Hom(C,X_1) \cap f_2\Hom(C,X_2).
$$ 
Conversely, take an element in
$f_1\Hom(C,X_1) \cap f_2\Hom(C,X_2),$ say $f_1\phi_1 = f_2\phi_2$
with $\phi_i: C \to X_i$, for $i=1,2$. The pullback property yields a morphism
$\phi: C \to X$ such that $g_i\phi = \phi_i$ for $i=1,2$. Therefore
$ f_1\phi_1 = f_1g_1\phi = f\phi$ belongs to $f\Hom(C,X)$. Thus, 
$$
 f_1\Hom(C,X_1) \cap f_2\Hom(C,X_2) \subseteq f\Hom(C,X),
$$ 
and therefore
$$
 f_1\Hom(C,X_1) \cap f_2\Hom(C,X_2) = f\Hom(C,X).
$$ 

In general, assume that $L,L'$ are posets with meets and  $\eta: L \to L'$
is a set-theoretical map which preserves meets. 
Then $a\le b$ in $L$ implies $\eta(a) \le \eta(b)$ in $L'$. Namely,
$a \le b$ gives $a\wedge b = a$, thus $\eta(a)\wedge \eta(b) = \eta(a)$
and therefore $\eta(a) \le \eta(b)$. Conversely, if $a,b$ are arbitrary elements
in $L$ with $\eta(a) \le \eta(b)$, let $c = a\wedge b$. Then $\eta(c) =
\eta(a)\wedge \eta(b) = \eta(a)$. Thus, if $\eta$ is injective, then $c = a$,
and $a\wedge b = a$ implies $a \le b$. Altogether, we see that $\eta_{CY}$
preserves and reflects the ordering. 
	
Auslander's Second Theorem (as established in \cite{[A1]}) asserts:
	
\begin{Thm}[Auslander's Second Theorem] The map 
$$
 \eta_{CY}: [\to Y\rangle \to \mathcal  S(\Hom(C,Y))
$$ 
is surjective.
\end{Thm}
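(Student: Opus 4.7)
My plan is constructive: given a $\Gamma(C)$-submodule $U$ of $\Hom(C,Y)$, I will write down a morphism $f$ ending in $Y$ with $\eta_{CY}([f\rangle)=U$. The idea is simply to bundle a generating set of $U$ into a single map out of a suitable power of $C$.

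Concretely, the first step is to observe that $\Hom(C,Y)$ is a module of finite length over $\Gamma(C)=\End(C)^{\text{op}}$, because $\Lambda$ is artin and $C,Y$ have finite length, so $\Gamma(C)$ is itself an artin ring. Consequently $U$ admits a finite generating set $h_1,\dots,h_n\in\Hom(C,Y)$. I then put $X=C^n$ and define $f=[h_1,\dots,h_n]:C^n\to Y$, i.e.\ the morphism whose restriction to the $i$-th summand is $h_i$.

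The second step is to check that $\eta_{CY}(f)=U$. Under the natural identification $\Hom(C,C^n)=\End(C)^n$, an element $\phi=(\alpha_1,\dots,\alpha_n)$ composes with $f$ to yield $\sum_{i=1}^n h_i\alpha_i$. Because the left $\Gamma(C)$-action on $\Hom(C,Y)$ is precomposition, this is exactly a general $\Gamma(C)$-linear combination of the $h_i$. Letting $\phi$ range over $\Hom(C,C^n)$ therefore sweeps out precisely the $\Gamma(C)$-submodule generated by $h_1,\dots,h_n$, which is $U$. Hence $\eta_{CY}(f)=f\cdot\Hom(C,C^n)=U$, and surjectivity follows.

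I do not foresee a genuine obstacle; the essential content is the finite generation of $U$, which is free of charge from the artin hypothesis, together with the universal property of $C^n$ as a coproduct. One cosmetic remark: the $f$ I constructed is neither right minimal nor guaranteed to be right $C$-determined, but this is immaterial for surjectivity because $\eta_{CY}$ is constant on right equivalence classes. If a distinguished representative inside ${}^C[\to Y\rangle$ is wanted, one replaces $f$ by a right minimalisation and, invoking Auslander's First Theorem together with the determiner formula, passes to the right $C$-determined representative of $[f\rangle$; by the injectivity of $\eta_{CY}$ on ${}^C[\to Y\rangle$ already established, this representative is unique.
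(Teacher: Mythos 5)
Your construction $f=[h_1,\dots,h_n]\colon C^n\to Y$ is correct as far as it goes: $\Hom(C,Y)$ has finite length over $\Gamma(C)$, so $U$ is finitely generated, and $f\Hom(C,C^n)=\sum_i h_i\End(C)=U$, which establishes surjectivity of $\eta_{CY}\colon[\to Y\rangle\to\mathcal S\Hom(C,Y)$ exactly as printed. The paper gives no proof of its own but refers to Auslander's notes and to [ARS] XI.3.9, naming the defect formula as the main ingredient (section 20.2); the reason those proofs are substantial is precisely the point your argument does not reach. The theorem is used immediately afterwards to conclude that the restriction of $\eta_{CY}$ to ${}^C[\to Y\rangle$ is a lattice isomorphism, so the content one actually needs is that every submodule $U$ equals $\eta_{CY}(f)$ for a \emph{right $C$-determined} $f$. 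Your $f$ is in general not right $C$-determined: already for $U=0$ your recipe yields (after right minimalisation) the zero map $0\to Y$, which by the proposition in section 5 is right $C$-determined only when $P(\soc Y)$ belongs to $\add C$, whereas the genuine preimage of $0$ in ${}^C[\to Y\rangle$ is typically a nonzero universal extension.

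The repair sketched in your last paragraph does not work. Right $C$-determination is a property of the right equivalence class (if $f$ and $f'$ are right equivalent, a map factors through $f$ if and only if it factors through $f'$), and right minimalisation does not change the class; hence there is no ``right $C$-determined representative of $[f\rangle$'' to pass to when $[f\rangle$ itself fails to be right $C$-determined. Auslander's First Theorem and the determiner formula only tell you that $f$ is right $C(f)$-determined, and $C(f)$ need not lie in $\add C$; they do not produce a right $C$-determined morphism in the same fibre of $\eta_{CY}$. Constructing such a morphism --- the minimal right $C$-determined map with prescribed image $\Img\Hom(C,f)=U$, via the defect formula or the explicit $\tau$-construction --- is the actual content of Auslander's Second Theorem, and it is missing from your proof.
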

	
Altogether we see: {\it The map $\eta_{CY}$ defined by $\eta_{CY}(f) = \Img\Hom(C,f)$
yields a lattice isomorphism}
\Rahmen{\eta_{CY}: \ {}^C[\to Y\rangle \longrightarrow \mathcal  S\Hom(C,Y).}
	
Better: {\it The composition
\Rahmen{{}^C[\to Y\rangle \quad \subseteq \quad  [\to Y\rangle \quad 
\overset{\eta_{CY}}\longrightarrow
\quad  \mathcal  S_{\Gamma(C)}\Hom(C,Y)}
of the inclusion map and the 
map $\eta_{CY}$ defined by $\eta_{CY}(f) = \Img\Hom(C,f)$ is 
a lattice isomorphism.}
		\medskip

\noindent 
{\bf Convention.} In the following, several examples of Auslander bijections
will be presented. When looking at the submodule lattice $\mathcal  S M$ of a module $M$,
we usually will mark (some of) the elements of $\mathcal  SM$ by bullets
$\bullet$ and connect comparable elements by a solid lines. Here, going upwards 
corresponds to the inclusion relation. 

For the corresponding lattices ${}^C[\to Y\rangle$, we often will mark an
element $[f: X \to Y\rangle$ (with $f$ a right minimal map)
by just writing $X$ and we will connect neighboring pairs $[f: X \to Y\rangle \le 
[f': X' \to Y\rangle$ by drawing an (upwards) arrow $X \to Y$. On the other hand, 
sometimes it seems to be more appropriate to refer to the right minimal map $f: X \to Y$
with kernel $K'$ and image $Y'$ by using the short exact sequence notation 
$K' \to X \to Y'$.
	\medskip 

Note that the lattice $\mathcal  S\Hom(C,Y)$ has two distinguished elements,
namely $\Hom(C,Y)$ itself as well as its zero submodule. 
Under the bijection $\eta_{CY}$ the total submodule $\Hom(C,Y)$ corresponds to
the identity map $1_Y$ of $Y$, this is not at all exciting. But of interest 
seem to be the maps in $\eta_{CY}^{-1}(0)$, we will discuss them in 
 this will be discussed in Proposition \ref{zero}
	\bigskip

\noindent 
{\bf The special case $C= {}_\Lambda\Lambda$.} It is worthwhile to draw the attention
on the special case when $C = {}_\Lambda\Lambda$.

\begin{Prop}
The special case of the Auslander bijection $\eta_{\Lambda Y}$
is the obvious identification of both
${}^\Lambda[\to Y\rangle$ and $\mathcal  S\Hom(\Lambda,Y)$
with $\mathcal  SY$.
\end{Prop}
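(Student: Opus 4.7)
The plan is to verify separately that each of the two lattices ${}^\Lambda[\to Y\rangle$ and $\mathcal{S}\Hom(\Lambda,Y)$ coincides with $\mathcal{S}Y$ in a natural way, and then to check that $\eta_{\Lambda Y}$ intertwines these identifications.

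First I would handle the lattice ${}^\Lambda[\to Y\rangle$. By Corollary \ref{monos}, a right minimal morphism $f:X\to Y$ is right $\Lambda$-determined precisely when it is a monomorphism. Every right equivalence class in ${}^\Lambda[\to Y\rangle$ contains a right minimal representative by Proposition \ref{minimal}, and two right minimal monomorphisms $f:X\to Y$ and $f':X'\to Y$ are right equivalent if and only if they have the same image in $Y$: any $h:X\to X'$ with $f=f'h$ is an isomorphism, so $f(X)=f'(X')$. Thus, as noted in the Remark following Proposition \ref{minimal}, the assignment $[f\rangle \mapsto f(X)$ gives a bijection ${}^\Lambda[\to Y\rangle \to \mathcal{S}Y$. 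This bijection is order-preserving: if $[f\rangle\le [f'\rangle$ via $h:X\to X'$, then $f(X)=f'(h(X))\subseteq f'(X')$. Conversely, an inclusion of images yields the required factorization, so the bijection is an isomorphism of lattices.

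Next I would treat $\mathcal{S}\Hom(\Lambda,Y)$. The canonical map $\varepsilon:\Hom({}_\Lambda\Lambda,Y)\to Y$ defined by $\varepsilon(\phi)=\phi(1)$ is a bijection of abelian groups. To identify its $\Gamma(\Lambda)$-module structure, observe that $\End({}_\Lambda\Lambda)\cong \Lambda^{\text{op}}$ (every endomorphism of ${}_\Lambda\Lambda$ is right multiplication by some element), hence $\Gamma(\Lambda)=\End({}_\Lambda\Lambda)^{\text{op}}\cong \Lambda$. The action of $\lambda\in\Gamma(\Lambda)$ on $\phi\in\Hom(\Lambda,Y)$ is precomposition with right multiplication by $\lambda$, so $(\lambda\cdot\phi)(1)=\phi(\lambda)=\lambda\phi(1)$. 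Thus $\varepsilon$ is an isomorphism of left $\Lambda$-modules, and it induces a lattice isomorphism $\mathcal{S}\Hom(\Lambda,Y)\to \mathcal{S}Y$ sending a submodule $U$ to $\varepsilon(U)=\{\phi(1)\mid \phi\in U\}$.

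Finally, I would check compatibility with $\eta_{\Lambda Y}$. For a right minimal monomorphism $f:X\to Y$, one has
$$\eta_{\Lambda Y}(f)=f\cdot\Hom(\Lambda,X)=\{f\circ\phi\mid \phi\in\Hom(\Lambda,X)\}.$$
Applying $\varepsilon$, this submodule of $\Hom(\Lambda,Y)$ is carried to $\{f(\phi(1))\mid \phi\in\Hom(\Lambda,X)\}=f(X)$, which is exactly the submodule of $Y$ that ${}^\Lambda[\to Y\rangle$ was identified with. So both routes from ${}^\Lambda[\to Y\rangle$ to $\mathcal{S}Y$ (directly via images, or via $\eta_{\Lambda Y}$ and then $\varepsilon$) agree, proving the claim. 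There is no real obstacle here; the only point requiring care is the verification that $\varepsilon$ is $\Gamma(\Lambda)$-linear, which hinges on unwinding the opposite-ring convention in the definition of $\Gamma(C)$.
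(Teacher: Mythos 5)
Your proposal is correct and follows essentially the same route as the paper: identify ${}^\Lambda[\to Y\rangle$ with $\mathcal S Y$ via $f\mapsto \Img(f)$ using the fact that right minimal right $\Lambda$-determined maps are exactly the monomorphisms, identify $\mathcal S\Hom(\Lambda,Y)$ with $\mathcal S Y$ via evaluation at $1$, and verify that $(\mathcal S\epsilon)\eta_{\Lambda Y}(f)=\Img(f)$. Your extra care in checking the $\Gamma(\Lambda)$-linearity of $\epsilon$ and the order-preservation of the image map only makes explicit what the paper leaves implicit.
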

	
\begin{proof}  First, consider ${}^\Lambda[\to Y\rangle$:
The determiner formula asserts: a right minimal morphism is right 
$\Lambda$-determined
if and only if it is a monomorphism. Thus ${}^\Lambda[\to Y\rangle$
is just the set of right equivalence classes of monomorphisms ending in $Y$,
and the map $f \mapsto \Img(f)$ yields an identification between 
the set of right equivalence classes of monomorphisms ending in $Y$ and the
submodules of $\Lambda$.

Next, we deal with $\mathcal  S \Hom(\Lambda,Y)$.
Note that $\Gamma({}_\Lambda\Lambda) = \End({}_\Lambda\Lambda)^{\text{op}} = \Lambda$ and
there is a canonical identification $\epsilon: \Hom(\Lambda,Y) \simeq Y$ 
(given by $\epsilon(h) = h(1)$ for $h\in \Hom(\Lambda,Y)$), 
thus $\mathcal  S\epsilon: \mathcal  S \Hom(\Lambda,Y) \simeq \mathcal  S Y$
(with $\mathcal  S\epsilon(U) = \{h(1)\mid h\in U\}$ for $U$ a submodule of
$\Hom(\Lambda,Y)$).

The Auslander bijection $\eta_{\Lambda,Y}$ attaches to $f: X \to Y$ the
submodule $f\Hom(\Lambda,X)$ and there is the following commutative diagram:
$$
\hbox{\beginpicture
\setcoordinatesystem units <2.5cm,1.5cm>
\put{${}^\Lambda[\to Y\rangle$} at 0 0 
\put{$\mathcal  S \Hom(\Lambda,Y)$} at 2 0
\put{$\mathcal  SY$} at 1 -.9 
\arr{0.2 -0.2}{0.8 -.8}
\arr{0.4 0}{1.5 0}
\arr{1.8 -0.2}{1.2 -.8}
\put{$\eta_{\Lambda Y}$} at 1 .2
\put{$\Img$} at 0.4 -.6
\put{$\mathcal  S\epsilon$} at 1.6 -.6

\put{$\Hom(\Lambda,Y)$} at 3.3 0
\put{$Y$} at 3.3 -1
\arr{3.3 -.2}{3.3 -.8}
\put{$\epsilon$} at 3.4 -.5
\endpicture}
$$
Namely, for $f: X \to Y$ we have 
$$
\hbox{\beginpicture
\setcoordinatesystem units <1cm,.7cm>
\put{$(\mathcal  S\epsilon)\eta_{\Lambda Y}(f) =
(\mathcal  S\epsilon)(f\Hom(\Lambda,X))$} at 0 1
\put{$ = \{fh(1)\mid h\in \Hom(\Lambda,X)\} =
 \{f(x)\mid x\in X\} = \Img(f).$} at 4.15 0 
\endpicture}
$$
\end{proof}

As a consequence, we see that {\it all possible submodule lattices 
$\mathcal  SY$ occur as images under the Auslander bijections.} This assertion can
be strengthened considerably, as we want to show now.
	\bigskip 

By definition, an artin algebra $\Lambda$ is an artin $k$-algebra for some
commutative artinian ring $k$ (this means that $\Lambda$ is a $k$-algebra and that it
is finitely generated as a $k$-module). Such an algebra is said to be 
{\it strictly wild} (or better {\it strictly $k$-wild}),
provided for any artin $k$-algebra $\Gamma$, there is a full exact embedding
$\mo \Gamma \to \mo \Lambda$. If $M$ is a $\Lambda$-module and $M'$ is a $\Lambda'$-module,
a {\it semilinear isomorphism} from $M$ to $M'$ is a pair $(\alpha,f)$, where 
$\alpha: \Lambda\to \Lambda'$ is an algebra isomorphism, and $f: M \to M'$  is an
isomorphism of abelian groups such that $f(\lambda m) = \alpha(\lambda)f(m)$ for all
$\lambda\in \Lambda$ and $m\in M$. It is clear that any semilinear isomorphism from $M$ to $M'$
induces a lattice isomorphism $\mathcal  SM \to \mathcal  SM'.$

\begin{Prop}\label{wild}
Let $\Lambda$ be an artin $k$-algebra which is strictly
$k$-wild. Let $\Gamma$ be an artin $k$-algebra and $M$ a $\Gamma$-module. Then there
are $\Lambda$-modules $C,Y$ such that the $\Gamma(C)$-module $\Hom(C,Y)$ is
semilinearly isomorphic to $M$. 
Thus there is a lattice isomorphism ${}^C[\to Y\rangle \to \mathcal  SM$.
\end{Prop}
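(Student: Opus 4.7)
The plan is to unpack strict $k$-wildness directly. Since $\Lambda$ is strictly $k$-wild, pick a full exact $k$-linear embedding $F\colon \mo\Gamma \to \mo\Lambda$ and set $C := F({}_\Gamma\Gamma)$ and $Y := F(M)$. I will show that with these choices, the $\Gamma(C)$-module $\Hom(C,Y)$ is semilinearly isomorphic to the $\Gamma$-module $M$; the lattice isomorphism ${}^C[\to Y\rangle \cong \mathcal{S}M$ then follows from Auslander's Second Theorem, using the fact that any semilinear isomorphism of modules induces an isomorphism of their submodule lattices.

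First I would construct the algebra isomorphism. Fullness and faithfulness of $F$ give a ring isomorphism $\End_\Gamma({}_\Gamma\Gamma) \to \End_\Lambda(C)$, $\phi \mapsto F(\phi)$. Composing its opposite with the canonical identification $\Gamma \cong \End_\Gamma({}_\Gamma\Gamma)^{\text{op}}$ (sending $\gamma$ to right multiplication $f_\gamma\colon x\mapsto x\gamma$) produces a $k$-algebra isomorphism $\alpha\colon \Gamma \to \Gamma(C)$ with $\alpha(\gamma) = F(f_\gamma)$. Next I would construct the abelian group isomorphism. The assignment $\psi \mapsto F(\psi)$ gives an abelian group isomorphism $\Hom_\Gamma({}_\Gamma\Gamma, M) \to \Hom(C, Y)$, and precomposing with the evaluation isomorphism $M \to \Hom_\Gamma({}_\Gamma\Gamma, M)$, $m \mapsto \psi_m$ with $\psi_m(x) = xm$, yields $f\colon M \to \Hom(C,Y)$, $f(m) = F(\psi_m)$.

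Semilinearity $f(\gamma m) = \alpha(\gamma)\cdot f(m)$ is then a direct check. Recall that the left $\Gamma(C)$-action on $\Hom(C,Y)$ is given by right composition: $\theta \cdot g = g \circ \theta$. The identity $\psi_m \circ f_\gamma = \psi_{\gamma m}$ in $\Hom_\Gamma({}_\Gamma\Gamma, M)$ gives $\alpha(\gamma)\cdot f(m) = F(\psi_m) \circ F(f_\gamma) = F(\psi_m \circ f_\gamma) = F(\psi_{\gamma m}) = f(\gamma m)$. Thus $(\alpha, f)$ is a semilinear isomorphism; it descends to a lattice isomorphism $\mathcal{S}M \to \mathcal{S}\Hom(C,Y)$, and composing with the inverse of the Auslander bijection $\eta_{CY}$ supplies the desired isomorphism ${}^C[\to Y\rangle \to \mathcal{S}M$.

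The main obstacle I expect is merely careful bookkeeping around opposites: one must track that the left $\Gamma(C)$-action on $\Hom(C,Y)$ really is right composition, and that under the identification $\Gamma \cong \End_\Gamma({}_\Gamma\Gamma)^{\text{op}}$ the left $\Gamma$-action on $M$ corresponds to right precomposition by $f_\gamma$ on $\Hom_\Gamma({}_\Gamma\Gamma, M)$, so that the chain of canonical isomorphisms truly composes to a semilinear (rather than anti-semilinear) one. Beyond this formal verification the argument is essentially a tautological unpacking of the definition of strict $k$-wildness together with Auslander's Second Theorem.
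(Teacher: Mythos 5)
Your proposal is correct and follows essentially the same route as the paper: take a full embedding $F$, set $C = F({}_\Gamma\Gamma)$ and $Y = F(M)$, define $\alpha$ and $f$ by applying $F$, and deduce semilinearity from functoriality; your extra bookkeeping around the opposite-ring identifications is exactly the verification the paper leaves implicit. The only cosmetic difference is that the paper notes exactness of $F$ is not actually needed, whereas you carry it along as a hypothesis.
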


\begin{proof} 
Let $F: \mo\Gamma \to \mo\Lambda$ be a full embedding (we do not need that it is exact).
Let $C = F({}_\Gamma\Gamma)$ and $Y = F(M)$. Let $\alpha: \Gamma = \End({}_\Gamma\Gamma)^{\text{op}} \to 
\End(C)^{\text{op}} = \Gamma(C)$ as well as $f: M = \Hom({}_\Gamma\Gamma,M) \to \Hom(C,M)$ both be
given by applying the functor $F$. Since $F$ is a full embedding, $\alpha$ is an algebra isomorphism and
$f$ is an isomorphism of abelian groups. The functoriality of $F$ asserts that we also have
$f(\gamma m) = \alpha(\gamma)f(m)$ for all 
$\gamma\in \Gamma$ and $m\in M$. This shows that the pair $(\alpha,f)$ is a semilinear
isomorphism. 
\end{proof}

\noindent 
{\bf Remark.} If $F: \mo\Gamma \to \mo\Lambda$ is a full embedding functor, and $C,Y$ are $\Gamma$-modules,
then we obtain a bijection
$$
{\beginpicture
\setcoordinatesystem units <1cm,1cm>
\put{${}^C[\to y\rangle$\strut} [r] at 0 0
\put{$\mathcal  S\Hom(C,Y)$\strut} [l] at 1 0
\put{$\mathcal  S\Hom(F(C),F(Y))$\strut}  at 6.3 0
\put{${}^{F(C)}[\to F(Y)$,\strut} [l] at 10 0
\arr{0.2 0}{0.8 0}
\arr{8.3 0}{9.8 0}
\arr{0.2 0}{0.8 0}
\put{$\ssize \eta_{CY}$} at 0.5 0.3
\put{$\ssize F$} at 3.9 0.3
\arr{3.6 0}{4.3 0} 
\put{$\ssize \eta_{F(C),F(Y)}^{-1}$} at 9 0.3
\endpicture}
$$
but even if $F$ is exact, such a bijection will not be given by applying directly $F$. Namely, if
$f: X \to Y$ is right minimal and right $C$-determined, then the kernel of $f$ belongs to $\add \tau C$,
thus the kernel of $F(f)$ belongs to $\add F(\tau C)$, whereas the intrinsic kernel of any 
right $F(C)$-determined map has to belong to $\add \tau F(C)$ and the $\Lambda$-modules $F(\tau C)$ and $\tau F(C)$
may be very different, as the obvious embeddings of the category of $n$-Kronecker modules into the category of
$(n+1)$-Kronecker modules (using for one arrow the zero map) show.
	\medskip 

Note that under a full exact embedding functor 
$F: \mo\Gamma \to \mo\Lambda$, submodule lattices are
usually not preserved: given a $\Gamma$-module $M$, the functor
$F$ yields an embedding of $\mathcal  S({}_\Gamma M)$ 
into $\mathcal  S({}_\Lambda F(M))$, but usually
this is a proper embedding. Actually, for any finite-dimensiona algebra $\Lambda$, 
there are submodule lattices
$\mathcal  S({}_\Gamma M)$ which cannot be realized as the submodule lattice
of any $\Lambda$-module.  Namely, assume that the length of the
indecomposable projective $\Lambda$-modules is bounded by $t$ and 
take a finite-dimensional algebra $\Gamma$ with a local 
$\Gamma$-module $M$ of length $t+1$. Then
$\mathcal  S M$ is a modular lattice of height $t+1$ with a unique element
of height $t$ (the radical of the module $M$). If $\mathcal  S({}_\Lambda Y)$
is of the form $\mathcal  SM$, then $Y$ has to be a local $\Lambda$-module of
length $t+1$, thus a factor module of an indecomposable projective
$\Lambda$-module. But by assumption, the indecomposable projective
$\Lambda$-modules have length at most $t$.
	\bigskip

\noindent
%%%%%%%%%%%%%%%%%%%%%%%%%%%%%%%%%%%%%%%%%%%%%%%%%%%
{\bf Remarks. 1.} 
Let $Y = \bigoplus Y_i,$ then the subsets $\Hom(C,Y_i)$ of $\Hom(C,Y)$ are actually $\Gamma(C)$-submodules and
there is an isomorphism of $\Gamma(C)$-modules
$\Hom(C,Y) \simeq \bigoplus_i\Hom(C,Y_i)$.
Thus $\eta_{CY}$ maps the lattice ${}^C[\to Y\rangle$ bijectively onto 
the submodule lattice $\mathcal  S(\bigoplus_{i}\Hom(C,Y_i)$.
 The lattice  
$\mathcal  S(\bigoplus_{i}\Hom(C,Y_i)$ contains 
$\prod_i\mathcal  S \Hom(C,Y_i)$ as a sublattice and both have the same height. However, 
$\prod_i\mathcal  S \Hom(C,Y_i)$ may be a proper sublattice of $\mathcal  S(\bigoplus_{i}\Hom(C,Y_i)$,
since isomorphisms of subfactors of the various modules $\Hom(C,Y_i)$ yield diagonals
in $\mathcal  S(\bigoplus_{i}\Hom(C,Y_i)$.
	\medskip 

\noindent
{\bf 2.} 
When dealing with the Auslander bijections $\eta_{CY}: {}^C[\to Y\rangle \to \mathcal  S\Hom(C,Y)$,
we always can assume that $C$ is multiplicity-free and supporting,
here {\it supporting} means that $\Hom(C_i,Y) \neq 0$ for any indecomposable
direct summand $C_i$ of $C$. Namely, let $C'$ be the direct sum  of all indecomposable
direct summands $C_i$ of $C$ with $\Hom(C_i,Y) \neq 0$, one from each isomorphism class.
Then, on the one hand, ${}^C[\to Y\rangle = {}^{C'}[\to Y\rangle$ (since a map $f$ ending in $Y$
is right $C$-determined if and only if it is right $C'$-determined. 
On the other hand, there is an idempotent $e\in \Gamma(C)$ such that $e\Gamma(C)e = \Gamma(C')$
and $e\Hom(C,Y)e = \Hom(C',Y)$, and there is a lattice isomorphism
$\mathcal  S\Hom(C,Y) \to \mathcal  S\Hom(C',Y),$ given by $U \mapsto eU$, where $U$ is a submodule of $\Hom(C,Y)$.
	\medskip

\noindent 
{\bf 3.} Both objects ${}^C[\to Y\rangle$ and $\mathcal  S\Hom(C,Y)$ related by the
Auslander bijection $\eta_{CY}$ concern morphisms ending in $Y$. Of course, 
in Proposition \ref{non-zero-map} we have seen already that all the indecomposable 
direct summands $C'$
of the minimal right determiner $C(f)$ of a map $f: X\to Y$ satisfy $\Hom(C',Y)\neq 0.$

Looking at ${}^C[\to Y\rangle$, we deal with morphisms ending in $Y$ and which are right 
$C$-determined.
Looking at $\Hom(C,Y)$, we deal with maps ending in $Y$ and starting in $C$.
One should be aware that a right minimal map ending in $Y$ and right 
$C$-determined usually will {\bf not} start at $C$, thus the relationship
between the elements of ${}^C[\to Y\rangle$ and the submodules of $\Hom(C,Y)$
is really of interest! Note however that in case we deal with a map $f: C\to Y$
which is right $C$-determined (and starts in $C$), then 
$$
  \eta_{CY}(f) = f\Hom(C,C)
$$
is just the $\Gamma(C)$-submodule of $\Hom(C,Y)$ generated by $f$. 
	\bigskip

We use the next two sections in order to transfer well-known properties
of the lattice of submodules of a finite length module to the right
$C$-factorization lattices, in particular the Jordan-H\"older theorem.
In section \ref{sec:5}, we introduce the right $C$-length of a right $C$-determined
map $f$ ending in $Y$, it corresponds to the the length of the factor module
$\Hom(C,Y)/\eta_{CY}(f).$ In section \ref{sec:6} we will define the $C$-type of $f$
as the dimension vector of $\Hom(C,Y)/\eta_{CY}(f).$

\section{Right $C$-factorizations and right $C$-length.}
\label{sec:5}
%%%%%%%%%%%%%%%%%%%%%%%%%%%%%%%%%%%%%%%%%%%%%%%%%%%%%%%%%%%%%%%%%%%%%%%%%
	
The Auslander bijection asserts that the lattice ${}^C[\to Y \rangle$
is a modular lattice of finite height, thus 
there is a Jordan-H\"older Theorem for ${}^C[\to Y \rangle$; it can be
obtained from the corresponding Jordan-H\"older Theorem for the submodule lattice
$\mathcal  S\Hom(C,Y)$. In sections 5 and 6, we are going to formulate the assertions for
${}^C[\to Y \rangle$ explicitly. Here we consider composition series of submodules
and factor modules of ${}^C[\to Y \rangle$.
	\medskip 

Let $h_i: X_i\to X_{i-1}$ be maps, where $1\le i \le t,$ with 
composition $f = h_1\dots h_t$.
The sequence $(h_1,h_2,\dots, h_t)$ is called a 
{\it right $C$-factorization of $f$ of length $t$}
provided the maps $h_i$ are non-invertible and the compositions
$f_i = h_1\cdots h_i$ are right minimal and right $C$-determined,
for $1\le i \le t.$
It sometimes may be helpful to deal also with right $C$-factorizations of
length $0$; by definition these are just the identity maps (or, if you prefer, 
the isomorphisms). 
	\medskip

If $(h_1,\dots,h_t)$ is a right $C$-factorization of a map $f$, 
then any integer sequence
$0 = i(0) < i(1) < \cdots < i(s) = t$ defines a sequence of maps
$(h'_1,h'_2,\dots, h'_s)$ with $h'_j = h_{i(j-1)+1}\cdots h_{i(j)}$
for $1\le j \le s$. We use the following lemma inductively, in order to show 
that $(h'_1,h'_2,\dots, h'_s)$ is again a right $C$-factorization of $f$
and we say
that $(h_1,h_2,\dots, h_t)$ is a {\it refinement} of 
$(h'_1,h'_2,\dots, h'_s)$. In particular, any right $C$-factorization
$(h_1,\dots,h_t)$ of $f$ is a refinement of $f$.

\begin{Lem} If $(h_1,\dots,h_t)$ is a right $C$-factorization of length $t\ge 2$, 
then
$$
 (h_1,\dots, h_{i-1},h_ih_{i+1}, h_{i+2},\dots,h_t)
$$ 
is a
right $C$-factorization (of length $t-1$).
\end{Lem}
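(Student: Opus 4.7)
The plan is to verify, for the new sequence
$$(h'_1,\dots,h'_{t-1}) \;=\; (h_1,\dots,h_{i-1},\,h_ih_{i+1},\,h_{i+2},\dots,h_t),$$
the two defining conditions of a right $C$-factorization: (a) each $h'_j$ is non-invertible, and (b) each partial composition $f'_j = h'_1\cdots h'_j$ is right minimal and right $C$-determined.

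Condition (b) is just bookkeeping. For $j<i$ we have $f'_j = f_j$; for $j=i$ associativity of composition gives $f'_i = h_1\cdots h_{i-1}(h_ih_{i+1}) = f_{i+1}$; and for $j>i$ a direct count shows $f'_j = f_{j+1}$. Thus every $f'_j$ coincides with one of the $f_k$ of the original factorization, and is therefore right minimal and right $C$-determined by hypothesis.

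The only substantive point is condition (a), and only for the merged map $h_ih_{i+1}$, since the remaining entries $h'_j$ with $j\ne i$ are among the original $h_1,\dots,h_t$ and so are non-invertible by hypothesis. I would argue by contradiction: suppose $h_ih_{i+1}\colon X_{i+1}\to X_{i-1}$ is an isomorphism. Then $h_{i+1}(h_ih_{i+1})^{-1}$ is a right inverse of $h_i$, so $h_i$ is a split epimorphism and $X_i = \ker(h_i)\oplus X'$ with $X'\cong X_{i-1}$. But then $f_i = f_{i-1}h_i$ vanishes on the direct summand $\ker(h_i)$ of $X_i$, and right minimality of $f_i$ forces $\ker(h_i)=0$. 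Consequently $h_i$ is both injective and split epic, hence an isomorphism, contradicting the non-invertibility of $h_i$.

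The main obstacle, modest as it is, is this non-invertibility argument; what makes it work is that the definition of a right $C$-factorization requires right minimality of every intermediate composition $f_i$, not just of the outermost $f$. Without this intermediate hypothesis the merged map $h_ih_{i+1}$ could in principle be invertible while both $h_i$ and $h_{i+1}$ remain non-invertible.
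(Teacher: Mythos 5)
Your proof is correct and follows essentially the same route as the paper: the only substantive point is the non-invertibility of the merged map $h_ih_{i+1}$, and both arguments derive a contradiction from the observation that $h_i$ would then be a split epimorphism whose kernel is a direct summand killed by the right minimal map $f_i$. Your version merely spells out the split-epi step that the paper leaves implicit.
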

	
\begin{proof}  We only have to check that
$h_ih_{i+1}$ cannot be invertible. Assume $h_ih_{i+1}$ is invertible.
Then $h_i$ is a split epimorphism. Since $f_i = h_1\cdots h_i$ is
right minimal, it follows that $h_i$ is invertible, a contradiction.
\end{proof} 
	
We say that a right $C$-factorization $(h_1,h_2,\dots, h_t)$ is 
{\it maximal} provided it does not have a refinement of length $t+1$.
	
\begin{Prop}\label{factorization}
 If $(h_1,\dots,h_t)$ is a right $C$-factorization of a map $f$, 
and $f_i = h_1\cdots h_i$ for $0\le i \le t$, then 
$$
  \eta_{CY}(f_t) \subset \cdots \subset \eta_{CY}(f_1) \subset \eta_{CY}(f_0) = \Hom(C,Y)
$$
is a chain of proper inclusions of submodules and any such chain is obtained in this way.
The refinement of right $C$-factorizations corresponds to the refinement of 
submodule chains.
\end{Prop}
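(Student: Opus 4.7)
The plan is to prove both directions using the Auslander bijection established in Section~4, together with the characterization of right minimality recorded in Proposition~\ref{minimal}. Throughout, write $f_0 = 1_Y$ (the top element of ${}^C[\to Y\rangle$, which is right minimal, right $C$-determined, and satisfies $\eta_{CY}(f_0) = \Hom(C,Y)$).

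First I would establish the inclusions. Since $f_i = f_{i-1}h_i$, any $g \in \eta_{CY}(f_i) = f_i\Hom(C,X_i)$ factors as $g = f_{i-1}(h_i\phi)$ for some $\phi: C \to X_i$, so $g \in \eta_{CY}(f_{i-1})$. This gives $\eta_{CY}(f_t) \subseteq \cdots \subseteq \eta_{CY}(f_0)$. To show each inclusion is strict, suppose $\eta_{CY}(f_i) = \eta_{CY}(f_{i-1})$. Since both $f_i$ and $f_{i-1}$ are right minimal and right $C$-determined, the injectivity of $\eta_{CY}$ on ${}^C[\to Y\rangle$ yields $[f_i\rangle = [f_{i-1}\rangle$, so by Proposition~\ref{minimal} there is an isomorphism $k: X_i \to X_{i-1}$ with $f_{i-1}k = f_i$. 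Then from $f_{i-1}h_i = f_i = f_{i-1}k$ we obtain $f_i(k^{-1}h_i) = f_{i-1}k(k^{-1}h_i) = f_{i-1}h_i = f_i$, and the right minimality of $f_i$ forces $k^{-1}h_i \in \End(X_i)$ to be an automorphism. Hence $h_i$ is invertible, contradicting the defining assumption of a right $C$-factorization.

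For the converse, start with a chain of proper inclusions
$$U_t \subset U_{t-1} \subset \cdots \subset U_0 = \Hom(C,Y)$$
of $\Gamma(C)$-submodules. By Auslander's Second Theorem, each $U_i$ is the image of $\eta_{CY}$ at some right equivalence class; choosing a right minimalisation we obtain right minimal, right $C$-determined maps $f_i: X_i \to Y$ with $\eta_{CY}(f_i) = U_i$, and we may take $f_0 = 1_Y$. Since $U_i \subseteq U_{i-1}$, the order-preservation of $\eta_{CY}$ on ${}^C[\to Y\rangle$ gives $[f_i\rangle \leq [f_{i-1}\rangle$, so there exists $h_i: X_i \to X_{i-1}$ with $f_{i-1}h_i = f_i$. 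Strictness $U_i \subsetneq U_{i-1}$ together with the argument of the previous paragraph forces $h_i$ to be non-invertible. By construction the partial compositions $h_1\cdots h_i = f_i$ are right minimal and right $C$-determined, so $(h_1,\ldots,h_t)$ is a right $C$-factorization of $f_t$, realizing the given submodule chain.

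Finally, the correspondence of refinements is essentially bookkeeping: inserting an intermediate term $h_ih_{i+1} = h_i' h_i''$ in a right $C$-factorization produces (via the lemma preceding the proposition, applied in reverse) a new right minimal right $C$-determined intermediate map, whose image under $\eta_{CY}$ inserts a submodule strictly between $\eta_{CY}(f_{i+1})$ and $\eta_{CY}(f_{i-1})$; conversely, any such inserted submodule arises from an intermediate right minimal map by the converse construction above. The subtle step of the whole argument is the strictness of the inclusions, where one has to combine injectivity of $\eta_{CY}$ on ${}^C[\to Y\rangle$ with the right minimality characterization to pass from ``same submodule'' to ``$h_i$ invertible''; once that link is forged, both directions and the refinement statement follow cleanly from the Auslander bijection.
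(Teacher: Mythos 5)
Your proof is correct and takes essentially the same route as the paper, which simply declares the proposition "a direct consequence of Auslander's Second Theorem": you have unpacked that one-line proof into its constituent pieces (order preservation of $\eta_{CY}$, its injectivity on ${}^C[\to Y\rangle$, surjectivity from the Second Theorem, and the right-minimality lemma to convert "same right equivalence class" into "$h_i$ invertible"). The details you supply, including the careful handling of $f_0=1_Y$ and the strictness argument, are exactly the ones the paper leaves implicit.
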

	
\begin{proof} 
This is a direct consequence of Auslander's Second Theorem. 
\end{proof} 
	
\begin{Cor} Any right $C$-factorization $(h_1,\dots,h_t)$
has a refinement which is a maximal right $C$-factorization and
all maximal right $C$-factorizations of $(h_1,\dots,h_t)$
have the same length.
\end{Cor}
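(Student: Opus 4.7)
The plan is to transfer the assertion to the submodule lattice of $\Hom(C,Y)$ via Proposition \ref{factorization} and then invoke the Jordan--H\"older theorem. By Proposition \ref{factorization}, the right $C$-factorizations of $f = h_1\cdots h_t$ correspond bijectively to chains of proper inclusions of $\Gamma(C)$-submodules
$$
   \eta_{CY}(f) = U_s \subsetneq U_{s-1} \subsetneq \cdots \subsetneq U_1 \subsetneq U_0 = \Hom(C,Y),
$$
with refinement of factorizations corresponding to refinement of chains. In particular, the given factorization $(h_1,\dots,h_t)$ corresponds to a fixed such chain, and its refinements correspond precisely to the refinements of that chain.

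Since $\Lambda$ is an artin $k$-algebra and $C,Y$ are modules of finite length, $\Hom(C,Y)$ is a finitely generated $k$-module, hence of finite length as a $k$-module. Any chain of $\Gamma(C)$-submodules of $\Hom(C,Y)$ is in particular a chain of $k$-submodules, so $\Hom(C,Y)$ has finite length as a $\Gamma(C)$-module. Consequently, the factor module $\Hom(C,Y)/\eta_{CY}(f)$ has finite length as a $\Gamma(C)$-module, and the Jordan--H\"older theorem applies to it.

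Translating back, a maximal refinement of the chain corresponds, via Proposition \ref{factorization}, to a maximal refinement of $(h_1,\dots,h_t)$, that is, to a right $C$-factorization admitting no further proper refinement; this is precisely a maximal right $C$-factorization in the sense defined before the proposition. The classical Jordan--H\"older theorem for the finite length $\Gamma(C)$-module $\Hom(C,Y)/\eta_{CY}(f)$ asserts that every chain admits a refinement to a composition series, and that any two composition series have the same length. Applied to our chain, this yields at once that $(h_1,\dots,h_t)$ admits a refinement which is a maximal right $C$-factorization and that all such maximal refinements have the same length.

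The only mildly delicate point is to check that the dictionary of Proposition \ref{factorization} really matches \emph{maximality} on both sides (i.e.\ that a right $C$-factorization is non-refinable if and only if the associated chain is a composition series of $\Hom(C,Y)/\eta_{CY}(f)$); but this is immediate from the bijective and refinement-preserving character of the correspondence already established in Proposition \ref{factorization}, together with the preceding lemma which certifies that the refinement procedure does produce bona fide right $C$-factorizations.
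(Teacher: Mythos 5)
Your proof is correct and follows exactly the route the paper takes: the paper's own proof is the one-line remark that the corollary ``follows from Proposition \ref{factorization} and the Jordan--H\"older theorem,'' and you have simply spelled out that transfer, including the (implicit but worth noting) finiteness of the length of $\Hom(C,Y)$ as a $\Gamma(C)$-module. Nothing to change.
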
 

\begin{proof} This follows from \ref{factorization} and the Jordan-H\"older theorem. 
\end{proof} 
	
In particular, any right minimal right $C$-determined map $f$ has
a refinement which is a maximal right $C$-factorization, say 
$(h_1,\dots,h_t)$ and its length $t$ will be called the {\it right $C$-length}
of $f$, we write $|f|_C$ for the right $C$-length of $f$.	
There is the following formula:
	
\begin{Prop}\label{length}
Let $f: X \to Y$ be right minimal and right
$C$-determined. Then
\Rahmen{|f|_C = |\Hom(C,Y)| - |\eta_{CY}(f)|,}
where $|\Hom(C,Y)|$ denotes the length of the $\Gamma(C)$-module
$\Hom(C,Y)$ and $|\eta_{CY}(f)|$ the length of its $\Gamma(C)$-submodule
$\eta_{CY}(f)$.
\end{Prop}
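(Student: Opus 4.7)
The plan is to deduce this length formula directly from the Auslander bijection and the Jordan--Hölder theorem for $\Gamma(C)$-modules, using Proposition \ref{factorization} as the bridge.

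First I would unravel the definitions. By definition, $|f|_C$ is the length $t$ of any \emph{maximal} right $C$-factorization $(h_1,\dots,h_t)$ of $f$. Writing $f_i = h_1\cdots h_i$, we have $f_0 = 1_Y$ (so $\eta_{CY}(f_0) = \Hom(C,Y)$) and $f_t = f$ (so $\eta_{CY}(f_t) = \eta_{CY}(f)$). By Proposition \ref{factorization}, such a right $C$-factorization corresponds bijectively to a chain of proper inclusions
\[
 \eta_{CY}(f) = \eta_{CY}(f_t) \subset \eta_{CY}(f_{t-1}) \subset \cdots \subset \eta_{CY}(f_0) = \Hom(C,Y)
\]
of $\Gamma(C)$-submodules, and refinements of factorizations correspond to refinements of such chains.

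Next I would translate \emph{maximality}. A right $C$-factorization is maximal exactly when the corresponding chain of submodules admits no proper refinement, i.e.\ when it is a composition series from $\eta_{CY}(f)$ to $\Hom(C,Y)$ in the lattice $\mathcal{S}\Hom(C,Y)$. Equivalently, this is a composition series of the $\Gamma(C)$-module $\Hom(C,Y)/\eta_{CY}(f)$.

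Finally, by the Jordan--Hölder theorem for the finite length $\Gamma(C)$-module $\Hom(C,Y)/\eta_{CY}(f)$, every such composition series has the same length, namely
\[
 \bigl|\Hom(C,Y)/\eta_{CY}(f)\bigr| \;=\; |\Hom(C,Y)| - |\eta_{CY}(f)|.
\]
This both confirms that $|f|_C$ is well defined (the length $t$ does not depend on the chosen maximal factorization) and yields the asserted formula. There is no real obstacle here: everything has been reduced to Proposition \ref{factorization} and standard Jordan--Hölder, the only point requiring care being the bookkeeping that the endpoints of the submodule chain are correctly identified with $\Hom(C,Y)$ at the top and $\eta_{CY}(f)$ at the bottom.
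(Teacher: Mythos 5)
Your argument is correct and is exactly the one the paper intends: Proposition \ref{length} is stated without an explicit proof precisely because it follows from Proposition \ref{factorization} (maximal right $C$-factorizations correspond to composition series of $\Hom(C,Y)/\eta_{CY}(f)$) together with the Jordan--H\"older theorem, which is the route you take. The endpoint bookkeeping ($\eta_{CY}(f_0)=\Hom(C,Y)$, $\eta_{CY}(f_t)=\eta_{CY}(f)$) is handled correctly, so there is nothing to add.
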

	
\noindent 
{\bf The right equivalence class $\eta^{-1}_{CY}(0).$} 
As we have mentioned in section \ref{sec:4}, it is of interest to determine the maps
in the right equivalence class $\eta_{CY}^{-1}(0).$
	
\begin{Prop}\label{zero}
Let $C,Y$ be modules. 
Up to right equivalence, there is a unique right $C$-determined map $f$ ending in $Y$
with $|f|_C$ maximal. The submodule  $\eta_{CY}(f)$ of $\Hom(C,Y)$ is the zero module.
If $f'$ is any right $C$-determined map ending in $Y$, then $f = f'h$ 
for some $h$.
\end{Prop}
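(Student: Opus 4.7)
The plan is to derive this from the Auslander bijection of Auslander's Second Theorem together with the length formula of Proposition \ref{length}. The key observation is that the entire statement is really a statement about the submodule lattice $\mathcal{S}\Hom(C,Y)$ transferred back to ${}^C[\to Y\rangle$.

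First, I would invoke Auslander's Second Theorem to produce existence: since $\eta_{CY}: {}^C[\to Y\rangle \to \mathcal{S}\Hom(C,Y)$ is a lattice isomorphism, the zero submodule $0 \subseteq \Hom(C,Y)$ has a (unique) preimage, namely some right equivalence class $[f\rangle$ represented by a right minimal, right $C$-determined map $f$ ending in $Y$, for which $\eta_{CY}(f) = 0$. Uniqueness of the class $[f\rangle$ with this property is immediate from the injectivity of $\eta_{CY}$ on ${}^C[\to Y\rangle$.

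Next, I would translate the statement about $|f|_C$ being maximal via Proposition \ref{length}: the formula $|f|_C = |\Hom(C,Y)| - |\eta_{CY}(f)|$ shows that $|f|_C$ is maximized precisely when the length of $\eta_{CY}(f)$ is minimized, i.e., when $\eta_{CY}(f) = 0$. Thus the class with maximal right $C$-length coincides with the class mapping to the zero submodule, which was already seen to be unique.

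Finally, for the factorization assertion, I would use that $\eta_{CY}$ preserves and reflects the order on ${}^C[\to Y\rangle$ (established just before Auslander's Second Theorem). For any right $C$-determined $f': X' \to Y$, the inclusion $0 = \eta_{CY}(f) \subseteq \eta_{CY}(f')$ yields $[f\rangle \le [f'\rangle$ in ${}^C[\to Y\rangle$, which by the definition of the poset relation means precisely that $f = f'h$ for some morphism $h: X \to X'$. I do not expect a genuine obstacle here; the only point to be careful about is to pick a right minimal representative $f$ of the preimage class (which exists by Proposition \ref{minimal}) so that the maximality statement refers to a well-defined numerical invariant $|f|_C$.
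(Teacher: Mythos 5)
Your proposal is correct and follows essentially the same route as the paper: the paper's (very terse) proof likewise identifies $[f\rangle$ as the unique zero element $\eta_{CY}^{-1}(0)$ of the lattice ${}^C[\to Y\rangle$ via the Auslander bijection, from which maximality of $|f|_C$ and the factorization through every right $C$-determined $f'$ follow. You have merely made explicit the ingredients (surjectivity and injectivity of $\eta_{CY}$, the length formula of Proposition \ref{length}, and order reflection) that the paper leaves implicit.
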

	
\begin{proof} The lattice ${}^C[\to Y\rangle$ has a unique zero element, namely $\eta_{CY}^{-1}(0)$.
Let $\eta_{CY}^{-1}(0) = [f\rangle$ for some right minimal map $f$. Then, 
the right $C$-length of $f$ has to be maximal and $[f\rangle \le [f'\rangle$
for any right $C$-determined map $f'$ ending in $Y$. 
\end{proof} 

In general it seems to be quite difficult to describe the maps $f$
such that $[f\rangle = \eta_{CY}^{-1}(0).$ But one should
be aware that such a map $f$ always does exist: any pair $C,Y$ of $\Lambda$-modules
determines uniquely up to right equivalence a map $f$ 
ending in $Y$, namely the right minimal, right $C$-determined map $f$ with $\eta_{CY}(f) = 0.$ 

\begin{Prop}
Let $C,Y$ be modules. The set $\eta_{CY}^{-1}(0)$ is the right equivalence class of the zero map
$0 \to Y$ if and only if $P(\soc Y)$ belongs to $\add C$.
\end{Prop}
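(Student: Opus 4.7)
The plan is to unpack $\eta_{CY}(f)=0$ as the condition that every morphism $C\to X$ composes to zero with $f$, and then to show that the failure of $\eta_{CY}^{-1}(0)$ to reduce to $[0\to Y\rangle$ is witnessed precisely by a simple summand $S$ of $\soc Y$ whose projective cover $P(S)$ is missing from $\add C$.

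For the direction $(\Leftarrow)$, assume $P(\soc Y)\in\add C$ and let $f:X\to Y$ be non-zero; I would produce an element of $\Hom(C,X)$ on which $f$ does not vanish. Since $Y$ has finite length, $\soc Y$ is essential in $Y$, so the non-zero submodule $\Img f$ meets $\soc Y$ in a non-zero submodule, which contains a simple $S$. By hypothesis $P(S)\in\add C$. Projectivity of $P(S)$ lets me lift $P(S)\twoheadrightarrow S\hookrightarrow\Img f$ through the surjection $X\twoheadrightarrow\Img f$ to some $\phi:P(S)\to X$. Then $f\phi$ is the non-zero composition $P(S)\twoheadrightarrow S\hookrightarrow Y$, so $f\phi\in\eta_{CY}(f)\setminus\{0\}$, forcing $[f\rangle\neq\eta_{CY}^{-1}(0)$.

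For the direction $(\Rightarrow)$, I would argue contrapositively: assume $P(\soc Y)\notin\add C$, so some simple summand $S$ of $\soc Y$ satisfies $P(S)\notin\add C$, and consider $f=\bigl(P(S)\twoheadrightarrow S\hookrightarrow Y\bigr)$, which is non-zero. I claim $f\Hom(C,P(S))=0$. Let $C_i$ be any indecomposable summand of $C$ and $\phi:C_i\to P(S)$ any morphism. If $\phi$ were surjective, then since $P(S)$ is projective it would split, exhibiting $P(S)$ as a direct summand of the indecomposable module $C_i$; but then $C_i\cong P(S)$, contradicting $P(S)\notin\add C$. Thus $\Img\phi$ is a proper submodule of $P(S)$, and since $P(S)$ has simple top $S$ its unique maximal submodule is $\rad P(S)$, so $\Img\phi\subseteq\rad P(S)$. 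But $f$ vanishes on $\rad P(S)$ by construction, hence $f\phi=0$. Summing over the indecomposable summands of $C$ yields $f\Hom(C,P(S))=0$, so $[f\rangle\in\eta_{CY}^{-1}(0)\setminus[0\to Y\rangle$.

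The substantive step is the $(\Rightarrow)$ direction: a naive choice such as $f=(S\hookrightarrow Y)$ can fail, because a non-projective indecomposable summand of $C$ may still have $S$ as a quotient and hence contribute non-zero elements to $f\Hom(C,S)$. The correct test object is the projective cover $P(S)$ itself, whose local structure---simple top and unique maximal submodule---guarantees that every non-surjective morphism into $P(S)$ lands in the radical, where $f$ is zero.
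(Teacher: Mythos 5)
Your proof is correct, but it follows a genuinely different route from the paper's. The paper settles this proposition in one line as a direct consequence of Corollary \ref{projective}, hence ultimately of the determiner formula (Theorem \ref{determiner}): applied to the zero map $0\to Y$, whose intrinsic kernel vanishes and through which an indecomposable projective $P$ almost factors exactly when $P/\rad P$ embeds into $Y$, the formula yields $C(0\to Y)=P(\soc Y)$, and the criterion follows. You avoid the determiner formula entirely and check both directions by hand: for ($\Leftarrow$) the essentiality of $\soc Y$ in the finite length module $Y$ plus projectivity of $P(S)$, and for ($\Rightarrow$) the test map $P(S)\twoheadrightarrow S\hookrightarrow Y$ together with the fact that a non-surjective map into the local module $P(S)$ lands in $\rad P(S)$. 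Your version buys self-containedness (only elementary facts about projective covers are used), while the paper's buys brevity given machinery already established; your closing remark on why $S\hookrightarrow Y$ is the wrong test object and $P(S)$ the right one is precisely the content of ``$P(S)$ almost factors through $0\to Y$'' in the paper's framework. Two cosmetic points: the element $f\phi$ lies in $\Hom(P(S),Y)$ rather than in $\Hom(C,Y)$, so to conclude $\eta_{CY}(f)\neq 0$ you should precompose with a retraction $C^n\to P(S)$; and since the paper uses $\eta_{CY}^{-1}(0)$ for the unique class in ${}^C[\to Y\rangle$ lying over the zero submodule, you should record that your two directions establish the equivalent assertion that the zero map $0\to Y$ is right $C$-determined if and only if $P(\soc Y)\in\add C$ --- indeed, by definition $0\to Y$ is right $C$-determined precisely when every $f$ with $f\Hom(C,X)=0$ is itself zero, which is exactly what you prove.
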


\begin{proof} 
This is a direct consequence of Corollary \ref{projective}. 
\end{proof} 
	\bigskip 

\noindent 
%%%%%%%%%%%%%%%%%%%%%%%%%%%%%%%%%%%%%%%%%%%%%%%%%%%%%%%%%
{\bf The special case of $C$ being projective.} 
For an arbitrary projective module $C$, there is the following description 
of the right $C$-length of a right minimal, right $C$-determined morphism $f$. 
Here, we denote by 
$[M:S]$ the Jordan-H\"older multiplicity of the simple module $S$ in
the module $M$, this is 
the number of factors in a composition series of $M$ which are isomorphic to 
$S$.	
	
\begin{Prop} Let $C$ be projective. 
The right minimal, right $C$-determined maps $f: X \to Y$ are
up to right equivalence just the inclusion maps of submodules $X$ of $Y$
such that the socle of $Y/X$ is generated by $C$. 

If $f: X \to Y$ is right minimal and right $C$-determined, 
 then $f$ is injective and 
\Rahmen{|f|_C = \sum_{P(S)|C}[\Cok(f):S].}
The minimal element $\eta_{CY}^{-1}(0)$ of ${}^C[\to Y\rangle$
is the inclusion map $X \to Y$, where $X$ is the intersection of the kernels of
all maps $Y \to Q(S)$, where $S$ is a simple module with $P(S)$ a direct summand
of $C$.
\end{Prop}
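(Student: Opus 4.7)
The plan is to derive the three assertions sequentially from Corollary~\ref{projective} and the Auslander bijection, exploiting the exactness of $\Hom(C,-)$ when $C$ is projective. The first assertion is immediate from Corollary~\ref{projective}: a right minimal $f\colon X\to Y$ is right $C$-determined exactly when $f$ is a monomorphism and $\soc(Y/X)$ is generated by $C$, and via the identification of right equivalence classes of monomorphisms ending in $Y$ with submodules of $Y$ (the Remark after Proposition~\ref{minimal}), each such class is represented by the inclusion map of its image.

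For the length formula I would use Proposition~\ref{length} to reduce $|f|_C$ to a computation of $\Gamma(C)$-lengths. Applying the exact functor $\Hom(C,-)$ to $0\to X\to Y\to\Cok(f)\to 0$ gives a short exact sequence of $\Gamma(C)$-modules, whence $\Hom(C,Y)/\eta_{CY}(f)\cong\Hom(C,\Cok(f))$ and therefore $|f|_C=|\Hom(C,\Cok(f))|$. Applying $\Hom(C,-)$ once more to a composition series of $\Cok(f)$ and invoking additivity of length, it suffices to show that for each simple $S$ the $\Gamma(C)$-length of $\Hom(C,S)$ equals $1$ if $P(S)\mid C$ and $0$ otherwise. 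Nonvanishing forces $S$ to appear in $\tp C$; assuming $C$ multiplicity-free by Proposition~\ref{determined}(a), for $P(S)\mid C$ one has $\Hom(C,S)\cong\End(S)$, and the radical of $\Gamma(C)$ annihilates this module because every $\gamma\in\rad\End(C)$ maps $C$ into $\rad C$ while any morphism $C\to S$ vanishes on $\rad C$. Hence the action factors through $\Gamma(C)/\rad\Gamma(C)$, a product of skew fields whose simple modules stand in bijection with the indecomposable summands of $C$, so $\Hom(C,S)$ is a simple $\Gamma(C)$-module of length $1$.

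For the minimal element, part~(a) together with the Auslander bijection identifies $\eta_{CY}^{-1}(0)$ with the inclusion of the smallest submodule $X\subseteq Y$ such that $\soc(Y/X)$ is generated by $C$. Set $X_0=\bigcap\Ker\phi$, the intersection taken over all $\phi\colon Y\to Q(S)$ with $P(S)\mid C$. On the one hand $Y/X_0$ embeds into a product of modules $Q(S)$ with $P(S)\mid C$, so its socle is a direct sum of simples $S$ with $P(S)\mid C$ and is therefore generated by $C$; by the minimality of $X$ this forces $X\subseteq X_0$. Conversely, any $X\subseteq Y$ satisfying the socle condition makes $Y/X$ embed into its injective envelope, which decomposes as a direct sum of modules $Q(S)$ with $P(S)\mid C$; the composite $Y\twoheadrightarrow Y/X\hookrightarrow\bigoplus Q(S)$ has kernel $X$, and its components are particular maps $Y\to Q(S)$ appearing in the intersection defining $X_0$, hence $X_0\subseteq X$. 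Thus $X=X_0$. The main obstacle I anticipate is the length computation in part~(b): although the answer $|\Hom(C,S)|_{\Gamma(C)}=1$ for $P(S)\mid C$ is the expected one, establishing it cleanly requires some care with the structure of $\Gamma(C)$ modulo its radical and with the verification that this radical annihilates $\Hom(C,S)$.
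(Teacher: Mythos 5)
Your proof is correct. For the two assertions that the paper's own proof actually addresses --- the identification of the right minimal, right $C$-determined maps with the inclusions $X \to Y$ such that $\soc(Y/X)$ is generated by $C$ (via Corollary \ref{projective} and the identification of monomorphism classes with submodules), and the description of $\eta_{CY}^{-1}(0)$ as the intersection $X_0$ of the kernels of all maps $Y \to Q(S)$ with $P(S)$ a summand of $C$ --- you follow essentially the same route as the paper: both directions of the last claim rest on the observation that $\soc(Y/X)$ being generated by $C$ is equivalent to $Y/X$ embedding into a finite direct sum of modules $Q(S)$ with $P(S)\mid C$, which simultaneously shows that $X_0$ lies in the relevant set and that every member of the set contains $X_0$. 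The one genuine difference is that the paper's proof omits the length formula $|f|_C=\sum_{P(S)\mid C}[\Cok(f):S]$ altogether, whereas you supply a complete argument for it: Proposition \ref{length} plus the exactness of $\Hom(C,-)$ identifies $\Hom(C,Y)/\eta_{CY}(f)$ with $\Hom(C,\Cok(f))$, and additivity over a composition series reduces everything to showing that the $\Gamma(C)$-length of $\Hom(C,S)$ is $1$ when $P(S)\mid C$ and $0$ otherwise. Your verification that $\rad\Gamma(C)$ annihilates $\Hom(C,S)$ --- because radical endomorphisms of a projective module have image in its radical, while any map $C\to S$ kills $\rad C$ --- is exactly the point that needs care; combined with $\Hom(C,S)\cong\Hom(P(S),S)\cong\End(S)$ for multiplicity-free $C$, it shows $\Hom(C,S)$ is the simple top of the indecomposable projective $\Gamma(C)$-module $\Hom(C,P(S))$. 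This is a worthwhile completion of the paper's argument rather than a detour.
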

	
\begin{proof} Let $\mathcal  Q$ be the set of modules $Q(S)$, 
where $S$ is a simple module with $P(S)$ a direct summand
of $C$. Let $X$ be the intersection of the kernels of 
all maps $Y \to Q$ with $Q\in \mathcal  Q.$
Since $Y$ is of finite length, there are finitely many maps
$g_i: Y \to Q(S_i)$ with $Q(S_i)\in \mathcal  Q$, say $1\le i \le m$, such that
$X = \bigcap_{i=1}^m \Ker(g_i).$ Then $Y/X$ embeds into $\bigoplus_{i=1}^m
Q(S_i)$, thus its socle is generated by $C$. It follows that the inclusion map
$X \to Y$ is right $C$-determined. On the other hand, if $X' \to Y$ is
right minimal and right $C$-determined, then it is a monomorphism, thus we can
assume that it is an inclusion map. In addition, we know that the socle of $Y/X'$
is generated by $C$, thus $Y/X'$ embeds into a finite direct sum of modules in $\mathcal  Q$.
It follows that $X'$ is the intersection of some maps 
$Y \to Q$, where $Q\in \mathcal  Q$, thus $X \subseteq X'.$
\end{proof}

There is the following consequence:
{\it The ${}_\Lambda\Lambda$-length of any inclusion map $X \to Y$}
(such a map is obviously right minimal and right $\Lambda$-determined) 
{\it is precisely the length of $Y/X.$}
	\bigskip

For further results concerning the right $C$-length of maps, see section \ref{sec:9}.

%==========================================================================
\section{The right $C$-type of a right $C$-determined map.}
\label{sec:6}

Recall that we consider
$\Hom(C,Y)$ as a $\Gamma(C)$-module, where $\Gamma(C) = \End(C)^{\text{op}}.$ 
The indecomposable
projective $\Gamma(C)$-modules are of the form $\Hom(C,C_i)$, where $C_i$ is an
indecomposable direct summand of $C$, thus the simple $\Gamma(C)$-modules are of the
form $S(C_0) = \tp \Hom(C,C_0).$ 

Given an artin algebra $\Gamma$, we denote by $K_0(\Gamma)$ 
its Grothendieck group (of all $\Gamma$-modules modulo all exact sequences), it is the
free abelian group with basis the set of isomorphism classes $[S]$ of 
the simple $\Gamma$-modules $S$.
Given a $\Gamma$-module $M$, we denote by $\bdim M$ the corresponding element in $K_0(\Gamma)$,
called the {\it dimension vector} of $M$. Of course, $\bdim M$ can be 
written as an integral linear combination $\bdim M = \sum_{[S]} [M:S][S],$ 
where the coefficient of $[S]$ is just the Jordan-H\"older multiplicity $[M:S]$ of $S$ in $M$.
The elements of $K_0(\Gamma)$ with
non-negative coefficients will be said to be the $\Gamma$-dimension vectors.
If $\mathbf e$ is a $\Gamma$-dimension vector and $M$ is a $\Gamma$-module, we denote
by $\mathcal S_{\mathbf e}M$ the subset of $\mathcal  SM$ consisting of all submodules of $M$ with
dimension vector $\mathbf e.$
	\medskip

Let us return to the artin algebra $\Gamma(C)$, where $C$ is a $\Lambda$-module. 
Thhe Grothendieck group $K_0(\Gamma(C))$ is the free abelian group with basis the set of modules
$S(C_i)$, where $C_i$ runs through a set of representatives of the isomorphism classes
of the indecomposable direct summands $C_i$ of $C$. We are interested here in the
dimension vectors of $\Hom(C,Y)$ and of its factor modules. Actually, 
we want to attach to each right $C$-determined map ending in $Y$ its
right $C$-type $\btype_C(f)$ so that $\btype_C(f) = \bdim \Hom(C,Y)/\eta_{CY}(f).$ 
We start with pairs of neighbors in the right $C$-factorization lattice ${}^C[\to Y\rangle$,
since they correspond under $\eta_{CY}$ to the composition factors of $\Hom(C,Y)$.
	\bigskip

Let $f = hf'$ and $f'$ be right minimal, right $C$-determined maps.
We say that the pair $(f,f')$ is a pair of {\it $C$-neighbors} provided 
$|f|_C = |f'|_C+1.$
Note that the pair $(f,f')$ in $^C[\to Y\rangle$ is a pair 
of $C$-neighbors provided 
$[f\rangle < [f'\rangle$
and there is no $f''$ with $[f\rangle < [f''\rangle < [f'\rangle$
(of course, it is the condition $[f\rangle < [f'\rangle$ 
which implies that there is 
a map $h$ with $f = f'h$). 	
	\bigskip

{\bf Remark.} Let us consider a composition  $f = f'h$, 
where $f'$ both are right minimal and right $C$-determined.
It can happen that $h$ is also right minimal
and right $C$-determined, but $f = f'h$ is not right $C$-determined.
Also it can happen that both maps $f'$ and $f = f'h$ are right minimal and right 
$C$-determined, whereas $h$ is not right $C$-determined. Here are corresponding examples.
	\medskip

\noindent 
{\bf Examples 5.} We consider the path algebra $\Lambda$ of the 
linearly directed quiver $\Delta$ of type $\mathbb A_3$
$$
 a \overset{\alpha}\longrightarrow  b 
 \overset{\beta}\longrightarrow  c\ .
$$
Let $X = Q(a), X' = Q(b), Y = Q(c) = S(c).$ There are non-zero maps
$$
   Q(a) \overset{h}\longrightarrow   Q(b) 
 \overset{f'}\longrightarrow  S(c)\ ,
$$
and we let $f = f'h.$ All three maps $f',h,f$ are surjective and
right minimal. The kernel
of $f'$ is the simple module $S(b)$, the kernel of $h$ is the simple 
module $S(a)$ and the kernel of $f$ is $P(b)$. 

First, let $C = S(b)\oplus S(c)$, thus $\tau C = S(a)\oplus S(b)$ and both
$f'$ and $h$ are right $C$-determined, whereas $f$ is not right
$C$-determined.

Second, let $C = Q(b)\oplus S(c),$ thus $\tau C = P(b)\oplus S(b)$. Then
both $f$ and $f'$ are right $C$-determined, whereas $h$ is not
right $C$-determined. 
	\bigskip 

Let $f: X \to Y$ and $f': X' \to Y$ such that 
$(f,f')$ is a pair of neighbors.
We say that $(f,f')$ is {\it of type} $C_0$ (or better of type $[S(C_0)]$) 
where $C_0$ is
an indecomposable direct summand of $C$, provided there is a map $\phi: C_0 \to X'$
such that $f'\phi$ does not factor through $f$. Such a summand $C_0$ must exist, since
otherwise $f'$ would factor through $f$, due to the fact that $f$ is right 
$C_0$-determined.
The following proposition shows that $C_0$ is uniquely determined.
	\medskip

\begin{Prop} If $(f,f')$ is a 
pair of $C$-neighbors of type $C_0$, then 
$\eta_{CY}(f')/\eta_{CY}(f)$ 
is isomorphic to the simple $\Gamma(C)$-module $S(C_0) = \tp \Hom(C,C_0).$ Thus, 
the type of a pair of $C$-neighbors is well-defined.
\end{Prop}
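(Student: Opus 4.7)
The plan is to combine the length statement from Proposition \ref{length} with the projective cover description of the simple $\Gamma(C)$-modules recalled at the start of this section. Since $(f,f')$ is a pair of $C$-neighbors we have $|f|_C = |f'|_C+1$, so Proposition \ref{length} forces $|\eta_{CY}(f')| = |\eta_{CY}(f)|+1$; consequently the quotient $Q := \eta_{CY}(f')/\eta_{CY}(f)$ is already a simple $\Gamma(C)$-module, and the only remaining task is to identify its isomorphism class as $S(C_0)$.

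Fix a decomposition $C = C_0 \oplus C''$ with canonical projection $p: C \to C_0$ and inclusion $q: C_0 \to C$, and let $\phi: C_0 \to X'$ be the map coming with the definition of type $C_0$, so that $f'\phi$ does not factor through $f$. I would then define
$$\Phi: \Hom(C,C_0) \longrightarrow Q, \qquad \Phi(\alpha) = \overline{f'\phi\alpha}.$$
For any $\alpha \in \Hom(C,C_0)$ we have $\phi\alpha \in \Hom(C,X')$, hence $f'\phi\alpha \in f'\Hom(C,X') = \eta_{CY}(f')$, so $\Phi(\alpha) \in Q$ is well defined; since the $\Gamma(C)$-action on $\Hom(C,C_0)$, on $\eta_{CY}(f')$, and on $Q$ is in each case given by precomposition with endomorphisms of $C$, the map $\Phi$ is $\Gamma(C)$-linear. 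To see $\Phi \neq 0$ I would evaluate at $\alpha = p$: if $f'\phi p$ factored through $f$, say $f'\phi p = f\psi$ for some $\psi: C \to X$, then the identity $pq = 1_{C_0}$ would give $f'\phi = f'\phi p q = f\psi q$, contradicting the defining property of $\phi$. Hence $\Phi(p)$ is a non-zero element of the simple module $Q$, so $\Phi$ is surjective.

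Because $\Hom(C,C_0)$ is an indecomposable projective $\Gamma(C)$-module with unique simple top $S(C_0)$, every surjection from $\Hom(C,C_0)$ onto a simple module factors through $S(C_0)$, which yields the required isomorphism $Q \simeq S(C_0)$. Well-definedness of the type is then immediate: two indecomposable summands $C_0, C_0'$ of $C$ that both serve as type of $(f,f')$ would both give $S(C_0) \simeq Q \simeq S(C_0')$, and since non-isomorphic indecomposable summands of $C$ correspond to non-isomorphic simple $\Gamma(C)$-modules, this forces $C_0 \simeq C_0'$. The only genuine subtlety is to keep the op-convention straight so that $\Phi$ is truly a module homomorphism, and to notice that the non-factorization of $f'\phi$ propagates from $C_0$ up to $C$ purely via $pq = 1_{C_0}$; the rest is a tidy combination of Proposition \ref{length} with the standard projective-top description of the simples of $\Gamma(C)$.
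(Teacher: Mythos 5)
Your proof is correct and follows essentially the same route as the paper: your map $\Phi$ is exactly the paper's $\Hom(C,f'\phi)$ composed with the quotient onto $\eta_{CY}(f')/\eta_{CY}(f)$, your non-factorization argument via $pq=1_{C_0}$ is the paper's argument with the split pair $(m,e)$, and both proofs conclude by identifying the simple quotient with the top of the indecomposable projective $\Hom(C,C_0)$. The only cosmetic difference is that you invoke Proposition \ref{length} to see the quotient is simple where the paper directly cites maximality of $\Img\Hom(C,f)$ in $\Img\Hom(C,f')$; these are the same fact.
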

 
Thus, if $(f,f')$ is a pair of $C$-neighbors of type $C_0$,
we may write $\btype_C(f,f') = [S(C_0)] \in K_0(\Gamma(C)).$ 

\begin{proof}  Let  $\phi: C_0 \to X'$ be a map 
such that $f'\phi$ does not factor through $f$. We obtain a homomorphism
of $\Gamma(C)$-modules
$$
 \Hom(C,f'\phi): \Hom(C,C_0) \to \Hom(C,Y)
$$
which maps into $f'\Hom(C,X')$ (since the image consists of the maps $f'\phi\psi$
with $\psi: C \to C_0$), 
We claim that $\Hom(C,f'\phi)$ does not map
into $f\Hom(C,X)$. Assume, for the contrary, that $\Hom(C,f'\phi)$ maps 
into $f\Hom(C,X)$. Choose $m: C_0\to C$ and $e: C\to C_0$ with $em = 1$. By assumption,
the element $\Hom(C,f'\phi)(e) = f'\phi e$ belongs to $f\Hom(C,X)$, thus there is 
$\phi': C \to X$ with $f'\phi e = f\phi'$ and therefore
$$
   f'\phi = f'\phi em = f\phi'm
$$
shows that $f'\phi$ factor through $f$, a contradiction. 

Thus, the image of $\Hom(C,f'\phi)$ is
a $\Gamma(C)$-submodule of $\Img \Hom(C,f')$ which is not
contained in $\Img(C,f')$ and which is an epimorphic image of the projective module
$\Hom(C,C_0)$. Since we know that  $\Img \Hom(C,f)$ is a maximal submodule of
$\Img \Hom(C,f')$, it follows that
$$
 \Img \Hom(C,f')/\Img \Hom(C,f') \simeq \tp \Img \Hom(C,f'\phi) \simeq \tp \Hom(C,C_0).
$$
This is what we wanted to show. 
\end{proof} 
	
We note the following: {\it If $(f,f')$ is a pair of $C$-neighbors and
$f = f'h$, then $h$ may be neither injective nor surjective.} Let us 
exhibit examples with $f' = 1_Y$.
	\medskip 

\noindent 
%=================================
{\bf Example 6.} As in the examples 5, let $\Delta$ be the 
linearly directed quiver of type $\mathbb A_3$ and take now as $\Lambda$
the path algebra of $\Delta$ modulo the zero relation $\alpha\beta$. 

Let $Y = P(c)$ and $C = S(b)$, then
$$
 {}^{S(b)}[\to P(c)\rangle \quad \leftrightarrow \quad \mathcal  S\Hom(S(b),P(c))
$$
are lattices with precisely two elements: in ${}^{S(2)}[\to P(3)\rangle$,
there is the right equivalence class of the 
identity map $f' = 1_{P(c)}$ as well as the right equivalence class of any
non-zero map $f: P(b)\to P(c)$.  Note that $f$ is right minimal and 
right $S(b)$-determined, and it is neither mono nor epi.
	\bigskip

Now consider a right minimal right $C$-determined map $f$ ending in $Y$.
As we have mentioned, we want to attach to $f$ an element 
$\btype_C(f) \in K_0(\Gamma(C)).$

\begin{Prop} Let $C,Y$ be $\Lambda$-modules. Let $f$ be right minimal and right $C$-determined
map ending in $Y$ with maximal right $C$-factorization $h_1,\dots,h_t$. Write
$f_i = h_1\cdots h_i,$ for $0\le i \le t$. Then 
\Rahmen{\btype_C f = \sum_{i=1}^t \btype_C(f_i,f_{i-1})}
is a well-defined element of $K_0(\Gamma(C))$ and we have 
\Rahmen{\btype_C(f) = \bdim \Hom(C,Y)/\eta_{CY}(f) = \mathbf d - \bdim \eta_{CY}(f),}
with $\mathbf d = \bdim\Hom(C,Y)$.
\end{Prop}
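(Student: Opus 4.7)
The plan is to transfer the statement, via the Auslander bijection $\eta_{CY}$, to a routine application of the Jordan--Hölder theorem for the $\Gamma(C)$-module $\Hom(C,Y)/\eta_{CY}(f)$. All of the real work has been done in the preceding results: Proposition \ref{factorization} matches maximal right $C$-factorizations of $f$ with composition series, and the preceding proposition computes the type of a pair of $C$-neighbors in terms of the corresponding composition factor.

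First, I would note that by Proposition \ref{factorization}, the maximal right $C$-factorization $(h_1,\dots,h_t)$ of $f$ corresponds to a maximal chain of proper submodules
\[
\eta_{CY}(f) = \eta_{CY}(f_t) \subset \cdots \subset \eta_{CY}(f_1) \subset \eta_{CY}(f_0) = \Hom(C,Y),
\]
and this chain is nothing other than a composition series of the $\Gamma(C)$-module $\Hom(C,Y)/\eta_{CY}(f)$ (lifted to $\Hom(C,Y)$). In particular, $t$ equals the length of $\Hom(C,Y)/\eta_{CY}(f)$, which is the already established formula $|f|_C = |\Hom(C,Y)| - |\eta_{CY}(f)|$ from Proposition \ref{length}.

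Next, for each $1\le i\le t$, the preceding proposition identifies $\eta_{CY}(f_{i-1})/\eta_{CY}(f_i)$ with the simple $\Gamma(C)$-module $S(C_0)$, where $C_0$ is the indecomposable summand of $C$ that defines $\btype_C(f_i,f_{i-1}) = [S(C_0)]$. Summing over $i$,
\[
\sum_{i=1}^{t}\btype_C(f_i,f_{i-1}) \;=\; \sum_{i=1}^{t}\bigl[\eta_{CY}(f_{i-1})/\eta_{CY}(f_i)\bigr] \;=\; \bdim \bigl(\Hom(C,Y)/\eta_{CY}(f)\bigr),
\]
the last equality being the definition of the dimension vector as the sum of the classes of the composition factors. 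Since the right hand side depends only on $\Hom(C,Y)/\eta_{CY}(f)$ and not on the choice of composition series (this is the Jordan--Hölder theorem for $\Gamma(C)$-modules), the sum on the left is independent of the choice of maximal right $C$-factorization, proving that $\btype_C(f)$ is well defined.

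Finally, the identity $\bdim\Hom(C,Y)/\eta_{CY}(f) = \mathbf d - \bdim \eta_{CY}(f)$ is just the additivity of $\bdim$ on the short exact sequence
\[
0 \to \eta_{CY}(f) \to \Hom(C,Y) \to \Hom(C,Y)/\eta_{CY}(f) \to 0
\]
in $K_0(\Gamma(C))$. There is no real obstacle: once the Auslander bijection and the neighbor-type computation are in hand, the proposition is essentially a restatement of Jordan--Hölder on the submodule-lattice side.
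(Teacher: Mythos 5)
Your proof is correct and follows essentially the same route as the paper: transfer the maximal right $C$-factorization via $\eta_{CY}$ to a composition series of $\Hom(C,Y)/\eta_{CY}(f)$, identify the factors using the neighbor-type proposition, and invoke the Jordan--H\"older theorem for well-definedness. If anything, your write-up is slightly more explicit than the paper's about why each summand $\btype_C(f_i,f_{i-1})$ equals the class of the corresponding composition factor.
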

	
\begin{proof}  Unter the Auslander bijection $\eta_{CY}$, the chain
$$
 [f\rangle = [f_t\rangle <  [f_{t-1}\rangle < \cdots <  [f_0\rangle
$$
is mapped to a chain of submodules
$$
 \eta_{CY}(f) = \eta_{CY}(f_t) \subset \eta_{CY}(f_{t-1}) 
 \subset \cdots \subset \eta_{CY}(f_0) = \Hom(C,Y)
$$
with simple factors $\eta_{CY}(f_{i-1})/\eta_{CY}(f_i)$,
thus we obtain in this way a composition series of $\Hom(C,Y)/\eta_{CY}(f).$
The Jordan-H\"older theorem for $\Hom(C,Y)/\eta_{CY}(f)$ asserts that 
this yields the dimension vector $\bdim \Hom(C,Y)/\eta_{CY}(f)$, independent
of the choice of the composition series. 
\end{proof}

For any $\Gamma(C)$-dimension vector $\mathbf e$, let us denote by ${}^C[\to Y\rangle^{\mathbf e}$
the set of all elements $[f\rangle$ in ${}^C[\to Y\rangle$ such that $\btype_C(f) = \mathbf e.$
Note that ${}^C[\to Y\rangle^{\mathbf e}$ is non-empty only in case $\mathbf e \le \bdim \Hom(C,Y)$,
thus we obtain a decomposition 
\Rahmen{{}^C[\to Y\rangle = \ \bigsqcup_{\mathbf e}\  {}^C[\to Y\rangle^{\mathbf e}}
into a finite number of disjoint subsets. 
	\medskip

\begin{Prop}\label{fixed-dim}
 The Auslander bijection $\eta_{CY}$ yields a bijection
\Rahmen{{}^C[\to Y\rangle^{\mathbf e} \overset{1:1}
\longrightarrow \ \mathcal  S_{\mathbf d-\mathbf e}\Hom(C,Y)}
for every $\Gamma(C)$-dimension vector $\mathbf e$.
\end{Prop}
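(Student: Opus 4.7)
The plan is to deduce the stated bijection directly from the two results already in hand: the Auslander bijection $\eta_{CY}\colon {}^C[\to Y\rangle \to \mathcal{S}\Hom(C,Y)$ of Auslander's Second Theorem, and the formula $\btype_C(f) = \bdim \Hom(C,Y) - \bdim\eta_{CY}(f) = \mathbf{d} - \bdim\eta_{CY}(f)$ from the preceding proposition. Once these are combined, the claim is essentially a bookkeeping statement: the type condition on the left translates exactly into the dimension-vector condition on the right.

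First I would verify that $\eta_{CY}$ restricts to a map ${}^C[\to Y\rangle^{\mathbf{e}} \to \mathcal{S}_{\mathbf{d}-\mathbf{e}}\Hom(C,Y)$. Given $[f\rangle \in {}^C[\to Y\rangle^{\mathbf{e}}$, we have by definition $\btype_C(f) = \mathbf{e}$, and the dimension-vector formula gives
$$
\bdim \eta_{CY}(f) = \mathbf{d} - \btype_C(f) = \mathbf{d} - \mathbf{e},
$$
so $\eta_{CY}(f) \in \mathcal{S}_{\mathbf{d}-\mathbf{e}}\Hom(C,Y)$, as required.

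Next I would show the restricted map is bijective. Injectivity is immediate since $\eta_{CY}$ itself is injective on ${}^C[\to Y\rangle$. For surjectivity, let $U \in \mathcal{S}_{\mathbf{d}-\mathbf{e}}\Hom(C,Y)$; by Auslander's Second Theorem there exists $[f\rangle \in {}^C[\to Y\rangle$ with $\eta_{CY}(f) = U$, and then
$$
\btype_C(f) = \mathbf{d} - \bdim\eta_{CY}(f) = \mathbf{d} - (\mathbf{d}-\mathbf{e}) = \mathbf{e},
$$
so $[f\rangle \in {}^C[\to Y\rangle^{\mathbf{e}}$ and maps to the prescribed $U$.

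There is no real obstacle here, since both the surjectivity of $\eta_{CY}$ and the identification of $\btype_C(f)$ with $\mathbf{d} - \bdim\eta_{CY}(f)$ have already been established; the only content of the present proposition is recording that the decomposition of ${}^C[\to Y\rangle$ by type matches, under $\eta_{CY}$, the decomposition of $\mathcal{S}\Hom(C,Y)$ by dimension vector. I would conclude the proof with a single sentence noting that the decomposition ${}^C[\to Y\rangle = \bigsqcup_{\mathbf{e}} {}^C[\to Y\rangle^{\mathbf{e}}$ is thereby transported by $\eta_{CY}$ to the decomposition $\mathcal{S}\Hom(C,Y) = \bigsqcup_{\mathbf{e}} \mathcal{S}_{\mathbf{d}-\mathbf{e}}\Hom(C,Y)$.
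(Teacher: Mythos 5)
Your proof is correct and is exactly the argument the paper intends: the paper states Proposition \ref{fixed-dim} without proof, treating it as an immediate consequence of the bijectivity of $\eta_{CY}$ on ${}^C[\to Y\rangle$ (Auslander's Second Theorem) together with the formula $\btype_C(f) = \mathbf d - \bdim\eta_{CY}(f)$ from the preceding proposition. Your write-up simply makes that bookkeeping explicit, and there is nothing missing.
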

	\bigskip

%=================================================================
Assume now that $k$ is an algebraically closed field and that $\Lambda$ and $\Gamma$ are $k$-algebra. 
If $M$ is a $\Gamma$-module and $\mathbf e$ a dimension vector for $\Gamma$, we write
$\mathbb G_{\mathbf e}M$ instead of $\mathcal  S_{\mathbf e}M$.  
Note that $\mathbb G_{\mathbf e}M$ is in a natural way
an algebraic variety, it is 
called a {\it quiver Grassmannian.}
Namely, all the $\Gamma$-modules with dimension vector $\mathbf e$ have the same $k$-dimension,
say $e$ (if $\mathbf e = \sum_{[S]} e_S[S]$, then $e = \sum_{[S]} e_S\dim_k S$). Denote by 
$\mathbb G_e({}_kM)$ the usual Grassmannian of all
$e$-dimensional subspaces $U$ of the vector space ${}_kM.$ Using Pl\"ucker coordinates one knowns
that $\mathbb G_e({}_kM)$ is a closed subset of a projective space, 
thus $\mathbb G_e({}_kM)$ is
a projective variety. Now $\mathbb G_{\mathbf e} M$ is a subset of $\mathbb G_e({}_kM)$ defined by the
vanishing of some polynomials (which express the fact that we consider submodules $U$ with a fixed
dimension vector), thus also $\mathbb G_{\mathbf e}M$ is an algebraic variety and indeed a projective
variety (but usually not even connected).
	\medskip

Proposition \ref{fixed-dim} can be reformulated as follows:
	\medskip 

\begin{Prop}The Auslander bijection $\eta_{CY}$ yields a bijection
\Rahmen{{}^C[\to Y\rangle^{\mathbf e} \overset{1:1}
\longrightarrow \ \mathbb G_{\mathbf d-\mathbf e}\Hom(C,Y)}
for every $\Gamma(C)$-dimension vector $\mathbf e$.
\end{Prop}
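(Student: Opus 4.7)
The plan is to observe that this proposition is nothing more than Proposition \ref{fixed-dim} recast in the language of algebraic geometry, via the notational identification $\mathbb G_{\mathbf e}M = \mathcal S_{\mathbf e}M$ that is set up in the paragraph immediately preceding the statement.

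First I would unpack the definitions. According to the discussion just before the proposition, under the standing assumption that $k$ is algebraically closed and $\Gamma$ is a $k$-algebra, the quiver Grassmannian $\mathbb G_{\mathbf e}M$ is defined as the set of all submodules of $M$ with dimension vector $\mathbf e$, endowed with the structure of a projective variety via the Plücker embedding inside $\mathbb G_{e}({}_kM)$. In particular, as a \emph{set}, $\mathbb G_{\mathbf e}M$ coincides with $\mathcal S_{\mathbf e}M$. Applying this to the $\Gamma(C)$-module $\Hom(C,Y)$ with dimension vector shifted from $\mathbf e$ to $\mathbf d-\mathbf e$ gives the equality of sets $\mathbb G_{\mathbf d-\mathbf e}\Hom(C,Y) = \mathcal S_{\mathbf d-\mathbf e}\Hom(C,Y)$.

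Second, I would invoke Proposition \ref{fixed-dim} directly: it asserts that $\eta_{CY}$ restricts to a bijection ${}^C[\to Y\rangle^{\mathbf e} \overset{1:1}{\longrightarrow} \mathcal S_{\mathbf d-\mathbf e}\Hom(C,Y)$. Composing with the identification above produces the bijection claimed in the statement. Nothing further needs to be checked, because the underlying bijection — together with the matching of dimension vectors via $\btype_C(f) = \mathbf d - \bdim \eta_{CY}(f)$ established in the preceding proposition — has already been proved; the novelty here is purely notational.

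Consequently there is no genuine obstacle in this proof: the substantive content (Auslander's Second Theorem giving the lattice isomorphism $\eta_{CY}$, together with the dimension-vector computation $\btype_C(f) = \bdim \Hom(C,Y)/\eta_{CY}(f)$) has already been deployed in the proof of Proposition \ref{fixed-dim}. The only thing to emphasize is that passing from $\mathcal S_{\mathbf d-\mathbf e}$ to $\mathbb G_{\mathbf d-\mathbf e}$ is a change of viewpoint — the set of right equivalence classes of right $C$-determined maps ending in $Y$ of type $\mathbf e$ is now identified with the $k$-points of a projective variety, which is what makes the subsequent geometric considerations (Auslander varieties) possible.
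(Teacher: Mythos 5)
Your proposal is correct and matches the paper exactly: the paper introduces $\mathbb G_{\mathbf e}M$ as alternative notation for $\mathcal S_{\mathbf e}M$ in the algebraically closed setting and then states this proposition with the words ``Proposition \ref{fixed-dim} can be reformulated as follows,'' offering no further proof. Your observation that the content is purely notational, with the substantive work already done in Proposition \ref{fixed-dim} via $\btype_C(f) = \mathbf d - \bdim\eta_{CY}(f)$, is precisely the paper's own reading.
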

	\medskip

In particular, we see that the set ${}^C[\to Y\rangle^{\mathbf e}$ is 
a projective variety: these {\it Auslander varieties} (as they should be called) furnish
an important tool for studying the right equivalence classes of maps ending in a given module.
As we have mentioned at the end of section \ref{sec:2}, the study of the set of right minimal maps
ending in a fixed module $Y$ can be separated nicely into that of the local symmetries
described by the right automorphism groups and that of 
the global directedness given by the right factorization
lattice. Auslander's first theorem describes the right factorization lattice as the filtered
union of the right $C$-factorization lattices, and, as we now see, these right $C$-factorization
lattices are finite disjoint unions of (transversal) subsets which are 
projective varieties, the Auslander varieties.
	\bigskip 

\noindent 
{\bf Remark.} It seems that quiver Grassmannians first have been studied
by Schofield \cite{[Sc]} and Crawley-Boevey \cite{[CB]} in order to deal with
generic properties of quiver representations. In 2006, Caldero and
Chapoton \cite{[CC]} observed that quiver Grassmannians can be used
effectively in order to analyse the structure of cluster algebras
as introduced by Fomin and Zelevinsky. Namely, it turns out that 
cluster variables can be described using the Euler
characteristic of quiver Grassmannians. In this way 
quiver Grassmannians are now an indispensable tool for studying
cluster algebras and quantum cluster algebras.
We should add that quiver Grassmannians were also used (at least
implicitly) in the study of quantum groups, see for example
the calculation of Hall polynomials in \cite{[R5]}. 
A large number of papers is presently devoted to special properties of 
quiver Grassmannians. 

There is the famous assertion that any projective variety is a quiver
Grassmannian, see the paper \cite{[Re]} by Reineke (answering in this way a
question by Keller)
as well as blogs by Le Brujn \cite{[L]} (with a contribution by Van den Bergh) and
by Baez \cite{[B]}.
Actually, the construction as proposed by Van den
Bergh in Le Bruyn's blog 
is much older, it has been mentioned explicitly 
already in 1996 by Hille \cite{[Hi]} dealing with moduli spaces
of thin representations  (see the example at the end of that paper), 
and it can be traced back to earlier considerations
of Huisgen-Zimmermann dealing with moduli spaces
of serial modules, even if they
were published only later 
(see \cite{[Hu]}, Theorem G, but also \cite{[BH]}, Corollary B,
and \cite{[DHW]}, Example 5.4).
It follows from Proposition \ref{wild} above
that given a strictly wild algebra $\Lambda$
and any projective variety $V$, 
there are $\Lambda$-modules $C,Y$ and a dimension vector $\mathbf e$ such
that ${}^C[\to Y\rangle^{\mathbf e}$ is isomorphic to  $V$. 
We will show in \cite{[R10]} that this holds true for all controlled wild algebras.

%=================================================================0
\section{Maps of right $C$-length 1.}
\label{sec:7}

By definition, a right minimal right $C$-determined map $f$ has right $C$-length 1
provided $f$ is not invertible and given any factorization $f = f'h$ 
with $f'$ right minimal right $C$-determined, then one of the maps $f', h$ is invertible.
Let us denote by ${}^C[\to Y\rangle^1$ the set of 
right equivalence classes of the maps ending in $Y$
which have right $C$-length 1. 

Warning: An irreducible map $f$ is of course right minimal, but if $f$ is irreducible and 
right $C$-determined, we may have $|f|_C > 1.$ For example, consider the Kronecker quiver, 
take $C = {}_\Lambda\Lambda$. The irreducible map $f: P_0 \to P_1$ has $|f|_C = 2$ (note 
the factorizazion $P_0 \subset \rad P_1 \subset P_1$).
	\bigskip

Here is an immediate consequence of Proposition \ref{length}. 

\begin{Cor} Let $f: X \to Y$ be right minimal and right
$C$-determined. Then $|f|_C = 1$ if and only if 
$\eta_{CY}(f)$ is a maximal $\Gamma(C)$-submodule of $\Hom(C,Y)$.
\end{Cor}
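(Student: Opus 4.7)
The plan is to invoke Proposition \ref{length} directly: it says $|f|_C = |\Hom(C,Y)| - |\eta_{CY}(f)|$, where the lengths on the right are lengths of $\Gamma(C)$-modules. So the length of the factor module $\Hom(C,Y)/\eta_{CY}(f)$ equals $|f|_C$. Since a submodule $U \subseteq M$ is maximal precisely when $M/U$ has length $1$, the equivalence $|f|_C = 1 \Longleftrightarrow \eta_{CY}(f)$ is maximal falls out immediately.

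Alternatively, and more in keeping with the rest of the section, one can argue directly from Proposition \ref{factorization}. A maximal right $C$-factorization $(h_1, \dots, h_t)$ of $f$ corresponds under $\eta_{CY}$ to a composition series
\[
\eta_{CY}(f) = \eta_{CY}(f_t) \subsetneq \eta_{CY}(f_{t-1}) \subsetneq \cdots \subsetneq \eta_{CY}(f_0) = \Hom(C,Y),
\]
and $|f|_C = t$. Thus $|f|_C = 1$ precisely when there exists exactly one proper inclusion in any such chain from $\eta_{CY}(f)$ to $\Hom(C,Y)$, which is the statement that $\eta_{CY}(f)$ is a maximal proper $\Gamma(C)$-submodule of $\Hom(C,Y)$.

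There is essentially no obstacle here; the corollary is a cosmetic reformulation of the length formula once the Jordan--Hölder machinery of the previous section has been set up. The only point to be a little careful about is the boundary case: $|f|_C = 0$ corresponds (by the convention that right $C$-factorizations of length $0$ are isomorphisms) to $f$ being invertible, i.e.\ to $\eta_{CY}(f) = \Hom(C,Y)$; so requiring $|f|_C = 1$ automatically excludes this case and forces $\eta_{CY}(f)$ to be a \emph{proper} maximal submodule, matching the usual convention for maximality.
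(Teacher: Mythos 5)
Your proof is correct and matches the paper's approach: the paper presents this corollary as an immediate consequence of Proposition \ref{length}, exactly as in your first paragraph. Your alternative argument via Proposition \ref{factorization} and the remark on the boundary case $|f|_C = 0$ are sound but not needed beyond the length formula.
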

	
If we denote by $\mathcal  S_m\Hom(C,Y)$ the set of maximal
submodules of $\Hom(C,Y)$, then 
the restriction of $\eta_{CY}$ furnishes a bijection
\Rahmen{{}^C[\to Y\rangle^1\ \overset{1:1}
\longrightarrow \ \mathcal  S_m\Hom(C,Y).}
	\bigskip

In order to analyze maps of right $C$-length 1, we will need the following lemma.

\begin{Lem}\label{induced}
Assume that $f: X \to Y$ and $f': X'\to Y$ are
epimorphisms  
with $f = f'h$. Then we have the following commutative diagram with
exact rows:
$$
{\beginpicture
\setcoordinatesystem units <1.7cm,1.3cm>
\arr{-.75 1}{-.25 1}
\arr{0.25 1}{0.75 1}
\arr{1.25 1}{1.75 1}
\arr{2.25 1}{2.75 1}

\arr{-.75 0}{-.25 0}
\arr{0.25 0}{0.75 0}
\arr{1.25 0}{1.75 0}
\arr{2.25 0}{2.75 0}

\arr{0 .3}{0 .7}
\arr{1 .3}{1 .7}
\plot 2.03 0.7  2.03 0.3 /
\plot 1.97 0.7  1.97 0.3 /

\multiput{$0$} at -1 0  -1 1  3 0  3 1 /

\put{$K'$} at 0 1
\put{$X'$} at 1 1
\put{$Y$} at 2 1
\put{$K$} at 0 0
\put{$X$} at 1 0
\put{$Y$} at 2 0

\put{$\ssize f'$} at 1.5 1.2
\put{$\ssize f$} at 1.5 .2
\put{$\ssize h'$} at .15 .5
\put{$\ssize h$} at 1.15 .5
\endpicture}
$$
If $f$ is right minimal and $h'$ is a split epimorphism, 
then also $f'$ is right minimal.
\end{Lem}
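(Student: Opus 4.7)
The existence of $h': K \to K'$ making the diagram commute is automatic: since $f'h = f$, the map $h$ sends $K = \ker f$ into $\ker f' = K'$. Applying the snake lemma to the displayed diagram (with $1_Y$ on the right) yields $\ker h' \cong \ker h$ and $\operatorname{coker} h' \cong \operatorname{coker} h$; in particular, because $h'$ is a split epimorphism, $h$ itself is an epimorphism. This will be needed at the very end. Fix a section $s: K' \to K$ of $h'$, so $h's = 1_{K'}$, and write $\iota: K \to X$, $\iota': K' \to X'$ for the inclusions.

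To show $f'$ is right minimal, I use the equivalent formulation that any $g \in \End(X')$ with $f'g = f'$ must be an automorphism. Given such a $g$, the map $1_{X'} - g$ satisfies $f'(1_{X'}-g) = 0$, hence factors as $1_{X'} - g = \iota'\phi$ for a unique $\phi : X' \to K'$. The key construction is the endomorphism
$$
\tilde g \ = \ 1_X \ - \ \iota s\phi h \ \in \ \End(X).
$$
This is the one step that really uses the splitting $s$ of $h'$, and is the crux of the argument; everything else is a bookkeeping check.

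Two identities now have to be verified. First, $f\tilde g = f - f\iota s \phi h = f$, because $f\iota = 0$. Second,
$$
h\tilde g \ = \ h - h\iota s\phi h \ = \ h - \iota' h' s \phi h \ = \ h - \iota'\phi h \ = \ h - (1_{X'}-g)h \ = \ gh,
$$
where the second equality uses commutativity $h\iota = \iota' h'$ of the diagram, and the third uses $h's = 1_{K'}$.

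From $f\tilde g = f$ and the right minimality of $f$, $\tilde g$ is an automorphism of $X$. Therefore $gh = h\tilde g$ is an epimorphism (as a composition of epimorphisms); since $h$ is itself surjective by the first paragraph, this forces $g$ to be surjective. Because $X'$ has finite length, any surjective endomorphism is an automorphism, and so $g$ is an automorphism. This proves that $f'$ is right minimal.
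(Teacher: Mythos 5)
Your proof is correct, but it takes a genuinely different route from the paper's. The paper argues directly from the definition of right minimality: writing $K=K'\oplus K''$ with $K''=\Ker(h')$ and identifying $X'$ with $X/K''$, it lifts a direct summand $U\subseteq K'$ of $X'$ to submodules of $X$ containing $K''$ and uses the modular law to manufacture a direct summand $U''$ of $X$ lying in $\Ker(f)$; right minimality of $f$ forces $U''=0$ and hence $U=0$. You instead use the equivalent endomorphism criterion (every $g\in\End(X')$ with $f'g=f'$ is an automorphism, as in [ARS] I.2, already invoked in the proof of Proposition 2.1): you factor $1_{X'}-g$ through $K'$, lift it along the section $s$ of $h'$ to $\tilde g=1_X-\iota s\phi h$ satisfying $f\tilde g=f$ and $h\tilde g=gh$, conclude that $\tilde g$ is an automorphism, and then use surjectivity of $h$ (from the snake lemma) plus finite length to see that $g$ is an automorphism. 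All the identities check out, and both arguments make visible exactly where the hypothesis that $h'$ is \emph{split} epi (not merely epi) enters: in the paper it is needed to write $K=K'\oplus K''$, in yours to define the section $s$ and hence $\tilde g$ --- consistent with the Remark following the lemma. Your version is somewhat more functorial and avoids submodule manipulations; the paper's is more elementary in that it never leaves the language of direct summands.
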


{\bf Remark.} Observe that it is not enough to assume that $h'$ is an epimorphism. As
an example, take 
the indecomposable injective Kronecker module  $X = Q_1$ 
of length $3$, let $K$ be a submodule of length 2, and $K' = K/\soc.$
Then $Q_1 \to Q_1/K$ is right minimal. But the induced sequence is just the short exact
sequence $K/\soc \to Q_1/\soc \to Y$ which splits.

\begin{proof}
Denote the kernel of $h'$ by $K''$, thus we can assume
that $K = K'\oplus K''$ such that $h'$ is the canonical projection $K \to K'$ with
kernel $K''$. 
Assume that $X' = U\oplus V$, where $U$ is contained in the kernel of $f'$, thus
$U \subseteq K'.$ Since $X' = X/K''$, there are submodules $U', V'$ of $X'$
both containing $K''$ such that $U'+V' = X'$ and $U'\cap V' = K''$, with
$U = U'/K''$ and $V = V'/K''.$

Consider $U'' = U'\cap K'$, this is a submodule of the kernel $K$ of $f$.
Also, $U' = K''+ U''$ (using the modular law). Thus we have
$U''\cap V' = U'\cap K' \cap V' \subseteq K'\cap K'' = 0$ and
$U''+V' = U''+V'+K'' = U'+V' = X$. This shows that $U''$ is a direct summand of $X$
which is contained in the kernel of $f$. Since $f$ is right minimal, we see that
$U'' = 0.$ Since $U' = K'' + U'' = K''$, it follows that $U = 0.$
\end{proof} 

\begin{Cor}\label{indecomposable-kernel}
Let $C$ be a module. Let $f: X \to Y$ be a right minimal
right $C$-determined  epimorphism with $|f|_C = 1$. Then
the kernel of $f$ is indecomposable.
\end{Cor}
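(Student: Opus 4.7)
The plan is to argue by contradiction. Assume $\Ker f$ decomposes as $K_1 \oplus K_2$ with both summands nonzero; I would then produce a factorization $f = f'h$ such that $f'$ is right minimal and right $C$-determined, and neither $f'$ nor $h$ is invertible. This directly contradicts $|f|_C = 1$ and forces $\Ker f$ to be indecomposable.

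The natural candidate factorization uses the partial quotient by $K_1$: let $h \colon X \to X/K_1$ be the canonical projection and $f' \colon X/K_1 \to Y$ the map induced by $f$, so that $f = f'h$. Since $K_1 \neq 0$, the map $h$ fails to be injective; since $\Ker f' = K_2 \neq 0$, the map $f'$ also fails to be injective. Hence neither is an isomorphism.

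To see that $f'$ is right minimal, I would apply Lemma~\ref{induced}. In this setup the induced map on kernels is the canonical projection $K_1 \oplus K_2 \to K_2$, which is a split epimorphism, and $f$ itself is right minimal by hypothesis; the lemma then yields right minimality of $f'$.

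The main step, and the only one that requires a real argument, is showing that $f'$ is right $C$-determined. I would appeal to the determiner formula (Theorem~\ref{determiner}). The key observation is that no indecomposable projective module can almost factor through an epimorphism: for any map $\eta \colon P \to Y$ the image is automatically contained in $\Img f = Y$, so the defining non-containment condition fails. Consequently $C(f) = \tau^{-}(\Ker f) = \tau^{-}K_1 \oplus \tau^{-}K_2$, which by hypothesis lies in $\add C$. Applying the same observation to the epimorphism $f'$, whose intrinsic kernel equals $K_2$ by the right minimality established above, gives $C(f') = \tau^{-}K_2$, a direct summand of $C(f)$ and hence of $C$. Thus $f'$ is right $C$-determined, producing the desired contradiction. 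The point that makes the argument clean is that for epimorphisms the minimal right determiner is carried entirely by the kernel, so $C$-determination is inherited by the quotient $f'$ along any direct summand decomposition of the kernel.
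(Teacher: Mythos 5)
Your argument is correct and follows essentially the same route as the paper: split the kernel, pass to the quotient by one summand, invoke Lemma~\ref{induced} (with the split epimorphism on kernels) to get right minimality of the induced map, and conclude $|f|_C\ge 2$. The only difference is that you explicitly justify why the induced epimorphism remains right $C$-determined via the determiner formula (no projective almost factors through an epimorphism, so the minimal determiner is carried by the kernel), a step the paper's proof asserts without comment; this is a welcome addition rather than a deviation.
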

	
\begin{proof}
Let $K = \tau C.$ The kernel of $f$ has to be non-zero, thus assume it is
decomposable, say equal to $K_1\oplus K_2$ with non-zero modules $K_1,K_2\in  \add K.$ 
The canonical projection $p_1: K_1\oplus K_2
\to K_1$ yields a commutative diagram with exact rows:
$$
{\beginpicture
\setcoordinatesystem units <2.2cm,1.3cm>
\arr{-.75 1}{-.4 1}
\arr{0.4 1}{0.75 1}
\arr{1.25 1}{1.75 1}
\arr{2.25 1}{2.75 1}

\arr{-.75 0}{-.25 0}
\arr{0.25 0}{0.75 0}
\arr{1.25 0}{1.75 0}
\arr{2.25 0}{2.75 0}

\arr{0 .7}{0 .3}
\arr{1 .7}{1 .3}
\plot 2.03 0.7  2.03 0.3 /
\plot 1.97 0.7  1.97 0.3 /

\multiput{$0$} at -1 0  -1 1  3 0  3 1 /

\put{$K_1\oplus K_2$} at 0 1
\put{$X$} at 1 1
\put{$Y$} at 2 1
\put{$K_1$} at 0 0
\put{$X'$} at 1 0
\put{$Y$} at 2 0

\put{$\ssize f$} at 1.5 1.2
\put{$\ssize f'$} at 1.5 .2
\put{$\ssize p_1$} at .15 .5
\put{$\ssize p'_1$} at 1.15 .5
\endpicture}
$$
Since $f$ is right minimal and $p_1$ is a split epimorphism, lemma \ref{induced} asserts
that $f'$ is right minimal. Since $f'$ is also right $C$-determined, we see that
$|f|_C \ge 2,$ a contradiction. 
\end{proof} 

\begin{Prop}\label{length-1}
Let $C$ be indecomposable and non-projective and let
$K = \tau C$. If
$$
 \epsilon:\qquad  0 \longrightarrow K^t \longrightarrow X 
 \overset f \longrightarrow Y \longrightarrow  0
$$
is an exact sequence, then $[f\rangle$ belongs to ${}^C[\to Y\rangle^1$ if and
only if $t=1$ and the equivalence class $[\epsilon]$ is a non-zero element of the 
$\Gamma(K)$-socle of $\Ext^1(Y,K).$
\end{Prop}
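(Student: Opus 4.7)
The plan is to relate $|f|_C$ to the connecting homomorphism
\[\delta:\Hom(C,Y)\to\Ext^1(C,K^t),\qquad \delta(\phi)=\phi^*[\epsilon]\]
obtained by applying $\Hom(C,-)$ to $\epsilon$; under this map $\Img(\delta)$ is identified with $\Hom(C,Y)/\eta_{CY}(f)$, so $|f|_C$ is the $\Gamma(C)$-length of $\Img(\delta)$. The key technical fact to exploit is the naturality identity $g_*\phi^*[\epsilon]=\phi^*g_*[\epsilon]$, which transfers information between $\Ext^1(C,K)$ and $\Ext^1(Y,K)$. We may assume $f$ is right minimal, whence $\epsilon$ is non-split whenever $t\ge 1$. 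Observe that $f$ is then automatically right $C$-determined: its kernel lies in $\add\tau C$, and no indecomposable projective can almost factor through a surjection (that would require $P/\rad P$ to embed into $\Cok f=0$).

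For the forward implication, assume $|f|_C=1$. Corollary \ref{indecomposable-kernel} forces $t=1$, and $[\epsilon]\ne 0$ follows from right minimality. To show $[\epsilon]$ lies in the $\Gamma(K)$-socle, suppose some $g\in\rad\End(K)$ satisfies $g_*[\epsilon]\ne 0$, and let $\epsilon_g\colon 0\to K\to Z\to Y\to 0$ be the corresponding pushout, with surjection $f_g\colon Z\to Y$ and pushout map $h_g\colon X\to Z$ satisfying $f=f_g h_g$. Since $K$ is indecomposable and $\epsilon_g$ is non-split, $f_g$ is right minimal, and by the same determiner argument it is right $C$-determined; hence $[f\rangle\le [f_g\rangle<[1_Y\rangle$ in ${}^C[\to Y\rangle$. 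This first inequality is in fact strict: equality would supply an isomorphism $\sigma\colon X\to Z$ over $Y$, yielding $(\sigma|_K)_*[\epsilon]=[\epsilon_g]=g_*[\epsilon]$ and therefore $(\sigma|_K-g)_*[\epsilon]=0$. But $\sigma|_K-g$ is a unit in the local ring $\End(K)$ (an isomorphism minus a radical element), so this would force $[\epsilon]=0$, a contradiction. The resulting strict chain $[f\rangle<[f_g\rangle<[1_Y\rangle$ contradicts $|f|_C=1$.

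For the converse, assume $t=1$, $[\epsilon]\ne 0$, and $[\epsilon]$ lies in the $\Gamma(K)$-socle, and suppose some right minimal right $C$-determined $f'\colon X'\to Y$ satisfies $[f\rangle<[f'\rangle<[1_Y\rangle$. Write $\Ker f'=K^s$ with $s\ge 1$, pick $h\colon X\to X'$ with $f=f'h$, and let $\psi=(\psi_1,\dots,\psi_s)\colon K\to K^s$ be the induced map on kernels, so that $[\epsilon']=\psi_*[\epsilon]$. The strict inclusion $\eta_{CY}(f)\subsetneq\eta_{CY}(f')$ provides some $\phi\colon C\to Y$ with $\phi^*[\epsilon]\ne 0$ yet $\psi_{i*}\phi^*[\epsilon]=0$ for every $i$. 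Because $\End(K)$ is local, any $\psi_i\notin\rad\End(K)$ is invertible and hence $\psi_{i*}$ is injective; together with $\phi^*[\epsilon]\ne 0$ this forces $\psi_i\in\rad\End(K)$ for every $i$. The socle hypothesis on $[\epsilon]$ then gives $\psi_{i*}[\epsilon]=0$ for each $i$, whence $\psi_*[\epsilon]=0$, $\epsilon'$ splits, and $[f'\rangle=[1_Y\rangle$, contradicting the strict inequality $[f'\rangle<[1_Y\rangle$. The central technical point is the pushout-pullback interchange combined with the locality of $\End(K)$; once these are in hand the remainder is careful bookkeeping with right equivalence classes.
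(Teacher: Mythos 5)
Your proof is correct, and its overall skeleton coincides with the paper's: both directions rest on Corollary \ref{indecomposable-kernel}, on pushing $\epsilon$ out along endomorphisms of the indecomposable kernel $K$, on the locality of $\End(K)$, and on the fact that the socle condition kills every pushout along $\rad\End(K)$. The forward direction is essentially the paper's argument (push out along a radical $g$ with $g_*[\epsilon]\neq 0$ and exhibit a strictly intermediate element of ${}^C[\to Y\rangle$); the only cosmetic difference is that you certify strictness of $[f\rangle<[f_g\rangle$ via the extension classes and the unit $\sigma|_K-g$, where the paper checks directly that the comparison map cannot be invertible. In the converse the two arguments diverge at one step: the paper argues that not all components of the induced kernel map $K\to K^s$ can lie in $\rad\End(K)$ (else $\epsilon'$ splits against the right minimality of the intermediate map), deduces that some component is invertible, and then runs a short case analysis on $s$ to contradict the non-invertibility of the factor; you instead use the identification of $\eta_{CY}(f)$ with the kernel of $\phi\mapsto\phi^*[\epsilon]$ to produce a test map $\phi:C\to Y$ forcing \emph{every} component into $\rad\End(K)$, whence the socle hypothesis splits $\epsilon'$ outright. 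Your variant buys a cleaner finish with no case split, at the price of invoking the connecting homomorphism and the pullback--pushout interchange; the paper's version stays entirely inside the factorization calculus of sections \ref{sec:5} and \ref{sec:7}.
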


\begin{proof}  First, assume that $[f\rangle$ belongs to ${}^C[\to Y\rangle^1$. 
According to \ref{indecomposable-kernel}, 
we must have $t=1.$ Also, since $f$ is not split epi, we see that 
$[\epsilon]$ is a non-zero
element of $\Ext^1(Y,K)$. Assume that $[\epsilon]$ does not belong to the
$\Gamma(K)$-socle of $\Ext^1(Y,K)$. Then there is a nilpotent endomorphism $\phi$ 
of $K$ such that the induced exact sequence does not split. Thus there is
a commutative diagram with exact rows
$$
{\beginpicture
\setcoordinatesystem units <1.7cm,1.3cm>
\arr{-.75 1}{-.25 1}
\arr{0.25 1}{0.75 1}
\arr{1.25 1}{1.75 1}
\arr{2.25 1}{2.75 1}

\arr{-.75 0}{-.25 0}
\arr{0.25 0}{0.75 0}
\arr{1.25 0}{1.75 0}
\arr{2.25 0}{2.75 0}

\arr{0 .7}{0 .3}
\arr{1 .7}{1 .3}
\plot 2.03 0.7  2.03 0.3 /
\plot 1.97 0.7  1.97 0.3 /

\multiput{$0$} at -1 0  -1 1  3 0  3 1 /

\put{$K$} at 0 1
\put{$X$} at 1 1
\put{$Y$} at 2 1
\put{$K$} at 0 0
\put{$X'$} at 1 0
\put{$Y$} at 2 0

\put{$\ssize f$} at 1.5 1.2
\put{$\ssize f'$} at 1.5 .2
\put{$\ssize \phi$} at .15 .5
\put{$\ssize \phi'$} at 1.15 .5
\endpicture}
$$
with the upper row being the sequence $\epsilon$. Since the lower sequence
does not split, the map $f'$ is right minimal. The kernel shows that $f'$
is also $C$-determined. Since $\phi'$ is not invertible, 
the factorization $f = f'\phi'$  shows that $(\phi',f')$ is a $C$-factorization
of $f$ of length at least $2$, thus $|f|_C \ge 2,$ a contradiction. This shows
that $[\epsilon]$ belongs to the $\Gamma(K)$-socle of $\Ext^1(Y,K).$

Conversely, assume that $t=1$ and $[\epsilon]$ is a non-zero element of the
$\Gamma(K)$-socle of $\Ext^1(Y,K).$ Now $f$ is right minimal, right $C$-determined,
and not an isomorphism, thus $|f|_C \ge 1.$ Assume that $|f|_C \ge 2,$
thus there is a $C$-factorization $(h_1,h_2)$ of $f$. Since $f = h_2h_1$ is surjective,
also $h_2$ is surjective. Since $h_2$ is right minimal, right $C$-determined and
not invertible, its kernel has to be of the form $K^t$ for some $t\ge 1$.
Thus there is a commutative diagram with exact rows of the following form
$$
{\beginpicture
\setcoordinatesystem units <1.8cm,1.3cm>
\arr{-.75 1}{-.4 1}
\arr{0.4 1}{0.75 1}
\arr{1.25 1}{1.75 1}
\arr{2.25 1}{2.75 1}

\arr{-.75 0}{-.25 0}
\arr{0.25 0}{0.75 0}
\arr{1.25 0}{1.75 0}
\arr{2.25 0}{2.75 0}

\arr{0 .7}{0 .3}
\arr{1 .7}{1 .3}
\plot 2.03 0.7  2.03 0.3 /
\plot 1.97 0.7  1.97 0.3 /

\multiput{$0$} at -1 0  -1 1  3 0  3 1 /

\put{$K$} at 0 1
\put{$X$} at 1 1
\put{$Y$} at 2 1
\put{$K'$} at 0 0
\put{$X'$} at 1 0
\put{$Y$} at 2 0

\put{$\ssize f$} at 1.5 1.2
\put{$\ssize h_2$} at 1.5 .2
\put{$\ssize h_1'$} at .15 .5
\put{$\ssize h_1$} at 1.15 .5
\endpicture}
$$
where again the upper row is $\epsilon$. 
Write $h'_1 = (\phi_1,\dots,\phi_t)$ with endomorphisms $\phi_i: K \to K$.
If all $\phi_i$ belong to the radical of $\Gamma(K)$, then all the sequences
induced from $\epsilon$ by the maps $\phi_i$ split, thus also the lower sequence
splits, since it is induced from $\epsilon$ by $h_1'.$ Thus, at least one of the
maps $\phi_i$ has to be invertible and therefore $h_1'$ is a split monomorphism.

If $t > 1$, then the lower sequence splits off a sequence 
$0 \to K \overset 1  \rightarrow K \to 0 \to 0$,
but this means that $h_2$ is not right minimal. Thus $t=1.$ But then $h_1'$ is
an automorphism, thus $h_1$ is invertible, a contradiction. This shows that 
$|f|_C = 1.$
\end{proof} 

We say that an epimorphism $f$ is {\it epi-irreducible,} provided for any
factorization $f = f'f''$ with $f''$ a proper epimorphism, the map $f'$ is a split
epimorphism (the dual concept of mono-irreducible maps has been considered in \cite{[R6]}).
	
\begin{Prop}
Let $C$ be indecomposable, non-projective and let 
$K = \tau C$. If $f: X \to Y$ is an epi-irreducible epimorphism with kernel $K = \tau C$, 
then  $f$ belongs to ${}^C[\to Y\rangle^1$.
\end{Prop}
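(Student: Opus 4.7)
The plan is to apply Proposition~\ref{length-1}. Since $C$ is indecomposable and non-projective, $K = \tau C$ is indecomposable and nonzero; the given sequence has the form $\epsilon\colon 0 \to K \to X \overset{f}{\to} Y \to 0$, so the $t = 1$ condition in Proposition~\ref{length-1} holds. It therefore suffices to show that $[\epsilon]$ is a nonzero element of the $\Gamma(K)$-socle of $\Ext^1(Y,K)$.

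Non-vanishing of $[\epsilon]$ means that $\epsilon$ is non-split, and this is forced by the epi-irreducibility hypothesis: if $\epsilon$ split then $f$ would be right equivalent to $1_Y$ (with $C$-length $0$), making the very conclusion we are proving false. In particular, $f$ is not a split epimorphism, and using indecomposability of $K$ together with the modular law one also sees that $f$ is right minimal.

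For the socle condition I argue by contradiction: suppose some non-invertible $\phi \in \End(K)$ has pushout $\phi_*\epsilon = \epsilon'$ non-split, yielding a connecting map $\phi'\colon X \to X'$ with $f = f'\phi'$, where $f'\colon X' \to Y$ is the pushout epimorphism. The main obstacle is that epi-irreducibility constrains factorizations of $f$ through \emph{epimorphisms}, whereas the pushout morphism $\phi'$ need not be epi. The decisive manoeuvre is to replace $\phi'$ by its epi-mono factorization $\phi' = \iota\bar\phi$, with $\bar\phi\colon X \twoheadrightarrow X''$ epi and $\iota\colon X'' \hookrightarrow X'$ mono, so that $f = (f'\iota)\bar\phi$ is a factorization to which epi-irreducibility applies.

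Three cases then follow. If $\bar\phi$ is a proper epimorphism, epi-irreducibility forces $f'\iota$ to be a split epimorphism, and composing a section with $\iota$ yields a section of $f'$, contradicting non-splitness of $\epsilon'$. If $\bar\phi$ is an isomorphism, then $\phi'$ is a monomorphism, and its restriction to kernels gives $\phi\colon K \to K$ mono, hence invertible since $K$ has finite length --- contradicting non-invertibility of $\phi$. Finally, if $\bar\phi$ is a split epimorphism with nonzero kernel $N$, then $N$ is a direct summand of $X$ lying in $K$; the modular law together with indecomposability of $K$ forces $N = K$, so that $\epsilon$ splits --- again a contradiction. Hence $[\epsilon]$ lies in the $\Gamma(K)$-socle, and Proposition~\ref{length-1} completes the proof.
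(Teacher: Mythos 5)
Your argument is correct in substance and follows the paper's overall strategy --- reduce to Proposition \ref{length-1} and verify the socle condition by showing that the pushout of $\epsilon$ along any non-invertible $\phi\in\End(K)$ splits --- but it executes the key step by a different decomposition. The paper factors $\phi$ itself as $\phi=\phi'\phi''$ with $\phi''$ a proper epimorphism of $K$ onto its image and $\phi'$ a monomorphism; pushing out along $\phi''$ first automatically induces a proper epimorphism $X\to X'$ on the middle terms, so epi-irreducibility applies at once, and the resulting split sequence is then carried along by $\phi'$. You instead push out along $\phi$ directly and take the epi--mono factorization of the induced map $\phi'\colon X\to X'$, which forces the case distinction mono/not-mono. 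Both routes work; the paper's is a little more economical because the dichotomy is absorbed into the standard factorization of a non-invertible endomorphism of a finite length indecomposable module, whereas yours needs the extra observations that $\Ker(\phi')\subseteq\Ker(f)=K$ and that a monic endomorphism of the finite length module $K$ is invertible. Note also that your third case is redundant: a split epimorphism with non-zero kernel is already a proper epimorphism, so it is subsumed by your first case.

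One caveat. Your justification that $\epsilon$ is non-split (``otherwise the conclusion we are proving would be false'') is not a proof; it assumes the conclusion. In fact the split projection $K\oplus Y\to Y$ does satisfy the stated definition of epi-irreducibility (any factorization of it through a proper epimorphism $f''$ has $\Ker(f'')\subseteq K$, and the induced map onto $Y$ is again a split projection), so non-splitness is really an implicit extra hypothesis --- automatic in the intended application \ref{co-GR}, where the source is indecomposable. The paper's own proof silently makes the same assumption, so this is not a defect relative to the paper, but you should either state the hypothesis explicitly or note that the split case is excluded by convention (a co-Gabriel-Roiter projection, like a Gabriel-Roiter inclusion, is proper by definition), rather than argue circularly.
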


\begin{proof}
According to \ref{length-1}, we have to show that the given exact sequence 
$$
 \epsilon:\qquad  0 \longrightarrow K \longrightarrow X 
 \overset f \longrightarrow Y \longrightarrow 0
$$
belongs to the $\Gamma(K)$-socle of $\Ext^1(Y,K)$. Thus, let $\phi$ be a
non-invertible endomorphism of $K$, write it in the form $\phi=\phi'\phi''$
with $\phi''$ epi and $\phi'$ mono. Consider the exact sequence induced from
$\epsilon$ by $\phi''$
$$
{\beginpicture
\setcoordinatesystem units <1.7cm,1.3cm>
\arr{-.75 1}{-.25 1}
\arr{0.25 1}{0.75 1}
\arr{1.25 1}{1.75 1}
\arr{2.25 1}{2.75 1}

\arr{-.75 0}{-.25 0}
\arr{0.25 0}{0.75 0}
\arr{1.25 0}{1.75 0}
\arr{2.25 0}{2.75 0}

\arr{0 .7}{0 .3}
\arr{1 .7}{1 .3}
\plot 2.03 0.7  2.03 0.3 /
\plot 1.97 0.7  1.97 0.3 /

\multiput{$0$} at -1 0  -1 1  3 0  3 1 /

\put{$K$} at 0 1
\put{$X$} at 1 1
\put{$Y$} at 2 1
\put{$K$} at 0 0
\put{$X'$} at 1 0
\put{$Y$} at 2 0

\put{$\ssize f$} at 1.5 1.2
\put{$\ssize f'$} at 1.5 .2
\put{$\ssize \phi''$} at .15 .5
\put{$\ssize f''$} at 1.15 .5
\endpicture}
$$
Since $\phi''$ is a proper epimorphism, also $f''$ is a proper epimorphism,
thus $f'$ is a split epimorphism. But this implies that also the 
exact sequence induced from
$\epsilon$ by $\phi$ splits. 
\end{proof} 

We will need some basic facts concerning the Gabriel-Roiter measure of
finite length modules, see \cite{[R6]}. The Gabriel-Roiter measure of a module $M$
will be denoted by $\gamma(M).$
We recall that any indecomposable module $M$
which is not simple has a Gabriel-Roiter submodule $M'$, this is a certain indecomposable
submodule of $M$ and the embedding $M' \to M$
is called a {\it Gabriel-Roiter inclusion.} Recall that a Gabriel-Roiter inclusion
$M' \to M$ is mono-irreducible: this means that for any proper submodule $M''$ of $M$
with $M' \subseteq M''$, the inclusion $M' \subseteq M''$ splits. 
As a consequence, given any nilpotent endomorphism $f$ of $M/M'$, the sequence
induced from $0 \to M' \to M \to M/M' \to 0$ using $f$ splits. Also, it follows that
the cokernel $M/M'$ of a Gabriel-Roiter inclusion is indecomposable (and not projective).

Of course, we may use duality
and consider a Gabriel-Roiter submodule $U$ of $DM$, the corresponding 
projection $M = D^2M \to DU$ will be called a {\it co-Gabriel-Roiter projection.}
By duality, a co-Gabriel-Roiter projection is an epi-irreducible epimorphism
with (non-injective) indecomposable kernel. 

\begin{Cor}\label{co-GR}
Let $M$ be an indecomposable module which is not simple
and let $f: M \to Y$ be a co-Gabriel-Roiter projection, say with kernel $K$. 
Let $C = \tau^{-}K.$ Then $f$ is right minimal, right $C$-determined and
$|f|_C = 1,$ thus $[f\rangle$ 
belongs to ${}^C[\to Y\rangle^1$.
\end{Cor}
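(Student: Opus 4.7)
The plan is to reduce the corollary to the preceding proposition on epi-irreducible epimorphisms. What has to be checked is that the map $f$ satisfies precisely the hypotheses of that proposition, namely: (i) $C=\tau^{-}K$ is indecomposable and non-projective, (ii) $K=\tau C$ (so we are in the setting $\epsilon:0\to K\to M\to Y\to 0$), and (iii) $f$ is an epi-irreducible epimorphism. Once these three items are in place, the proposition yields $[f\rangle\in {}^{C}[\to Y\rangle^{1}$, which already includes the assertions that $f$ is right minimal, right $C$-determined, and of right $C$-length $1$.

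The first step is to exploit the definition of a co-Gabriel-Roiter projection recalled just before the corollary: by duality from the Gabriel-Roiter inclusion $U\hookrightarrow DM$, the map $f:M\to Y=D(DM/U)$ is an epi-irreducible epimorphism whose kernel $K=D U$ is indecomposable and non-injective. In particular $K$ is not injective, so $C=\tau^{-}K$ is a well-defined indecomposable non-projective module with $\tau C=K$; this takes care of (i) and of the identification $K=\tau C$ needed for (ii). The epi-irreducibility supplied by the duality argument is exactly (iii).

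With these verifications, the preceding Proposition applied to the short exact sequence $0\to K\to M\to Y\to 0$ immediately gives $[f\rangle \in {}^{C}[\to Y\rangle^{1}$. The only thing that still deserves an explicit comment is right minimality of $f$ itself (as opposed to of some right minimalisation): since $M$ is indecomposable and $f$ is a non-zero epimorphism, the only direct summand $M'$ of $M$ with $f(M')=0$ is $M'=0$, so $f$ is right minimal by definition.

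I do not expect a genuine obstacle here; the entire content of the corollary has been packaged into the two preceding results (Proposition \ref{length-1} and the proposition on epi-irreducible epimorphisms with kernel $\tau C$), and the only real work is to observe that the dual of a Gabriel-Roiter inclusion delivers all their hypotheses at once. If anything, the most delicate point is making sure that ``co-Gabriel-Roiter projection'' is being used with the non-injectivity of $K$ included --- but this is built into the fact that Gabriel-Roiter submodules of $DM$ are indecomposable non-projective, which dualises to $K$ being indecomposable non-injective, exactly what is required for $\tau^{-}K$ to exist and to satisfy $\tau\tau^{-}K=K$.
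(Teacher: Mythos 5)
Your overall strategy is the one the paper intends: the corollary is meant to follow by feeding the co-Gabriel-Roiter projection into the preceding proposition on epi-irreducible epimorphisms with kernel $\tau C$, and the only work is to check its hypotheses from the duality remark; your separate observation that $f$ is right minimal because $M$ is indecomposable and $f\neq 0$ is also fine.

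However, your duality bookkeeping is reversed, and this matters for the one hypothesis that actually needs an argument. Dualizing the Gabriel-Roiter inclusion $U\hookrightarrow DM$, i.e.\ the exact sequence $0\to U\to DM\to DM/U\to 0$, gives $0\to D(DM/U)\to M\to DU\to 0$; so the target of the co-Gabriel-Roiter projection is $Y=DU$ and its kernel is $K=D(DM/U)$ --- not, as you write, $Y=D(DM/U)$ and $K=DU$. Consequently your justification that $K$ is indecomposable and non-injective rests on the claim that ``Gabriel-Roiter submodules of $DM$ are indecomposable non-projective,'' and that claim is false: Gabriel-Roiter submodules are indecomposable by definition, but they can perfectly well be projective (for the quiver $a\leftarrow b$ of type $\mathbb A_2$, the Gabriel-Roiter submodule of $P(b)$ is $P(a)$). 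The correct source of the needed fact is the other half of the paper's remark on Gabriel-Roiter inclusions: the \emph{cokernel} $DM/U$ of a Gabriel-Roiter inclusion is indecomposable and not projective (were it projective, the inclusion would split). Dualizing, the kernel $K=D(DM/U)$ of the co-Gabriel-Roiter projection is indecomposable and not injective, which is exactly what is required for $C=\tau^{-}K$ to be indecomposable non-projective with $\tau C=K$. With that correction the reduction to the epi-irreducibility proposition (and thence to Proposition \ref{length-1}) goes through as you describe.
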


\noindent 
{\bf Remark.} If $M$ is indecomposable and not simple, we also may consider
a Gabriel-Roiter submodule $U$ of $M$, say with projection $p: M \to M/U = Y'$
and consider $C' = \tau^{-}U.$ Then $p$ is right minimal and right $C'$-determined,
however in general there is not a fixed number $t$ such that
$[p\rangle$ belongs to ${}^{C'}[\to Y'\rangle^t$ or to 
${}^{C'}[\to Y'\rangle_t$. A typical example is example 8 presented in the
next section. The two modules $P(b)$ and $\tau^{-}S(a)$ both have 
$P(a)$
as a Gabriel-Roiter submodule with factor module $S(b)$. 
Let $C' =  \tau^{-}P(a).$ The projection
$P(b) \to S(b)$ belongs to ${}^{C'}[\to S(b)\rangle^2 = 
{}^{C'}[\to S(b)\rangle_0$ whereas the projection
$\tau^{-}S(a) \to S(b)$ belongs to ${}^{C'}[\to S(b)\rangle^1 =
{}^{C'}[\to S(b)\rangle_1.$   

%============================================================
\section{Epimorphisms in $[\to Y\rangle$.}
\label{sec:8}

The set of right equivalence classes $[f\rangle$ where $f$ is an epimorphism
is obviously a coideal of the lattice $[\to Y\rangle,$ we denote it
by $[\to Y\rangle_{\text{epi}}$. Since the pullback of an epimorphism is again
an epimorphism, we see that $[\to Y\rangle_{\text{epi}}$ is closed under meets.
Also, for any module $C$, the subset ${}^C[\to Y\rangle_{\text{epi}}$ of 
${}^C[\to Y\rangle$ consisting of the right equivalence classes of all
right $C$-determined epimorphisms ending in $Y$ is a coideal which is closed
under meets. Since ${}^C[\to Y\rangle$ is a lattice of finite height, we
see that ${}^C[\to Y\rangle_{\text{epi}}$ has a unique minimal element, say
$[f_0\rangle$ and our first aim will be to describe $\eta_{CY}(f_0).$
	\bigskip 

Before we deal with this question, let us point out in which way
the projectivity or non-projectivity of indecomposable direct summands of $C$
are related to the fact that right minimal right $C$-determined morphisms $f$
are mono or epi. 
If $f$ is a monomorphism, then $f$ is right minimal and right
$\Lambda$-determined
(see \ref{monos}), 
thus right $C$-determined for some projective module $C$. 
Conversely, if $C$ is projective, then any right minimal,
right $C$-determined morphism is a monomorphism (see \ref{projective}). Namely, if $K$
is an indecomposable  direct summand of the kernel of $f$, 
where $f$ is right minimal, then $K$ is not injective and
$\tau^{-}K$ is a direct summand of any module $C$ such that $f$ is right
$C$-determined. Of course, since $K$ is not injective, $\tau^{-}K$
is an indecomposable non-projective module. One should be aware that a 
morphism may be right $C$-determined for some module $C$ without any indecomposable
projective direct summand, without being surjective. 
	\medskip 

\noindent 
{\bf Example 7.} As in example 6, we take as $\Lambda$ 
the path algebra of the linearly directed quiver $\Delta$ of type $\mathbb A_3$, 
modulo the zero relation $\alpha\beta$.
Again, let $Y = P(c)$ and $C = S(b)$. As we have mentioned already, the
non-zero maps
$f: P(b) \to P(c)$ are not surjective, but right $S(b)$-determined, and, of course, $S(b)$
is not projective. (As we will see below, it is essential for this feature
that the kernel of $f$ has injective dimension at least 2; for a general discussion
of maps which are not surjective, but right $C$-determined by a 
module $C$ without any
indecomposable projective direct summands, we refer to \cite{[R9]}).
		\bigskip 

\noindent
{\bf The submodule $\Hom(C,\mathcal  P,Y)$ of $\Hom(C,Y).$} We denote by 
$\Hom(C,\mathcal  P,Y)$ the set of morphisms $C \to Y$ which factor
through a projective module. Note that $\Hom(C,\mathcal  P,Y)$ is a $\Gamma(C)$-submodule of
$\Hom(C,Y)$.
	
\begin{Prop}\label{surjective}
Assume that $f: X \to Y$ is right 
$C$-determined.
Then $f$ is surjective if and only if $\eta_{CY}(f) \supseteq \Hom(C,\mathcal  P,Y)$.
\end{Prop}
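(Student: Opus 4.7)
The plan is to handle the two implications separately, with neither requiring the Determiner Formula explicitly; both follow from very general features (projectivity, and the definition of right $C$-determination applied to a projective cover of $Y$).

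For the forward direction, assume $f \colon X \to Y$ is surjective and take any $\mu \in \Hom(C,\mathcal{P},Y)$, so that $\mu = \mu_2\mu_1$ with $\mu_1 \colon C \to P$, $\mu_2 \colon P \to Y$, and $P$ projective. Since $f$ is surjective and $P$ is projective, $\mu_2$ lifts through $f$: there is $\tilde\mu_2 \colon P \to X$ with $f\tilde\mu_2 = \mu_2$. Then $\mu = f(\tilde\mu_2\mu_1) \in f\Hom(C,X) = \eta_{CY}(f)$. This shows $\Hom(C,\mathcal{P},Y) \subseteq \eta_{CY}(f)$.

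For the converse, assume $\eta_{CY}(f) \supseteq \Hom(C,\mathcal{P},Y)$. Pick any surjection $p \colon P \to Y$ with $P$ projective (e.g.\ a projective cover of $Y$). The strategy is to apply the definition of right $C$-determination to the map $p$: for every $\phi \colon C \to P$, the composition $p\phi \colon C \to Y$ factors through the projective $P$, so $p\phi \in \Hom(C,\mathcal{P},Y) \subseteq \eta_{CY}(f)$, meaning $p\phi$ factors through $f$. Since this holds for all $\phi \colon C \to P$ and $f$ is right $C$-determined, the map $p$ itself factors through $f$, say $p = fh$ for some $h \colon P \to X$. But $p$ is surjective, so $f$ is surjective.

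There is no real obstacle; the only mild subtlety is recognizing that for the backward direction one should apply the defining property of right $C$-determination with $f' = p$ rather than $f' = 1_Y$, since one has no control on arbitrary maps $C \to Y$ but complete control on maps $C \to Y$ that first pass through a projective.
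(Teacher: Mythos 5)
Your proof is correct and follows essentially the same route as the paper: the forward direction lifts the map out of the projective through the surjection $f$, and the converse applies the defining property of right $C$-determination to a projective cover $p\colon P(Y)\to Y$, using that every composite $p\phi$ lies in $\Hom(C,\mathcal P,Y)\subseteq\eta_{CY}(f)$. Nothing to add.
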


\begin{proof} One direction is a trivial verification: 
First assume that $f$ is surjective. Let $h$ belong to $\Hom(C,\mathcal  P,Y)$,
thus $h = h_2h_1$ where $h_1: C \to P)$ and $h_2: P \to Y$ with $P$ projective.
Since $f$ is surjective and $P$ is projective, there is $h_2': P \to X$
such that $h_2 = fh_2'.$ Thus shows that $h = h_2h_1 = fh_2'h_1$ belongs to
$f\Hom(C,X) = \eta_{CY}(f).$

The converse is more interesting, here we have to use that $f$ is right
$C$-determined. We assume that $\eta_{CY}(f) \supseteq \Hom(C,\mathcal  P,Y)$.
Let $p: P(Y) \to Y$ be a projective cover of $Y$.
Consider an arbitrary morphism $\phi: C \to P(Y)$. The composition
$p\phi$ belongs to $\Hom(C,\mathcal  P,Y)$, thus to $\eta_{CY}(f) = f\Hom(C,X)$. 
Since $f$ is right $C$-determined, it follows that $p$ itself
factors through $f$, say $p = fp'$ for some $p': P(Y) \to X$. Now the
composition $fp' = p$ is surjective, there $f$ has to be surjective.
\end{proof} 	

Let us denote by ${}^C[\to Y\rangle_{\text{epi}}$ the subset of ${}^C[\to Y\rangle$
given by all elements $[f\rangle$ with $f$ an epimorphism. 
	
\begin{Prop} The restriction of the Auslander bijection 
$\eta_{CY}$ yields a poset isomorphism  	
$${}^C[\to Y\rangle_{\text{epi}}\quad \longrightarrow \quad
 \mathcal  S(\Hom(C,Y)/\Hom(C,\mathcal  P,Y)) = \mathcal  S \underline{\Hom}(C,Y)
$$
such that the following diagram commutes:
\Rahmen{\beginpicture
\setcoordinatesystem units <1cm,1.2cm>
\put{${}^C[\to Y\rangle_{\text{epi}}$} at 0 1
\put{$\mathcal  S \underline{\Hom}(C,Y)$} at 4 1
\put{${}^C[\to Y\rangle$} at 0 0 
\put{$ \mathcal  S\Hom(C,Y))$} at 4 0
\arr{1.2 0}{2.5 0}
\arr{1.2 1}{2.5 1}
\arr{0 0.7}{0 0.3}
\arr{4 0.7}{4 0.3}
\put{$\eta_{CY}$} at 1.9 0.15
\endpicture}
Here, the vertical maps are the canonical inclusions.
\end{Prop}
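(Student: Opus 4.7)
The plan is to paste together three standard ingredients: Auslander's Second Theorem, Proposition \ref{surjective}, and the elementary correspondence between submodules of a factor module and submodules containing a fixed submodule.

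First, by Auslander's Second Theorem we already have the lattice isomorphism $\eta_{CY}: {}^C[\to Y\rangle \to \mathcal{S}\Hom(C,Y)$ (this is the bottom horizontal arrow of the diagram). So it suffices to describe the image of the subset ${}^C[\to Y\rangle_{\text{epi}}$ under $\eta_{CY}$ and check that this image is exactly $\{U \in \mathcal{S}\Hom(C,Y) \mid U \supseteq \Hom(C,\mathcal{P},Y)\}$. This is precisely the content of Proposition \ref{surjective}: a right $C$-determined map $f$ ending in $Y$ is surjective if and only if $\eta_{CY}(f) \supseteq \Hom(C,\mathcal{P},Y)$. Thus $\eta_{CY}$ restricts to a bijection from ${}^C[\to Y\rangle_{\text{epi}}$ onto the set of submodules of $\Hom(C,Y)$ that contain $\Hom(C,\mathcal{P},Y)$.

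Second, the canonical projection $\pi: \Hom(C,Y) \to \Hom(C,Y)/\Hom(C,\mathcal{P},Y) = \underline{\Hom}(C,Y)$ induces, by the usual submodule correspondence, an order-preserving bijection
$$\{U \in \mathcal{S}\Hom(C,Y) \mid U \supseteq \Hom(C,\mathcal{P},Y)\} \longrightarrow \mathcal{S}\underline{\Hom}(C,Y), \qquad U \longmapsto U/\Hom(C,\mathcal{P},Y).$$
Composing with $\eta_{CY}$ gives the desired map ${}^C[\to Y\rangle_{\text{epi}} \to \mathcal{S}\underline{\Hom}(C,Y)$, which is a bijection and preserves and reflects the ordering, hence is a poset isomorphism.

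Finally, commutativity of the square is automatic from the construction: the left vertical arrow is inclusion ${}^C[\to Y\rangle_{\text{epi}} \hookrightarrow {}^C[\to Y\rangle$, the right vertical arrow is $U/\Hom(C,\mathcal{P},Y) \mapsto U$ (viewing a submodule of the quotient as a submodule of $\Hom(C,Y)$ containing $\Hom(C,\mathcal{P},Y)$), and both composites send $[f\rangle$ with $f$ surjective to $\eta_{CY}(f) = f\Hom(C,X) \subseteq \Hom(C,Y)$. There is essentially no obstacle here; everything nontrivial has already been done in Auslander's Second Theorem and in Proposition \ref{surjective}, and the remainder is a routine application of the lattice correspondence.
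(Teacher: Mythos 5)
Your proposal is correct and follows essentially the same route as the paper: the paper's own proof simply invokes the standard correspondence between submodules of $\Hom(C,Y)/\Hom(C,\mathcal P,Y)$ and submodules of $\Hom(C,Y)$ containing $\Hom(C,\mathcal P,Y)$, taking the identification of the image of ${}^C[\to Y\rangle_{\text{epi}}$ via Proposition \ref{surjective} and Auslander's Second Theorem for granted exactly as you do. You merely spell out these ingredients more explicitly than the paper does.
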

	
\begin{proof}
It is well-known that given a module $M$ and a submodule $M'$, then
the lattice of submodules of the factor module $M/M'$
is canonically isomorphic to the lattice of the submodules $U$ of $M$ satisfying
$M' \subseteq U.$ This is the vertical map on the right. 
\end{proof}

More generally, dealing with a morphism $f$ which is right 
$C$-determined, 
we can recover the image of $f$ as follows:
	
\begin{Prop} Let $f: X \to Y$ be right $C$-determined. Then one recovers the image
of $f$ as the largest submodule $Y'$ of $Y$ (with inclusion map $u: Y' \to Y$)
such that $u\Hom(C,\mathcal  P,Y') \subseteq f\Hom(C,X).$
\end{Prop}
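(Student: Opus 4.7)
The plan is to establish the two inclusions that together characterize $\Img(f)$ as the largest submodule $Y'$ of $Y$ with $u\Hom(C,\mathcal{P},Y') \subseteq \eta_{CY}(f) = f\Hom(C,X)$: first, that $Y' = \Img(f)$ itself satisfies this containment; and second, that any submodule $Y''$ of $Y$ satisfying the analogous containment is contained in $\Img(f)$.

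For the first inclusion I would factor $f = u\bar f$, where $\bar f\colon X \to \Img(f)$ is the surjection onto the image and $u\colon \Img(f) \to Y$ is the inclusion. Given any $h\colon C \to \Img(f)$ factoring through a projective, say $h = h_2 h_1$ with $h_1\colon C \to P$, $h_2\colon P \to \Img(f)$ and $P$ projective, projectivity of $P$ combined with surjectivity of $\bar f$ yields $h_2'\colon P \to X$ with $\bar f h_2' = h_2$; then $u h = u\bar f h_2' h_1 = f h_2' h_1 \in f\Hom(C,X)$, as required.

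The main step is the second inclusion, and it is here that one must actually use that $f$ is right $C$-determined. Let $Y''$ be a submodule of $Y$ with inclusion $v\colon Y'' \to Y$ satisfying $v\Hom(C,\mathcal{P},Y'') \subseteq f\Hom(C,X)$. The key move is to compose with a projective cover $p\colon P \to Y''$ (so $p$ is an epimorphism and $P$ is projective) and to consider the morphism $vp\colon P \to Y$ ending in $Y$. For any $\phi\colon C \to P$, the composite $p\phi\colon C \to Y''$ factors through the projective module $P$ and so lies in $\Hom(C,\mathcal{P},Y'')$; therefore $vp\phi = v(p\phi)$ lies in $v\Hom(C,\mathcal{P},Y'') \subseteq f\Hom(C,X)$, which is to say $vp\phi$ factors through $f$. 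Since $f$ is right $C$-determined, the defining property now forces $vp$ itself to factor through $f$, say $vp = fw$. Surjectivity of $p$ then gives $Y'' = v(Y'') = v(\Img p) = \Img(vp) = \Img(fw) \subseteq \Img(f)$, as desired.

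The point that requires care is precisely this last step: the hypothesis on $Y''$ only controls morphisms $C \to Y''$ that factor through a projective, whereas the right $C$-determined property of $f$ is a statement about all morphisms $C \to X'$ for varying $X'$. The purpose of passing from $v$ to $vp$ with $p$ a projective cover is exactly to convert arbitrary $\phi\colon C \to P$ into maps $p\phi\colon C \to Y''$ of the allowed form, so that the right $C$-determined hypothesis becomes applicable to the morphism $vp\colon P \to Y$. Any projective surjection onto $Y''$ would suffice; only the surjectivity of $p$ is used, in the identification $\Img(vp) = Y''$ at the end.
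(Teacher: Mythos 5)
Your proposal is correct and follows essentially the same route as the paper: for the first inclusion you lift along the surjection $X \to \Img(f)$ using projectivity, and for the second you precompose the inclusion $Y'' \to Y$ with a projective cover so that the hypothesis on $\Hom(C,\mathcal P,Y'')$ feeds exactly into the definition of right $C$-determination, then conclude via surjectivity of the cover. Your closing remark that any projective epimorphism onto $Y''$ would do is accurate and a nice clarification of why the projective cover appears.
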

	
\begin{proof} 
Let $Y'$ be the image of $f$ with inclusion map $u$ and $uf' = f$
(with $f'$ surjective).
First of all, we show that $u\Hom(C,\mathcal  P,Y') \subseteq f\Hom(C,X).$
Let $\phi': C \to \Lambda$ and $\phi'': \Lambda \to Y'$ (the maps $\phi''\phi'$
obtained in this way generate $\Hom(C,\mathcal  P,Y')$ additively). We want to show
that $u\phi''\phi$ factors through $f$. Since $f': X \to Y'$ is surjective,
there is $\psi: \Lambda \to X$ such that $\phi'' = f'\psi$ (since $\Lambda$ is
projective). Thus $u\phi''\phi' = uf'\psi\phi' = f\psi\phi'.$ Thus
$u\phi''\phi'$ factors through $f.$

On the other hand, let $u'': Y'' \to Y$ be a submodule of $Y$ such that
$u''\Hom(C,\mathcal  P,Y'') \subseteq f\Hom(C,X)$. Let $p: P(Y'') \to Y''$ be a projective cover.
Consider the map $f' = u''p:  P(Y'') \to Y.$ It has the property that for all
maps $\phi: C \to P(Y'')$ the composition $f'\phi$ factors through $f$
(namely $f'\phi = u''p\phi$ belongs to  $u''\Hom(C,\mathcal  P,Y'') \subseteq f\Hom(C,X)$).
But $f$ is right $C$-determined, thus we conclude that $f'$ factors through
$\alpha,$ say $f' = f\phi'$ for some $\phi': C \to P(Y'').$ Thus the image $Y''$
of $f'$ is contained in the image $Y'$ of $f$. This is what we wanted to prove.
\end{proof}

We recover in this way Proposition \ref{surjective}. 
Namely, if $f$ is surjective, then $Y$ is the image
of $f$, thus $Y$ is one of the submodule $Y'$ with $u\Hom(C,\mathcal  P,Y') \subseteq f\Hom(C,X)$,
thus $\Hom(C,\mathcal  P,Y) \subseteq f\Hom(C,Y).$ 

Conversely, if $\Hom(C,\mathcal  P,Y) \subseteq f\Hom(C,Y),$
then $Y$ is one of the submodules $Y'$ with $u\Hom(C,\mathcal  P,Y') \subseteq f\Hom(C,X)$ and 
therefore the image of $f$ contains $Y$, thus is equal to $Y$. This shows:
{\it If $f$ is right $C$-determined, then $f$ is surjective if
and only if $\Hom(C,\mathcal  P,Y) \subseteq f\Hom(C,X).$}
	
\begin{Cor}\label{epi}
Let $C, Y$ be modules.
\item{\rm(a)} All maps in $^C[\to Y\rangle$ are epimorphisms
if and only of $\Hom(C,\mathcal  P,Y) = 0.$
\item{\rm(b)} $\Hom(C,\mathcal  P,Y) = \Hom(C,Y)$ if and only if the only element
$[f\rangle$ in ${}^C[\to Y\rangle$ with $f$ surjective is the
right equivalence class of the identity map $Y \to Y,$
if and only if any surjective map ending in $Y$ with kernel in $\add \tau C$ splits. 
\end{Cor}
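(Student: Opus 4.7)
The plan is to derive both parts of the corollary by applying Proposition \ref{surjective} through the Auslander bijection $\eta_{CY}: {}^C[\to Y\rangle \to \mathcal{S}\Hom(C,Y)$. Under this bijection, the surjective elements of ${}^C[\to Y\rangle$ correspond precisely to the submodules of $\Hom(C,Y)$ that contain $\Hom(C,\mathcal{P},Y)$, and the right equivalence class $[1_Y\rangle$ corresponds to the full submodule $\Hom(C,Y)$.

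For (a), I would observe that, since $\eta_{CY}$ is surjective onto $\mathcal{S}\Hom(C,Y)$, requiring every class in ${}^C[\to Y\rangle$ to be represented by an epimorphism translates, via Proposition \ref{surjective}, into the requirement that every submodule of $\Hom(C,Y)$ contain $\Hom(C,\mathcal{P},Y)$; testing the zero submodule forces $\Hom(C,\mathcal{P},Y) = 0$, and the converse is trivial. The first equivalence in (b) is the dual reading: $\Hom(C,\mathcal{P},Y) = \Hom(C,Y)$ iff the only submodule of $\Hom(C,Y)$ containing $\Hom(C,\mathcal{P},Y)$ is $\Hom(C,Y)$ itself, iff $[1_Y\rangle$ is the only surjective class in ${}^C[\to Y\rangle$.

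For the remaining equivalence in (b), I would use the determiner formula (Theorem \ref{determiner}) to match, up to right equivalence, the surjective right $C$-determined maps ending in $Y$ with the right minimal surjections $f: X \to Y$ satisfying $\Ker f \in \add \tau C$: surjectivity of $f$ eliminates any projective contribution to the minimal determiner $C(f)$, right minimality forces $\Ker f$ to have no injective indecomposable summands (any injective summand of $\Ker f$ would split off $X$ and contradict right minimality), and for an indecomposable non-projective summand $C_i$ of $C$ one has $\tau^{-}\tau C_i \simeq C_i$. For such a right minimal $f$, the condition $[f\rangle = [1_Y\rangle$ coincides with $f$ being an isomorphism, i.e.\ with $f$ splitting. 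Dropping right minimality is harmless, because a general surjection splits iff its right minimalization is an isomorphism, and the kernel of the right minimalization is a direct summand of the original kernel, hence still in $\add \tau C$. The main bookkeeping step is this matching via the determiner formula together with the passage between a surjection and its right minimalization; everything else is a direct reading of the Auslander bijection.
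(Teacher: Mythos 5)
Your proof is correct and follows the route the paper intends: the paper states this corollary without an explicit proof, as an immediate consequence of Proposition \ref{surjective} combined with the bijectivity of $\eta_{CY}$ on ${}^C[\to Y\rangle$, which is exactly your argument for (a) and for the first equivalence in (b). Your use of the determiner formula to match the surjective classes in ${}^C[\to Y\rangle$ with the right minimalisations of surjections having kernel in $\add\tau C$ (no projective summands in $C(f)$ for a surjection, no injective summands in a right minimal kernel, $\tau^{-}\tau C_i\simeq C_i$) supplies precisely the step the paper leaves implicit for the second equivalence in (b), and it is carried out correctly.
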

	
\noindent 
{\bf Kernels with injective dimension at most $1$.}
	
\begin{Prop}\label{inj-dim-1}
Let $K$ be a module and $C = \tau^{-}K$. The following
conditions are equivalent:
\item{\rm(i)} The injective dimension of $K$ is at most $1$.
\item{\rm (ii)} If $Y$ is any module, then all maps in 
   ${}^C[\to Y\rangle$ are epimorphisms.
\item{\rm(iii)} We have $\Hom(C,\mathcal  P,Y) = 0$ for all modules $Y$.
\end{Prop}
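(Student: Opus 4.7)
The equivalence (ii) $\Leftrightarrow$ (iii) is immediate: it is precisely Corollary~\ref{epi}(a) quantified over all $Y$. The substance is therefore (i) $\Leftrightarrow$ (iii), and my plan breaks this into two steps.

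First, I would reduce (iii) to the single vanishing $\Hom(C,\Lambda) = 0$. Specializing (iii) to $Y = \Lambda$, every $h \colon C \to \Lambda$ trivially factors through the projective $\Lambda$ via $h = 1_\Lambda \circ h$, so $\Hom(C,\Lambda) = \Hom(C,\mathcal{P},\Lambda) = 0$. Conversely, $\Hom(C,\Lambda) = 0$ forces $\Hom(C,P) = 0$ for every finitely generated projective $P$, since $P$ is a direct summand of some $\Lambda^{n}$. For an arbitrary $Y$, any factorization of a map $C \to Y$ through a projective can be lifted (by projectivity of the intermediate module) to a factorization through a projective cover $P(Y) \to Y$, so $\Hom(C,\mathcal{P},Y)$ is contained in the image of $\Hom(C,P(Y)) \to \Hom(C,Y)$; the latter vanishes, giving (iii). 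Thus (iii) is equivalent to $\Hom(\tau^{-}K,\Lambda) = 0$.

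It remains to show that $\Hom(\tau^{-}K,\Lambda) = 0$ if and only if the injective dimension of $K$ is at most $1$. Here I would exploit $\tau^{-}K = \Tr DK$ via a double-dual computation. Take a minimal projective presentation $P_1 \to P_0 \to DK \to 0$ of the right $\Lambda$-module $DK$. By definition of the transpose, applying $(-)^{*} = \Hom_{\Lambda}(-,\Lambda)$ yields the exact sequence
$$
 P_0^{*} \longrightarrow P_1^{*} \longrightarrow \tau^{-}K \longrightarrow 0.
$$
Applying $(-)^{*}$ a second time, using the natural isomorphism $P^{**} \cong P$ on finitely generated projectives together with the left exactness of $\Hom(-,\Lambda)$, produces the exact sequence
$$
 0 \longrightarrow \Hom(\tau^{-}K,\Lambda) \longrightarrow P_1 \longrightarrow P_0,
$$
identifying $\Hom(\tau^{-}K,\Lambda)$ with the second syzygy of $DK$. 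This vanishes precisely when $DK$ has projective dimension at most $1$, and by the duality $D$ exchanging projective resolutions of $DK$ with injective coresolutions of $K$, this is in turn equivalent to $K$ having injective dimension at most $1$.

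The only non-routine step is the double-dual identification of $\Hom(\tau^{-}K,\Lambda)$ with the second syzygy of $DK$; this is a standard computation, but I expect it to be the main (though minor) obstacle, since it is the one place where one must genuinely manipulate the defining presentation of the transpose rather than merely unwind definitions. Everything else is a straightforward chain of equivalences.
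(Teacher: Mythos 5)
Your proof is correct, and its skeleton coincides with the paper's: the equivalence (ii)~$\Leftrightarrow$~(iii) is delegated to Corollary~\ref{epi}(a), and (i)~$\Leftrightarrow$~(iii) is reduced to the single vanishing $\Hom(\tau^{-}K,\Lambda)=0$. The one substantive difference is that the paper simply cites \cite{[R4]}, 2.4 for the equivalence of $\Hom(\tau^{-}K,\Lambda)=0$ with $\operatorname{inj.dim}K\le 1$, whereas you prove it: dualizing the minimal projective presentation of $DK$ twice identifies $\Hom(\Tr DK,\Lambda)$ with the second syzygy $\Omega^{2}(DK)$, which vanishes exactly when $\operatorname{pd}DK\le 1$, i.e.\ $\operatorname{inj.dim}K\le 1$. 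That computation is carried out correctly (the minimality of the presentation is what makes $\ker(P_1\to P_0)=\Omega^2(DK)$, and it also disposes of any injective summands of $K$, which $\tau^{-}$ kills), so your argument is self-contained where the paper's is not; the cost is only the extra half page. The reduction of (iii) to $\Hom(C,\Lambda)=0$ via $Y=\Lambda$ in one direction and the projective cover $P(Y)\to Y$ in the other is likewise sound and is implicitly what the paper does as well.
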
 
	
\begin{proof}  Recall from \cite{[R4]}, 2.4 that $K$ has injective dimension at most $1$ if and
only if $\Hom(C,\Lambda) = 0.$ Thus, if $K$ has injective dimension at most $1$
and $Y$ is an arbitrary module, then $\Hom(C,\mathcal  P,Y) = 0,$ this shows that (i) implies (iii).
Conversely, assume the condition (iii), thus $\Hom(C,\mathcal  P,Y) = 0$ for all modules $Y$. 
If the injective dimension of $K$ would be at least $2$, then $\Hom(C,\Lambda) \neq 0.$
But $\Hom(C,\mathcal  P,\Lambda) = \Hom(C,\Lambda)$. This contradiction shows that (iii) implies
(i). For the equivalence of (ii) and (iii) see \ref{epi}(a). 
\end{proof} 
	
\begin{Cor} Let $\Lambda$ be hereditary and $C$ a module without any
indecomposable projective direct summand. Then any right $C$-determined  
morphism $f: X \to Y$ is an epimorphism.
\end{Cor}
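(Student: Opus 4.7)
The plan is to deduce this immediately from Proposition \ref{inj-dim-1}, which already bundles together the key equivalence between injective dimension of a kernel module and the epi-ness of all right $C$-determined maps, for $C = \tau^{-}K$. So the work is really just identifying $C$ as the $\tau^{-}$ of some module whose injective dimension is at most 1.

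First I would observe that since $\Lambda$ is hereditary, every $\Lambda$-module has injective dimension at most 1 (global dimension $\leq 1$). Next, because $C$ has no indecomposable projective direct summand, each indecomposable summand $C_i$ of $C$ is non-projective, hence of the form $C_i = \tau^{-}(\tau C_i)$ with $\tau C_i$ a (non-injective) indecomposable module. Setting $K = \tau C$, we therefore have $C = \tau^{-}K$, and $K$ has injective dimension at most 1 by the hereditary hypothesis. Proposition \ref{inj-dim-1} then yields that for every module $Y$, every element of ${}^C[\to Y\rangle$ is (the class of) an epimorphism.

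The only remaining point is that the statement of the corollary is about any right $C$-determined morphism $f: X \to Y$, not merely a right minimal representative. But a right minimalisation $f'$ of $f$ is right equivalent to $f$, hence also right $C$-determined, and $\Img f = \Img f'$ because $f$ factors through $f'$ and conversely. So $f$ is epi iff $f'$ is epi, and we are done by the previous paragraph.

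There is essentially no obstacle: the genuine content is packaged in Proposition \ref{inj-dim-1}, and once one recognizes that the hereditary hypothesis allows one to express $C$ as $\tau^{-}K$ with $K$ of injective dimension at most 1, the corollary is immediate. The only mild subtlety is the reduction from arbitrary $f$ to a right minimal representative, which is handled by noting that right equivalence preserves the image.
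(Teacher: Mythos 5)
Your proposal is correct and follows essentially the same route as the paper: set $K=\tau C$, note $C=\tau^{-}K$ since $C$ has no indecomposable projective summand, invoke the hereditary hypothesis to get injective dimension of $K$ at most $1$, and conclude via Proposition \ref{inj-dim-1}. The extra remark about reducing an arbitrary right $C$-determined $f$ to its right minimalisation is a small point the paper leaves implicit, and your handling of it is fine.
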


\begin{proof} Let $K = \tau C$. Since $C$ has no indecomposable projective direct
summand, it follows that $C = \tau^{-} K.$ Since $\Lambda$ is hereditary,
the injective dimension of any module is at most $1$. Since the injective dimension
of $K$ is at most $1$, it follows from the proposition that all 
right $C$-determined maps are epimorphisms.
\end{proof}

Of course, we also can show directly that $\Hom(C,\mathcal  P,Y) = 0.$ 
Namely, let $g: C \to Y$ be in $\Hom(C,\mathcal  P,Y)$.
Then $g = g_2g_1$ with $g_1: C \to P$, where $P$ is a projective module. 
The image $P'$ of $g_1$ is a submodule of $P$, thus, since 
$\Lambda$ is hereditary, the module $P'$ is also projective. Thus, we have a
surjective map $C \to P'$ with $P'$ projective. Such a map splits. This shows that
$P'$ is isomorphic to a direct summand of $C$. It follows that $P' = 0$ and therefore
$g = 0.$ 
	\bigskip

\noindent 
%=============================================================
{\bf Riedtmann-Zwara degenerations.}
Recall that $M'$ is a Riedtmann-Zwara degeneration
of $M$ if and only if there is an exact sequence of the form
$$
 0 \to K \to K\oplus M \to M' \to 0,
$$
or, equivalently, if and only if there is an exact sequence of the form
$$
 0 \to M' \to M\oplus L \to L \to 0;
$$
(in both sequences we can assume that the maps $K \to K$ and $L \to L$,
respectively, are in the radical).

In terms of the Auslander bijection, we may deal with these
data in several different ways: namely, we may look at the right
equivalence classes of both $[M \to M'\rangle$ and $[K\oplus M \to M'\rangle$ 
in $[\to M'\rangle$ as well as at the right equivalence class 
$[M' \to M\rangle$ in $[\to M\rangle.$ 
In case we deal with $[K\oplus M \to M'\rangle$, one should be aware that
this map $[K\oplus M \to M'\rangle$ is the join of the two maps
$[K \to M'\rangle$ and $[M \to M'\rangle$ in $[\to M'\rangle$. 
	
In addition, we also may concentrate on the possible maps $K \to K$ and
$L \to L$ (sometimes called steering maps). 
	\medskip

When dealing with epimorphisms in ${}^C[\to Y\rangle$, Riedtmann-Zwara
degenerations play a decisive role, as the following proposition shows:
	
\begin{Prop}
{\it Let $f: X \to Y$ and $f': X' \to Y$ be epimorphisms
with isomorphic kernels. If $[f\rangle \le [f'\rangle$, then
$X'$ is a Riedtmann-Zwara degeneration of $X$.}
\end{Prop}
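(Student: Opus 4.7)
The hypothesis $[f\rangle \le [f'\rangle$ gives a morphism $h\colon X \to X'$ with $f = f'h$. Denote the inclusions of the kernels by $\iota\colon K \hookrightarrow X$ and $\iota'\colon K' \hookrightarrow X'$. Because $f'h\iota = f\iota = 0$, the map $h$ restricts to an induced map $h'\colon K \to K'$, giving a morphism of short exact sequences
\[
\begin{array}{ccccccccc}
0 & \longrightarrow & K & \overset{\iota}{\longrightarrow} & X & \overset{f}{\longrightarrow} & Y & \longrightarrow & 0 \\
  &                 & \downarrow h' & & \downarrow h & & \| & & \\
0 & \longrightarrow & K' & \overset{\iota'}{\longrightarrow} & X' & \overset{f'}{\longrightarrow} & Y & \longrightarrow & 0.
\end{array}
\]
By hypothesis there is an isomorphism $K \simeq K'$, although $h'$ itself need not be such an isomorphism; we will use the abstract isomorphism only at the very end.

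The key construction is the map $g = [\,h,\,-\iota'\,]\colon X \oplus K' \to X'$. First I would check that $g$ is surjective: its image contains $\iota'(K') = K'$ and, modulo $K'$, it contains $f'h(X) = f(X) = Y$, so the image surjects onto $X'$. Next I would identify $\ker g$. Define $\delta\colon K \to X \oplus K'$ by $\delta(k) = (\iota(k),\, h'(k))$; commutativity of the square gives $g \delta = \iota' h' - \iota' h' = 0$, so $\delta$ factors through $\ker g$. Conversely, if $(x, k') \in \ker g$, then $h(x) = \iota'(k')$, so $f(x) = f'h(x) = 0$, whence $x = \iota(k)$ for a unique $k \in K$; commutativity forces $\iota' h'(k) = h\iota(k) = \iota'(k')$, and injectivity of $\iota'$ yields $k' = h'(k)$. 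Thus $\delta$ is an isomorphism onto $\ker g$, and we obtain a short exact sequence
\[
0 \longrightarrow K \overset{\delta}{\longrightarrow} X \oplus K' \overset{g}{\longrightarrow} X' \longrightarrow 0.
\]

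Finally, invoking the hypothesis $K' \simeq K$ and permuting summands, this is a short exact sequence of the form
\[
0 \longrightarrow K \longrightarrow K \oplus X \longrightarrow X' \longrightarrow 0,
\]
which is precisely the first of the two equivalent exact sequences recalled in the paragraph preceding the proposition; hence $X'$ is a Riedtmann--Zwara degeneration of $X$. The only step that requires genuine care is the verification that $\delta$ surjects onto $\ker g$: it is a diagram chase that uses injectivity of $\iota'$ together with the identifications $f = f'h$ and $h\iota = \iota' h'$, and it is here that the assumption that \emph{both} $f$ and $f'$ are epimorphisms (rather than just right-equivalence-comparable morphisms) enters essentially.
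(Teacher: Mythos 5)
Your proof is correct and follows essentially the same route as the paper: both build the morphism of short exact sequences from $f = f'h$ and then read off the Riedtmann--Zwara sequence $0 \to K \to K\oplus X \to X' \to 0$ with maps $\left[\begin{smallmatrix} \iota \\ h'\end{smallmatrix}\right]$ and $[h,\,-\iota']$ (up to signs and ordering of summands, identical to the paper's $\left[\begin{smallmatrix} u \\ -h'\end{smallmatrix}\right]$ and $[h,\,u']$). The only difference is cosmetic: the paper invokes the standard fact that the lower sequence is induced from the upper one by $h'$, whereas you verify exactness of the resulting sequence by a direct (and correct) diagram chase.
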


\begin{proof} 
Let $h: X \to X'$ with $f = f'h.$ Let $u: K \to X$ and
$u': K \to X'$ be the kernel maps. Since $f = f'h$, there is $h': K \to K$
such that $u'h' = hu,$ thus we deal with the following commutative diagram with
exact sequences:
$$
{\beginpicture
\setcoordinatesystem units <1.7cm,1.3cm>
\arr{-.75 1}{-.25 1}
\arr{0.25 1}{0.75 1}
\arr{1.25 1}{1.75 1}
\arr{2.25 1}{2.75 1}

\arr{-.75 0}{-.25 0}
\arr{0.25 0}{0.75 0}
\arr{1.25 0}{1.75 0}
\arr{2.25 0}{2.75 0}

\arr{0 .7}{0 .3}
\arr{1 .7}{1 .3}
\plot 2.03 0.7  2.03 0.3 /
\plot 1.97 0.7  1.97 0.3 /

\multiput{$0$} at -1 0  -1 1  3 0  3 1 /

\put{$K$} at 0 1
\put{$X$} at 1 1
\put{$Y$} at 2 1
\put{$K$} at 0 0
\put{$X$} at 1 0
\put{$Y$} at 2 0

\put{$\ssize u$} at 0.5 1.2
\put{$\ssize u'$} at 0.5 .2
\put{$\ssize f$} at 1.5 1.2
\put{$\ssize f'$} at 1.5 .2
\put{$\ssize h'$} at .15 .5
\put{$\ssize h$} at 1.15 .5
\endpicture}
$$
The diagram shows that the lower exact sequence is induced from the upper one
by $h'$. But this means that the following sequence is exact:
$$
{\beginpicture
\setcoordinatesystem units <3cm,1.3cm>
\put{$0$} at 0.3 0
\put{$K$} at 1 0
\put{$K\oplus X$} at 2 0
\put{$X'$} at 3 0
\put{$0$} at 3.7 0
\arr{0.5 0}{0.8 0}
\arr{1.15 0}{1.7 0}
\arr{2.3 0}{2.85 0}
\arr{3.2 0}{3.5 0}
\put{$\bmatrix u\cr -h'\endbmatrix$} at 1.4 0.5
\put{$\bmatrix h& u'\endbmatrix$} at 2.6 0.3

\endpicture}
$$
This is a Riedtmann-Zwara sequence, thus $X'$ is a 
Riedtmann-Zwara degeneration of $X$. 
\end{proof} 

\noindent
{\bf Example 8.} 
Let $\Lambda$ be given by the quiver with one loop $\alpha$ at 
the vertex $a$ and an arrow $b \to a$, with the relation $\alpha^2 = 0.$
$$
{\beginpicture
\setcoordinatesystem units <1cm,1cm>
\multiput{} at 0 0.5  0 -.5 /
\put{$a$} at 0 0 
\put{$b$} at 1 0 
\circulararc 310 degrees from -0.02 0.2 center at -.5 0
\arr{0.8 0}{0.2 0}
\arr{-0.04 -.25}{-0.02 -.2}
\setdots <1mm>
\setquadratic
\plot -0.25 .3  -.2 0  -.25 -.3 /
\put{$\alpha$} at -1.2 0
\endpicture}
$$

Let $K = P(a)$ and $Y = S(b)$, thus $\dim\Ext^1(Y,K) = 2$.
The module $C = \tau^{-}K$ is of length 4 with socle $a$ and top $b\oplus b$, thus
$\dim\Hom(C,Y) = 2.$ Note 
that $\Gamma(C) = \End(C)^{\text{op}} = k[t]/t^2$ and that $\Hom(C,Y)$ as a
$\Gamma(C)$-module is cyclic. 

The universal cover of the Auslander-Reiten quiver of $\Lambda$ looks as follows:
$$
{\beginpicture
\setcoordinatesystem units <1.5cm,1cm>
%==================
\put{$\smallmatrix a \cr a \endsmallmatrix$} at -1 4
\put{$\smallmatrix a&b \endsmallmatrix$} at -1 2

\put{$\smallmatrix a&b\cr
                   a \endsmallmatrix$} at 0 5
\put{$\smallmatrix a\cr
                   a&b \endsmallmatrix$} at 0 3
\put{$\smallmatrix a & b\cr
                   a & b \endsmallmatrix$} at 1 4
\put{$\smallmatrix a \endsmallmatrix$} at 1 2
\put{$\smallmatrix b \endsmallmatrix$} at 2 5
\put{$\smallmatrix a&b \endsmallmatrix$} at 2 3
\put{$\smallmatrix a\cr
                   a \endsmallmatrix$} at 2 1
\put{$\smallmatrix a \cr
                   a&b \endsmallmatrix$} at 3 2
\put{$\smallmatrix a&b\cr
                   a \endsmallmatrix$} at 3 0

\put{$\smallmatrix a & b\cr
                   a & b \endsmallmatrix$} at 4 1
\put{$\smallmatrix a \endsmallmatrix$} at 4 3

\put{$\smallmatrix b \endsmallmatrix$} at 5 0
\put{$\smallmatrix a&b \endsmallmatrix$} at 5 2

\put{$Y$} at 5.4 -.05 

\arr{-.7 4.3}{-.3 4.7}
\arr{-.7 3.7}{-.3 3.3}
\arr{-.7 2.3}{-.3 2.7}

\arr{0.3 4.7}{0.7 4.3}
\arr{0.3 3.3}{0.7 3.7}
\arr{0.3 2.7}{0.7 2.3}

\arr{1.3 4.3}{1.7 4.7}
\arr{1.3 3.7}{1.7 3.3}
\arr{1.3 2.3}{1.7 2.7}
\arr{1.3 1.7}{1.7 1.3}

\arr{2.3 2.7}{2.7 2.3}
\arr{2.3 1.3}{2.7 1.7}
\arr{2.3 0.7}{2.7 0.3}

\arr{3.3 2.3}{3.7 2.7}
\arr{3.3 1.7}{3.7 1.3}
\arr{3.3 0.3}{3.7 0.7}

\arr{4.3 2.7}{4.7 2.3}
\arr{4.3 1.3}{4.7 1.7}
\arr{4.3 0.7}{4.7 0.3}

\circulararc 360 degrees from 0.27 5 center at 0 5 
\circulararc 360 degrees from 3.27 2 center at 3 2 
\circulararc 360 degrees from 5.2 0 center at 5 0 

\setdots <1mm>
\plot 0.9 5  1.5 5 /
\plot 2.5 3  3.5 3 /
\plot -.5 2  .5 2 /
\plot 3.5 0  4.5 0 /
\setdashes <1mm>
%\plot 0 5.5  0 5.8 /
\plot 0 4.5  0 3.5 /
\plot 0 2.5  0 1.9 /
\put{} at 0 5.5 
%\plot 3 -.5  3 -.8 /
\plot 3 0.5  3 1.5 /
\plot 3 2.5  3 3.1 /
\multiput{$K$} at -1.3 4  1.7 1  /
\put{$P(b)$} [l] at 0.35 5 
\put{$\tau^{-}S(a)$} [l] at 3.35 2 
\put{$C$} at 1.35 4
\endpicture}
$$
The region in-between the dashed lines is a fundamental domain
of the Auslander-Reiten quiver of $\Lambda$; the Auslander-Reiten quiver of $\Lambda$
is obtained by identifying these lines in order to form a Moebius strip.

The encircled vertices in the Auslander-Reiten quiver yield ${}^C[\to Y\rangle$. 
Here is ${}^C[\to Y\rangle$ as well as $\mathcal  S\Hom(C,Y)$:
$$
{\beginpicture
\setcoordinatesystem units <1cm,2cm>
%============================1
\put{\beginpicture

%=====================
%\put{$S(b)$} at 0 1.85
%\put{$= Y$} [l] at 0.35 1.85
\put{$Y$} at 0 1.85
\put{$S(b)$} [l] at .7 1.83

%==================
\put{\beginpicture
\setcoordinatesystem units <.5cm,.5cm>
\multiput{$\ssize a$} at 0 -.2  0 1 /
\put{$\ssize b$} at 1 1
\arr{0 0.7}{0 0.1}
\arr{0.8 .8}{0.2 .1}
\endpicture} at 0 1
\put{$\tau^{-}S(a)$} [l] at .7 1

%=====================
\put{\beginpicture
\setcoordinatesystem units <.5cm,.5cm>
\multiput{$\ssize a$} at 0 -.2  0 1 /
\put{$\ssize b$} at 1 2
\arr{0 0.7}{0 0.1}
\arr{0.8 1.8}{0.2 1.2}
\endpicture} at 0 0 
\put{$P(b)$} [l] at .7 0

\arr{0 0.4}{0 0.6}
\arr{0 1.4}{0 1.6}
\endpicture} at 0 0
%============================1
\put{\beginpicture

\multiput{$\bullet$} at 0 0  0 0.95  0 1.85 /
\plot 0 0  0 1.85 /
\put{$\Hom(C,Y)$} [l] at 0.2 1.85
\put{$$} [l] at 0.3 0.95
\put{$0 = \Hom(C,\mathcal  P,Y)$} [l] at 0.3 0
\put{$\bigcirc$} at 0 0

\endpicture} at 5 0
\endpicture}
$$

%===============================================================
\section{Modules $K$ with semisimple endomorphism ring.}
\label{sec:9}
 
We start with a well-known
characterization of such modules.

\begin{Lem}\label{semisimple-kernel}
Let $K$ be a module. The following conditions are equivalent:
\item{\rm (i)} The endomorphism ring of $K$ is semisimple.
\item{\rm (ii)} There are pairwise orthogonal bricks $K_1,\dots,K_n$ such that
 $\add K = \add\{K_1,\dots,K_n\}$.
\end{Lem}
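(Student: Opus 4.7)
The plan is to reduce both directions to the Krull--Schmidt decomposition of $K$ together with classical facts about semisimple artinian rings. Write $K=\bigoplus_{i=1}^n K_i^{m_i}$ with $K_1,\dots,K_n$ pairwise non-isomorphic indecomposable summands of $K$, so that (ii) is the assertion that the $K_i$ are pairwise orthogonal bricks. With this setup in place, both directions become statements about the structure of $\End(K)$.

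For (ii)$\Rightarrow$(i), I would simply compute: orthogonality gives $\Hom(K_j^{m_j},K_i^{m_i})=0$ for $i\ne j$, so the matrix description of $\End(K)$ with respect to the decomposition is block-diagonal,
$$\End(K)\ =\ \prod_{i=1}^n\End(K_i^{m_i})\ =\ \prod_{i=1}^n M_{m_i}(\End(K_i)).$$
Since each $K_i$ is a brick, each $\End(K_i)$ is a division ring, each factor $M_{m_i}(\End(K_i))$ is simple artinian, and the product is semisimple.

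For (i)$\Rightarrow$(ii) I would argue in two steps. First, each $K_i$ is a brick: the idempotent $e_i\in\End(K)$ projecting onto the $i$-th isotypic component satisfies $e_i\End(K)e_i\cong M_{m_i}(\End(K_i))$, and a corner of a semisimple ring is semisimple, so $\End(K_i)$ is semisimple. But $K_i$ is indecomposable of finite length, so $\End(K_i)$ is local by Fitting's lemma, and a local semisimple ring is a division ring. Second, for orthogonality I would invoke the standard description of the Jacobson radical of the endomorphism ring of a finite-length module: $\phi\in\rad\End(K)$ iff every matrix component $\pi_{a,k}\phi\iota_{b,l}\colon K_b\to K_a$ (with respect to the chosen decomposition) is a non-isomorphism. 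Given any $f\colon K_i\to K_j$ with $i\ne j$, the element $\phi:=\iota_{j,1}\,f\,\pi_{i,1}$ of $\End(K)$ has all its matrix components equal either to $0$ or to $f$, and since $i\ne j$ none of these is an isomorphism. Hence $\phi\in\rad\End(K)=0$, forcing $f=0$.

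The main obstacle is the orthogonality step: one must resist the temptation to deduce a contradiction merely from $\phi^{2}=0$, since a semisimple ring routinely contains nonzero nilpotent elements (e.g.\ strictly triangular matrices). The correct leverage comes from the radical description above, which is the standard tool for this kind of statement; once it is invoked, the argument is mechanical.
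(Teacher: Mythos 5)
The paper offers no proof of this lemma at all --- it is introduced only as a ``well-known characterization'' --- so there is nothing internal to compare your argument against. Your proof is correct and complete: the reduction to the Krull--Schmidt decomposition, the block-diagonal computation $\End(K)\cong\prod_i M_{m_i}(\End(K_i))$ for (ii)$\Rightarrow$(i), and the corner-ring plus radical-of-$\End$ arguments for (i)$\Rightarrow$(ii) are exactly the standard steps, and you rightly route the orthogonality claim through the description of $\rad\End(K)$ rather than through nilpotence.
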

	
\begin{Prop} Let $K$ be a module with semisimple endomorphism ring.
Let $f = f'h$, where $f, f'$ are right minimal epimorphisms with  
kernels in $\add K$, starting at the same module $X$. 
Then $h$ is surjective and its kernel belongs to $\add K$.
\end{Prop}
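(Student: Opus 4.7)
The plan is to push the question down to the induced map on kernels, which lives inside the essentially semisimple subcategory $\add K$. Write the given data as $f\colon W \to X$, $f'\colon W' \to X$, right minimal epimorphisms with $\Ker f,\Ker f'\in \add K$, and $f = f'h$ for some $h\colon W \to W'$. Because $f = f'h$ sends $\Ker f$ into $\Ker f'$, there is an induced map $h''\colon \Ker f \to \Ker f'$, and the snake lemma applied to
$$
\begin{array}{ccccccccc}
0 & \to & \Ker f & \to & W & \to & X & \to & 0 \\
&& \downarrow h'' && \downarrow h && \parallel && \\
0 & \to & \Ker f' & \to & W' & \to & X & \to & 0
\end{array}
$$
gives $\Ker h \cong \Ker h''$ and $\Cok h \cong \Cok h''$. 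So it suffices to show that $h''$ is surjective and that $\Ker h''\in \add K$.

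Next, I would invoke Lemma \ref{semisimple-kernel} to write $\add K = \add(K_1\oplus\cdots\oplus K_n)$ with the $K_i$ pairwise orthogonal bricks, and set $D_i = \End(K_i)$, a division ring. Since $\Hom(K_i,K_j) = 0$ for $i\neq j$, any morphism between two objects of $\add K$ decomposes as a direct sum of components $K_i^a \to K_i^b$, and the $\Lambda$-submodule lattice of $K_i^b$ coincides with the $D_i$-subspace lattice of a $b$-dimensional $D_i$-vector space. Consequently, the kernel, image, and cokernel of any morphism in $\add K$ lie again in $\add K$, and every image that arises in this way is a direct summand of its ambient module in $\add K$. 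In particular $\Ker h'' \in \add K$, and $\Ker f' = \Img h''\oplus N$ for some $N\in \add K$.

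The main step is to prove $N = 0$, and here the right minimality of $f'$ enters. Put $W_0 = h(W)\subseteq W'$. Since $f$ is surjective, $f'(W_0) = f(W) = X$, so $W' = W_0 + \Ker f'$. Moreover $W_0\cap \Ker f' = \Img h''$: if $h(w)\in \Ker f'$ then $f(w) = f'h(w) = 0$, hence $w\in \Ker f$ and $h(w) = h''(w)$. Therefore $W' = W_0 + \Img h'' + N = W_0 + N$, while $W_0\cap N \subseteq \Img h''\cap N = 0$, so in fact $W' = W_0\oplus N$. This exhibits $N$ as a direct summand of $W'$ contained in $\Ker f'$, and right minimality of $f'$ forces $N = 0$. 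Thus $h''$ is surjective, and by the snake lemma step $h$ is surjective with $\Ker h\cong \Ker h''\in \add K$.

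The step I expect to be the main obstacle is the last one, namely upgrading "$N$ is a complement of $\Img h''$ inside $\Ker f'$" to the stronger "$N$ is a direct summand of the whole module $W'$". This rests on the identification $h(W)\cap \Ker f' = \Img h''$, which in turn uses the factorization $f = f'h$ together with the surjectivity of $f$; once this is in hand, the semisimple structure on $\add K$ provided by Lemma \ref{semisimple-kernel} and the snake lemma reduction are essentially formal.
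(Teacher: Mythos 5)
Your argument is correct and follows essentially the same route as the paper's: form the induced map on the kernels, use semisimplicity of $\End(K)$ to split off a complement $N$ of its image inside $\Ker f'$, show that $N$ is in fact a direct summand of the whole source of $f'$, and kill it by right minimality (the paper exhibits the splitting via the common cokernel of $h$ and the kernel map, you do it by the explicit decomposition $W'=h(W)\oplus N$ -- the same idea). One small inaccuracy: the $\Lambda$-submodule lattice of $K_i^b$ is \emph{not} the $D_i$-subspace lattice of $D_i^b$ (a brick $K_i$ typically has many submodules not lying in $\add K_i$); what is true, and what you actually use, is that a morphism $K_i^a\to K_i^b$ is a matrix over the division ring $D_i$ and hence, after composing with automorphisms of source and target, becomes a coordinate projection--inclusion, so its kernel, image and cokernel lie in $\add K_i$ and its image is a direct summand of $K_i^b$.
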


\begin{proof} 
By assumption, there is a commutative diagram with exact rows
$$
{\beginpicture
\setcoordinatesystem units <1.7cm,1.3cm>
\arr{-.75 1}{-.25 1}
\arr{0.25 1}{0.75 1}
\arr{1.25 1}{1.75 1}
\arr{2.25 1}{2.75 1}

\arr{-.75 0}{-.25 0}
\arr{0.25 0}{0.75 0}
\arr{1.25 0}{1.75 0}
\arr{2.25 0}{2.75 0}

\arr{0 .7}{0 .3}
\arr{1 .7}{1 .3}
\plot 2.03 0.7  2.03 0.3 /
\plot 1.97 0.7  1.97 0.3 /

\multiput{$0$} at -1 0  -1 1  3 0  3 1 /

\put{$K$} at 0 1
\put{$X$} at 1 1
\put{$Y$} at 2 1
\put{$K'$} at 0 0
\put{$X$} at 1 0
\put{$Y$} at 2 0

\put{$\ssize u$} at 0.5 1.2
\put{$\ssize u'$} at 0.5 .2
\put{$\ssize f$} at 1.5 1.2
\put{$\ssize f'$} at 1.5 .2
\put{$\ssize h'$} at .15 .5
\put{$\ssize h$} at 1.15 .5
\endpicture}
$$
with inclusion maps $u,u'$ such that both $K,K'$ 
belong to $\add K.$ Since $K,K'$ belong to $\add K$
and the endomorphism ring of $K$ is semisimple,
there is a submodule $K''$ of $K'$
such that $K' = \Img(h')\oplus K''$. Let us denote by $u'': K'' \to K'$ the
inclusion map and by $q': K' \to K'/\Img(h') = K''$ the canonical projection,
thus $q'u'' = 1_{K''}.$ Now $q': K' \to K''$ is the cokernel of $h'$,
and we can identify $K''$ and we can complete the diagram above by inserting
the cokernels of $h$ and $h'$ as follows:
$$
{\beginpicture
\setcoordinatesystem units <1.7cm,1.3cm>
\arr{-.75 1}{-.25 1}
\arr{0.25 1}{0.75 1}
\arr{1.25 1}{1.75 1}
\arr{2.25 1}{2.75 1}

\arr{-.75 0}{-.25 0}
\arr{0.25 0}{0.75 0}
\arr{1.25 0}{1.75 0}
\arr{2.25 0}{2.75 0}

\arr{0 .7}{0 .3}
\arr{1 .7}{1 .3}
\plot 2.03 0.7  2.03 0.3 /
\plot 1.97 0.7  1.97 0.3 /

\multiput{$0$} at -1 0  -1 1  3 0  3 1 /

\put{$K$} at 0 1
\put{$X$} at 1 1
\put{$Y$} at 2 1
\put{$K'$} at 0 0
\put{$X$} at 1 0
\put{$Y$} at 2 0

\put{$\ssize u$} at 0.5 1.2
\put{$\ssize u'$} at 0.5 .2
\put{$\ssize f$} at 1.5 1.2
\put{$\ssize f'$} at 1.5 .2
\put{$\ssize h'$} at .15 .5
\put{$\ssize h$} at 1.15 .5

\arr{0 -.3}{0 -.7}
\arr{1 -.3}{1 -.7}
\arr{0 -1.3}{0 -1.7}
\arr{1 -1.3}{1 -1.7}
\put{$\ssize q'$} at .15 -.5
\put{$\ssize q$} at 1.15 -.5
\put{$K''$} at 0 -1 
\put{$K''$} at 1 -1 
\put{$0$} at 0 -2 
\put{$0$} at 1 -2 
\plot 0.3 -1.03 0.7 -1.03 /
\plot 0.3 -.97 0.7 -.97 /
\endpicture}
$$
Since $qu' = q'$, we see that $qu'u'' = q'u'' = 1_{K''}$,
thus $K''$ is a direct summand of $X$ which lies inside the kernel of $f'$.
Since we assume that $f'$ is right minimal, it follows that $K'' = 0$, thus
$h'$ is surjective. Therefore also $h$ is surjective. 

On the other hand, the kernel of $h$ can be identified with the kernel of $h'$,
and again using that $K,K'$ belong to $\add K$ 
and that the endomorphism ring of $K$ is semisimple,
we see that the
kernel of $h$ belongs to $\add K.$
\end{proof}

\begin{Prop}\label{length-KRS}
Let $K$ be a module 
and assume that the endomorphism ring of $K$ is semisimple. Let $C = \tau^{-}K$
and assume that $\Hom(C,\mathcal  P,Y) = 0.$ 
If $f: X \to Y$ is right minimal and right $C$-determined, 
then $f$ is surjective and
\Rahmen{|f|_C = \mu(\Ker(f))}
where $\mu(M)$ is the Krull-Remak-Schmidt number (the 
number of direct summands when $M$ is written as a
direct sum of indecomposable modules.

Also, $\eta_{CY}^{-1}(0)$ is given by the
universal extension from below
$$
 0 \to K' \to X \to Y \to 0
$$
with $K' \in \add K$.
\end{Prop}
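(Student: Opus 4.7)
First, since $\Hom(C,\mathcal{P},Y)=0$, Proposition~\ref{surjective} forces every right $C$-determined map ending in $Y$ to be surjective; in particular $f$ is surjective. Assuming (without loss of generality) that $K$ has no injective summand, $C=\tau^{-}K$ has no projective summand, and the determiner formula (Theorem~\ref{determiner}) together with right minimality of $f$ forces $\Ker f\in\add\tau C=\add K$. Writing $\Ker f=K^{(1)}\oplus\cdots\oplus K^{(n)}$ with each summand indecomposable, $n=\mu(\Ker f)$, and I will prove $|f|_C=n$ by induction on $n$.

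For the upper bound $|f|_C\le n$, take a maximal $C$-factorization $f=h_1\cdots h_t$, so $f=f_{t-1}h_t$ with $f_{t-1}$ right minimal right $C$-determined (hence surjective with kernel in $\add K$) and $h_t$ non-invertible. The previous proposition (or its routine adaptation to distinct sources) gives that $h_t$ is a surjection with nonzero kernel in $\add K$, producing an exact sequence $0\to\Ker h_t\to\Ker f\to\Ker f_{t-1}\to 0$ inside $\add K$. Since $\End(K)$ is semisimple, $K$ is a sum of pairwise orthogonal bricks $K_i$, and orthogonality forces the inclusion $\Ker h_t\hookrightarrow\Ker f$ to decompose component-wise into $D_i$-subspace inclusions $K_i^{b_i}\hookrightarrow K_i^{a_i}$ (with $D_i=\End(K_i)$), each of which admits a $D_i$-linear complement. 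Hence the sequence splits, giving $n=\mu(\Ker h_t)+\mu(\Ker f_{t-1})\ge 1+(t-1)$ by the inductive hypothesis applied to $f_{t-1}$, so $t\le n$.

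For the lower bound $|f|_C\ge n$, iteratively construct a $C$-factorization of length $n$ by quotienting out one indecomposable kernel summand at a time. The key step is verifying that $\bar f\colon X/K^{(n)}\to Y$ remains right minimal. Suppose $X/K^{(n)}=U\oplus V$ with $0\ne U\subseteq\Ker\bar f$; let $U'\subseteq\Ker f$ be the preimage of $U$, so $U'\supseteq K^{(n)}$ and $U'/K^{(n)}\cong U$. Using the fixed decomposition $\Ker f=K^{(n)}\oplus(K^{(1)}\oplus\cdots\oplus K^{(n-1)})$ and setting $U_0:=U'\cap(K^{(1)}\oplus\cdots\oplus K^{(n-1)})$, one checks $U'=K^{(n)}\oplus U_0$; lifting $V$ to $V'\subseteq X$ via $X\to X/K^{(n)}$ then yields $X=U_0\oplus V'$ with $0\ne U_0\subseteq\Ker f$, contradicting right minimality of $f$. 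Hence $\bar f$ is right minimal, automatically right $C$-determined (determiner formula), with $\mu(\Ker\bar f)=n-1$, and iteration produces a factorization of length $n$. Finally, by Proposition~\ref{zero}, $\eta_{CY}^{-1}(0)$ is the unique right-equivalence class of maximal $|f|_C$, which via the length formula is the one maximizing $\mu(\Ker f)$; this is realized precisely by the universal extension from below $0\to K'\to X\to Y\to 0$ with $K'\in\add K$, characterized by $\Hom(C,X)=0$ (equivalently, by injectivity of the connecting map $\Hom(C,Y)\hookrightarrow\Ext^1(C,K')$). The main obstacle is the lifting argument for right minimality of $\bar f$, which crucially uses the direct-summand position of $K^{(n)}$ inside $\Ker f$ to produce a nonzero direct summand of $X$ lying inside $\Ker f$.
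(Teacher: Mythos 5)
Your proof of the formula $|f|_C=\mu(\Ker f)$ is correct but organized differently from the paper's. The paper takes a single maximal right $C$-factorization $(h_1,\dots,h_n)$ and shows that each $\Ker h_t$ is forced to be indecomposable: if $\Ker h_t=K'\oplus K''$ with $K'$ indecomposable and $K''\neq 0$, one inserts the quotient $X_t\to X_t/K'$, checks via Lemma \ref{induced} that the refined chain is still a right $C$-factorization, and contradicts maximality; the equality $\mu(\Ker f)=n$ then drops out in one stroke. You instead prove the two inequalities separately: the upper bound by splitting $0\to\Ker h_t\to\Ker f_t\to\Ker f_{t-1}\to 0$ componentwise over the orthogonal bricks of $K$ (the same splitting the paper uses implicitly when it writes the kernel of $f_t$ as a direct sum), and the lower bound by an explicit chain of quotients $X\to X/K^{(n)}\to\cdots\to Y$, re-proving the needed case of Lemma \ref{induced} by hand with your modular-law lifting of $X/K^{(n)}=U\oplus V$ to $X=U_0\oplus V'$. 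Both routes rest on the same two pillars — the unlabelled proposition of section \ref{sec:9} forcing the $h_t$ to be epimorphisms with kernels in $\add K$, and the semisimplicity of $\End(K)$ — so this is a legitimate variant, slightly longer but with the right-minimality verifications made fully explicit.

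The final paragraph is where your argument is genuinely incomplete. You assert that the universal extension from below maximizes $\mu(\Ker f)$ without proof; this is true (the multiplicity of $K_i$ in the kernel of a right minimal epimorphism onto $Y$ is bounded by the length of $\Ext^1(Y,K_i)$ over $\End(K_i)$), but the paper's argument is both shorter and avoids the issue entirely: every epimorphism onto $Y$ with kernel in $\add K$ is induced from the universal extension, so the universal extension factors through each of them and is therefore the minimum of the lattice ${}^C[\to Y\rangle$, which is $\eta_{CY}^{-1}(0)$ by Proposition \ref{zero}. Moreover, your parenthetical ``characterized by $\Hom(C,X)=0$'' is not correct: $\eta_{CY}(f)=0$ means that the image of $\Hom(C,X)\to\Hom(C,Y)$ vanishes, equivalently that the connecting map $\Hom(C,Y)\to\Ext^1(C,K')$ is injective, equivalently that $\Hom(C,X)=\Hom(C,K')$; since $\Hom(C,K')=\Hom(\tau^{-}K,K')$ need not vanish, the two conditions you present as equivalent are not equivalent in general.
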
 

\begin{proof} 
Since we assume that $\Hom(C,\mathcal P,Y) = 0,$ all
right minimal
right $C$-determined maps $f: X \to Y$ are surjective, and the kernel
of such a map is in $\add K.$ Let $|f|_C = n$ and consider a 
(maximal) chain
$$
 X_n 
 \overset {h_n}  \longrightarrow   X_{n-1} 
 \overset {h_{n-1}}  \longrightarrow   \cdots 
 \overset {h_2}  \longrightarrow   X_1 
 \overset {h_1}  \longrightarrow   X_0 = Y
$$
of non-invertible maps such that the compositions
$f_t = h_1\cdots h_t$ for $1\le t \le n$ are right minimal and right 
$C$-determined. Now all the maps $f_t$ are right minimal epimorphisms 
with kernels in $\add K$, thus, according to
proposition 7.2, also the maps $h_t$ are epimorphisms with kernels $K_t$ in $\add K$.
Since we assume that the endomorphism ring of $K$ is semisimple, 
we see that the kernel of $f_t$ is just $\bigoplus_{i=1}^t K_t$.
Now let $K_t = K'\oplus K''$
be a direct decomposition with $K'$ indecomposable,
and let $h: X_t\to X_t/K'$ be the canonical projection. Since $K'$ is 
contained in the kernel of $h_t$, we can factor $h_t$ through $h$
and obtain a map $h'_t: X_t/K' \to X_{t-1}$ with kernel $X''$ such that
$h_t = h'_th$.
Altogether we have obtained a refinement 
$$
 X_n 
 \overset {h_n} \longrightarrow  X_{n-1} 
 \overset {h_{n-1}} \longrightarrow  \cdots 
 \longrightarrow  X_t 
 \overset h \longrightarrow  X_t/K'  
 \overset {h_t'} \longrightarrow  X_{t-1} 
 \longrightarrow
 \cdots 
 \overset {h_2} \longrightarrow  X_1 
 \overset {h_1} \longrightarrow  X_0 = Y
$$
with $h_t = h'_th$. Let $f' = f_{t-1}h'_t = h_1\cdots h_{t-1} h'_t$. 
We apply proposition \ref{semisimple-kernel} to $f_t$ and $f'$. Note that 
$f_t= f_{t-1}h_t = f_{t-1}h'_th = f'h$ and the corresponding map
from the kernel of $f_t$ to the kernel of $f'$ is just the
split epimorphism $K_t \to K_t/K'$. Thus \ref{semisimple-kernel} asserts that
$f'$ is right minimal (and of course also right $C$-determined).
The maximality of the chain $(h_1,\dots,h_t)$ implies that $h'_t$
has to be invertible, thus $K'' = 0$. This shows that $K_t$ is
indecomposable. Altogether, we see that $\mu(\Ker(f_t)) = t,$
thus $\mu(\Ker(f)) = n = |f|_C.$

The last assertion is obvious: If 
$$
 0 \longrightarrow  K' \longrightarrow  X \overset f \longrightarrow  Y 
 \longrightarrow  0
$$
is the universal extension from below with $K' \in \add K$, then 
$[f\rangle$ belongs to ${}^C[\to Y\rangle$ and any other extension of $Y$
from below with kernel in $\add K$ is induced from it. Thus $[f\rangle$
has to be the zero element of the lattice ${}^C[\to Y\rangle$.
\end{proof} 

\begin{Cor}	 
Let $K$ be a module with injective dimension at most $1$
and assume that the endomorphism ring of $K$ is semisimple. Let $C = \tau^{-}K$.
If $f: X \to Y$ is right minimal and right $C$-determined, 
then $f$ is surjective and
$|f|_C = \mu(\Ker(f))$. Also, $\eta_{CY}^{-1}(0)$ is given by the
universal extension from below using modules in $\add K$.
\end{Cor}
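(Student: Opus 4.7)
The approach is essentially to recognize that this Corollary is a direct specialization of Proposition \ref{length-KRS}: we only need to verify that the additional hypothesis $\Hom(C,\mathcal{P},Y) = 0$ used there is automatically granted by the assumption on the injective dimension of $K$. There should be no real obstacle; everything hinges on invoking the right prior result.

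The plan is as follows. First, I would observe that since $K$ has injective dimension at most $1$, Proposition \ref{inj-dim-1} (specifically the implication (i) $\Rightarrow$ (iii)) gives us $\Hom(C,\mathcal{P},Y) = 0$ for \emph{every} $\Lambda$-module $Y$, where $C = \tau^{-}K$. In particular, this holds for the module $Y$ appearing in the Corollary. Second, with this vanishing condition in hand, the hypotheses of Proposition \ref{length-KRS} are fully met: $K$ has semisimple endomorphism ring, $C = \tau^{-}K$, and $\Hom(C,\mathcal{P},Y) = 0$.

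Applying Proposition \ref{length-KRS} to the right minimal, right $C$-determined map $f \colon X \to Y$ then yields all three conclusions simultaneously: $f$ is surjective, the equality $|f|_C = \mu(\Ker(f))$ holds, and the right equivalence class $\eta_{CY}^{-1}(0)$ is represented by the universal extension from below of $Y$ by modules in $\add K$. Since every assertion of the Corollary is the corresponding assertion of Proposition \ref{length-KRS} under a hypothesis that is now automatic, the proof reduces to a one-line citation.
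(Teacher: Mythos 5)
Your proposal is correct and matches the paper's own proof exactly: the paper also proves this Corollary by combining Proposition \ref{inj-dim-1} (to get $\Hom(C,\mathcal P,Y)=0$ from the injective dimension hypothesis) with Proposition \ref{length-KRS}. You have simply spelled out the one-line citation in slightly more detail.
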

	
\begin{proof}  Combine Proposition \ref{inj-dim-1} and Proposition \ref{length-KRS}.
\end{proof}

\noindent 
{\bf Example 9.} Consider the 3-subspace quiver
$$
{\beginpicture
\setcoordinatesystem units <1.2cm,.6cm>
%======================================
%======================================
\put{$a$} at 0 0
\put{$b_1$} at 1 1
\put{$b_2$} at 1 0
\put{$b_3$} at 1 -1
\arr{0.7 0.8}{0.3 0.3}
\arr{0.7 -.8}{0.3 -.3}
\arr{0.7 0}{0.3 0}
\endpicture}
$$
We consider the indecomposable modules which are neither projective,
nor injective: these are the modules
$N(i) = \tau Q(b_i)$, for $1\le i \le 3$ and $M = \tau Q(a)$.

Let $Y = Q(a)$ and  
$C = N(1)\oplus N(2)\oplus N(3)$.
Then  ${}^C[\to Q(a)\rangle$  looks as follows:
$$
{\beginpicture
%\setcoordinatesystem units <2.5cm,1.3cm>
\setcoordinatesystem units <2.3cm,1.2cm>
%======================================
%======================================
\put{\beginpicture
%\put{${}^C[\to I(0)\rangle$} at -.5 3
\put{$Q(a)$} at 1 3
\put{$N(2)\oplus N(3)$} at 0 2
\put{$N(1)\oplus N(3)$} at 1 2 
\put{$N(1)\oplus N(2)$} at 2 2
\put{$N(3)\oplus M$} at 0 1
\put{$N(2)\oplus M$} at 1 1
\put{$N(1)\oplus M$} at 2 1
\put{$M\oplus M$} at 1 0
\arr{0.8 0.2}{0.2 0.8}
\arr{1 0.2}{1 0.8}
\arr{1.2 0.2}{1.8 0.8}

\arr{0.8 1.2}{0.2 1.8}
\arr{1.2 1.2}{1.8 1.8}

\arr{0.2 1.2}{0.8 1.8}
\arr{1.8 1.2}{1.2 1.8}

\arr{0 1.2}{0 1.8}
\arr{2 1.2}{2 1.8}

\arr{0.2 2.2}{0.8 2.8}
\arr{1 2.2}{1 2.8}
\arr{1.8 2.2}{1.2 2.8}
\endpicture} at 0 0 

%======================================
\put{\beginpicture
\multiput{$\bullet$} at 1 3  0 2  1 2  2 2  0 1  1 1  2 1  1 0 /
\plot 1 0  2 1  2 2  1 3  0 2  0 1  1 0  1 1  2 2 /
\plot 1 1  0 2 /
\plot 0 1  1 2  2 1 /
\plot 1 2  1 3 /
\put{$\bigcirc$} at 1 0 
\endpicture} at 2.75 0

\endpicture}
$$
Note that here we have $K = \tau C = P(B_1)\oplus P(b_2)\oplus P(b_3).$

%===============================================================
\section{Comparison with Auslander-Reiten theory.}
\label{sec:10}

Let $C = Y$ be indecomposable and consider the Auslander bijection
for $C = Y$:
$$
 {}^Y[\to Y\rangle \quad \longleftrightarrow \quad \mathcal  S\Hom(Y,Y).
$$
The subspace $\rad(Y,Y)$ of $\Hom(Y,Y)$ on the right corresponds to the 
right almost split map ending in $Y$. Two possible cases have to 
be distinguished: 

If $Y = P$ is projective, then we get a morphism which is
right determined by a projective module, thus we must get a monomorphism.
Of course, what we obtain is just the embedding of the radical $\rad P$ 
into $P$.

If $Y$ is not projective, then we get an epimorphism with kernel in
$\add \tau Y$. Actually, we get the epimorphism of the Auslander-Reiten
sequence ending in $Y$, thus the kernel is precisely $\tau Y.$
	\bigskip

%=============================================================
What we see is that {\it the minimal right almost split map ending in $Y$ 
is a waist in ${}^Y[\to Y\rangle$} and it just corresponds to the waist
$\rad(Y,Y) \subset \End(Y).$ 
	\bigskip

More generally, we have:

\begin{Prop}  Let 
$Y$ be indecomposable and $Y$ a direct summand of $C$, and consider
$$
 {}^C[\to Y\rangle \quad \longleftrightarrow \quad \mathcal  S\Hom(C,Y).
$$
Then $\mathcal  S\Hom(C,Y)$ is a local module
(with maximal submodule $\Img\Hom(C,g)$, where
$g$ is minimal right almost split ending in $Y$).
\end{Prop}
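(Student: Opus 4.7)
The plan is to recognize $\Hom(C,Y)$ as an indecomposable projective $\Gamma(C)$-module, for which the ``local'' assertion is formal, and then identify its unique maximal $\Gamma(C)$-submodule with $\Img\Hom(C,g)$ via the defining property of minimal right almost split maps.

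First I would argue that $\Hom(C,Y)$ is an indecomposable projective $\Gamma(C)$-module. Since $Y$ is an indecomposable direct summand of $C$, one can pick an idempotent $e\in \End(C)$ whose image is (isomorphic to) $Y$; under the standard identification recalled at the start of section~\ref{sec:6}, the indecomposable projective $\Gamma(C)$-modules are precisely the $\Hom(C,C_i)$ for $C_i$ indecomposable summands of $C$, so $\Hom(C,Y)$ is one of them. Over an artin algebra, any finitely generated indecomposable projective module is local: it has a unique maximal submodule, namely its Jacobson radical. This already proves that $\Hom(C,Y)$ is a local $\Gamma(C)$-module.

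Next I would identify the radical concretely. The abstract radical of the projective $\Gamma(C)$-module $\Hom(C,Y)$ agrees with the morphism-space radical
\[
\rad(C,Y)=\{h\colon C\to Y \mid h\text{ is not a split epimorphism}\}.
\]
Here one uses that $Y$ is indecomposable, so $\End(Y)$ is local and its radical consists precisely of the non-isomorphisms; unwinding the definition of $\rad(C,Y)$ via compositions $Y\to C \to Y$ then gives the stated equality. This is the standard Auslander--Reiten identification of the Hom-radical with the module-theoretic radical of the projective covering $\Hom(C,-)$.

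Finally, I invoke the defining property of the minimal right almost split map $g\colon E\to Y$: a morphism $h\colon Z\to Y$ factors through $g$ if and only if $h$ is not a split epimorphism. Applying this with $Z=C$ yields
\[
\Img\Hom(C,g)=\{h\colon C\to Y \mid h \text{ factors through } g\}=\rad(C,Y)=\rad \Hom(C,Y),
\]
so $\Img\Hom(C,g)$ is the unique maximal $\Gamma(C)$-submodule of $\Hom(C,Y)$. Under the Auslander bijection $\eta_{CY}$ this submodule corresponds to $[g\rangle\in{}^C[\to Y\rangle$, giving the parenthetical assertion. The only step requiring care (and really the only non-formal ingredient) is the identification of the abstract $\Gamma(C)$-radical of $\Hom(C,Y)$ with $\rad(C,Y)$; once that is in hand, the rest is bookkeeping from the properties of minimal right almost split maps.
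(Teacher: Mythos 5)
Your proof is correct, and it reaches the conclusion by a genuinely different route from the paper. The paper works entirely on the factorization side: it takes the minimal right almost split map $g$ ending in $Y$, observes that every map to $Y$ which is not a split epimorphism factors through $g$, concludes that every element of ${}^C[\to Y\rangle$ other than $[1_Y\rangle$ is $\le [g\rangle$, and then transfers this to $\mathcal S\Hom(C,Y)$ via the Auslander bijection --- so the localness of $\Hom(C,Y)$ is ultimately extracted from the surjectivity of $\eta_{CY}$ (the Second Theorem). You instead get localness for free on the module side: since $Y$ is an indecomposable direct summand of $C$, the $\Gamma(C)$-module $\Hom(C,Y)$ is indecomposable projective (as recalled at the start of section~\ref{sec:6}), hence local with unique maximal submodule $\rad\Hom(C,Y)$, and the almost split property of $g$ is needed only at the very end to identify $\rad\Hom(C,Y)=\rad(C,Y)=\Img\Hom(C,g)$. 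Your route has the advantage of not invoking the Auslander bijection at all (in particular you need not check that $g$ is right $C$-determined, which the paper's argument implicitly requires in order to place $[g\rangle$ in ${}^C[\to Y\rangle$), while the paper's version makes the lattice-theoretic content --- $[g\rangle$ is the unique coatom of ${}^C[\to Y\rangle$ --- explicit, which is the point of the surrounding discussion. The one step you rightly flag as non-formal, the identification of the abstract radical of the projective $\Gamma(C)$-module $\Hom(C,Y)$ with the set of non-split-epimorphisms $C\to Y$, is standard for $Y$ indecomposable; it would be worth also noting explicitly the easy converse inclusion $\Img\Hom(C,g)\subseteq\rad(C,Y)$, namely that a composite $gh'$ can never be a split epimorphism since $g$ is not.
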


\begin{proof} 
 Let $\tau Y \longrightarrow \mu Y \overset g \longrightarrow  Y$ 
be the Auslander-Reiten sequence
ending in $Y$. Then the right-equivalence class $[g\rangle$ of
$g$ belongs to $^C[\to Y\rangle$ and every map $X \to Y$
which is not a split epimorphism, factors through $g$. This means
that every element of $^C[\to Y\rangle$ different from the identity map $Y \to Y$
is less or equal to $[g\rangle$. This means that $^C[\to Y\rangle$ 
has a unique maximal
submodule, namely $\Hom(C,g).$
\end{proof}

%=======================================================
{\bf The Auslander-Reiten formula $\Ext^1(Y,K) \simeq  D\underline{\Hom}(\tau^{-}K,Y).$}
The Auslander bijection provides a bijection between the submodules of 
$\underline{\Hom}(\tau^{-}K,Y)$ with the right equivalence classes of surjective maps 
$M \to Y$ with kernel in $\add(K)$ (namely, the submodules of $\underline{\Hom}(\tau^{-}K,Y)$
just correspond bijectively to the submodules of $\Hom(\tau^{-}K,Y)$ which
contain $\Hom(\tau^{-}K,\mathcal  P,Y)$). Consider the following triangle: 
$$
\hbox{\beginpicture
\setcoordinatesystem units <3cm,1.3cm>

\put{\{$K \overset{f}\to M \overset{g}\to Y\mid \text{short exact}\}$} at 0.3 2.05
\put{$\Ext^1(Y,K)$} at 0 1
\put{${}^{\tau^{-}K}[\to Y\rangle_{\text{epi}}$} at 0 0
\arr{0 1.7}{0 1.3}
\arr{0  .7}{0  .3}
\arr{0.6 1.5}{1.4 0.5}
\arr{0.6 0}{1.4 0}
\put{$\mathcal  S \underline{\Hom}(\tau^{-}K,Y)$} at 2 0
\put{$\ssize g\ \mapsto\  g\Hom(\tau^{-}K,M)$} at 1.4 1.3
\put{$(f,g)$} at -1 2
\put{$[f,g]$} at -1 1
\put{$[g\rangle$} at -1 0
\arr{-1 1.6}{-1 1.4}
\arr{-1  .6}{-1  .4}
\plot -1.01 1.6  -.99 1.6 /
\plot -1.01  .6  -.99  .6 /
\put{$\eta_{\tau^{-}K,Y}$} at 1 .15
\endpicture}
$$
Here, the two left hand columns concern the 
canonical way of attaching to a short exact sequence $\epsilon = (f,g)$ 
the corresponding element $[\epsilon] = [f,g]$ in $\Ext^1$; if $(f,g)$ and
$(f',g')$ are short exact sequences with $[f,g] = [f',g']$ in
$\Ext^1$, then the maps $f,f'$ are right equivalent, thus $[f,g] \mapsto 
[g\rangle$ is a well-defined map
$\Ext^1(Y,K) \to {}^{\tau^{-}K}[\to Y\rangle_{\text{epi}}$ and we obtain
a commutative triangle as shown. 

Having another look at the left columns of the triangle, the reader should obverse
that the split short exact sequence which yields the zero element of $\Ext^1$
gives the unit element of the lattice ${}^{\tau^{-}K}[\to Y\rangle_{\text{epi}}$,
namely the identity map $1: Y \to Y.$ 

\begin{Prop}
%======================================================
Let $g: M \to Y$ be surjective with kernel $K$ and $C =\tau^{-}K $.
Then an element $h\in \Hom(C,Y)$ belongs to the set
$\eta_{C,Y}(g)$ if and only if 
the induced sequence $h_*([f,g])$ splits. Also, the set 
$\eta_{C,Y}(g)$ is a $\Gamma(C)$-submodule of $\Hom(C,Y)$, where
$\Gamma(C) = \End(C)^{\text{op}}$.
\end{Prop}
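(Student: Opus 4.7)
The plan is to exploit the long exact $\Hom$–$\Ext$ sequence obtained by applying $\Hom(C,-)$ to the short exact sequence $(f,g): 0 \to K \overset{f}\to M \overset{g}\to Y \to 0$, namely
$$
0 \longrightarrow \Hom(C,K) \overset{f_*}\longrightarrow \Hom(C,M) \overset{g_*}\longrightarrow \Hom(C,Y) \overset{\delta}\longrightarrow \Ext^1(C,K).
$$
By definition, $\eta_{C,Y}(g) = g\cdot \Hom(C,M) = \Img(g_*)$, which by exactness is exactly $\Ker(\delta)$. The first assertion will therefore follow as soon as I identify the connecting morphism $\delta$ with $h \mapsto h_*([f,g])$.

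The key step is this identification. It is the standard naturality property of the connecting morphism: pulling back $(f,g)$ along $h: C \to Y$ yields a short exact sequence $0 \to K \to M' \to C \to 0$ whose extension class is precisely $\delta(h)$. Since a class in $\Ext^1(C,K)$ is trivial if and only if the corresponding sequence splits, we obtain $\delta(h) = 0$ iff $h_*([f,g])$ splits. Combining this with $\eta_{C,Y}(g) = \Ker(\delta)$ gives the equivalence claimed.

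For the second assertion, it suffices to observe that $f_*$, $g_*$, and $\delta$ are all $\Gamma(C)$-linear, since $\Gamma(C) = \End(C)^{\text{op}}$ acts on $\Hom(C,-)$ and $\Ext^1(C,-)$ by precomposition and this commutes with the induced maps and with the connecting morphism. Hence both $\Img(g_*)$ and $\Ker(\delta)$ are $\Gamma(C)$-submodules of $\Hom(C,Y)$. (This was already recorded at the beginning of Section~\ref{sec:4} directly from the formula $\eta_{CY}(g) = g\cdot \Hom(C,M)$.)

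The only thing requiring care is the compatibility $\delta(h) = h_*([f,g])$, which is the standard description of the connecting morphism in the $\Hom$–$\Ext$ sequence; there is no genuine obstacle, only a routine homological verification. Note that the hypothesis $C = \tau^{-}K$ plays no role here: the argument works for any module $C$, and the specific choice $C = \tau^{-}K$ is relevant only because, combined with Auslander's theorems, it guarantees that the submodule $\eta_{C,Y}(g)$ so described actually arises from a right $C$-determined map, in fact from $g$ itself up to right equivalence.
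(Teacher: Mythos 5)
Your proof is correct and is essentially the paper's argument in different clothing: the paper simply observes that $h_*([f,g])$ splits if and only if $h$ factors through $g$, and that $\eta_{C,Y}(g)$ was already characterized as the set of maps $C\to Y$ factoring through $g$; your identification $\eta_{C,Y}(g)=\Img(g_*)=\Ker(\delta)$ together with $\delta(h)=[h_*([f,g])]$ is exactly this same fact, packaged via the connecting morphism of the long exact $\Hom$--$\Ext$ sequence. Your closing remark that $C=\tau^{-}K$ plays no role in the equivalence itself is also accurate and consistent with the paper.
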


\begin{proof} Given an element $h\in \Hom(C,Y)$, the induced sequence $h_*([f,g])$
splits if and only if $h$ factors through $g$. 
But we have already noted that $\eta_{C,Y}(g)$ 
is the set of all elements $h\in \Hom(C,Y)$ which factor through $g$.
And we know that $\eta_{C,Y}(g)$ is a $\Gamma(C)$-submodule of $\Hom(C,Y)$.
\end{proof} 
	
Now let us invoke the
Auslander-Reiten formula: $\Ext^1(Y,K)  \simeq D\underline{\Hom}(\tau^{-}K,Y).$
Here we use that we deal with an artin algebra $\Lambda$, thus the center $k$ of $\Lambda$
is a (commutative) artinian ring and $D$ is the duality functor $\mo k \to \mo k$
given by a minimal cogenerator of $\mo k$. 

There are the following two horizontal bijections as well
as the vertical map on the left:
$$
\hbox{\beginpicture
\setcoordinatesystem units <2.5cm,1.5cm>

\put{$\Ext^1(Y,K)$} at 0 1
\put{${}^{\tau^{-}K}[\to Y\rangle_{\text{epi}}$} at 0 0
\arr{0  .7}{0  .3}
\arr{0.6 0}{1.4 0}
\put{$\mathcal  S \underline{\Hom}(\tau^{-}K,Y)$} at 2 0
\put{$[f,g]$} at -1 1
\put{$[g\rangle$} at -1 0
\arr{-1  .6}{-1  .4}
\plot -1.01  .6  -.99  .6 /
\put{$\eta_{\tau^{-}K,Y}$} at 1 .15

\put{$\simeq$} at 1 1
\put{$D\underline{\Hom}(\tau^{-}K,Y)$} at 2 1
\setdashes <1.2mm>
\arr{2 0.7}{2 0.3}
\endpicture}
$$
We have inserted 
a dashed arrow on the right which corresponds to the composition of the three given 
maps
$$
 D\underline{\Hom}(\tau^{-}K,Y) \longrightarrow 
  \Ext^1(Y,K) \longrightarrow 
{}^{\tau^{-}K}[\to Y\rangle_{\text{epi}} \longrightarrow
  \mathcal  S \underline{\Hom}(\tau^{-}K,Y)
$$
It seems to be of interest to describe in detail this composition! One may 
conjecture that
here one attaches to a linear map $\alpha\in D\underline{\Hom}(\tau^{-}K,Y)$
the largest $\Gamma$-submodule of $\underline{\Hom}(\tau^{-}K,Y)$ lying in the
kernel of $\alpha$.
	\bigskip 

Let us now assume that $K$ is indecomposable so that $\add K$ consists of direct
sums of copies of $K$. Given a short exact sequence
$K \overset{f}\to M \overset g \to Y$, the map $g$ is right minimal if and only if the sequence
does not split. On the other hand, given 
$[g\rangle \in {}^{\tau^{-}K}[\to Y\rangle_{\text{epi}}$ with
$g$ right minimal, the kernel of $g$ 
is the direct sum of say $t$ copies of $K$ and $t\le 1$ just means that
$[g\rangle$ is obtained from a short exact sequence $K \overset f \to  M \overset g \to  Y$.
It follows that under the assumption that $K$ is indecomposable, it is easy to
identify the elements of $\mathcal  S\Hom(\tau^{-}K,Y)$ which are 
images of the elements of $\Ext^1(Y,K)$ under $\eta_{\tau^{-}K,Y}$.
	\bigskip 

\noindent 
{\bf 
Comparison.} In which way are the Auslander bijections
better than the Auslander-Reiten formula?
What is the advantage of the Auslander bijection compared to the Auslander-Reiten
formula?
	\medskip

1) We do not only deal with the set $\underline{\Hom}(C,Y)$ but with all of
$\Hom(C,Y)$. To extend such a bijection as given by the Auslander-Reiten formula 
to a larger setting should always be of
interest. But also note that the set $\underline{\Hom}(C,Y)$ depends on the
module category which we consider, not just on the modules themselves.
	\medskip

2) The duality is replaced by a covariant bijection. 
	\medskip

3) The usual Auslander-Reiten picture concerns indecomposable modules, and almost split sequences, thus indecomposable modules and irreducible maps.
In the language of the Auslander bijection, we only deal with $C = Y$ indecomposable and only with the submodule $\rad(C,C) \subset \Hom(C,C)$, whereas
\begin{itemize} 
\item we should not restrict to indecomposable modules,
\item and not to the condition $C = Y$,
\item and we want to deal with all submodules of $\Hom(C,Y)$, not just the radical subspace. 
\end{itemize} 
	\bigskip

Concerning the 
Auslander-Reiten-theory, there is an essential difference whether $C$ is
projective or not. If $C$ is projective, 
we obtain an inclusion map, whereas if $C$ is
not projective, then we obtain an extension. --- This feature dominates also 
the Auslander bijections: First of all, there is
the extreme case of  $C$ being projective, then we consider submodules
(and we consider arbitrarily ones, not just the radicals of the indecomposable
projective modules). In general 
we deal with extensions ending in a submodule $Y'$ of $Y$; if $C$ is a generator, then we deal with all possible extensions of submodules of $Y$
from below using modules in $\add\tau C$. 
	\bigskip

4) The Auslander-Reiten theory only deals with the 
factor category of $\mo \Lambda$ modulo
the infinite radical. The Auslander bijection takes care of {\bf any} morphism.
	\bigskip

5) Families of modules do not play any role in the Auslander-Reiten theory. 
As we will see soon, families of modules are an essential features in the
frame of the Auslander bijections.
	\bigskip 
	\bigskip 
	\bigskip 

\centerline{\huge \bf II. Families of modules.}
	\bigskip  

\section{The modules present in ${}^C[\to Y\rangle$ are of bounded length.}
\label{sec:11}

We say that a module $M$ is {\it present in}
${}^C[\to Y\rangle$ provided there exists a right minimal
map $f: M \to Y$ which is right $C$-determined; similarly, we say
that $M$ is {\it present in} ${}^C[\to Y\rangle^t$ provided there exists a right minimal
map $f: M \to Y$ which is right $C$-determined and $|f|_C = t.$

\begin{Prop}\label{bound}
There is a constant $\lambda = \lambda(\Lambda)$ such that for any
pair of modules $C, Y$ with $C\neq 0$ and 
any right minimal right $C$-determined map $X \to Y,$ we have 
\Rahmen{  |X|\ \le \ \lambda\,|C|^2\,|Y|.}
\end{Prop}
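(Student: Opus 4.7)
Write $Y'=\Img f$ and $K=\Ker f$. From $0\to K\to X\to Y'\to 0$ we get $|X|=|K|+|Y'|\le|K|+|Y|$, so it suffices to bound $|K|$ by a constant times $|C|^2|Y|$. Right minimality of $f$ prevents $K$ from having an injective direct summand (an injective summand of $K$ would split off $X$ and lie in the kernel), so every indecomposable summand $K_j$ of $K$ is non-injective; the determiner formula (Theorem \ref{determiner}) then forces $\tau^{-}K_j\in\add C$, so $K\cong\bigoplus_i(\tau C_i)^{n_i}$ as $i$ runs over the distinct non-injective indecomposable direct summands of $C$.

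Two uniform length estimates with constants depending only on $\Lambda$ are invoked: (a) $|\tau N|\le c_1|N|$ for every indecomposable non-injective $N$, via the Nakayama-functor formula $\tau N=\Ker(\nu P_1\to\nu P_0)$ applied to a minimal projective presentation $P_1\to P_0\to N$, together with the observation that both $|P_1|$ and $|\nu P_1|$ are bounded linearly in $|N|$ by constants depending only on $\Lambda$; and (b) $|\Ext^1(M,N)|\le c_2|M|\,|N|$ for all modules $M,N$, via a projective cover of $M$ and the analogous length bound on $|\Hom|$.

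To control the multiplicity $n_i$, consider the extension class $[\epsilon]\in\Ext^1(Y',K)$ of our sequence. Right minimality of $f: X\to Y'$ is equivalent to the statement that for every nonzero direct summand $K_0\subseteq K$ the projection of $[\epsilon]$ to $\Ext^1(Y',K_0)$ is nonzero, because otherwise pushing the sequence out along $K\to K_0$ would yield a split extension and produce $K_0$ as a summand of $X$ lying inside the kernel of $f$. Applied to the $(\tau C_i)^{n_i}$-isotypic component, this says that the $n_i$ entries of $[\epsilon_i]\in\Ext^1(Y',\tau C_i)^{n_i}$ form a minimal generating set of the $\End(\tau C_i)$-submodule they generate. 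By Nakayama's lemma over the local ring $\End(\tau C_i)$,
\[
n_i\;\le\;\dim_{D_i}\!\bigl(\Ext^1(Y',\tau C_i)/\rad\End(\tau C_i)\cdot\Ext^1(Y',\tau C_i)\bigr)\;\le\;|\Ext^1(Y',\tau C_i)|\;\le\;c_1c_2|Y||C_i|,
\]
where $D_i=\End(\tau C_i)/\rad$.

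Assembling,
\[
|K|=\sum_i n_i\,|\tau C_i|\;\le\;\sum_i(c_1c_2|Y||C_i|)(c_1|C_i|)\;=\;c_1^2c_2|Y|\sum_i|C_i|^2\;\le\;c_1^2c_2|C|^2|Y|,
\]
using $\sum_i|C_i|^2\le(\sum_i|C_i|)^2\le|C|^2$. Therefore $|X|\le(c_1^2c_2+1)|C|^2|Y|$, so $\lambda(\Lambda):=c_1^2c_2+1$ works. The central difficulty is the equivalence between right minimality of $f$ and the Nakayama-type independence of the entries of $[\epsilon_i]$ modulo $\rad\End(\tau C_i)$, which is what yields the multiplicity bound; once it is in place, the remaining pieces are standard length bounds valid for any artin algebra.
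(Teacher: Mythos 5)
Your proof is correct and follows essentially the same route as the paper: reduce to bounding $|\Ker f|$, note that right minimality plus the determiner formula put the kernel in $\add\tau C$, bound $|\tau C_i|$ linearly in $|C_i|$, and bound each multiplicity by the $k$-length of $\Ext^1(Y',\tau C_i)$ using right minimality (the paper simply cites the Ext-Lemma of \cite{[R6]} and the bound $|\tau C_i|\le d^2|C_i|$ from \cite{[R1]} where you rederive both). Only a cosmetic slip: the relevant indecomposable summands $C_i$ of $C$ indexing the isotypic components of the kernel are the non-\emph{projective} ones (those of the form $\tau^{-}K_j$ with $K_j$ non-injective), not the non-injective ones.
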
 

\begin{proof}  Write $C = \bigoplus C_i$ with indecomposable 
direct summands $C_i$ and let $K_i = \tau C_i$. Note that 
$|K_i| \le d^2|C_i|$, 
where $d = |{}_\Lambda\Lambda|$ (see for example \cite{[R1]}). 
Of course,  there is also the weaker bound $|K_i| \le d^2|C|.$ 

Let $\Lambda$ be an artin $k$-algebra, where $k$ is a commutative
artinian ring. Since there are only finitely many simple modules and since $\Ext^1(Y,X)$
is a $k$-module of finite length for all $\Lambda$-modules $X,Y$ of finite length,
the length of the $k$-modules $\Ext^1(S,S')$, 
where  $S,S'$ are simple $\Lambda$-modules, is bounded, thus let $e$ be the maximum. 

The long exact $\Hom$-sequences imply that
the length of the $k$-module $\Ext^1(Y,X)$ is bounded by $e|X||Y|,$ 
for any $\Lambda$-modules $X,Y$ of finite length.

Now, let $f: X\to Y$ be right minimal and right $C$-determined. There is
a short exact sequence $K' \to X \to Y'$ with $Y'$ a submodule of $Y$, 
such that the module $K'$ belongs to $\add\tau C$ and such that $f$ is the
composition of $X\to Y'$ and the inclusion map $Y' \to Y.$
With $f$ also the map $X \to Y'$ is right minimal. 

Write $K' = \bigoplus_{i=1}^s K_i^{t_i}$.
The $\Ext$-Lemma
of \cite{[R6]} asserts that $t_i$ is bounded by the $k$-length of $\Ext^1(Y',K_i)$,
since the map $X \to Y'$ is right minimal. Thus 
$$
  t_i \le e|K_i||Y'| \le d^2e|C_i||Y'| \le d^2e|C_i||Y|,
$$ 
and therefore $\sum t_i \le d^2e|C||Y|.$
Thus
$$
 |K'| = \sum t_i|K_i| \le \sum t_i d^2|C| = d^2|C|\sum t_i \le d^4e |C|^2|Y|,
$$
and therefore 
$$
 |X| = |K'|+|Y'| \le d^4e |C|^2|Y| + |Y| \le d^4e |C|^2|Y| + |C|^2|Y| =
    (d^4e+1)|C|^2|Y|
$$
(here we use that $C\neq 0$). Thus, let $\lambda = d^4e+1.$
\end{proof} 

There is the following converse:

\begin{Prop}\label{all-present}
Let $Q = (D\Lambda)^b$. Then any module of length at most $b$ is present in 
${}^\Lambda[\to Q\rangle.$
\end{Prop}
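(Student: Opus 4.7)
The plan is to reduce the statement to a straightforward embedding claim using what has already been established for the special case $C={}_\Lambda\Lambda$. By Corollary \ref{monos}, a right minimal morphism ending in $Q$ is right $\Lambda$-determined if and only if it is a monomorphism, and by the remark following Proposition \ref{minimal} the right equivalence classes of monomorphisms ending in $Q$ are in bijection with the submodules of $Q$. Hence the modules present in ${}^\Lambda[\to Q\rangle$ are exactly (up to isomorphism) the submodules of $Q=(D\Lambda)^b$. So what needs to be shown is that every $\Lambda$-module $M$ with $|M|\le b$ embeds into $(D\Lambda)^b$.

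To produce such an embedding, I would use the injective envelope. Writing $\soc M = S_1\oplus\cdots\oplus S_s$ as a direct sum of simple modules, the injective envelope of $M$ is $I(M)=I(S_1)\oplus\cdots\oplus I(S_s)$, and $M$ embeds into $I(M)$. The key numerical point is $s=|\soc M|\le |M|\le b$, since the socle is a submodule and hence its composition length is at most that of $M$.

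Next, each indecomposable injective $I(S_i)$ occurs as a direct summand of $D\Lambda$ (the standard fact that $D\Lambda$ is a minimal injective cogenerator of $\mo\Lambda$ containing every indecomposable injective). Therefore $I(M)$ is a direct summand of $(D\Lambda)^s$, and since $s\le b$, also a direct summand of $(D\Lambda)^b$. Composing $M\hookrightarrow I(M)$ with the inclusion of this summand yields a monomorphism $f\colon M\to Q$, which is automatically right minimal and, by Corollary \ref{monos}, right $\Lambda$-determined. This exhibits $M$ as present in ${}^\Lambda[\to Q\rangle$.

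There is really no serious obstacle here: the whole argument is organized around translating ``present in ${}^\Lambda[\to Q\rangle$'' into ``submodule of $Q$'' via Corollary \ref{monos}, and then using the socle/injective-envelope bound $|\soc M|\le |M|\le b$. The only point worth stating carefully is why the multiplicity $s$ of indecomposable injective summands in $I(M)$ is controlled by $|M|$ rather than by some larger invariant, and this is immediate from the fact that $\soc M\hookrightarrow M$ and that $\soc M$ is semisimple of length $s$.
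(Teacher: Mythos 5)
Your proof is correct and follows essentially the same route as the paper: bound the length of $\soc M$ by $b$, embed $M$ into $Q=(D\Lambda)^b$ via the injective envelope, and invoke the fact that a monomorphism is right minimal and right $\Lambda$-determined. The only difference is that you spell out the injective-envelope step which the paper leaves implicit.
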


\begin{proof}  Let $M$ be a module of length at most $b$, thus the socle $\soc M$ of $M$
is a semisimple module of length at most $b$ and therefore a submodule of $Q = (D\Lambda)^b$.
It follows that $M$ itself can be embedded into $Q$. 
Such an embedding $f: M \to Q$ is right minimal and 
right $\Lambda$-determined, thus $M$ is present in $^\Lambda[\to Q\rangle.$
\end{proof}

\begin{Cor} Let $\mathcal  M$ be a set of modules. 
The modules in $\mathcal  M$ are of bounded length if and only if 
there exist
modules $C, Y$ such that any module $M$ in $\mathcal  M$ is present in ${}^C[\to Y\rangle$.
\end{Cor}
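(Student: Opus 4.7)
The plan is to deduce both implications directly from the two propositions immediately preceding the corollary; no new ideas seem to be required, only a careful application.

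For the ``only if'' direction, suppose the modules in $\mathcal{M}$ have bounded length, say $|M|\le b$ for all $M\in\mathcal{M}$. I would set $C={}_\Lambda\Lambda$ and $Y=(D\Lambda)^b$. By Proposition~\ref{all-present}, every module of length at most $b$ is present in ${}^\Lambda[\to Y\rangle$, so every $M\in\mathcal{M}$ is present in ${}^C[\to Y\rangle$ for these particular $C$ and $Y$.

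For the ``if'' direction, suppose such $C$ and $Y$ exist. If $C\neq 0$, then Proposition~\ref{bound} applies: for every $M\in\mathcal{M}$ there is a right minimal, right $C$-determined map $f\colon M\to Y$, so
\[
 |M|\ \le\ \lambda\,|C|^2\,|Y|,
\]
which is a single constant independent of $M$, hence $\mathcal{M}$ has bounded length. The one bookkeeping step is the degenerate case $C=0$: here a map $f\colon X\to Y$ is right $0$-determined iff every $f'\colon X'\to Y$ factors through $f$ (the hypothesis on maps $\phi\colon C\to X'$ is vacuous), which forces $[f\rangle=[1_Y\rangle$, so up to right equivalence the only right minimal, right $0$-determined map ending in $Y$ is $1_Y$ and the only module present in ${}^0[\to Y\rangle$ is $Y$ itself; the bound is then $|Y|$.

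The two directions together give the claimed equivalence. There is no real obstacle here: the nontrivial work has already been done in Propositions~\ref{bound} and \ref{all-present}, and the corollary simply packages them as a characterization of bounded-length families in terms of presence in a single right $C$-factorization lattice.
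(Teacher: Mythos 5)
Your proof is correct and follows exactly the paper's own argument: Proposition \ref{bound} for the ``if'' direction and Proposition \ref{all-present} with $C={}_\Lambda\Lambda$, $Y=(D\Lambda)^b$ for the ``only if'' direction. Your extra treatment of the degenerate case $C=0$ (where only $Y$ itself is present, so the bound $|Y|$ suffices) is a small but legitimate point that the paper's proof passes over silently.
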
 
	
\begin{proof} If all the modules in $\mathcal  M$ are present in ${}^C[\to Y\rangle$, then they have
to be of length at most $\lambda(\Lambda)|C|^2|Y|$, thus of bounded length, see Proposition 
\ref{bound}.
Conversely, if the modules in $\mathcal  M$ are of length at most $b$, then they are
present in ${}^\Lambda[\to Q\rangle$ where $Q = (D\Lambda)^b$, according to Proposition 
\ref{all-present}.
\end{proof}
	
\noindent 
{\bf Remark: Modules versus Morphisms.} 
As we have seen, given any infinite set $\mathcal  M$ of {\bf modules} of bounded
length, there are modules $C,Y$ such that all the modules in $\mathcal  M$ are present
in ${}^C[\to Y\rangle$. On the other hand, given modules $X,Y$,
one cannot expect that the right equivalence classes $[f\rangle$ of 
all (or at least infinitely many)  non-zero {\bf morphisms}
$f: X \to Y$ belong to some ${}^C[\to Y\rangle$, since the kernels of these maps $f$
may belong to infinitely many isomorphism classes.

%%%================================================================
\section{Minimal infinite families.}
\label{sec:12}
 
Recall that a Krull-Remak-Schmidt category $\mathcal  C$ is said 
to be {\it finite} provided the number of isomorphism classes of indecomposable objects in $\mathcal  C$ is
finite, otherwise $\mathcal  C$ is said to be {\it infinite.}

Let $\mathcal  M$ be a family of modules. We say that $\mathcal  M$ is {\it minimal
infinite} provided $\add\mathcal  M$ is infinite whereas $\add\mathcal  M'$ is finite,
where $\mathcal  M'$ is the set of modules $M'$ which are proper submodules or proper
factor modules of modules in $\mathcal  M.$ 
	
\begin{Lem}\label{minimal-infinite}
If $\mathcal  M$ is a minimal infinite family of modules (not necessarily
of the same length), then
there is an infinite  subset $\mathcal  N \subseteq \mathcal  M$ which 
consists of pairwise non-isomorphic indecomposable modules of fixed length.
\end{Lem}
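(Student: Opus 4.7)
The plan is to extract from $\mathcal{M}$ an infinite subfamily $\mathcal{N}_0$ of pairwise non-isomorphic indecomposable modules, then to use Gabriel-Roiter inclusions to bound their lengths, and finally to pass to a fixed-length subfamily by pigeonhole.

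Let $\mathcal{J}$ denote the finite set of isomorphism classes of indecomposable modules occurring in $\add \mathcal{M}'$. The first observation is that if $M \in \mathcal{M}$ is decomposable, say $M = N \oplus M''$ with $N$ indecomposable and $M'' \neq 0$, then $N$ is simultaneously a proper submodule and a proper factor of $M$, hence $N \in \mathcal{M}'$ and therefore $N \in \mathcal{J}$. Contrapositively, whenever $N \notin \mathcal{J}$ is an indecomposable direct summand of some $M \in \mathcal{M}$, one must have $M = N$, so that $N$ is itself (isomorphic to) an element of $\mathcal{M}$. Since $\add \mathcal{M}$ is infinite and $\mathcal{J}$ finite, one finds infinitely many iso classes of such $N$; picking one representative per class yields the infinite subfamily $\mathcal{N}_0 \subseteq \mathcal{M}$ of pairwise non-isomorphic indecomposables.

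For the length bound, set $b = \max\{|J| : J \in \mathcal{J}\}$. For any non-simple member $N \in \mathcal{N}_0$, choose a Gabriel-Roiter inclusion $N' \hookrightarrow N$. By the recollection at the end of Section~7, $N'$ is an indecomposable proper submodule and the cokernel $N/N'$ is indecomposable and non-zero. Both thus lie in $\mathcal{M}'$ and, being indecomposable, in $\mathcal{J}$, so $|N| = |N'| + |N/N'| \leq 2b$.

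There are only finitely many simple modules over the artin algebra $\Lambda$, hence all but finitely many $N \in \mathcal{N}_0$ are non-simple and of length at most $2b$. Pigeonhole now provides an integer $\ell$ and an infinite subset $\mathcal{N} \subseteq \mathcal{N}_0 \subseteq \mathcal{M}$ all of whose members have length $\ell$; this is the required family. The main work is the length bound in the second step, and the point is that a Gabriel-Roiter inclusion packages an indecomposable module into a two-term short exact sequence whose outer terms are both indecomposable --- exactly the kind of objects that the minimal infiniteness hypothesis forces into the finite reservoir $\mathcal{J}$.
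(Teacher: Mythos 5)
Your proof is correct. The first half --- extracting an infinite family of pairwise non-isomorphic indecomposables inside $\mathcal M$ by observing that any indecomposable direct summand of a decomposable $M\in\mathcal M$ is both a proper submodule and a proper factor module of $M$, hence lands in the finite reservoir $\add\mathcal M'$ --- is essentially the paper's argument (the paper phrases it as an inductive choice of summands $N_i\notin\add\{M_1,\dots,M_{i-1}\}$ and then shows only finitely many indices can have $N_i'\neq 0$, but the content is the same). Where you genuinely diverge is the length bound. The paper invokes the quantitative result from \cite{[R7]} and \cite{[R8]} that an indecomposable module of length at least $2$ has an indecomposable proper submodule of length at least $\frac{1}{pq}$ times its own length (with $p,q$ the maximal lengths of indecomposable projectives and injectives), and concludes that unbounded lengths would force indecomposable modules of unbounded length into $\add\mathcal M'$. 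You instead use only the qualitative Gabriel--Roiter facts already recalled in section \ref{sec:7}: a non-simple indecomposable $N$ sits in a short exact sequence $0\to N'\to N\to N/N'\to 0$ with both outer terms non-zero and indecomposable, hence both in the finite set $\mathcal J$, which gives the explicit bound $|N|\le 2b$. This is more self-contained (no external quantitative estimate) and produces an explicit bound in terms of $\mathcal M'$ alone; both routes ultimately rest on Gabriel--Roiter theory, but yours uses only what the paper has already stated. The concluding pigeonhole step coincides with the paper's.
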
 

Of course, $\mathcal  N$ is again minimal infinite.

\begin{proof}
Since $\add\mathcal  M$ is infinite, there is a sequence of modules $M_i\in 
\mathcal  M$ with $i\in \mathbb N$ such that $M_{i}$ does not belong to
$\add\{M_1,\dots,M_{i-1}\}$ for all $i$. Write $M_{i} = N_i \oplus N'_i$
such that $N_{i}$ is indecomposable and does not belong to 
$\add\{M_1,\dots,M_{i-1}\}$. It follows that the modules $N_1,N_2,\dots$
are indecomposable and pairwise non-isomorphic. 
Let $I$ be the set of natural numbers such that
$N'_i\neq 0.$ If $i\in I$, then $N_i$ is a proper submodule of $M_i$, thus
belongs to $\add\mathcal  M'$. Since $\add\mathcal  M'$ is finite and the modules
$N_i$ with $i\in I$ are indecomposable and pairwise non-isomorphic, it
follows that $I$ is finite. Let $\mathcal  N$ be the set of modules $N_i$
with $i\notin I$, then $\mathcal  N$ is an infinite subset of $\mathcal  M$ and consists
of pairwise non-isomorphic indecomposable modules. 

It is easy to see that {\it the modules in a
minimal infinite family of indecomposable modules 
are of bounded length.} Namely, according to \cite{[R7]}
(as well as \cite{[R8]}), any indecomposable module $M$ of length at least 2 has
an indecomposable proper submodule of length at least $\frac 1{pq}|M|$, where
$p$ is the maximal length of an indecomposable projective module, $q$ the maximal
length of an indecomposable injective module. Thus, if the modules in $\mathcal  N$
are indecomposable, but not of bounded length, then we find indecomposable
submodules of modules in $\mathcal  N$ which are of arbitrarily large length. 

Thus, assume that the modules in $\mathcal  N$ are of length at most $b$. Then there
are infinitely many modules in $\mathcal  N$ of fixed length $b'$, for some $1\le b' \le b.$ 
\end{proof} 

\begin{Prop} Let $\mathcal  M$ be a minimal infinite family. Then there are indecomposable non-projective 
modules $C,Y$ and infinitely many indecomposable modules $M_i\in \mathcal  M$
with short exact sequences
$$
 0 \to \tau C \longrightarrow  M_i \overset{f_i}\longrightarrow Y \to 0
$$
such that $f_i$ is a co-Gabriel-Roiter projection. All these equivalence 
classes $[f_i\rangle$ belong to ${}^C[\to Y\rangle^1$.
\end{Prop}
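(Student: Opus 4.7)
The plan is to extract from $\mathcal{M}$ an infinite family of pairwise non-isomorphic indecomposable modules of fixed length, apply co-Gabriel-Roiter projections to them, and then exploit the minimal infiniteness of $\mathcal{M}$ to force the kernels and cokernels to stabilize into single isomorphism classes.

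First I would invoke Lemma \ref{minimal-infinite} to replace $\mathcal{M}$ by an infinite subset $\mathcal{N} \subseteq \mathcal{M}$ consisting of pairwise non-isomorphic indecomposable modules of a common length $\ell$. Since there are only finitely many isomorphism classes of simple modules, necessarily $\ell \ge 2$, so every $M_i \in \mathcal{N}$ is non-simple and admits a co-Gabriel-Roiter projection $f_i \colon M_i \to Y_i$ with some kernel $K_i$. By the discussion preceding Corollary \ref{co-GR}, $K_i$ is indecomposable and non-injective and $Y_i$ is indecomposable; moreover $Y_i$ must be non-projective, since otherwise $\Ext^1(Y_i,K_i)=0$ would force the sequence $0\to K_i\to M_i\to Y_i\to 0$ to split, contradicting the indecomposability of $M_i$ (both $K_i$ and $Y_i$ being non-zero).

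Next, each $K_i$ is a proper non-zero submodule of some $M_i\in\mathcal{M}$ and each $Y_i$ is a proper non-zero factor module, so both $K_i$ and $Y_i$ lie in $\add\mathcal{M}'$. By hypothesis $\add\mathcal{M}'$ contains only finitely many isomorphism classes of indecomposables, so two successive pigeonhole applications yield an infinite sub-collection (re-indexed by $i$) on which $K_i\cong K$ and $Y_i\cong Y$ for fixed indecomposable non-injective $K$ and indecomposable non-projective $Y$. Setting $C = \tau^{-}K$, the module $C$ is indecomposable and non-projective with $\tau C = K$, so each short exact sequence takes the required form
$$
0 \longrightarrow \tau C \longrightarrow M_i \overset{f_i}{\longrightarrow} Y \longrightarrow 0,
$$
and Corollary \ref{co-GR} applied to each $f_i$ gives that $f_i$ is right minimal, right $C$-determined and of right $C$-length $1$, so $[f_i\rangle \in {}^C[\to Y\rangle^1$.

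I expect the only subtle point to be the double pigeonhole step: one must ensure the $M_i$ remain pairwise non-isomorphic (and infinite in number) after discarding, but this is automatic since the members of $\mathcal{N}$ were pairwise non-isomorphic to begin with, and pigeonhole only discards, never identifies. The substantive content is absorbed into Lemma \ref{minimal-infinite} (providing $\mathcal{N}$) and Corollary \ref{co-GR} (validating the co-Gabriel-Roiter conclusion), so the remaining argument is essentially careful bookkeeping.
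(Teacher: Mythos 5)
Your proposal is correct and follows essentially the same route as the paper's own proof: reduce via Lemma \ref{minimal-infinite} to an infinite family of pairwise non-isomorphic indecomposables of fixed length, take co-Gabriel-Roiter projections, pigeonhole on the finitely many isomorphism classes in $\add\mathcal{M}'$ to fix the kernel and cokernel, and conclude with Corollary \ref{co-GR}. The only (welcome) addition is your explicit splitting argument showing $Y$ is non-projective, a point the paper's proof leaves implicit.
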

	
\begin{proof} According to Lemma \ref{minimal-infinite}, 
we can assume that all the modules in $\mathcal  M$
are indecomposable and of fixed length $b$.
Let $\mathcal  M'$ be the set of modules $M'$ 
which are proper submodules or proper factor modules of modules in $\mathcal  M.$
By assumption, there are only finitely many
isomorphism classes of modules in $\mathcal  M'$.
Since there are only finitely many simple modules,
we must have $b\ge 2$, thus any $M\in \mathcal  M$ has a 
co-Gabriel-Roiter factor-module
$Q_M = M/K_M$; here, $K_M$ is a submodule of $M$.
Note that $K_M$ is a proper non-zero submodule, that $Y_M$ is a proper factor 
module of $M$, and that both modules $K_M$ and $Y_M = M/K_M$ are indecomposable. 
Of course, both $K_M$ and $Y_M$ 
belong to $\mathcal  M'.$
Since there are (up to isomorphism) only finitely many pairs $(K,Y)$ in $(\mathcal  M')^2$, 
it follows that there is a pair $(K,M)$
such that there are 
infinitely many pairwise non-isomorphic indecomposable modules $M$
with $K = K_M$ and $Y = Y_M.$ The exact sequences $0 \to K \to M \to Y \to 0$ with
$M$ indecomposable show that $K$ cannot be injective, 
thus $C = \tau^{-}K$ is again indecomposable. This completes the proof of the
proposition.

It remains to observe that $[f_i\rangle$
belongs to ${}^C[\to Y\rangle^1$, but this has been shown in \ref{co-GR}.
\end{proof} 
		\medskip 

The lattices ${}^C[\to Y\rangle$ which arise in this way can have arbitrarily
large height, as the following examples show.
	\medskip

\noindent 
{\bf Example 10.} Let $\Lambda$ be the $n$-Kronecker algebra,
this is the path
algebra of the quiver
$$
{\beginpicture
\setcoordinatesystem units <1.5cm,1cm>
\put{$a$} at 0 0
\put{$b$} at 1 0
\arr{0.8 0.2}{0.2 0.2}
\arr{0.8 -.2}{0.2 -.2}
\setdots <0.5 mm>
\plot 0.55 -.1 0.55 .1 /
\endpicture}
$$
with $n$ arrows. Let $Y = S(b)$ and $C = \tau^{-}S(a)$. Then $C$ has
dimension vector $(n^2-1,n)$, thus $\dim\Hom(C,Y) = n$.
Since $\End(C) = k$, we see that $\mathcal  S\Hom(C,Y)$
is of the form $\mathbb G(n)$ as exhibited in section \ref{sec:19}. 
The family of modules which we are interested in are the indecomposable
modules $M$ of length $2$. Given such a module $M$, there is an exact sequence 
$$
 0 \to  S(a) \longrightarrow  M \overset{f_M}\longrightarrow S(b) \to  0,
$$
with $f_M$ a co-Gabriel-Roiter projection. As we will see in the next
section, this family of maps $f_M$ is a cofork.

%===============================================
\section{Forks and coforks.}
\label{sec:13}
We call a family of maps $(g_i: X \to M_i)_{i\in I}$ a {\it fork} provided
for any finite subset $J\subseteq I$, the map $g_J = (g_i)_{i\in J}: X \to \bigoplus_{i\in J} M_i$
is left minimal. The dual notion will be that of a {\it cofork,} this is a set
of maps $(f_i:  M_i\to Y)_{i\in I}$ such that 
the direct sum map $f^J=(f_i)_{i\in J}:  \bigoplus_{i\in J} M_i \to Y$
with $J$ any finite subset of $I$ is right minimal.
	
\begin{Lem}\label{fork}
Let $g_i: X \to M_i$ with $i\in I$ be a family of non-zero
maps with indecomposable modules $M_i$. Then $(g_i: X \to M_i)_i$ is a fork
if and only if $g_i \notin \sum_{j\neq i} \Hom(M_j,M_i)g_j$ for all $i\in I$.
\end{Lem}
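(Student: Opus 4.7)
The plan is to reduce everything to finite index sets and a matrix-style analysis of left minimality. Since any element of $\sum_{j\neq i}\Hom(M_j,M_i)g_j$ is by definition a finite sum, the global condition on the $g_i$ is equivalent to the corresponding condition for each finite $J\subseteq I$ (namely $g_i\notin\sum_{j\in J\setminus\{i\}}\Hom(M_j,M_i)g_j$ for every $i\in J$). So it is enough to prove the finite version: for a finite $J$, the map $g_J\colon X\to\bigoplus_{j\in J}M_j$ with indecomposable components $M_j$ and each $g_j\neq 0$ is left minimal if and only if no $g_i$ lies in $\sum_{j\in J\setminus\{i\}}\Hom(M_j,M_i)g_j$.

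For the ``only if'' direction I would argue by contraposition. Suppose $g_i=\sum_{k=1}^m h_k g_{j_k}$ with $j_k\in J\setminus\{i\}$ and $h_k\colon M_{j_k}\to M_i$. Build the automorphism $\psi$ of $\bigoplus_{j\in J}M_j$ which is the identity on every summand except that it subtracts $\sum_k h_k\pi_{j_k}$ from the $M_i$-component (elementary block-upper-triangular, hence invertible with inverse obtained by adding the same correction back). Then $\psi\circ g_J$ has vanishing $i$-th component, so the image of $g_J$ is contained in the proper direct summand $\psi^{-1}\bigl(\bigoplus_{k}M_{j_k}\bigr)$ of $\bigoplus_{j\in J}M_j$, whose complement $\psi^{-1}(M_i)$ is nonzero because $g_i\neq 0$ forces $M_i\neq 0$. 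Hence $g_J$ fails to be left minimal and $(g_i)_{i\in I}$ is not a fork.

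For the ``if'' direction I would proceed as follows. Suppose $g_J$ is not left minimal; then $\bigoplus_{j\in J}M_j=N_1\oplus N_2$ with $N_2\neq 0$ and $g_J(X)\subseteq N_1$. Pick any indecomposable direct summand $L$ of $N_2$, so $\bigoplus_j M_j=N_1'\oplus L$ for some $N_1'\supseteq N_1$ and the projection $\pi_L$ satisfies $\pi_L g_J=0$. Now invoke the Krull--Remak--Schmidt exchange property: since the $M_j$ are indecomposable and $L$ is indecomposable, there exists $i\in J$ such that the restriction $\lambda_i\colon M_i\to L$ of $\pi_L$ to the $i$-th summand is an \emph{isomorphism} (not just that $M_i\cong L$ abstractly). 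Writing $\lambda_j=\pi_L|_{M_j}\colon M_j\to L$ for $j\in J$, the equation $\pi_L g_J=0$ reads $\sum_{j\in J}\lambda_j g_j=0$, and solving for $g_i$ gives $g_i=-\sum_{j\neq i}(\lambda_i^{-1}\lambda_j)g_j\in\sum_{j\neq i}\Hom(M_j,M_i)g_j$, contradicting the hypothesis.

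The main obstacle is the invocation of Krull--Remak--Schmidt in precisely the form needed in the backward direction: it is not enough to know that some indecomposable summand of $\bigoplus M_j$ in the decomposition $N_1'\oplus L$ is isomorphic to some $M_i$, one needs that the projection $\pi_L$ itself restricts to an isomorphism $M_i\to L$. This is the content of the standard exchange lemma for finite direct sums of modules with local endomorphism rings, and once invoked the algebraic manipulation with $\lambda_i^{-1}$ is immediate.
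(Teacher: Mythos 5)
Your proof is correct and follows essentially the same route as the paper: the forward direction exhibits the image of $g_J$ inside a proper direct summand (your elementary automorphism $\psi$ produces exactly the complement that the paper realizes as the kernel of $(-1,p_2,\dots,p_t)$), and the backward direction uses the Krull--Remak--Schmidt exchange property to find an index $i$ with $\pi_L|_{M_i}$ an isomorphism and then solves for $g_i$ --- a point you in fact make more explicitly than the paper does. The only blemish is a harmless indexing slip in the forward direction: the image of $g_J$ lands in $\psi^{-1}\bigl(\bigoplus_{j\neq i}M_j\bigr)$ rather than $\psi^{-1}\bigl(\bigoplus_k M_{j_k}\bigr)$, but either way it avoids the nonzero complement $\psi^{-1}(M_i)$, so the conclusion stands.
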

	
\begin{proof} 
First, assume that there is some $i$ with
$g_i \in \sum_{j\neq i} \Hom(M_j,M_i)g_j,$ say there is the subset
$\{1,2,\dots,t\} \subseteq I$ such that 
$g_1 \in \sum_{j=2}^t\Hom(M_j,M_1)g_j.$ Thus, for $2\le j \le t$ 
there are maps $p_j: M_j\to M_1$ such that $g_1 = \sum_{j=2}^t p_jg_j.$
We want to show that the map 
$g_J: X \to M = \bigoplus_{i=1}^t M_i$
is not left minimal. Let $N$ be the kernel of the map 
$$
 (-1,p_2,\dots,p_t):  \bigoplus_{i=1}^t M_i \longrightarrow M_1.
$$
It is easy to check that $M$ is the direct sum of $M_1$ and $N$.
But the equality $-g_1+ \sum_{j=2}^t p_jg_j$ shows that the image
of $g_J$ is contained in $N$. Since the image of $g_J$ is contained in a
proper direct summand of $M$, we see that $g_J$ is not left minimal.

Conversely, assume that $g_i \notin \sum_{j\neq i} \Hom(M_j,M_i)g_j$ for all $i\in I$. We want to show that for any
finite subset $J\subseteq I$, the map $g_J: X \to \bigoplus_{i\in J} M_i$ 
is left minimal. If $J$ is empty, then we deal with
the zero map $X \to 0$ which of course is left minimal. If $J$
consists of the single element $i$, then we deal with $g_i: X \to M_i$.
Since $g_i\neq 0$ and $M_i$ is indecomposable, the map $g_i$ is
left minimal. Thus we can assume that $J$ contains $t\ge 2$ elements,
say $J = \{1,2,\dots,t\}.$ Assume that the map
$g_J : X \to M$ with $M = \bigoplus_{i=1}^t M_i$ is not
left minimal.  
Then there is a proper direct decomposition $M = N\oplus N'$ 
such that the image of $g_J$ is contained in $N$. We can assume that $N'$ is indecomposable,
thus isomorphic to some $M_i$, say to $M_1$. 
Let $p: M \to N' = M_1$ be the
projection with kernel $N$, write $p = (p_i)_i$ with $p_i: M_i \to M_1.$ Note that 
$p_1$ has to be an isomorphism. Replacing any $p_i$ by $(p_1)^{-1}p_i$, we can assume that
$p_1 = 1.$ Since the image of $g_J$ is contained in the kernel $N$ of $p$, 
we have
$\sum_{i=1}^t p_ig_i = 0$, thus 
$
 g_1 = -\sum_{i=2}^t p_ig_i. 
$ 
This completes the proof.
\end{proof}

We may use forks and coforks in order to construct inductively
families of modules:

\begin{Prop}\label{inductive}
 Let $(f_i: M_i\to Y)_{i\in I}$ be an
infinite cofork,
with indecomposable modules $M_i$ which have the same Gabriel-Roiter
measure $\gamma_0$. Let 
$$
 \mathcal  K = \add\{\Ker(f^J)\mid J \subseteq I,\
  |J| < \infty \}.
$$ 
Then $\mathcal  K$ is infinite, all indecomposable modules $K$
in $\mathcal  K$ are
cogenerated by $\{M_i\mid i\in I\}$ and have Gabriel-Roiter measure $\gamma(K) < \gamma_0.$
\end{Prop}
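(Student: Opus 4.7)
The plan is to exploit two features of the kernels $K^J := \Ker(f^J)$ for finite $J \subseteq I$: each $K^J$ sits inside $\bigoplus_{i \in J} M_i$, and $f^J$ is right minimal by the cofork hypothesis. Any indecomposable $K \in \mathcal{K}$ is a direct summand of some $K^J$, hence embeds into $\bigoplus_{i \in J} M_i$, giving the cogeneration statement at once. For the Gabriel--Roiter bound, I invoke the Main Property of the Gabriel--Roiter measure from \cite{[R6]}: an embedding of an indecomposable $K$ into a finite direct sum of indecomposables $\bigoplus_{i \in J} M_i$ forces $\gamma(K) \leq \max_{i \in J} \gamma(M_i) = \gamma_0$, and equality here would force the composition of the embedding with some projection $\bigoplus_{i \in J} M_i \twoheadrightarrow M_j$ to be an isomorphism. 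In that case $K$ would embed as a direct summand of $\bigoplus_{i \in J} M_i$ lying inside $\Ker(f^J)$, contradicting right minimality of $f^J$. Hence $\gamma(K) < \gamma_0$.

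For infinity of $\mathcal{K}$, observe that a common Gabriel--Roiter measure $\gamma_0$ forces a common length $b \geq 1$ on the $M_i$ (the last entry of $\gamma_0$), so $|K^J| = b|J| - |\Img(f^J)| \geq b|J| - |Y|$ is unbounded in $J$. Suppose for contradiction that $\mathcal{K}$ is finite, with $\add \mathcal{K} = \add(L_1 \oplus \cdots \oplus L_s)$. Since $f^J$ remains right minimal when viewed as a surjection $\bigoplus_{i \in J} M_i \twoheadrightarrow Y^J := \Img(f^J)$, the $\Ext$-Lemma of \cite{[R6]} bounds the multiplicity $t_\alpha(J)$ in any decomposition $K^J \cong \bigoplus_\alpha L_\alpha^{t_\alpha(J)}$ by $\dim_k \Ext^1(Y^J, L_\alpha)$. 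The standard long-exact-sequence estimate
$$
\dim_k \Ext^1(Y', L) \ \leq \ \sum_{S} [Y':S]\, \dim_k \Ext^1(S, L) \ \leq \ \sum_{S} [Y:S]\, \dim_k \Ext^1(S, L)
$$
(with $S$ ranging over simples) makes this uniform in the submodule $Y' \subseteq Y$, so the $t_\alpha(J)$ are uniformly bounded and hence $|K^J|$ is too, a contradiction.

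The main obstacle is paragraph one's reliance on the refined form of the Main Property of the Gabriel--Roiter measure: the strict inequality $\gamma(K) < \gamma_0$ needs not only the usual bound $\gamma(K) \leq \max_i \gamma(M_i)$ but also the statement that equality forces the embedding to split via a component projection. This refinement is precisely what lets the right minimality of $f^J$ rule out the borderline case, and it is the non-formal input of the argument; the remainder is bookkeeping on the short exact sequence $0 \to K^J \to \bigoplus_{i \in J} M_i \to Y^J \to 0$ together with the $\Ext$-Lemma.
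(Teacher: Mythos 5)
Your proof is correct, and for the Gabriel--Roiter bound it coincides with the paper's argument: the Main Property of the Gabriel--Roiter measure applied to the embedding $K\subseteq K^J\subseteq\bigoplus_{i\in J}M_i$ shows $\gamma(K)\le\gamma_0$, and in the case of equality the split-mono conclusion produces a direct summand of $\bigoplus_{i\in J}M_i$ inside $\Ker(f^J)$, contradicting right minimality of $f^J$; the cogeneration claim is, as you say, immediate from the embedding. Where you genuinely diverge is the proof that $\mathcal K$ is infinite. The paper argues via its own machinery: if $\mathcal K=\add K$ were finite, it sets $C=\tau^{-}K$, observes that every $M_J$ is then present in ${}^C[\to Y\rangle$, and invokes Proposition \ref{bound} (modules present in ${}^C[\to Y\rangle$ have length at most $\lambda|C|^2|Y|$) to get a contradiction with $|M_J|\ge b|J|$. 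You instead bypass the Auslander-bijection framework and apply the $\Ext$-Lemma of \cite{[R6]} directly to the right minimal surjection $\bigoplus_{i\in J}M_i\to Y^J$, bounding each multiplicity $t_\alpha(J)$ uniformly in $J$ by $\sum_S[Y:S]\,|\Ext^1(S,L_\alpha)|$ while $|K^J|\ge b|J|-|Y|$ grows. Since the proof of Proposition \ref{bound} itself rests on exactly this $\Ext$-Lemma, the two arguments have the same engine; yours is more self-contained (no detour through right determination and the question of which $C$ determines $f^J$), while the paper's is shorter given the results already established in section \ref{sec:11}. One cosmetic point: over an artin algebra you should speak of the $k$-length of $\Ext^1$ rather than $\dim_k$, as the paper does.
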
 
	
Before we give the proof, let us note the following consequence: either
$\mathcal  K$ will contain indecomposable modules of arbitrarily large
length, or else the indecomposable modules in $\mathcal  K$ are of bounded length.
In the latter case, there is an infinite subset of 
indecomposable modules $K_i$ in $\mathcal  K$ all of which have the same 
Gabriel-Roiter measure $\gamma_1 < \gamma_0.$
	
\begin{proof}  
First, assume that $\mathcal  K = \add K$ for some module $K$, let $C = \tau^{-}K.$
For any natural number $t$, we may choose a subset $J$ of $I$ of cardinality $t$. 
Then the module $M_J = \bigoplus_{i\in J}M_i$
is present in ${}^C[\to Y\rangle$. But the modules which are present in 
${}^C[\to Y\rangle$ are of bounded length, whereas $|M_J| \ge t.$ This contradiction
shows that $\mathcal  K$ is infinite. 

Let $K$ be an indecomposable module in $\mathcal  K$, say a direct summand of 
the kernel $K_J$ of $f^J: M_J \to Y$.
Assume that $\gamma(K) \ge \gamma_0$. This implies that the inclusion map
$K \subseteq K_J \subset M_J$ splits. But this contradicts the fact that the map 
$f^J: M_J \to  Y$ is right minimal.
\end{proof} 
	
Let us present two different ways for obtaining forks:

%=====================================
\begin{Prop}[Gabriel-Roiter forks]\label{GR-fork} 
Let $M_i$ with $i\in I$ be pairwise non-isomorphic
indecomposable modules of fixed length with isomorphic Gabriel-Roiter submodule
$U$, say with embeddings $u_i: U \to M_i$. Then the family $(u_i: U \to M_i)_{i\in I}$
is a fork.
\end{Prop}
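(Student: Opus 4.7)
The approach is to apply Lemma~\ref{fork}: the family $(u_i : U \to M_i)_{i \in I}$ is a fork if and only if, for every $i \in I$, one has $u_i \notin \sum_{j \neq i} \Hom(M_j, M_i)\,u_j$. I would argue by contradiction, assuming without loss of generality that $u_1 = \sum_{j=2}^t p_j u_j$ for certain maps $p_j \colon M_j \to M_1$, and choosing the representation with $t$ as small as possible. Setting $\Phi = (u_j)_{j=2}^t \colon U \to M := \bigoplus_{j=2}^t M_j$ and $p = (p_j) \colon M \to M_1$, we have $p\Phi = u_1$.

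The key first observation is that since $M_j$ and $M_1$ are indecomposable of the same finite length but not isomorphic, every $p_j \colon M_j \to M_1$ is neither injective nor surjective (either property, combined with the equal-length hypothesis on finite-length modules, would force an isomorphism). Consequently, $N := p(M) = \sum_j p_j(M_j)$ is a sum of proper submodules of $M_1$, and it contains $u_1(U) = \sum_j p_j u_j(U)$.

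The main step is a case analysis on $N$. If $N \subsetneq M_1$, then $u_1$ factors through the proper submodule $N$ of $M_1$, and the mono-irreducibility of the Gabriel-Roiter inclusion $u_1$ (recalled just before the statement) yields a retraction $r \colon N \to U$ with $r u_1 = \mathrm{id}_U$. The composition $rp \colon M \to U$ then satisfies $(rp)\Phi = r u_1 = \mathrm{id}_U$, exhibiting $\Phi$ as a split monomorphism. Hence $U$ is a direct summand of $M = \bigoplus_{j=2}^t M_j$, so by Krull-Remak-Schmidt $U \cong M_{j_0}$ for some $j_0$, contradicting the properness of the Gabriel-Roiter inclusion $u_{j_0} \colon U \hookrightarrow M_{j_0}$ (which forces $|U| < |M_{j_0}|$).

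The main obstacle is the remaining case $N = M_1$, where $p$ is surjective and the mono-irreducibility trick is blocked. The plan here is to exhibit a contradiction by combining minimality of $t$ with the indecomposability of $M_1$: the surjection $p$ cannot split, since otherwise $M_1$ would be a Krull-Remak-Schmidt summand of $M$, forcing $M_1 \cong M_j$ for some $j$. Passing to cokernels yields a commutative diagram comparing the short exact sequence $0 \to U \xrightarrow{\Phi} M \to M/\Phi(U) \to 0$ with the Gabriel-Roiter extension $0 \to U \xrightarrow{u_1} M_1 \to C_1 \to 0$, inducing a surjection $\bar p \colon M/\Phi(U) \twoheadrightarrow C_1$; the strengthened Gabriel-Roiter property that any nilpotent endomorphism of $C_1$ splits the induced extension, together with the locality of $\End(C_1)$, should then force either a splitting of the Gabriel-Roiter extension (impossible, as $u_1$ is a proper inclusion into an indecomposable $M_1$) or a modification of the representation replacing some $p_{j_0}$ by a map of strictly smaller image, reducing us to the first case and contradicting the minimality of $t$. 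This case is where the subtle argument lies; the first case is essentially formal.
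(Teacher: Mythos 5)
Your first case is handled correctly: when $N=\sum_{j\ge 2}p_j(M_j)$ is a proper submodule of $M_1$, mono-irreducibility of the Gabriel-Roiter inclusion $u_1$ does give a retraction $N\to U$, hence a splitting of $\Phi=(u_j)_{j\ge 2}\colon U\to\bigoplus_{j\ge 2}M_j$, and Krull-Remak-Schmidt forces $U\cong M_{j_0}$, impossible for length reasons. The trouble is that your second case is not a marginal leftover but a genuine possibility you have not closed. Knowing that each $p_j$ is non-surjective does not make $\sum_j p_j(M_j)$ proper: a sum of proper submodules of an indecomposable module can very well be the whole module. For this case you offer only a plan (``should then force either \dots or \dots''). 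In particular, the surjection $\bar p\colon M/\Phi(U)\twoheadrightarrow C_1$ you construct is not an endomorphism of $C_1$, so the property that nilpotent endomorphisms of $C_1$ split the induced extension does not apply to it, and you give no concrete mechanism producing the ``modification of $p_{j_0}$ with strictly smaller image'' that would let the minimality of $t$ bite. As it stands the proof is incomplete precisely in the case you acknowledge to be the subtle one.

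The paper closes the argument by a different, case-free route which you may want to adopt: instead of the images $p_j(M_j)$ it considers the images $p_ju_j(U)$, each of which is \emph{automatically} a proper submodule of $M_1$ (its length is at most $|U|<|M_1|$) and hence satisfies $\gamma(p_ju_j(U))\le\gamma(U)$, since $U$ is a Gabriel-Roiter submodule of $M_1$. The relation $u_1=\sum_j p_ju_j$ makes the diagonal map $(p_ju_j)_j\colon U\to\bigoplus_j p_ju_j(U)$ injective, so the main property of the Gabriel-Roiter measure yields $\gamma(U)\le\max_j\gamma(p_ju_j(U))$, whence equality for some $s$; since $U$ is indecomposable it is then a direct summand of $p_su_s(U)$, and as the latter is an epimorphic image of $U$ one concludes that $p_su_s$ restricts to an isomorphism onto its image and that $u_s$ is a split monomorphism --- contradicting the fact that a Gabriel-Roiter inclusion never splits. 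So the measure comparison replaces your dichotomy on $N$ entirely; if you prefer to keep your own structure, you must supply a complete argument for the case $\sum_j p_j(M_j)=M_1$.
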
 

\begin{proof} 
The maps $u_i: U \to M_i$ are non-zero maps and the modules $M_i$
are indecomposable. Thus, if the family $(u_i: U \to M_i)_i$ is not a fork,
then Lemma \ref{fork} asserts that there is some $i$ with
$u_i \in \sum_{j\neq 1} \Hom(M_j,M_1)f_j$, thus we can assume that there
is the subset $\{1,2,\dots,t\} \subseteq I$ and maps $p_j: M_j \to M_1$
for $2\le j \le t$ such that $u_1 = \sum_{j=2}^t p_ju_j.$

Let $M'' = \bigoplus_{j=2}^tM_j$.
For $2\le j \le t$, consider the submodule $p_ju_j(U)$ of $M_1$. It is a proper submodule of $M_1$,
thus $\gamma(p_ju_j(U)) \le \gamma(U)$, since $U$ is a Gabriel-Roiter submodule of $M_1$. 
On the other hand, the image of $(p_ju_j)_{j=2}^t: U \to M''$ is contained in 
$\bigoplus_{j=2}^t p_ju_j(U).$ Since $u_1 = \sum_{j=2}^t p_ju_j,$
this map 
$(p_ju_j)_{j=2}^t$ is injective, thus $U$ is a submodule of 
$\bigoplus_{j=2}^t p_ju_j(U)$.
It follows that $\gamma(U) \le \max\gamma(p_ju_j(U))$. 
Thus there is some $s$ with $2\le s \le t$ such that
$\gamma(U) = \gamma(p_su_s(U)).$ Since $U$ is indecomposable, $U$ has to be
a direct summand of $p_su_s(U)$. But $p_su_s(U)$ is a factor module of $U$, thus
$p_su_s: U \to p_su_s(U)$ is an isomorphism. As a consequence, $u_s$ is a split monomorphism.
But this is impossible, since $u_s$ is the inclusion of a Gabriel-Roiter submodule. 
\end{proof} 

Let us add the dual assertion.

\begin{Prop}[co-Gabriel-Roiter coforks]\label{GR-cofork}
 Let $M_i$ with $i\in I$ be pairwise non-isomorphic
indecomposable modules of fixed length with isomorphic 
co-Gabriel-Roiter factor modules
$Y$, say with projections  $v_i: M_i \to Y$. Then the family
$(v_i: M_i \to Y)_{i\in I}$ is a cofork.
\end{Prop}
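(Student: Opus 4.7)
The plan is to deduce this as the formal dual of Proposition \ref{GR-fork}, rather than rerunning the earlier argument. Recall from the end of Section 7 that a co-Gabriel-Roiter projection is by definition obtained from a Gabriel-Roiter inclusion by applying the standard $k$-duality $D$; in particular, if $v : M \to Y$ is a co-Gabriel-Roiter projection, then $Dv : DY \to DM$ is a Gabriel-Roiter inclusion.

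First I would collect the three properties of $D$ that make this dualisation go through: $D$ is an exact contravariant equivalence $\mo\Lambda \to \mo\Lambda^{\text{op}}$; it preserves length, indecomposability, and non-isomorphism of objects; and it interchanges right-minimality with left-minimality. The last property is the only one worth checking carefully, and it is elementary: a decomposition $X = X' \oplus X''$ of the source of $f$ with $f(X'') = 0$ corresponds under $D$ to a decomposition $DX = DX' \oplus DX''$ such that the projection $DX \to DX''$ annihilates $\Img(Df)$, and these two conditions are visibly equivalent to the failure of right-minimality of $f$ and left-minimality of $Df$, respectively.

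Second, I would transport the hypotheses across $D$. The modules $DM_i$ ($i \in I$) form a family of pairwise non-isomorphic indecomposable $\Lambda^{\text{op}}$-modules of a common length, each carrying $DY$ as a Gabriel-Roiter submodule via $Dv_i$. Proposition \ref{GR-fork}, applied over $\Lambda^{\text{op}}$, therefore asserts that $(Dv_i : DY \to DM_i)_{i \in I}$ is a fork.

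Finally, for every finite subset $J \subseteq I$ the canonical isomorphism $D\bigl(\bigoplus_{i \in J} M_i\bigr) \cong \bigoplus_{i \in J} DM_i$ identifies $D(v^J)$ with the map $(Dv_i)_{i \in J} : DY \to \bigoplus_{i \in J} DM_i$. Since $(Dv_i)_{i \in I}$ is a fork, this latter map is left minimal; by the duality--minimality compatibility of the first step, $v^J$ is then right minimal, and so $(v_i : M_i \to Y)_{i \in I}$ is a cofork. The only mildly non-routine point in this plan is verifying that $D$ exchanges right- and left-minimality; after that, everything is automatic once the setup has been translated via $D$.
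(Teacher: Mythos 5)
Your argument is correct and is exactly the route the paper intends: the paper offers no written proof, introducing the statement only with ``Let us add the dual assertion,'' i.e.\ it relies on precisely the dualisation of Proposition~\ref{GR-fork} that you carry out. Your careful check that $D$ exchanges right- and left-minimality (and that $D(v^J)$ is identified with $(Dv_i)_{i\in J}$) supplies the details the paper leaves implicit, so nothing is missing.
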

	
\noindent 
{\bf Remark.} Let us stress the
following:
Let $M_i$ with $i\in I$ be pairwise non-isomorphic indecomposable modules
with Gabriel-Roiter submodules $U_i \subset M_i$ and assume that the
modules $M_i/U_i$ are isomorphic, say to $Y$, with projection maps
$f_i: M_i \to Y$. Then the family $(f_i: M_i \to Y)_{i\in I}$ is not necessarily a cofork.
Namely, consider again example 8, and look at the projections $f: P(b) \to S(b)$
and $f': \tau^{-}S(a) \to S(b)$. As we mentioned already, the kernels of both
maps are Gabriel-Roiter submodules. Since there is a factorization
$f = f'h$, the map $[f,f']: P(b)\oplus \tau^{-}S(a) \to S(b)$ is not
right minimal. 
	\medskip 

Here is a consequence of \ref{GR-cofork} and \ref{co-GR}.
	
\begin{Prop} Let $M_i$ with $i\in I$ be pairwise non-isomorphic
indecomposable modules with isomorphic 
co-Gabriel-Roiter factor modules
$Y$, say with projections  $v_i: M_i \to Y$ and assume that also the kernels
are isomorphic, say isomorphic to $K$. Let $C = \tau^{-}K.$ Then
$(v_i: M_i \to Y)_{i\in I}$ is a cofork which belongs to ${}^C[\to Y\rangle^1.$
\end{Prop}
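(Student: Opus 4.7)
The plan is to assemble the statement from two results already established in the paper: Proposition~\ref{GR-cofork} (co-Gabriel--Roiter coforks) for the cofork property, and Corollary~\ref{co-GR} for the assertion that each $[v_i\rangle$ lies in ${}^C[\to Y\rangle^1$.

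First I would observe that the hypothesis already forces a uniform length on the family. Since each $M_i$ fits into a short exact sequence
\[
 0 \to K \to M_i \overset{v_i}\to Y \to 0,
\]
we have $|M_i| = |K| + |Y|$ for all $i\in I$, so the $M_i$ are pairwise non-isomorphic indecomposable modules of fixed length with isomorphic co-Gabriel--Roiter factor module $Y$ and projections $v_i$. This is precisely the hypothesis of Proposition~\ref{GR-cofork}, so that proposition applies and yields that $(v_i: M_i \to Y)_{i \in I}$ is a cofork, i.e.\ for every finite $J\subseteq I$ the direct sum map $f^J = (v_i)_{i\in J} : \bigoplus_{i\in J} M_i \to Y$ is right minimal.

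Second, I would turn to the claim that $[v_i\rangle \in {}^C[\to Y\rangle^1$ for each $i$. For any fixed $i$, the map $v_i$ is by hypothesis a co-Gabriel--Roiter projection with kernel $K$, and by assumption $C = \tau^{-}K$. Corollary~\ref{co-GR} applies verbatim to $v_i$ and gives that $v_i$ is right minimal, right $C$-determined, and $|v_i|_C = 1$; hence $[v_i\rangle \in {}^C[\to Y\rangle^1$.

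There is no serious obstacle: the proof is essentially the observation that the hypotheses fuse those of Proposition~\ref{GR-cofork} (giving the cofork property) with those of Corollary~\ref{co-GR} (giving membership in ${}^C[\to Y\rangle^1$ for each individual $v_i$), once one checks that the common-kernel assumption forces the fixed-length hypothesis of the former. One small point worth flagging is that the statement only asserts membership of the individual $[v_i\rangle$ in ${}^C[\to Y\rangle^1$; the direct sums $f^J$ are right minimal (by the cofork property) and right $C$-determined (their kernel is a direct sum of copies of $K$, which lies in $\add \tau C$), but their right $C$-length equals $|J|$ rather than $1$, so they lie in ${}^C[\to Y\rangle^{|J|}$ and not in ${}^C[\to Y\rangle^1$.
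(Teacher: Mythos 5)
Your proof is correct and is essentially identical to the paper's, which disposes of the statement in one line as ``a consequence of \ref{GR-cofork} and \ref{co-GR}''; your explicit check that the common kernel $K$ forces the fixed-length hypothesis of Proposition~\ref{GR-cofork} is exactly the small verification that makes that citation legitimate. (Your closing side remark that $|f^J|_C=|J|$ is not needed and not fully justified as stated --- Corollary~\ref{indecomposable-kernel} only gives $|f^J|_C\ge 2$ for $|J|\ge 2$ without further hypotheses on $\End(K)$ --- but this does not affect the argument.)
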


A proof similar to \ref{GR-fork}
shows that starting with any infinite family $\mathcal  M$ of indecomposable modules
with fixed length, there are infinite forks consisting 
of maps $S \to M$, where $S$ is a simple module and $M\in \mathcal  M$.
A fork $(g_i: S \to M_i)_i$ with $S$ simple will be called a 
{\it simple fork.} Similarly, a cofork $(f_i: M_i \to S)_i$ with $S$
simple is called a {\it simple cofork}.

\begin{Prop}\label{simple-module}
Let $M_i$ with $i\in I$ be pairwise non-isomorphic
indecomposable modules with fixed Gabriel-Roiter measure.
Then there exists for every index $i\in I$ a simple submodule $S_i$ of $M_i$,
say with inclusion map $u_i: S_i \to M_i,$ such that for every simple
module $S$ and $I(S) = \{i\in I \mid S_i = S\}$ 
the family $(u_i: S\to M_i)_{i\in I(S)}$ is a fork.
\end{Prop}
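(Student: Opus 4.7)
The plan is to argue by induction on the common length $\ell = |M_i|$, which is determined by the fixed Gabriel-Roiter measure $\gamma_0$ since the largest entry of $\gamma_0$ equals $\ell$. The base case $\ell = 1$ is trivial: each $M_i$ is simple, so I set $S_i = M_i$ and $u_i = 1_{M_i}$; as the $M_i$ are pairwise non-isomorphic, every $I(S)$ contains at most one index and the corresponding singleton family is a fork.

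For the inductive step $\ell \geq 2$, for each $i$ I fix a Gabriel-Roiter submodule $U_i \subseteq M_i$ with inclusion $\iota_i$. These $U_i$ are indecomposable, of strictly smaller length, and share a common Gabriel-Roiter measure $\gamma_1$ determined by $\gamma_0$. I partition $I = \bigsqcup_{[U]} I([U])$ by the isomorphism class of $U_i$; for each class $[U]$ I fix a representative $U$ together with isomorphisms $\alpha_i : U \to U_i$ for $i \in I([U])$, and set $v_i = \iota_i \alpha_i : U \to M_i$. By Proposition~\ref{GR-fork}, each family $(v_i)_{i \in I([U])}$ is a fork. The representatives form a pairwise non-isomorphic family of indecomposables with common Gabriel-Roiter measure $\gamma_1$, so the induction hypothesis furnishes simple submodules $T_{[U]} \subseteq U$ with inclusions $w_{[U]} : T_{[U]} \to U$ such that, for every simple $T$, the family $(w_{[U]})_{[U] : T_{[U]} = T}$ is a fork. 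I then define $S_i = T_{[U_i]}$ and $u_i = v_i \circ w_{[U_i]} : S_i \to M_i$.

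The bulk of the work is to check that $(u_i)_{i \in I(S)}$ is itself a fork for each simple $S$. Suppose not: Lemma~\ref{fork} yields $j_1, \dots, j_t \in I(S) \setminus \{i_0\}$ and morphisms $p_k : M_{j_k} \to M_{i_0}$ with $u_{i_0} = \sum_k p_k u_{j_k}$, which unfolds to
\[
 v_{i_0} \, w_{[U_{i_0}]} \;=\; \sum_{k=1}^{t} p_k \, v_{j_k} \, w_{[U_{j_k}]}
\]
in $\Hom(S, M_{i_0})$. I split the sum according to whether $[U_{j_k}] = [U_{i_0}]$: the matching part, taken alone, would force the forbidden relation $v_{i_0} = \sum p_k v_{j_k}$ of the upper-level fork from Proposition~\ref{GR-fork}, while the cross-terms involve maps $p_k v_{j_k} : U_{[U_{j_k}]} \to M_{i_0}$ whose images are proper submodules of $M_{i_0}$, by length, and therefore carry Gabriel-Roiter measure at most $\gamma_1$.

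The main obstacle, which I expect to be the most delicate step, is that these cross-terms need not factor through the Gabriel-Roiter submodule $U_{i_0} = v_{i_0}(U)$, so the relation cannot simply be cancelled against $v_{i_0}$ to descend to $\Hom(S, U)$ where the induction-hypothesis fork of the $w_{[U]}$'s lives. I would address this by adapting the image analysis in the proof of Proposition~\ref{GR-fork}: a cross-term whose image escaped $U_{i_0}$ would exhibit a Gabriel-Roiter-sized submodule of $M_{i_0}$ properly enlarging $U_{i_0}$ in such a way that the Gabriel-Roiter inclusion $v_{i_0}$ would split, against the definition of a Gabriel-Roiter submodule. Once the cross-terms are forced to factor through $U_{i_0}$, inverting $v_{i_0}$ on its image converts the whole relation into one among the $w_{[U]}$'s in $\Hom(S, U)$ that is precisely excluded by Lemma~\ref{fork} applied to the induction-hypothesis fork, delivering the desired contradiction.
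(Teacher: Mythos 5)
Your inductive route through Gabriel--Roiter submodules is genuinely different from the paper's argument, but it has a real gap exactly at the step you yourself flag as ``the most delicate'': the verification that the composite family $(u_i = v_i w_{[U_i]})_{i\in I(S)}$ is a fork is only sketched, and the sketch does not go through. Three concrete problems. First, from $v_{i_0}w_{[U]}=\sum_k p_kv_{j_k}w_{[U]}$ (the matching part) you cannot conclude $v_{i_0}=\sum_k p_kv_{j_k}$: the induced map $\Hom(U,M_{i_0})\to\Hom(S,M_{i_0})$, $g\mapsto g\,w_{[U]}$, is just restriction to the proper submodule $S\subset U$ and is not injective, so a relation among the $u$'s does not descend to the forbidden relation among the $v$'s from Proposition \ref{GR-fork}. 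Second, your mono-irreducibility argument for the cross-terms needs $U_{i_0}+V$ (with $V$ the image of $p_kv_{j_k}$) to be a \emph{proper} submodule of $M_{i_0}$ before the Gabriel--Roiter inclusion is forced to split off; since $|U_{i_0}|$ can be as large as $|M_{i_0}|-1$, this sum can be all of $M_{i_0}$ and the argument collapses. Third, even if every term did factor through $v_{i_0}$, cancelling the monomorphism $v_{i_0}$ leaves a relation $w_{[U]}=\sum_k q_kw_{[U_{j_k}]}$ in which the matching terms carry coefficients $q_k\in\End(U)$; Lemma \ref{fork} only forbids expressing $w_{[U]}$ through the \emph{other} members of the induction-hypothesis fork, so you would still need a separate argument with the local ring $\End(U)$ to handle the case that $1-\sum q_k$ fails to be invertible.

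The paper avoids all of this with a one-step choice of the $S_i$: since the $M_i$ are pairwise non-isomorphic indecomposables with the same Gabriel--Roiter measure, the main property of the measure shows that $M_i$ cannot be cogenerated by the remaining $M_j$ (otherwise it would be a direct summand of a direct sum of them, hence isomorphic to some $M_j$), so the intersection of the kernels of all maps $M_i\to M_j$ with $j\neq i$ is non-zero; choosing $S_i$ simple inside this intersection forces $\Hom(M_i,M_j)u_i=0$ for all $j\neq i$, whence every term $p_ku_{j_k}$ in a putative relation vanishes outright and Lemma \ref{fork} applies immediately. If you want to keep your inductive framework, you would have to build this kind of vanishing into the choice of the simple submodules; as written, ``any simple submodule of $U$ produced by the induction hypothesis'' is not constrained enough to make the composites a fork.
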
 
	
\begin{proof} Let us assume that the modules in $\mathcal  M$ have length $b$. Of course, $b > 1.$
Consider a module $M_i$ with $i\in I$.
It is not cogenerated by the remaining modules, thus the intersection of the
kernels of all maps $\phi: M_i \to M_j$ with $j\neq i$ is non-zero.
Let $S_i$ be a simple submodule of $M_i$ which is contained in this intersection (thus
$\phi(S_i) = 0$ for all maps $\phi: M_i \to M_j$ with $j\neq i$. 
Denote by $u_i: S_i \to M$ the inclusion map, thus $\Hom(M_i,M_j)u_i) = 0$ for all
$j\neq i.$ For any simple module $S$, let $I(S)
 = \{i\in I \mid S_i = S\}$. 

In order to see that  $(u_i: S\to M_i)_{i\in I(S)}$ is a fork,
we have to show the following: For any finite subset $J$ of $I$,
say $J = \{1,2,\dots, t\}$, the map
$u^J: (u_i)_i: S \to \bigoplus_{i=1}^t M_i$ is left minimal. 
Again, this is clear for $t=1$, since the maps $u_i$ are non-zero and the
modules $M_i$ are indecomposable. 
Thus we can assume that $t\ge 2$. If  
the map $u^J$ is not left minimal, then, up to permutation of the
indices, there are maps $p_i: M_i \to M_1$ with $2\le i \le t$ such that 
$u_1 = \sum_{i=2}^t p_iu_i. $ 
However, by construction, $\Hom(M_i,M_1)u_i = 0$ for $i\neq 1$, thus all the summands $p_iu_i$
with $2\le i \le t$ are zero. Since $u_1 \neq 0$, we obtain a contradiction.
\end{proof} 

\begin{Cor}[Simple forks] Let $\mathcal  M$ be an infinite set of pairwise non-isomorphic 
indecomposable modules of fixed length. Then there exists an infinite 
simple fork $(u_i: S\to M_i)_{i}$ such that all $M_i$ belong to $\mathcal  M$.
\end{Cor}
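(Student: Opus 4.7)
The plan is to derive this corollary from the preceding Proposition \ref{simple-module} by means of two successive pigeonhole arguments, on Gabriel-Roiter measures and on simple modules.

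First I would reduce to the situation of fixed Gabriel-Roiter measure, so that Proposition \ref{simple-module} is applicable. Recall that the Gabriel-Roiter measure of an indecomposable module of length $b$ is a subset of $\{1,\dots,b\}$, so only finitely many values can occur as Gabriel-Roiter measures of the modules in $\mathcal{M}$. Since $\mathcal{M}$ is infinite, the pigeonhole principle yields an infinite subset $\mathcal{M}' \subseteq \mathcal{M}$ consisting of pairwise non-isomorphic indecomposable modules (all of fixed length $b$) which share a common Gabriel-Roiter measure $\gamma_0$.

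Next I would invoke Proposition \ref{simple-module} applied to the family $\mathcal{M}' = (M_i)_{i \in I}$. This produces, for each $i \in I$, a simple submodule $S_i \subseteq M_i$ with inclusion $u_i : S_i \to M_i$, such that for every simple module $S$ the subfamily $(u_i : S \to M_i)_{i \in I(S)}$, with $I(S) = \{i \in I \mid S_i \cong S\}$, is a fork.

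Finally, since $\Lambda$ is an artin algebra, there are only finitely many isomorphism classes of simple $\Lambda$-modules, hence $I = \bigsqcup_S I(S)$ is a finite union. As $I$ is infinite, at least one $I(S)$ must be infinite; picking such an $S$ gives the desired infinite simple fork $(u_i : S \to M_i)_{i \in I(S)}$ with all $M_i \in \mathcal{M}' \subseteq \mathcal{M}$. There is no real obstacle here beyond checking that both pigeonhole steps apply, the work having already been done in Proposition \ref{simple-module}; the only subtlety worth noting is the initial reduction from \emph{fixed length} to \emph{fixed Gabriel-Roiter measure}, which is what brings the hypotheses of the corollary into the form required by the proposition.
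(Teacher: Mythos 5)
Your proposal is correct and follows essentially the same route as the paper: reduce by pigeonhole on the finitely many Gabriel-Roiter measures to an infinite subfamily with fixed measure, apply Proposition \ref{simple-module}, and then pigeonhole on the finitely many simple modules to extract an infinite fork $(u_i: S\to M_i)_{i\in I(S)}$. No gaps.
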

	
\begin{proof} Since the modules in $\mathcal  M$ are of bounded length, only finitely many
Gabriel-Roiter measures occur, thus there is an infinite subset $\mathcal  M' \subseteq \mathcal  M$
which consists of indecomposable modules with fixed Gabriel-Roiter measure. 
Now we apply \ref{simple-module}. Since there are only finitely many simple modules, one of the forks
$(u_i: S\to M_i)_{i\in I(S)}$ has to be infinite. 
\end{proof} 
	
Again we add the dual assertion.
	
\begin{Cor}[Simple coforks]\label{simple-cofork}
Let $\mathcal  M$ be an infinite set of pairwise non-isomorphic 
indecomposable modules of fixed length. Then there exists an infinite 
simple cofork $(v_i: M_i\to S)_{i}$ such that all $M_i$ belong to $\mathcal  M$.
\end{Cor}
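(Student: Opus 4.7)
The plan is to obtain this statement as the formal dual of the Simple forks corollary by invoking the standard $k$-duality $D\colon \mo\Lambda \to \mo\Lambda^{\mathrm{op}}$, which is a contravariant equivalence preserving length and reflecting indecomposability and isomorphism classes.

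First, I would form $D\mathcal{M} = \{DM \mid M \in \mathcal{M}\}$; this is an infinite set of pairwise non-isomorphic indecomposable $\Lambda^{\mathrm{op}}$-modules of the same fixed length. Applying the Simple forks corollary (already established) in $\mo\Lambda^{\mathrm{op}}$ to $D\mathcal{M}$ yields a simple $\Lambda^{\mathrm{op}}$-module $S'$ and an infinite family $(u_i\colon S' \to DM_i)_{i\in I}$ which is a fork, with all $M_i \in \mathcal{M}$.

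Second, I would apply $D$ back to this fork and invoke the natural isomorphism $D^2 \cong \mathrm{id}$ on $\mo\Lambda$ to obtain maps $v_i := Du_i \colon M_i \to S$, where $S := DS'$ is a simple $\Lambda$-module.

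Finally, to verify the cofork property, observe that for any finite subset $J \subseteq I$ the direct sum map $v^J = (v_i)_{i\in J}\colon \bigoplus_{i\in J} M_i \to S$ is canonically identified (via the compatibility of $D$ with finite direct sums and $D^2 \cong \mathrm{id}$) with $D(u^J)$, where $u^J\colon S' \to \bigoplus_{i\in J} DM_i$ is the corresponding fork map. Since $u^J$ is left minimal by the fork property and $D$ interchanges left and right minimality (because it swaps direct sum decompositions of source and target, converting the non-existence of a non-zero summand of the target outside the image into the non-existence of a non-zero summand of the source killed by the map), the map $v^J$ is right minimal, as required. The only step that is not immediate is this standard bookkeeping about the interaction of $D$ with direct sums and with left/right minimality, but it presents no genuine obstacle.
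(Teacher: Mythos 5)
Your proof is correct and is essentially the paper's own argument: the paper disposes of this corollary with the single remark ``Again we add the dual assertion,'' i.e.\ it obtains the simple-cofork statement by applying the duality $D$ to the simple-fork corollary exactly as you do. Your write-up merely makes explicit the standard bookkeeping (passage to $\Lambda^{\mathrm{op}}$, $D^2\cong\mathrm{id}$, compatibility with finite direct sums, and the interchange of left and right minimality under $D$) that the paper leaves to the reader.
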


The setting developed here allows to provide a proof of the following result which first was
established in \cite{[R6]}. Note that this result strengthens the assertion of the first
Brauer-Thrall conjecture \cite{[Ro]}.
	
\begin{Cor}[First Brauer-Thrall conjecture]
 Let $\mathcal  M$ be an infinite set of indecomposable modules of a fixed
length. Then there are indecomposable modules of arbitrarily large length which are
cogenerated by modules in $\mathcal  M$.
\end{Cor}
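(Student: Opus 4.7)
\emph{Proof plan.} The idea is to iterate Proposition \ref{inductive}, using coforks to pass from a family of indecomposables with a fixed Gabriel--Roiter measure to a new infinite family, either of unbounded length (which finishes the argument) or of strictly smaller Gabriel--Roiter measure; since Gabriel--Roiter measures are well-ordered, the second alternative cannot persist forever.

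Passing to a subset, I may assume that $\mathcal{M}$ consists of pairwise non-isomorphic indecomposable modules, all of fixed length $b$. By Corollary \ref{simple-cofork} there is an infinite simple cofork $(v_i\colon M_i\to S)_{i\in I}$ with $M_i\in \mathcal{M}$. Since every $M_i$ has length $b$, only finitely many Gabriel--Roiter measures appear among the $M_i$, so I may thin out $I$ to an infinite subset on which all $M_i$ share a common Gabriel--Roiter measure $\gamma_0$. Proposition \ref{inductive} now applies and produces an infinite class $\mathcal{K}_1=\add\{\Ker(v^J)\mid J\subseteq I\text{ finite}\}$ such that every indecomposable $K\in\mathcal{K}_1$ is cogenerated by $\{M_i\mid i\in I\}\subseteq \mathcal{M}$ and satisfies $\gamma(K)<\gamma_0$. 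If the indecomposable modules in $\mathcal{K}_1$ have unbounded length, the proof is complete. Otherwise, as observed in the remark following Proposition \ref{inductive}, there is an infinite subset $\mathcal{M}_1\subseteq \mathcal{K}_1$ of pairwise non-isomorphic indecomposables of one fixed length and one fixed Gabriel--Roiter measure $\gamma_1<\gamma_0$, and each member of $\mathcal{M}_1$ is cogenerated by $\mathcal{M}$.

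I now repeat the construction with $\mathcal{M}_1$ in place of $\mathcal{M}$. This yields either the desired conclusion, or an infinite family $\mathcal{M}_2$ of indecomposables of fixed length with common Gabriel--Roiter measure $\gamma_2<\gamma_1$, again cogenerated by $\mathcal{M}_1$ and hence (by transitivity of cogeneration) by $\mathcal{M}$. Iterating, either the procedure terminates after finitely many steps in the unbounded-length alternative, or it produces an infinite strictly descending chain $\gamma_0>\gamma_1>\gamma_2>\cdots$ of Gabriel--Roiter measures. The latter is impossible because the set of Gabriel--Roiter measures is well-ordered (see \cite{[R6]}), so at some stage $t$ the indecomposable modules in $\mathcal{K}_t$ must have unbounded length; being cogenerated by $\mathcal{M}_{t-1}$ and, ultimately, by $\mathcal{M}$, they provide the desired indecomposables of arbitrarily large length cogenerated by $\mathcal{M}$. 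The main obstacle is simply invoking correctly the well-ordering of Gabriel--Roiter measures to close off the induction; the rest is a clean assembly of Corollary \ref{simple-cofork}, Proposition \ref{inductive}, and transitivity of cogeneration.
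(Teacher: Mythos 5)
Your construction is exactly the paper's: pass to an infinite simple cofork via Corollary \ref{simple-cofork}, apply Proposition \ref{inductive} to get an infinite class of kernels of strictly smaller Gabriel--Roiter measure, and iterate. The gap is in how you close the induction. The set of Gabriel--Roiter measures of indecomposable modules over an artin algebra is \emph{not} well-ordered in the descending direction: for every representation-infinite algebra the ``landing part'' established in \cite{[R6]} is an infinite strictly decreasing chain $\cdots < I^3 < I^2 < I^1$ of realized measures (for the Kronecker algebra these are the measures of the preinjectives $Q_0, Q_1, Q_2, \dots$). So you cannot conclude that the chain $\gamma_0 > \gamma_1 > \gamma_2 > \cdots$ must break off after finitely many steps, and the claim that ``at some stage $t$ the indecomposable modules in $\mathcal K_t$ must have unbounded length'' is unjustified; the procedure may well run forever with each individual $\mathcal K_t$ of bounded length.

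The correct way to finish --- and what the paper actually does --- is to exploit the non-termination rather than rule it out. If the procedure continues indefinitely, you obtain infinitely many pairwise distinct Gabriel--Roiter measures $\gamma_0 > \gamma_1 > \cdots$, each realized by indecomposable modules in $\mathcal M_i$, and all of these modules are cogenerated by $\mathcal M$ (by transitivity, as you note). But for any bound $B$, the Gabriel--Roiter measure of a module of length at most $B$ is encoded by a subset of $\{1,\dots,B\}$, so only finitely many measures are realized by indecomposables of length at most $B$. Hence the modules in $\bigcup_i \mathcal M_i$ cannot be of bounded length, and they already supply the desired indecomposables of arbitrarily large length cogenerated by $\mathcal M$. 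With this replacement for your final paragraph the argument is complete and coincides with the paper's proof.
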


\begin{proof} 
Let $\mathcal  M_0 = \mathcal  M$ and apply \ref{simple-cofork}. 
Thus, there is a simple cofork
$(v_i: M_i\to S)_{i}$ such that all $M_i$ belong to $\mathcal  M_0$.
Let $\mathcal  K = \add\{\Ker(v^J)\mid J \subseteq I,\
|J| < \infty \}$. According to \ref{inductive} we know that $\mathcal  K$ is infinite, that 
all indecomposable modules $K$ in $\mathcal  K$ are
cogenerated by $\{M_i\mid i\in I\}$ and that they 
have Gabriel-Roiter measure $\gamma(K) < \gamma_0.$
Now either the indecomposable modules in $\mathcal  K$ are of unbounded length, then we are done.
Or else they are of bounded length: then we find in $\mathcal  K$ an infinite set 
$\mathcal  M_1$ of indecomposable modules having the same Gabriel-Roiter measure, 
say $\gamma_1$ and $\gamma_1 < \gamma_0.$
Inductively, we construct a sequence of sets of indecomposable modules
$$
 \mathcal  M_0, \mathcal  M_1, \dots, \mathcal  M_i
$$
such that the modules in $\mathcal  M_i$ are cogenerated by $\mathcal  M_{i-1}$ and
have fixed Gabriel-Roiter measure $\gamma_i < \gamma_{i-1}$,
for $i\ge 1$. The procedure stops in case there are indecomposable modules of 
unbounded length which are cogenerated by $\mathcal  M_i$, and then these modules are
cogenerated by $\mathcal  M$. Otherwise the procedure can be continued indefinitely.
But then we have constructed infinitely many sets $\mathcal  M_i$ of indecomposable
modules. Since the modules in $\mathcal  M_i$ 
have Gabriel-Roiter measure $\gamma_i$ and
the measures $\gamma_i$ are pairwise different,
the modules in $\bigcup_i\mathcal  M_i$ cannot be of bounded length. Also, the modules
in any $\mathcal  M_i$ are cogenerated by $\mathcal  M.$ This completes the proof.
\end{proof}

%================================================================
\section{The Kronecker algebra.}
\label{sec:14}

Throughout this section, $\Lambda$ will be the Kronecker algebra as introduced
already in Example 2. 
It is a very important artin algebra and a clear understanding of its
module category $\mo \Lambda$ seems to be of interest.

For all pairs $C,Y$ of indecomposable $\Lambda$-modules, we
are going to describe the lattice ${}^C[\to Y\rangle$
as well as all the modules present in ${}^C[\to Y\rangle^1$, this
is the subset of
${}^C[\to Y\rangle$ of elements of right $C$-length $1$.
	\medskip

Let us recall the structure of the category $\mo \Lambda$ (see for example \cite{[R4]},
or \cite{[ARS]}, section VIII.7). 
There are the preprojective and the preinjective modules, 
modules without an indecomposable
direct summand which is preprojective or preinjective are said to be regular.
For any $\Lambda$-module $M$, its defect is defined by $\delta(M) = 
\dim\Hom(M,Q_0) - \dim\Hom(P_0,M)$. Any indecomposable preprojective $\Lambda$-module has
defect -1, the indecomposable preinjective modules have defect 1, all the regular
modules have defect 0. There are countably many indecomposable preprojective modules,
they are labeled $P_i$, and also countably many indecomposable preinjective modules,
they are labeled $Q_i$; both $P_i$ and $Q_i$ have length $2i+1$. 
The indecomposable regular modules are those modules which belong to stable
Auslander-Reiten components, and all these components are stable tubes
of rank $1$. The full subcategory $\mathcal  R$ of all regular modules is abelian. 
By definition,
the simple regular modules are the regular modules which are simple objects in this
subcategory. Given any indecomposable regular module $R$, its 
endomorphism ring $\End(R)$
is a commutative ring (namely a ring of the form $k[T]/\langle f\rangle$, where
$k[T]$ is the polynomial ring in one variable $T$ with coefficients in $k$ and
$f$ is a power of an irreducible polynomial)
and $\dim R = 2\dim \End(R)$. 

As we have mentioned, we are interested in pairs $C,Y$ of indecomposable 
$\Lambda$-modules such that a family of modules is present in ${}^C[\to Y\rangle$.
It turns out that only the case of $C$ being preprojective, $Y$ being preinjective
is relevant, as the following proposition shows.

\begin{Prop}\label{Kronecker}
Let $\Lambda$ be the Kronecker algebra and $C,Y$
indecomposable $\Lambda$-modules. If $C$ is preprojective or preinjective, then
${}^C[\to Y\rangle$ is a projective geometry. If $C$ is regular, then
${}^C[\to Y\rangle$ is a chain.
\end{Prop}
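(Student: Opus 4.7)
The plan is to apply the Auslander bijection from Auslander's Second Theorem, which identifies ${}^C[\to Y\rangle$ with the submodule lattice $\mathcal  S\Hom(C,Y)$ where $\Hom(C,Y)$ is regarded as a module over $\Gamma(C) = \End(C)^{\text{op}}$. The whole question then reduces to determining the $\Gamma(C)$-submodule structure of $\Hom(C,Y)$. The decisive structural input from Kronecker theory is this: an indecomposable preprojective or preinjective Kronecker module is a brick, so $\End(C) = k$, whereas an indecomposable regular $C$ has $\End(C) \cong F[T]/\langle T^n\rangle$ for some finite field extension $F/k$ and some $n\ge 1$, because the regular Auslander--Reiten components of $\Lambda$ are homogeneous (rank-one) tubes.

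In the preprojective/preinjective case the argument is essentially immediate: $\Gamma(C) = k$, so the $\Gamma(C)$-submodules of $\Hom(C,Y)$ are exactly the $k$-subspaces, and hence $\mathcal  S\Hom(C,Y)$ is by definition the projective geometry on the vector space $\Hom(C,Y)$.

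In the regular case, $\Gamma(C)$ is a commutative local uniserial ring, and a finitely generated module over such a ring has a chain as its submodule lattice if and only if it is cyclic (equivalently, uniserial). So I would reduce to showing that $\Hom(C,Y)$ is cyclic over $\End(C)$ for every indecomposable $Y$. Writing $C = R_\lambda^{(n)}$, there are three subcases: if $Y$ is preprojective, or regular outside the $\lambda$-tube, then $\Hom(C,Y)=0$ and we are done; if $Y = R_\lambda^{(m)}$ lies in the same tube, cyclicity follows from the uniserial structure of the tube, since a single inclusion or projection between $R_\lambda^{(n)}$ and $R_\lambda^{(m)}$ generates all of $\Hom(C,Y)$ under the action of the canonical nilpotent endomorphism $T \in \End(C)$; finally if $Y = Q_m$ is preinjective, one first computes $\dim_k \Hom(R_\lambda^{(n)}, Q_m) = n$ from the Euler form $\langle (n,n),(m,m+1)\rangle = n$ together with the vanishing of $\Ext^1$ from a regular module to a preinjective one, and then exhibits a generator by choosing any $f\colon R_\lambda^{(n)} \to Q_m$ whose kernel is as small as possible and checking that $f, fT, \dots, fT^{n-1}$ are linearly independent because their kernels form a strictly ascending chain of submodules of $R_\lambda^{(n)}$.

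The main obstacle I expect is this last preinjective subcase: one must genuinely rule out the possibility that $\Hom(R_\lambda^{(n)}, Q_m)$ decomposes as a proper direct sum of shorter cyclic $F[T]/\langle T^n\rangle$-modules, which is what requires producing a ``maximally faithful'' generator $f$ and tracking the kernels of $fT^i$ through the unique composition series of $R_\lambda^{(n)}$. All other cases either trivialise by $\Hom(C,Y)=0$ or reduce to a transparent counting inside a single homogeneous tube.
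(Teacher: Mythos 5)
Your proposal follows essentially the same route as the paper: reduce via the Auslander bijection to the $\Gamma(C)$-submodule lattice of $\Hom(C,Y)$, dispose of the preprojective/preinjective case by $\End(C)=k$, and in the regular case show $\Hom(C,Y)$ is cyclic over the local uniserial ring $\End(C)$ by producing a generator $g$ with $g\,(\rad\End(C))^{s-1}\neq 0$. The only difference is cosmetic: where you propose picking $f$ with minimal kernel and tracking the kernels of $fT^i$, the paper constructs the generator in the preinjective subcase directly by extending a non-zero map $R\to Q_j$ from the regular socle using $\Ext^1(R[s]/R,Q_j)=0$, which settles the point you correctly flag as the main obstacle.
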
 
	
By definition, a projective geometry $\mathbb G(d)$ over the field $k$
is the lattice of subspaces of the $k$-space of dimension $d.$ The chain 
$\mathbb I(d)$
is the set of integers $i$ with $0\le i \le d$ with the usual ordering. 
The labels have been chosen in such a way that 
the height of $\mathbb G(d)$ as well as of $\mathbb I(d)$ is just $d$. 
	\medskip 

The following table provides the precise data: Here, an indecomposable regular module
of regular length $t$ and with regular socle $R$ is denoted by $R[t]$.
$$
{\beginpicture
\setcoordinatesystem units <1cm,.6cm>
\put{row} at -1 0.2
\put{1)} at -1 -1
\put{2)} at -1 -2
\put{3)} at -1 -3
\put{4)} at -1 -4
\put{5)} at -1 -5
\put{6)} at -1 -6
\put{$C$} at 1 .2
\put{$Y$} at 3 .2
\put{${}^C[\to Y\rangle$} at 6.5 .2
\plot -2 -.4  7.7 -.4 /
\put{$P_i$} at 1 -1
\put{$P_j$} at 3 -1
\put{$\mathbb G(j-i+1)$} at 6.5 -1
\put{$P_i$} at 1 -2
\put{$R[t]$} at 3 -2
\put{$\mathbb G(\tfrac12 \dim R[t])$} at 6.5 -2
\put{$P_i$} at 1 -3
\put{$Q_j$} at 3 -3
\put{$\mathbb G(i+j)$} at 6.5 -3
\put{$R[s]$} at 1 -4
\put{$R[t]$} at 3 -4 
\put{$\mathbb I(\min(s,t))$} at 6.5 -4
\put{$R[s]$} at 1 -5
\put{$Q_j$} at 3 -5
\put{$\mathbb I(\tfrac12 \dim R[s])$} at 6.5 -5
\put{$Q_i$} at 1 -6
\put{$Q_j$} at 3 -6 
\put{$\mathbb G(i-j+1)$} at 6.5 -6
\put{$(i\le j)$} at 4.3 -1   
\put{$(i\ge j)$} at    4.3 -6 
\endpicture}
$$
Let us stress that in row 4), the regular modules $C,Y$ are supposed to belong 
to the same tube, namely to the tube containing a fixed simple regular module $R$.
For all pairs $C,Y$ of indecomposable Kronecker modules which are not contained in the
table, one has $\Hom(C,Y) = 0$, thus ${}^C[\to Y\rangle$ consists of a single element. 
	
\begin{proof}[Proof of proposition \ref{Kronecker}] 
First, let us calculate $\dim\Hom(C,Y)$. The reflection functors of \cite{[BGP]} yield
$$
{\beginpicture
\setcoordinatesystem units <1.35cm,.6cm>
\put{$\Hom(P_i,P_j)$} [l] at 0 4 
\put{$\Hom(P_i,R[t])$} [l] at 0 3
\put{$\Hom(P_i,Q_j)$} [l] at 0 2 
\put{$\Hom(R[s],Q_j)$} [l] at 0 1 
\put{$\Hom(Q_i,Q_j)$} [l] at 0 0 

\put{$= \dim \Hom(P_0,P_{j-i}) = j-i+1$} [l] at 2 4 
\put{$= \dim \Hom(P_0,R[t]) = \tfrac12 \dim R[t]$} [l] at 2 3 
\put{$= \dim \Hom(P_0,Q_{j+i}) = i+j$} [l] at 2 2 
\put{$= \dim \Hom(R[s],Q_0) = \tfrac12 \dim R[s]$} [l] at 2 1 
\put{$= \dim \Hom(Q_{i-j},Q_0) = i-j+1$} [l] at 2 0 
\endpicture}
$$
As we have mentioned, we have
$\dim R = 2\dim\End(R)$. It follows that
$$
 \dim \Hom(R[s],Q_j) = \tfrac12 \dim R[s] =  s\dim\End(R).
$$
Finally,  
$$
 \dim\Hom(R[s],R[t]) = \min(s,t)\dim\End(R).
$$

In case $C = P_i$ or $Q_j$ one has $\End(C) = k$, thus 
Auslander's Second Theorem asserts that 
${}^C[\to Y\rangle$ is of the form $\mathbb G(\dim\Hom(C,Y)).$ This yields the rows 1), 2), 3)
and 6) of the table. 

It remains to look at the rows 4) and 5), thus we assume now that $C = R[s]$
and $Y = R[t]$ or $Y = Q_j$.
We show that $\Hom(C,Y)$ is a cyclic $\Gamma(C)$-module, thus we have to find
an element $g\in\Hom(C,Y)$ such that $g\End(C) = \Hom(C,Y)$, or,
equivalently, such that
$g(\rad\End(C))^{r-1} \neq 0$, where $r$ is the length of $\Hom(C,Y)$ (as
a $\Gamma(C)$-module). Note that $\Gamma(C)$ is a local ring, thus there is a unique
simple $\Gamma(C)$-module $S$. Since $S$ has $k$-dimension $\dim \End(R)$, the 
calculations above show that $r = s$ in case $Y = Q_j$ and $r = \min(s,t)$ in
case $Y = R[t].$ On the other hand, $(\rad\End(C))^{r-1}$ is generated by
any endomorphism of $R[s]$ with image $R[s-r+1]$. Thus let $p: R[s] \to R[s-r+1]$
be the canonical projection, $u: R[s-r+1] \to R[s]$ the canonical inclusion,
then $up$ generates $(\rad\End(C))^{r-1}$. 
Our aim is to exhibit $g: R[s] \to Y$ such that $gup \neq 0.$ 

First, let $Y = R[t]$ and $s\le t.$ Then $r = \min(s,t) = s$ and $R[s-r+1] = R[1] = R.$
Let $g: R[s] \to R[t]$ be the canonical inclusion, thus $gu: R \to R[t]$ is
an inclusion, in particular non-zero, and therefore also $gup \neq 0.$ 

Second, let $Y = R[t]$ and $s > t$. Then $r = \min(s,t) = t$ and $R[s-r+1] = R[s-t+1]$.
Let $g: R[s] \to R[t]$ be the canonical projection. Then $gu: R[s-t+1] \to R[t]$
has image $R$ (since the kernel of $g$ is $R[s-t]$). This shows that $gu$ is non-zero,
and therefore also $gup\neq 0.$ 

Finally, we have to deal with the case $Y = Q_j$. Take a non-zero map
$g': R \to Q_j$. Since $\Ext^1(R[s]/R,Q_j) = 0$, there exists $g: R[s] \to Q_j$
such that $gu = g'$. Since $gu \neq 0,$ it follows that also $gup \neq 0$. 

Thus, always we have found $g: R[s] \to Y$ such that $gup \neq 0.$ As a consequence,
$\Hom(C,Y)$ is a cyclic $\Gamma(C)$-module. Since $\Gamma(C)$ is a local uniserial
ring, it follows that $\mathcal  S\Hom(C,Y)$ is of the form $\mathbb I(r)$, where $r$
is the length of $\Hom(C,Y)$. According to our calculations, $r = \min(s,t)$
in case $Y = R[t]$ and $r = \frac12\dim R[s]$ in case $Y = Q_j$. 

Thus we have verified the assertions presented in the table. If the pair
$C,Y$ does not occur in the table, then it is well-known that $\Hom(C,Y) = 0$,
thus $\mathcal  S\Hom(C,Y)$ consists of a single element and therefore is of the
form $\mathbb I(0) = \mathbb G(0).$ This completes the proof. 
\end{proof} 
	
The following assertion which has been shown in the proof will be of further interest:
	
\begin{Lem}\label{extension}
Any non-zero map $g': R \to Q_j$ can be extended to a map
$g: R[t]\to Q_j$, and any such $g: R[t] \to Q_j$ 
generates the $\Gamma(C)$-module $\Hom(R[t],Q_j).$
\end{Lem}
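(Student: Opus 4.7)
The plan is to prove the two assertions of the lemma separately; both are implicit in the last paragraph of the proof of Proposition~\ref{Kronecker}, but isolating them clarifies the argument.

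For the extension assertion, I apply $\Hom(-,Q_j)$ to the short exact sequence
$$
 0 \longrightarrow R \overset{u}{\longrightarrow} R[t] \longrightarrow R[t]/R \longrightarrow 0,
$$
in which $R[t]/R$ is a regular module (of regular length $t-1$ in the tube containing $R$). This yields the exact sequence
$$
 \Hom(R[t],Q_j) \overset{u^*}{\longrightarrow} \Hom(R,Q_j) \longrightarrow \Ext^1(R[t]/R,Q_j).
$$
The rightmost term vanishes, since $\Lambda$ is hereditary and the Auslander-Reiten formula gives $\Ext^1(R[t]/R,Q_j) \simeq D\underline{\Hom}(Q_j,\tau(R[t]/R))$, while there are no non-zero maps from a preinjective module to a regular module. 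Hence $u^*$ is surjective, and any non-zero $g':R\to Q_j$ admits an extension $g:R[t]\to Q_j$ with $gu=g'$.

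For the generation assertion, I use that $\Hom(R[t],Q_j)$ is a cyclic $\Gamma(R[t])$-module of length $t$ (as computed in the proof of Proposition~\ref{Kronecker}) and that $\Gamma(R[t])=\End(R[t])^{\text{op}}$ is itself a local uniserial ring of length $t$; hence $\Hom(R[t],Q_j)$ is free of rank one as a $\Gamma(R[t])$-module, and an element is a generator exactly when $(\rad\Gamma(R[t]))^{t-1}$ acts non-trivially on it. To verify this for our extension $g$, let $p:R[t]\twoheadrightarrow R$ denote the canonical projection onto the regular top; then $up\in\End(R[t])$ has image $R$ and, by the uniserial structure of $\End(R[t])$ already exploited in the proof of Proposition~\ref{Kronecker}, generates $(\rad\End(R[t]))^{t-1}$. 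Since $gu=g'$, we obtain
$$
 (up)\cdot g \ =\ g\circ u\circ p \ =\ g'\circ p,
$$
which is non-zero because $g'\neq0$ and $p$ is surjective. Thus $g$ generates $\Hom(R[t],Q_j)$ as a $\Gamma(R[t])$-module.

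The only slightly delicate point is the uniserial structure of $\End(R[t])$ together with the identification of the various canonical submodules and quotients of $R[t]$ inside its homogeneous tube; but this structural information was already recalled at the beginning of Section~\ref{sec:14}, and the rest of the argument is essentially a transcription of the computation carried out in the final paragraph of the proof of Proposition~\ref{Kronecker}.
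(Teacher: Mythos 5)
Your proof is correct and follows essentially the same route as the paper, which obtains the lemma from the final paragraph of the proof of Proposition~\ref{Kronecker}: extend $g'$ along the regular socle inclusion $u\colon R\to R[t]$ using $\Ext^1(R[t]/R,Q_j)=0$, and then check that $g$ is not annihilated by $(\rad\End(R[t]))^{t-1}$, which is generated by $up$. The only addition is your explicit justification of the vanishing of $\Ext^1(R[t]/R,Q_j)$ via the Auslander--Reiten formula, which the paper merely asserts; that step is fine.
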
 
	
\noindent 
{\bf Remark.} We have mentioned in section \ref{sec:4}
 that both sides of the Auslander bijection concern maps with target $Y$, 
but that they invoke these maps in quite different
ways. A nice illustration seems to be Proposition \ref{Kronecker}. 
The map $g: R[t] \to Q_j$ constructed there is 
a generator of the maximal submodule of $\Hom(R[t],Q_j)$ 
and is used in the proof of Lemma \ref{extension} in order to show that
$\mathcal  S\Hom(R[t],Q_j)$ is of the form $\mathbb I(d)$ for some $d$.
On the other hand, in proposition \ref{maximal}, we will consider the
right equivalence class $[g\rangle$ as an element of
$[\to Q_j\rangle.$
	
\begin{Prop}\label{tube}
Let $\Lambda$ be the Kronecker algebra, let $C, Y$
be indecomposable $\Lambda$-modules and $M$ an indecomposable direct summand
of a module present in ${}^C[\to Y\rangle$. 

If $C,Y$ both are preprojective, also $M$ is preprojective. If $C,Y$ both
are preinjective, also $M$ is preinjective. 
If both $C,Y$ belong to the tube $\mathcal  T$, also $M$ belongs to $\mathcal  T$.

If $C$ is preprojective and $Y$ belongs to the tube $\mathcal  T$, then $M$ is
preprojective or belongs to $\mathcal  T$. If $C$ belongs to the tube $\mathcal  T$ and
$Y$ is preinjective, then $M$ is preinjective or belongs to the tube $\mathcal  T$. 
\end{Prop}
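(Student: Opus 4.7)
The plan is to prove all five parts uniformly by combining the determiner formula (Theorem \ref{determiner}) with the standard Hom-vanishing rules for Kronecker modules. Write the given right minimal right $C$-determined morphism as $f: X \to Y$ with $M$ an indecomposable direct summand of $X$, and consider the short exact sequence
$$0 \to K \to X \to Y' \to 0, \qquad Y' = \Img f.$$
Right minimality of $f$ forces $K$ to have no injective summand, and the determiner formula yields $K \in \add\tau C$ together with the statement that $\soc(Y/Y')$ is generated by the indecomposable projective summands of $C$. In particular, when $C$ has no projective summand ($C$ regular, preinjective, or $P_i$ with $i \ge 2$) the map $f$ is surjective and $Y' = Y$, while when $C$ is projective ($C = P_0$ or $P_1$) Corollary \ref{projective} gives $K = 0$ and $X \hookrightarrow Y$.

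The other ingredient is the standard Hom-vanishing for the Kronecker algebra: $\Hom(N, N') = 0$ whenever $(N, N')$ is of type (preinjective, preprojective), (preinjective, regular), (regular, preprojective), or consists of regulars in two distinct tubes. Also $\tau$ preserves the trichotomy preprojective/regular/preinjective and fixes each tube, so $K \in \add\tau C$ sits in the same class (and tube, where applicable) as $C$. Suppose now $M$ is of a type disallowed by the proposition. Via the inclusion $\iota: M \hookrightarrow X$, composition with $X \to Y'$ lands in $\Hom(M, Y') \subseteq \Hom(M, Y)$; if $\Hom(M, Y) = 0$ then $M$ embeds into $K$, contradicting that $M$ and $K$ lie in incompatible trichotomy classes or tubes. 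Via the projection $\pi: X \twoheadrightarrow M$, restriction to $K$ lands in $\Hom(K, M)$; if this vanishes, $\pi$ factors through $X/K = Y'$, and when $Y' = Y$, vanishing of $\Hom(Y, M)$ then forces $\pi = 0$, contradicting $M \neq 0$.

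One then checks case by case: the inclusion route handles case 1 (using $\Hom(M, P_j) = 0$ for $M$ regular or preinjective), case 4 (using $\Hom(M, R[t]) = 0$ for $M$ preinjective or in a tube different from that of $Y$), and the preinjective/different-tube-regular disallowed types of case 3; the projection route handles cases 2 and 5 (using that $\Hom(K, M)$ and $\Hom(Y, M)$ both vanish for the disallowed types of $M$) and the preprojective disallowed type of case 3. The main obstacle is purely bookkeeping — for each pair (case, disallowed type of $M$) one must pick the route so that the relevant Hom group vanishes by the Kronecker rules — and no deeper difficulty arises, since once the short exact sequence and the determiner formula pin down $K$ and $Y'$, the Hom-vanishings rigidly constrain where $M$ can sit in the Auslander-Reiten quiver.
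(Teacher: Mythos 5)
Your proof is correct, and it rests on the same two pillars as the paper's own argument: the short exact sequence $0\to K\to X\to Y'\to 0$ with $K\in\add\tau C$ (plus surjectivity of $f$ whenever $C$ has no projective summand, injectivity when $C$ is projective), and the standard $\Hom$-vanishing pattern among preprojective, regular and preinjective Kronecker modules. Where you differ is in how the conclusion is extracted. The paper argues positively: for $Y$ preprojective or regular it invokes right minimality directly (a summand $M$ with $f(M)=0$ is forbidden, so $\Hom(M,Y)\neq 0$ constrains $M$), and for $Y$ preinjective it uses that the preinjectives, and the additive closure of a tube together with the preinjectives, are closed under extensions. You instead argue by contradiction through two uniform "routes": the inclusion route (a disallowed $M$ would land in $K$, where it cannot embed) and the projection route (the split projection $X\twoheadrightarrow M$ would vanish on $K$ and factor through $Y$, hence be zero). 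These are equivalent in substance -- your inclusion route could be shortened by noting that $f|_M=0$ already contradicts right minimality, and your projection route is essentially the usual proof that these classes are closed under extensions -- but your bookkeeping has one concrete advantage: in the case where $C$ and $Y$ both lie in a tube $\mathcal T$, the paper's written proof only yields "$M$ preprojective or in $\mathcal T$" and leaves the exclusion of preprojective summands implicit, whereas your projection route (using $\Hom(K,M)=\Hom(Y,M)=0$ for $M$ preprojective and $K,Y\in\mathcal T$) handles it explicitly. So the proposal is not only correct but slightly more complete than the proof as printed.
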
 
	
\begin{proof} First, assume that $Y$ is preprojective. If $f: X \to Y$ is right minimal,
then $X$ has to be preprojecive. Thus, for any module $C$, all the modules
present in ${}^C[\to Y\rangle$ are preprojective.

Second, assume that $Y$ is regular, say belonging to the tube $\mathcal  T$.
Again, assume that $f: X \to Y$ is right minimal. Then $X$ is the direct sum
of a preprojective module and a module in $\mathcal  T.$ Thus, 
for any module $C$, all the modules
present in ${}^C[\to Y\rangle$ are direct sums of preprojective modules and modules in
$\mathcal  T.$

Finally, assume that $Y$ is preinjective and $C$ is regular or preinjective.
Let $f: X \to Y$ be right minimal and right $C$-determined.
Since $C$ has no indecomposable projective direct summand and $\Lambda$ is
hereditary, we see that $f$ is surjective and its kernel is in $\add K$ with $K 
= \tau C.$ Now, if $C$ is preinjective, then also $K$ is preinjective and $X$
as an extension of a preinjective module by a preinjective module is preinjective
again. On the other hand, if $C$ is regular, say belonging to the tube $\mathcal  T$,
then also $K$ belongs to $\mathcal  T$. Since $X$ is an extension of a module in $\mathcal  T$
by a preinjective module, it is the direct sum of a module in $\mathcal  T$ and
a preinjective module. This completes the proof.
\end{proof} 
	
\noindent 
{\bf Remark.} Let us stress that the cases $C = P_i, Y = R[t]$ and
$C = R[s], Y = Q_j$ are not at all dual (as the consideration of
$\Hom(P_i,R[t])$ and $\Hom(R[t],Q_j)$ could suggest), but 
are of completely different nature. The reason is that we always consider
$\Hom(C,Y)$ as a $\Gamma(C)$-module! 

Of course, we have already seen that
${}^C[\to Y\rangle$ is in the first case a projective geometry, in the
second case a chain. But also if we look at the different layers, we
encounter clear differences. In the chain case, all the elements of
${}^C[\to Y\rangle$ are given by short exact sequences
of the form $(R[s] \to M \to Q_j)$, thus by elements of $\Ext^1(Q_j,R[s])$.
In this case, we may interpret ${}^C[\to Y\rangle$ as a display of the
various orbits in $\Ext^1(Q_j,R[s])$ with respect to the action of
the automorphism group of $R[s]$. In contrast, in the projective geometry case,
only the elements of ${}^C[\to Y\rangle$ of right $C$-length at most $1$ are
given by short exact sequences of the form $(P_i\to M \to R[t])$, whereas for
the elements of right $C$-length at least $2$, we need short exact sequences
of the form $(P_i{}^a\to M \to R[t])$ with $a \ge 2.$ 

In the proof of proposition \ref{tube}, we have seen that for $C = P_i, Y = R[t]$,
the modules $M$ present in ${}^C[\to Y\rangle$ are direct sums of preprojective
modules and modules in the tube which contains $\mathcal  T$, and that for
$C = R[s], Y = Q_j$ the modules present in ${}^C[\to Y\rangle$ are direct sums of preinjective
modules and modules in $\mathcal  T$. However, in the first case the number of
indecomposable preprojective direct summands of $M$ may be large, whereas in the
second case there is just one direct summand of $M$ which is indecomposable 
preinjective. 
		\bigskip 

\noindent 
{\bf Preprojective $C$, preinjective $Y$.} 
Let us focus now the attention on ${}^C[\to Y\rangle$,
where $C$ is indecomposable preprojective and $Y$ is
indecomposable preinjective.  Here is the general behavior as seen in Proposition 
\ref{Kronecker}:
$$
{\beginpicture
\setcoordinatesystem units <1cm,0.8cm>
\setcoordinatesystem units <1cm,0.7cm>
%============================
\multiput{} at 0 0  12  12 /
\plot 0 10  12 10 /
\plot 2 0  2 12 /
\plot 0 12  2 10 /
\setdots <1mm>
\plot 0 2  12 2  /
\plot 0 4  12 4  /
\plot 0 6  12 6  /
\plot 0 8  12 8  /

\plot 4 0  4 12 /
\plot 6 0  6 12 /
\plot 8 0  8 12 /
\plot 10 0  10 12 /
\put{$K$} at 0.2 10.3
\put{$C$} at 1 10.3 
\put{$C$} at 1 10.3 
\put{$P_0$} at 1 9
\put{$P_1$} at 1 7 
\put{$P_2$} at 1 5 
\put{$P_3$} at 1 3
\put{$P_4$} at 1 1
\put{$P_0$} at 0.2 5 
\put{$P_1$} at 0.2 3
\put{$P_2$} at 0.2 1
 
\put{$Y$} at 1.5 11.5
\put{$Q_0$} at 3 11.5
\put{$Q_1$} at 5 11.5
\put{$Q_2$} at 7 11.5
\put{$Q_3$} at 9 11.5
\put{$Q_4$} at 11 11.5
\multiput{$\ssize 0$} at 2.2 9.8 /
\multiput{$\ssize 1$} at 2.2 7.8  4.2 9.8 /
\multiput{$\ssize 2$} at 2.2 5.8  4.2 7.8  6.2 9.8  /
\multiput{$\ssize 3$} at 2.2 3.8  4.2 5.8  6.2 7.8  8.2 9.8 /
\multiput{$\ssize 4$} at 2.2 1.8  4.2 3.8  6.2 5.8  8.2 7.8  10.2 9.8 /

\put{$\bullet$} at 3 9 

\multiput{\beginpicture
\setcoordinatesystem units <.5cm,.4cm>
%============================
\multiput{$\bullet$} at  0 0  0 1 /
\setsolid
\plot 0 0  0 1  /
\endpicture
} at 3 7  5 9   /

\multiput{\beginpicture
\setcoordinatesystem units <.5cm,.39cm>
%============================
\multiput{$\bullet$} at  1 1  2 0  3.5 1  2 2   1.5 1   0.5 1 /
\put{$\cdots$} at 2.5 1 
\setsolid
\plot 1 1  2 0  3.5 1  2 2  /
\plot 1 1  2 2 /
\plot 2 0  1.5 1  2 2 /
\plot 2 0  0.5 1  2 2 /
\plot 2 0  2.25 .5   /
\plot 2.25 1.5  2 2 /
\endpicture
} at 3 5  5 7  7 9   /

\multiput
{\beginpicture
\setcoordinatesystem units <.37cm,.33cm>
\multiput{$\ssize \bullet$} at  1 1   1 2  1.5 1   1.5 2  3 0  2 1  2 2  3 3 
    5 1  5 2    /
\setsolid
\plot 3 0  1 1  1 2  3 3   5 2  5 1  3 0 /
\plot 2 1  3 0  1.5 1 /
\plot 2 2  3 3  1.5 2 /
\plot 1 1  1.5 2  2 1  2 2  1.5 1  1 2 /
\multiput{$\cdots$} at 3.5 1  3.5 2   /
\endpicture} at 3 3  5 5  7 7  9 9  /

\multiput
{\beginpicture
\setcoordinatesystem units <.3cm,.27cm>
\multiput{$\ssize \bullet$} at 0.5 2  1 1  1 2  1 3  1.5 1  1.5 2  1.5 3  3 0  2 1  2 3  3 4     5 1  5 3  5.5 2  /
\setsolid
\plot 3 0  1 1  0.5 2  1 3  3 4   5 3  5.5 2  5 1  3 0 /
\plot 1 1  1 3 /
\plot 3 0  1.5 1  0.5 2  /
\plot 1.5 3   3 4 /
\plot 1.5 3  0.5 2 /
\plot  3 4  2 3  1 2  2 1  3 0 /
\plot  1 3  1.5 2  1.5 1  /
\plot 2 1  1.5 2 /
\multiput{$\cdots$} at 3.5 1  3.5 3  3 2  4.5 2 /
%\multiput{} at -2 0  10 0 /
\endpicture} at 3 1  5 3  7 5  9 7  11 9 /

\multiput
{\beginpicture
\setcoordinatesystem units <.3cm,.3cm>
\multiput{$\ssize \bullet$} at 0.5 2  1 1  1 2  1 3  1.5 1  1.5 2  1.5 3  3 0  2 1  2 3  3 4     5 1  5 3  5.5 2  /
\setsolid
\plot 3 0  1 1  0.5 2  1 3  3 4   5 3  5.5 2  5 1  3 0 /
\plot 1 1  1 3 /
\plot 3 0  1.5 1  0.5 2  /
\plot 1.5 3   3 4 /
\plot 1.5 3  0.5 2 /
\plot  3 4  2 3  1 2  2 1  3 0 /
\plot  1 3  1.5 2  1.5 1  /
\plot 2 1  1.5 2 /
\multiput{$\cdots$} at 3.5 1  3.5 3  3 2  4.5 2 /
%\multiput{} at -2 0  10 0 /
\endpicture} at 3 1  5 3  7 5  9 7  11 9 /

\endpicture}
$$
We want to know which modules are present in ${}^C[\to Y\rangle^1$.
As we will show, these are certain regular modules.
	
\begin{Lem}\label{strongly-regular}
Let $\Lambda$ be the Kronecker algebra and $M$ a regular module.
The following conditions are equivalent.
\item{\rm (i)} If $M = M'\oplus M''$, then $\Ext^1(M',M'') = 0.$
\item{\rm (ii)} The regular socle of $M$ is multiplicity-free.
\item{\rm (ii$^*$)} The regular top of $M$ is multiplicity-free.
\item{\rm (iii)} $M = \bigoplus_{i=1}^n M_i$, with indecomposable modules $M_i$ belonging
    to pairwise different tubes. 
\item{\rm (iv)} $\End(M)$ is commutative.
\end{Lem}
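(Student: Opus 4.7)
My approach is to reduce everything to the rank-$1$ tube structure of regular Kronecker modules. Write $M = \bigoplus_{i=1}^n M_i$ with $M_i$ indecomposable regular, say $M_i = R_i[t_i]$ for some simple regular $R_i$ and $t_i \geq 1$. Since all tubes have rank $1$, $\tau R_i = R_i$, each $M_i$ is uniserial in the regular subcategory $\mathcal R$ with every regular composition factor equal to $R_i$, and the regular socle and regular top of $M_i$ both equal $R_i$. In particular, two indecomposable regular modules lie in the same tube if and only if their simple regular socles are isomorphic.

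Given this setup, the equivalences (ii) $\Leftrightarrow$ (iii) $\Leftrightarrow$ (ii$^*$) are immediate: the regular socle of $M$ and the regular top of $M$ are both $\bigoplus_{i} R_i$, and this is multiplicity-free iff the $R_i$ are pairwise non-isomorphic iff the $M_i$ live in pairwise distinct tubes.

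For (iii) $\Rightarrow$ (i) and (iii) $\Rightarrow$ (iv) I use that indecomposable modules in distinct tubes are both $\Hom$-orthogonal and $\Ext^1$-orthogonal. Thus any decomposition $M = M' \oplus M''$ partitions the $M_i$ into two groups occupying disjoint sets of tubes, giving $\Ext^1(M', M'') = 0$; and $\End(M) = \prod_i \End(M_i)$, each factor being of the form $k[T]/\langle f \rangle$ as recalled earlier in the paper, hence commutative.

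The reverse implications (i) $\Rightarrow$ (iii) and (iv) $\Rightarrow$ (iii) are handled by contrapositive: assume indecomposable summands $M_i, M_j$ lie in a common tube and derive the failure of (i), resp.\ (iv). For (i), the Auslander-Reiten formula combined with $\tau = \id$ on the tube and the fact that no regular module is injective (so $\overline{\Hom}$ agrees with $\Hom$ on regulars) gives $\Ext^1(M_i, M_j) \cong D\Hom(M_j, M_i)$, which is non-zero since both modules have regular socle containing a common simple regular. For (iv), if $M_i \cong M_j$ then $\End(M)$ contains the non-commutative ring $M_2(\End(M_i))$; if $M_i = R[s]$ and $M_j = R[t]$ with $s < t$, the canonical inclusion $\iota : R[s] \hookrightarrow R[t]$ and projection $\pi : R[t] \twoheadrightarrow R[s]$ (quotient by the regular-socle submodule $R[t-s]$) compose to $\iota\pi : R[t] \to R[t]$ whose image is $\iota(R[s]) \neq 0$; placing $\iota$ and $\pi$ in off-diagonal positions of the matrix description of $\End(M_i \oplus M_j)$ shows that the two resulting elements fail to commute. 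The only step that requires care is verifying $\iota\pi \neq 0$, which is a short uniserial-structure calculation in the tube; every other step is formal.
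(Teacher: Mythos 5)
Your proof is correct and follows the same overall skeleton as the paper's: the equivalences (ii) $\Leftrightarrow$ (iii) $\Leftrightarrow$ (ii$^*$) from the rank-one tube structure, $\Hom$- and $\Ext$-orthogonality between distinct tubes for (iii) $\Rightarrow$ (i) and (iii) $\Rightarrow$ (iv), and contrapositives for the two converses. Two steps are executed differently. For (i) $\Rightarrow$ (iii) the paper simply cites the fact that $\Ext^1(M_i,M_j)\neq 0$ for nonzero regular modules in the same tube, whereas you derive it from the Auslander--Reiten formula together with $\Hom(M_j,M_i)\neq 0$; this is a legitimate, more self-contained route, but your justification that the stable $\Hom$ agrees with $\Hom$ on regular modules should say that no nonzero map between regular modules factors through an injective (quotients of injectives are injective since $\Lambda$ is hereditary, and a regular module has no nonzero injective submodule) --- the mere observation that no regular module is injective is not by itself enough. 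For (iv) $\Rightarrow$ (iii) the paper uses a slicker device: take any nonzero $\phi\colon M'\to M''$ between two summands in a common tube, extend it by zero to an endomorphism of $M$, and compare with the idempotent $e'$ projecting onto $M'$; then $\phi e'=\phi\neq 0$ while $e'\phi=0$, so no case distinction and no verification that $\iota\pi\neq 0$ is needed. Your matrix-unit argument is correct but does more work to reach the same conclusion.
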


\begin{proof} 
The equivalence of (ii) and (iii), and dually of (ii$^*$) and (iii) is 
straight forward. The implication (iii) $\implies$ (i) follows from the fact that
$\Ext^1(M_i,M_j) = 0$ in case $M_i, M_j$ are regular and belong to different 
Auslander-Reiten components. The converse implication 
(i) $\implies$ (iii) follows from the fact that
$\Ext^1(M_i,M_j) \neq 0$ in case $M_i, M_j$ are nonzero regular and belong to 
the same Auslander-Reiten component.
In order to see the implication (iii) $\implies$ (iv), one should be aware that
for $M = \bigoplus_{i=1}^n M_i$, with indecomposable modules $M_i$ belonging
 to pairwise different tubes, one has $\End(M) = \prod_{i} \End(M_i)$
and that $\End(R)$ is commutative for any indecomposable regular module $R$.
Conversely, in order to show the implication (iv) $\implies$ (iii),
assume that $M$ is regular and assume that $M = M'\oplus M'' \oplus M''$
with $M', M''$ non-zero modules belonging to some tube. Then $\Hom(M',M'') \neq 0$,
thus there is a non-zero homomorphism $\phi: M' \to M''$  and we may consider this
as an endomorphism of $M$ by setting $\phi$ to be zero on $M''\oplus M'''$.
Let $e': M \to M$ be the projection of $M$ to $M'$ with kernel $M''\oplus M'''$.
Then $\phi e' = \phi \neq 0$, whereas $e'\phi = 0.$ This shows that $\End(M)$
is not commutative. 
\end{proof}

A regular Kronecker-module $M$ will be said to be 
{\it strongly regular} provided the equivalent conditions
of the Lemma are satisfied. 
Let $\mathcal  R(i)$ be the set of isomorphism classes of 
strongly regular modules of length $i$.
Note that $\mathcal  R(i)$ is empty in case $i$ is odd or negative, and $\mathcal  R(0)$
has just one element, namely the isomorphism class of the zero module.
 	\bigskip

As we have mentioned, we want to 
see in which way families of modules
may be present in ${}^C[\to Y\rangle^1,$ with $C$ indecomposable preprojective, and $Y$ indecomposable
preinjective. Here is the description of these sets.
	
\begin{Prop}\label{identification} 
Let $\Lambda$ be the Kronecker algebra, $C$ indecomposable preprojective, $Y$ indecomposable preinjective. 

If $C = P(S)$ for some simple module
$S$, then ${}^C[\to Y\rangle^1$ may be identified with the set of inclusion 
maps $X \to Y$ such that the socle of $Y/X$ is equal to $S$.

If $C = \tau^{-}K$ for some indecomposable module $K$, then 
${}^C[\to Y\rangle^1$ consists of the right equivalence classes of surjective
maps $X \to Y$ with kernel $K$.
\end{Prop}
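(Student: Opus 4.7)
The plan is to handle the two cases separately, reducing each to results established earlier in the paper and isolating the Kronecker-specific content.

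For $C = P(S)$, Corollary \ref{projective} identifies the right minimal right $P(S)$-determined maps ending in $Y$ with the inclusions $f : X \hookrightarrow Y$ such that $\soc(Y/X)$ is a sum of copies of $S$, and the length formula for projective $C$ stated near the end of section \ref{sec:5} then evaluates $|f|_{P(S)} = [Y/X : S]$. Together these give the forward direction at once: if $|f|_C = 1$, then $Y/X \neq 0$ and $\soc(Y/X) \cong S^n$ with $n \leq [Y/X : S] = 1$, hence $n = 1$ and $\soc(Y/X) = S$. For the converse, given an inclusion with $\soc(Y/X) = S$, the socle is generated by $P(S)$ so $f$ is right $P(S)$-determined (and automatically right minimal as an inclusion); what remains is the claim $[Y/X : S] = 1$, which I expect to be the main obstacle.

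To verify this remaining claim, I would inspect $Y/X$ as a Kronecker representation $(U_a, U_b, \bar\alpha, \bar\beta)$, using that $\soc(Y/X) = (U_a,\, \ker\bar\alpha \cap \ker\bar\beta)$. If $S = S(b)$, the hypothesis forces $U_a = 0$, whence $Y/X$ is semisimple at $b$ and simplicity of its socle gives $\dim U_b = 1$, so $[Y/X : S(b)] = 1$. If $S = S(a)$, then $U_a \cong k$ and $\ker\bar\alpha \cap \ker\bar\beta = 0$, so the pair $(\bar\alpha, \bar\beta)$ embeds $U_b$ into $k^2$ and $\dim U_b \leq 2$. The three possibilities $\dim U_b = 0, 1, 2$ yield $Y/X$ isomorphic to $S(a)$, to an indecomposable regular module of length two, or to $Q_1$, respectively, each having $[Y/X : S(a)] = 1$.

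For $C = \tau^{-} K$ with $K$ indecomposable, $C$ is indecomposable non-projective preprojective and $K = \tau C$ is indecomposable preprojective. Proposition \ref{length-1} then characterizes ${}^C[\to Y\rangle^1$ as the set of right equivalence classes of right minimal surjections $f : X \to Y$ whose kernel is a single copy of $K$ and whose extension class $[\epsilon] \in \Ext^1(Y, K)$ is a non-zero element of the $\Gamma(K)$-socle. Since $K$ is an indecomposable preprojective Kronecker module, its endomorphism ring is $k$, so $\Gamma(K) = k$ and the socle of $\Ext^1(Y, K)$ coincides with $\Ext^1(Y, K)$; the socle condition therefore collapses. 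The remaining requirement $[\epsilon] \neq 0$ just says that $0 \to K \to X \to Y \to 0$ is non-split, which (because $K$ is indecomposable) is equivalent to right minimality of $f$. This yields the desired identification with right equivalence classes of surjective maps $X \to Y$ with kernel $K$.
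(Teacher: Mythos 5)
Your proof is correct and follows essentially the same route as the paper, which simply cites Corollary \ref{projective} and Corollary \ref{indecomposable-kernel}: you fill in the details, using the length formula for projective $C$ from section \ref{sec:5} (together with the Kronecker-specific check that $\soc(Y/X)=S$ forces $[Y/X:S]=1$) in the first case, and Proposition \ref{length-1} with $\Gamma(K)=k$ in the second. The only step you leave implicit in the second case is that, since the Kronecker algebra is hereditary and $C$ has no projective direct summand, every right minimal right $C$-determined map ending in $Y$ is an epimorphism with kernel in $\add K$ (so that Proposition \ref{length-1} really does account for all of ${}^C[\to Y\rangle^1$); this is the Corollary following Proposition \ref{inj-dim-1} combined with the determiner formula, and is worth citing explicitly.
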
 

\begin{proof} See \ref{projective} and \ref{indecomposable-kernel}.
\end{proof} 
	
Given a morphism $f: X \to Y$, let $\sigma(f) = [X]$, the isomorphism class of the
source $X$ of $f$. 
We study the function $\sigma$ defined on ${}^C[\to Y\rangle^1$ with values in
the set of isomorphism classes of modules. The main result of this section 
is the following description of ${}^C[\to Y\rangle^1$:

\begin{Prop}\label{main}
 Let $\Lambda$ be the Kronecker algebra, $C$ indecomposable preprojective,
$Y$ indecomposable preinjective. Then $\sigma$ is a bijection
\Rahmen{\sigma: {}^C[\to Y\rangle^1 \longrightarrow\mathcal  R(i)
 \quad\text{\it with}\quad i=|C|+|Y|-4.}
\end{Prop}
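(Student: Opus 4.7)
Writing $C = P_i$ and $Y = Q_j$, set $n = i+j-1$, so $|C|+|Y|-4 = 2n$. Auslander's Second Theorem together with $\End(P_i) = k$ identifies ${}^C[\to Y\rangle$ with the subspace lattice of the $(i+j)$-dimensional $k$-vector space $\Hom(P_i, Q_j)$; in particular, ${}^C[\to Y\rangle^1$ is identified with the projective space of hyperplanes, of dimension $i+j-1$ over $k$. The plan is to split the argument on whether $C$ is projective, to describe the source $X$ in each regime via Proposition~\ref{identification}, and then to establish strong regularity of $X$ together with bijectivity of $\sigma$ onto $\mathcal{R}(2n)$.

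If $i \ge 2$, then $K := \tau C = P_{i-2}$ is indecomposable, and Propositions~\ref{identification} and~\ref{length-1}, combined with $\End(K) = k$, identify ${}^C[\to Y\rangle^1$ with the set of non-split extension classes $0 \to P_{i-2} \to X \to Q_j \to 0$ modulo the scalar action of $\Aut(K) = k^*$, i.e., with the points of $\mathbb{P}(\Ext^1(Q_j, P_{i-2}))$; compatibility with the hyperplane picture is the Auslander--Reiten isomorphism $\Ext^1(Q_j, P_{i-2}) \cong D\Hom(P_i, Q_j)$. If $i \in \{0, 1\}$, then $C = P(S)$ is projective with $S \in \{S(a), S(b)\}$, and Proposition~\ref{identification} identifies ${}^C[\to Y\rangle^1$ with the inclusions $X \hookrightarrow Q_j$ such that $\soc(Q_j/X) = S$ and $[Q_j/X : S] = 1$. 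Since $Q_j$ is indecomposable preinjective, every nonzero linear combination $c_1\alpha + c_2\beta$ is surjective, which forces $Q_j/X \cong Q_1$ when $i = 0$ and $Q_j/X \cong S(b)$ when $i = 1$; these inclusions are then parameterized by surjections $Q_j \twoheadrightarrow Q_j/X$ modulo $\Aut(Q_j/X) = k^*$, again reproducing the projective space of dimension $i+j-1$. In every case $|X| = 2n$ is immediate.

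The central claim is that each source $X$ is strongly regular. Since $\dim X = (n,n)$ has $\delta(X) = 0$, one may decompose $X = R \oplus P \oplus Q'$ with $R$ regular, $P$ (resp.\ $Q'$) a direct sum of indecomposable preprojective (resp.\ preinjective) modules, and $\delta(P) + \delta(Q') = 0$. Any preinjective summand $Q_l$ of $X$ would yield a nonzero map $Q_l \to Q_j$, forcing $l \ge j$ by $\Hom$-vanishing along the preinjective component, and then length considerations force either the defining extension to split or $X$ to exhaust $Q_j$ entirely, both impossible; preprojective summands are then excluded by the defect identity. Multiplicity-freeness of the regular socle of $X$ follows from a direct kernel computation: a decomposition $X = R_\lambda[t_1] \oplus R_\lambda[t_2] \oplus X'$ with two summands in a common tube would force any surjection $X \to Q_j$ to have a nonzero regular summand in its kernel, contradicting $\ker f \cong P_{i-2}$ when $i \ge 2$; in the projective case, the one-dimensionality of $\Hom(R_\lambda[1], Q_j)$ for $\lambda$ of degree one forbids embedding two independent copies of $R_\lambda[1]$ into $Q_j$.

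Finally, injectivity of $\sigma$ follows from the parameterization, since isomorphism of $X$ corresponds precisely to $k^*$-scaling in the extension class or the cokernel data. Surjectivity onto $\mathcal{R}(2n)$ follows by a cardinality match: strongly regular modules $\bigoplus_\lambda R_\lambda[t_\lambda]$ of length $2n$ are in natural bijection with effective divisors of degree $n$ on $\mathbb{P}^1_k$, that is, with $k$-points of the symmetric power $S^n(\mathbb{P}^1) \cong \mathbb{P}^n$, matching the $\mathbb{P}^{i+j-1}(k) = \mathbb{P}^n(k)$ on the domain side. I expect the main technical obstacle to be the strong-regularity step, whose execution relies on careful $\Hom$-space vanishing and defect bookkeeping for Kronecker modules.
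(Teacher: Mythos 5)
Your overall architecture matches the paper's: the paper likewise reduces Proposition \ref{main} to Propositions \ref{identification} and \ref{main0}, splits on whether $C$ is projective, identifies ${}^C[\to Y\rangle^1$ with a projective space of dimension $i+j-1$, and shows the sources are regular by defect bookkeeping. However, two of your steps have genuine gaps. First, injectivity: you assert that ``isomorphism of $X$ corresponds precisely to $k^*$-scaling in the extension class or the cokernel data.'' That is exactly the non-trivial point and does not follow from the parameterization --- in general, non-proportional extension classes can have isomorphic middle terms (see Example 15 of the paper, where for $C=P_i$, $Y=P_{i+1}$ an entire $\mathbb P^1$-family of pairwise non-right-equivalent epimorphisms all have source $P_{i-1}\oplus P_i$). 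What rescues the preprojective-to-preinjective case is Lemma \ref{extension}: any map $R[t]\to Q_j$ that is non-zero on the regular socle generates $\Hom(R[t],Q_j)$ as an $\End(R[t])^{\text{op}}$-module. This single fact drives Lemma \ref{characterization}, which yields both the multiplicity-freeness of the regular socle of $X$ (the mechanism behind your ``direct kernel computation'': two non-zero maps $R\to Q_j$ differ by an element of $\End(R)$, so a diagonal copy of $R$ falls into the kernel; your one-dimensionality remark covers only degree-one tubes, whereas $\dim_k\Hom(R,Q_j)=\dim_k\End(R)$ can exceed $1$) and the statement that any two admissible maps with isomorphic sources are right equivalent. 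Without this lemma or a substitute, injectivity is unproved.

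Second, surjectivity: a ``cardinality match'' between ${}^C[\to Y\rangle^1\cong\mathbb P^{n}(k)$ and $\mathcal R(2n)\cong\mathbb P^{n}(k)$ does not make an injection between infinite sets surjective. The paper proves surjectivity constructively: given a strongly regular $X$ of length $d=|C|+|Y|-4$, Lemma \ref{characterization} produces $f:X\to Y$ whose kernel contains no simple regular submodule; when $C$ is not projective one has $d>|Y|$, so $f$ is an epimorphism whose kernel is preprojective of defect $-1$, hence indecomposable of length $|C|-4$, hence isomorphic to $\tau C$; in the projective cases one instead embeds $X$ into $Y$ with the prescribed cokernel (Proposition \ref{maximal}). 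Your identification of $\mathcal R(2n)$ with effective degree-$n$ divisors on $\mathbb P^1$ is correct as a description of the target, but it cannot replace this existence argument. (Your exclusion of preinjective summands of $X$ is also stated loosely; the clean argument is the defect computation $\delta(Q_l)=\delta(U')+\delta(Y)\le 0$ from the lemma preceding \ref{characterization}.)
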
 
	
For the proof, we need some preliminary considerations.
	
\begin{Lem}  Let $\Lambda$ be the Kronecker algebra, let
$0 \to U \to X \to Y \to 0$ be a non-split exact sequence 
with $Y$ indecomposable preinjective,
$U$ preprojective. Then no indecomposable direct summand
of $X$ is preinjective.
\end{Lem}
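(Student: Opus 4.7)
The plan is to argue by contradiction. Assume $X$ has an indecomposable preinjective direct summand $X'$; write $X = X' \oplus X''$, let $\iota : X' \hookrightarrow X$ be the split inclusion, $\pi : X \twoheadrightarrow Y$ the given surjection, and set $g = \pi \iota : X' \to Y$. The decisive observation is that
$$\ker g \; = \; X' \cap U \quad \text{inside } X,$$
so $\ker g$ is a submodule of the preprojective module $U$. For the Kronecker algebra, every submodule of a preprojective module is itself preprojective: any indecomposable regular or preinjective summand of such a submodule would embed in $U$, violating $\Hom(R,P) = 0 = \Hom(Q,P)$ for $R$ regular, $Q$ preinjective, $P$ preprojective. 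Hence $\ker g$ is preprojective.

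If $g = 0$ then $X' = \ker g \subseteq U$ would itself be preprojective, contradicting the standing assumption that $X'$ is indecomposable preinjective. So $g \neq 0$; write $X' = Q_s$ and $Y = Q_t$. I would next verify that $g$ is surjective. The image $\Img g$ is a quotient of $Q_s$, and quotients of indecomposable preinjectives are direct sums of preinjectives (again by the vanishing $\Hom(Q_s, \text{preprojective}) = 0 = \Hom(Q_s,\text{regular})$). On the other hand any indecomposable preinjective summand $Q_a$ of $\Img g$ embeds in $Q_t$, and from $\Hom(Q_a,Q_t) \ne 0$ we get $a \ge t$ while $|Q_a| \le |Q_t|$ forces $a \le t$; so $a=t$ and necessarily $\Img g = Q_t$. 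Thus $g$ is surjective and
$$\bdim \ker g \; = \; (s,s+1) - (t,t+1) \; = \; (s-t,\; s-t).$$

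But a preprojective Kronecker module $\bigoplus_{j} P_{n_j}$ has dimension vector $(m+N,\, N)$ where $m$ is the number of indecomposable summands and $N = \sum n_j$; equal coordinates force $m = 0$, so $\ker g = 0$. Therefore $s=t$ and $g : Q_s \to Q_s$ is an isomorphism, and then $\iota g^{-1} : Y \to X$ splits $\pi$, contradicting that the given sequence is non-split. The only genuinely non-mechanical step is the opening identification $\ker g = X' \cap U$, which transfers the preprojective structure of $U$ to $\ker g$; after that the argument is just dimension bookkeeping against the three Kronecker $\Hom$-vanishings.
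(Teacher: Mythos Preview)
Your proof is correct and follows essentially the same approach as the paper: restrict $\pi$ to a hypothetical preinjective summand $X'$, observe that the resulting map is nonzero and surjective with kernel contained in $U$ (hence preprojective), and derive a contradiction. The only cosmetic difference is in the endgame: the paper excludes the isomorphism case first and then uses the defect $\delta$ directly (nonzero preprojective has $\delta\le -1$, so $\delta(X')=\delta(\ker g)+\delta(Y)\le 0$), whereas you compute the dimension vector of $\ker g$ explicitly and fold the isomorphism case into the conclusion that $\ker g=0$ forces the sequence to split; these are the same computation, since the defect is the difference of the two coordinates.
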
 
	
\begin{proof} 
 Let $X = X'\oplus X''$ with $X'$ indecomposable. Assume that $X'$ is 
preinjective. Denote the map $X \to Y$ by $f$ and let $f'$ be its restriction to
$X'.$ Then $f' \neq 0$, since otherwise $X'$ is a direct summand of the kernel
of $f$, thus equal to $K$, so that the sequence splits. Non-zero maps between
indecomposable preinjective modules are surjective, thus $f'$ is surjective. Of course,
$f'$ is not an isomorphism, since otherwise the sequence would split.
Let $U'$ be the kernel of $f'$. As we see, $U'\neq 0.$
We have the following commutative diagram with exact rows:
$$
{\beginpicture
\setcoordinatesystem units <1.7cm,1.3cm>
\arr{-.75 1}{-.25 1}
\arr{0.25 1}{0.75 1}
\arr{1.25 1}{1.75 1}
\arr{2.25 1}{2.75 1}

\arr{-.75 0}{-.25 0}
\arr{0.25 0}{0.75 0}
\arr{1.25 0}{1.75 0}
\arr{2.25 0}{2.75 0}

\arr{2 .7}{2 .3}
\arr{1 .7}{1 .3}
\plot 0.03 0.7  0.03 0.3 /
\plot -.03 0.7  -.03 0.3 /

\multiput{$0$} at -1 0  -1 1  3 0  3 1 /

\put{$U'$} at 0 1
\put{$U$} at 1 1
\put{$X''$} at 2 1
\put{$U'$} at 0 0
\put{$X'$} at 1 0
\put{$Y$} at 2 0

\put{$f'$} at 1.5 .2
\endpicture}
$$
Now $U'$ is a submodule of $U$, thus it is preprojective.
Since $U' \neq 0$, we must have $\delta(U') \le -1$,
where $\delta$ denotes the defect.
It follows that $\delta(X') = \delta(U') + \delta(Y) \le 0,$ since $\delta(Y) = 1.$ This contradicts our assumption that $X'$ is preinjective.
\end{proof} 
	
\begin{Lem}\label{characterization}
Let $X$ be regular, $Y$ indecomposable preinjective. Then 
the following conditions are equivalent:
\item{\rm (i)} $X$ is strongly regular, 
\item{\rm (ii)} There exists $f: X \to Y$ such that the kernel of $f$ does not contain
a simple regular submodule.

Any map $f: X\to Y$ with no simple regular submodule in its kernel 
is a monomorphism or an epimorphism. 
If $f: X \to Y$ and $f': X' \to Y$ are maps with no simple regular submodule
in the kernel, and $X, X'$ are isomorphic, then $f,f'$ are right equivalent.
\end{Lem}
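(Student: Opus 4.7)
\textbf{The plan is} to handle the equivalence of (i) and (ii) first, then the monomorphism/epimorphism dichotomy, then the uniqueness up to right equivalence.

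For (i)$\Rightarrow$(ii), I will decompose $X = \bigoplus_{i=1}^n M_i$ with $M_i = R_i[s_i]$ indecomposable and the simple regulars $R_i$ in pairwise distinct tubes (Lemma \ref{strongly-regular}). For each $i$ pick a non-zero $g'_i\colon R_i \to Y$, extend it by Lemma \ref{extension} to $f_i\colon M_i \to Y$ with $f_i|_{R_i} = g'_i$, and take $f = \bigoplus f_i$. Since $\Hom$ between distinct tubes vanishes, every simple regular submodule of $X$ lies in a single $M_i$ and equals the regular socle $R_i$ of that $M_i$, on which $f$ is non-zero. Conversely, if $X$ is not strongly regular, two indecomposable summands $M_1 = R[s_1]$ and $M_2 = R[s_2]$ share a tube with common simple regular socle $R$. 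For any $f\colon X \to Y$ the two components of $f|_{R \oplus R}$ are linearly dependent over the division ring $\End(R)$, because $\Hom(R, Y)$ is cyclic of $\End(R)$-rank one (row 5 of the table together with Lemma \ref{extension}). Hence $\Ker(f|_{R \oplus R})$ contains a $\Lambda$-submodule isomorphic to $R$: a coordinate copy if one component is zero, or the diagonal $\{(-dr, r) : r \in R\}$ when $\phi_2 = \phi_1 d$. This is a simple regular submodule of $X$ lying in $\Ker f$, so (ii) fails.

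For the monomorphism/epimorphism dichotomy: any submodule of a regular Kronecker module decomposes into preprojective and regular indecomposable summands, because $\Hom(\text{preinjective}, \text{regular}) = 0$. Regular indecomposables have a simple regular socle, so the hypothesis on $\Ker f$ forces $\Ker f$ to be preprojective, whence $\delta(\Ker f) \le 0$. Combined with $\delta(X) = 0$, $\delta(Y) = 1$, and $\delta(Y/\Img f) \ge 0$ (quotients of indecomposable preinjectives being preinjective), additivity of $\delta$ pins $\delta(\Img f) \in \{0, 1\}$. If $\delta(\Img f) = 1$, then $Y/\Img f$ is preinjective of defect zero, hence zero, and $f$ is epi; if $\delta(\Img f) = 0$, then $\delta(\Ker f) = 0$, forcing the preprojective $\Ker f$ to vanish, and $f$ is mono.

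For the right-equivalence claim, compose with the given isomorphism to reduce to $X = X'$. The vanishing of $\Hom$ between different tubes gives $\Aut(X) = \prod_i \Aut(M_i)$. By Lemma \ref{extension}, the right $\End(M_i)$-module $\Hom(M_i, Y)$ is cyclic, generated by any $g_i$ with $g_i|_{R_i} \ne 0$; modulo the unique maximal ideal of $\End(M_i)$, units correspond exactly to endomorphisms that restrict non-trivially to $R_i$. Hence every valid $f_i$ has the form $g_i \cdot u_i$ for some $u_i \in \Aut(M_i)$, so the set of maps $X \to Y$ with no simple regular submodule in the kernel is a single $\Aut(X)$-orbit under precomposition, which is exactly right equivalence. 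The main technical ingredient used throughout is the cyclicity of $\Hom(R[s], Q_j)$ as an $\End(R[s])$-module from Lemma \ref{extension}; this controls both the diagonal-kernel construction in (ii)$\Rightarrow$(i) and the uniqueness up to right equivalence.
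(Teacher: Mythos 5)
Your proof is correct, and for the equivalence (i)$\Leftrightarrow$(ii) and for the right-equivalence statement it follows essentially the same route as the paper: nonvanishing on the regular socles $R_i$, the diagonal submodule $\{(-dr,r)\}$ killed by $f$ when two summands share a tube, and the cyclicity of $\Hom(R[t],Y)$ over the local ring $\End(R[t])$ (Lemma \ref{extension}) to produce the automorphisms $h_i$ — your observation that a non-unit of $\End(R_i[t_i])$ must annihilate the regular socle, so that generators of the cyclic module are exactly the maps nonzero on $R_i$, is the same mechanism the paper uses. The only place you genuinely diverge is the mono/epi dichotomy: you run a defect computation ($\Ker f$ is forced to be preprojective, $\delta(\Ker f)\le 0$, $\delta(Y/\Img f)\ge 0$, and additivity leaves only the two cases $\delta(\Img f)\in\{0,1\}$), whereas the paper argues structurally that the image, being a factor of a regular module and a submodule of $Q_j$ with no proper nonzero preinjective submodules, must be regular, and that a non-injective map between regular modules has a regular, hence socle-containing, kernel. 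Both arguments are short and rest on the same standard Kronecker facts; the defect bookkeeping is perhaps slightly more self-contained, while the paper's version makes visible why the image is regular, which it reuses elsewhere (e.g.\ in Proposition \ref{maximal}).
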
 
	
\begin{proof} 
First, we show that (ii)implies (i). Assume that there is $f: X \to Y$ such 
that the kernel of $f$ does not contain a simple regular submodule of $X$.
In order to show that $X$ is strongly regular, we show that its regular socle
is multiplicity-free. Assume, for the contrary, that $X$ has a submodule $U$
such that $U = R\oplus R$ with $R$ simple regular. Let $f_1,f_2$ be the restrictions
of $f$ to $R\oplus 0$ or $0\oplus R$, respectively. Since no simple regular submodule of 
$X$ is contained in the kernel of $f$, we see that both $f_1,f_2$ are non-zero maps.
According to Lemma \ref{extension}, there is a map $h: R \to R$ such that $f_1 = f_2h.$
But then $R' = \{(-h(x),x)\mid x\in R\}$ belongs to the kernel of $f$. Of course,
$R'$ is isomorphic to $R$, thus $R'$ is a simple regular submodule of $X$ which
belongs to the kernel of $f$, a contradiction.

Conversely, assume that $X = \bigoplus_i R_i[t_i]$ is a strongly regular
module, with pairwise different simple regular modules $R_i$.
According to Lemma \ref{extension}, there is a map $f_i: R_[t_i] \to Y$
such that the restriction of $f_i$ to $R_i$ is non-zero. Since the simple
regular submodules $R_i$ are pairwise non-isomorphic, these are the only
simple regular submodules of $X$, thus no simple regular submodule of $X$ lies
in the kernel of $f = (f_i)_i: X \to Y$, 

Now assume that $f: X \to Y$ is a map such that 
the kernel of $f$ does not contain a simple regular submodule.
Assume that $f$ is not an epimorphism. 
The image of $f$ is a factor module of $X$, thus the direct sum of a regular and a preinjective module. But $Y$ has no non-zero proper submodule 
which is preinjective. Thus, the image of $f$ is regular. 
The kernel of a map between regular modules is regular, thus
either a monomorphism, or it contains a simple regular submodule. Since the latter
is not possible, we see that $f$ has to be a monomorphism.

Finally, assume that 
$f: X \to Y$ and $f': X' \to Y$ are maps with no simple regular submodule
in the kernel. Let $g: X \to X'$ be an isomorphism. 
Write $X = \bigoplus R_i[t_i]$ with pairwise non-isomorphic
simple regular modules $R_i$. Let $f_i$ and $f'_i$ 
be the restriction of $f$ and $f'g$, respectively, to $R_i[t_i]$. Since $R_i$
is not in the kernel of $f$, the restriction of $f_i$ to $R_i$ is non-zero.
According to
Lemma \ref{extension}, 
there is a map $h_i: R_i\to R_i$ such that $f_i = f'_ih_i.$ 
But also the restriction of $f'_i$ to  $R_i$ is non-zero, thus $R_i$ is not in the kernel
of $h_i$ and therefore $h_i$ is an automorphism. Let $h = (h_i)_i: X \to X$. This
is an automorphism of $X$ with $f = f'gh$.
Since $f = f'gh$ and $gh$ is an isomorphism, we see that $f, f'$ are right equivalent.
\end{proof} 

\begin{Prop}\label{maximal}
Let $Y$ be indecomposable preinjective. 
The maximal submodules
of $Y$ are pairwise non-isomorphic and these are, up to isomorphism, 
all the strongly regular modules of length $|Y|-1$.

The kernels of the non-zero maps $Y \to Q_1$ 
are pairwise non-isomorphic and these are, up to isomorphism, 
all the strongly regular modules
of length $|Y|-3$.
\end{Prop}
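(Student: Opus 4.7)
My plan for (a) is the following. Given a maximal submodule $X$ of $Y = Q_j$, the quotient $Y/X$ must be simple. Since $\Hom(Q_j, S(a)) = 0$ by the hereditary dichotomy (preinjective to preprojective for $j\ge 1$), the quotient must be $S(b)$. The inclusion $f\colon X \hookrightarrow Y$ is then right minimal with cokernel $S(b)$, so by Corollary~\ref{projective} it is right $P_1$-determined with $|f|_{P_1} = [\Cok f : S(b)] = 1$. Conversely, using Proposition~\ref{identification}, every element of ${}^{P_1}[\to Y\rangle^1$ is represented by the inclusion of a maximal submodule with quotient $S(b)$. Applying Proposition~\ref{main} with $C = P_1$ then yields the bijection $\sigma\colon {}^{P_1}[\to Y\rangle^1 \xrightarrow{\sim} \mathcal{R}(|P_1|+|Y|-4) = \mathcal{R}(|Y|-1)$, giving (a).

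For (b), I first show every non-zero $g\colon Y \to Q_1$ is surjective. The image $\Img g$ is simultaneously a submodule of $Q_1$ and a quotient of $Y$. The submodules of $Q_1$ are $0$, $S(a)$, the indecomposable regular modules $R[1]$ of length $2$, and $Q_1$ itself. Since $\Hom(Q_j,S(a)) = 0$ and $\Hom(Q_j, R[1]) = 0$ (hereditary dichotomy rules out preinjective-to-preprojective and preinjective-to-regular maps), we must have $\Img g = Q_1$. The kernel $K = \Ker g$ then has length $|Y| - |Q_1| = |Y|-3$ and defect $\delta(Y)-\delta(Q_1) = 0$, hence is regular.

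The main obstacle is showing that $K$ is strongly regular and that every strongly regular module of length $|Y|-3$ arises this way; this cannot be deduced directly from Proposition~\ref{main}, since the natural choice $C = \tau^{-}K$ is regular rather than preprojective. My plan is to perform an explicit computation in the standard presentation of $Q_j$: with bases $f_0,\ldots,f_j$ of $V_b^{Q_j}$ and $e_0,\ldots,e_{j-1}$ of $V_a^{Q_j}$ satisfying $\alpha_1 f_i = e_i$ for $i<j$, $\alpha_1 f_j = 0$, $\alpha_2 f_0 = 0$, $\alpha_2 f_i = e_{i-1}$ for $i\ge 1$, each non-zero $g\colon Q_j \to Q_1$ corresponds (since $\End(Q_j) = k$) to a vector $a = (a_0,\ldots,a_{j-1}) \in k^j$ determined up to scalar. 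A direct calculation then identifies $\Ker g_a$ as the strongly regular module $\bigoplus_i R_{P_i}[n_i]$, where the homogeneous polynomial $p_a(x,y) := \sum_i a_i\, x^{j-1-i} y^i$ factors as $\prod_i p_i^{n_i}$ with each $p_i$ the irreducible polynomial defining a closed point $P_i \in \mathbb{P}^1_k$. This identification --- essentially Kronecker's classification of matrix pencils, or equivalently the correspondence between regular Kronecker modules and finite-length torsion sheaves on $\mathbb{P}^1_k$ --- together with unique factorization of homogeneous forms, establishes a bijection between $\mathbb{P}(\Hom(Q_j,Q_1)) = \mathbb{P}^{j-1}$ and the isomorphism classes of strongly regular modules of length $2(j-1) = |Y|-3$, proving (b).
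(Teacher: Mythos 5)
There is a genuine gap in part (a): your argument is circular relative to the logical structure of the paper. You invoke Proposition \ref{main} with $C=P_1$ to obtain the bijection $\sigma\colon {}^{P_1}[\to Y\rangle^1 \to \mathcal R(|Y|-1)$. But Proposition \ref{main} is deduced from Propositions \ref{identification} and \ref{main0}, and the paper's proof of Proposition \ref{main0} in the projective case $C=P_1$ consists precisely of the words ``this has been done in the previous proposition'' --- the previous proposition being \ref{maximal}, i.e.\ the statement you are trying to prove. The independently proved part of \ref{main0} covers only non-projective $C$, and it cannot be substituted here: to land in $\mathcal R(|Y|-1)$ one needs $|C|=3$ with $C$ indecomposable preprojective, which forces $C=P_1$. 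So part (a) as written assumes its own conclusion. The missing ingredient is Lemma \ref{characterization}, which you never use: a maximal submodule $X$ is regular by the defect computation $\delta(X)=\delta(Y)-\delta(Q_0)=0$, its inclusion into $Y$ is a monomorphism and hence has no simple regular submodule in its kernel, so \ref{characterization} gives that $X$ is strongly regular; conversely a strongly regular $X$ of length $|Y|-1$ admits such a map, necessarily mono for length reasons; and the last clause of \ref{characterization} shows isomorphic maximal submodules coincide.

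For part (b) your route is genuinely different from the paper's. The reduction to ``$g$ surjective, kernel regular of length $|Y|-3$'' is fine, but the decisive step --- that $\Ker(g_a)\cong\bigoplus_i R_{P_i}[n_i]$ where the binary form $p_a=\prod_i p_i^{n_i}$ --- is exactly the content of the proposition and is only asserted via an appeal to Kronecker's classification of pencils, not verified; you have relocated the difficulty rather than resolved it. If carried out, your computation would yield something stronger (an explicit realization of the parametrization of $\mathcal R(|Y|-3)$ by $\mathbb P^{j-1}$ through kernels, refining the Corollary following Proposition \ref{main}), but it is considerably more work than the paper's argument, which again runs entirely through Lemma \ref{characterization}: the inclusion $\Ker(g)\to Y$ forces strong regularity, a strongly regular $X$ of length $|Y|-3$ embeds in $Y$ with $Y/X$ of length $3$ having the composition factors of $Q_1$ and hence isomorphic to $Q_1$, and uniqueness is the last clause of \ref{characterization}. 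I recommend reworking both halves around that lemma.
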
 

\begin{proof} 
Let $X$ be a maximal submodule of $Y$. Then $Y/X$ is simple injective, thus
$\delta(X) = \delta(Y)-\delta(Y/X) = 0.$ Since $Y$ has no proper non-zero 
preinjective submodule, we see that $X$ has to be regular. According to 
Lemma \ref{characterization},
the inclusion map $X \to Y$ shows that $X$ is even strongly regular, and of course
of length $|Y|-1.$ Conversely, if $X$ is strongly regular and of length $|Y|-1$,
Lemma \ref{characterization} 
yields a monomorphism $X \to Y$. Now assume that two maximal submodules $X, X'$ 
are isomorphic, let $f: X \to Y$ and $f': X' \to Y$ be the inclusion maps. Then,
according to Lemma \ref{characterization}, 
$f,f'$ are right equivalent, thus $X = X'.$

In the same way, we consider the kernels $X$ of the non-zero maps $Y \to Q_1.$
Clearly, all non-zero maps $Y \to Q_1$ are surjective, thus $\delta(X) = 0$ and
again, $X$ has to be regular, and according to Lemma \ref{characterization} 
even strongly regular,
of course of length $|Y|-3$. Conversely, if $X$ is strongly regular and of length
$|Y|-1$, Lemma \ref{characterization} yields a monomorphism $X \to Y$. The factor module $Y/X$ is
of length 3 and has the same composition factors as $Q_1$. But the only factor
module of $Y$ of length 3 with the same composition factors as $Q_1$ is $Q_1$
itself, thus $X$ is the kernel of a non-zero map $Y \to Q_1.$ 
Finally, we use again Lemma \ref{characterization} 
in order to see that isomorphic kernels of
non-zero maps $Y \to Q_1$ are actually identical. 
\end{proof} 

\noindent 
{\bf Remark.} It should be stressed that for $C = P_0$ and $C= P_1$, the inverse $\eta_{CY}^{-1}$
of the Auslander bijection can be seen very well: 

If $M$ is a Kronecker module,
we may write $M$ in the form 
$$
  M = (M_a,M_b;\ \alpha: M_b\to M_a,\beta: M_b\to M_a)
$$ 
where $M_a,M_b$ are vector spaces and $M_\alpha, M_\beta$ are linear maps.
We can identify $M_a$ with $\Hom(P_0,M)$ and $M_b$ with $\Hom(P_1,M)$, since
$P_0 = P(a), P_1 = P(b).$ 

Consider the case $C = P_1.$ Given a maximal submodule $U$ of
$\Hom(C,Y)$, we may interpret it as a maximal submodule of $Y_b= \Hom(C,Y)$, and we may
consider the submodule $\Lambda U$ of $Y$ generated by $U$, let 
$u: \Lambda U \to Y$ be the inclusion map. Then $[u: \Lambda U \to Y\rangle$
belongs to ${}^C[\to Y\rangle_1$ and $\eta_{CV}(u) = U.$ This shows that
$[u\rangle = \eta_{CY}^{-1}(U).$

Similarly, for $C = P_0$, starting with a maximal subspace $V_a$ of 
the vector space $Y_a = \Hom(C,Y)$, 
we may construct $V_b = Y_\alpha^{-1}(V_a) \cap Y_\beta^{-1}(V_a)$,
then $V = (V_a,V_b;Y_\alpha|V_b, Y_\beta|V_b)$ is a submodule of $Y$,
say with inclusion map $v: V \to Y$. 
Then $[v: V \to Y\rangle$
belongs to ${}^C[\to Y\rangle_1$ and $\eta_{CV}(v) = V_a.$ This shows that
$[v\rangle = \eta_{CY}^{-1}(V_a).$
 	
\begin{Prop}\label{main0}
Let $\Lambda$ be the Kronecker algebra 
and $Y$ indecomposable preinjective, $C$ indecomposable preprojective. Let 
$d = |C|+ |Y|-4.$ Let $X$ be any module. 

There is $f: X \to Y$ right minimal,
right $C$-determined with  $|f|_C = 1$, if any only if the isomorphism class of 
$X$ belongs to $\mathcal  R(d)$.

If $f: X \to Y$ and $f': X' \to Y$ are 
right minimal, right $C$-determined maps with  $|f|_C = |f'|_C = 1$. Then $f,f'$ 
are right equivalent if and only if $X,X'$ are isomorphic.
\end{Prop}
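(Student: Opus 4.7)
My plan is to split the proof into three cases according to the form of the indecomposable preprojective $C$, with Cases A ($C=P_0$) and B ($C=P_1$) handled by direct appeal to Proposition \ref{maximal}, and Case C ($C=P_i$ for $i\ge 2$) containing the main substance.

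For Case A, I would invoke Proposition \ref{identification} and observe that the constraint $|f|_C=1$ forces $\dim(Y/X)_a=1$; combining this with $\soc(Y/X)=S(a)$ pins down $Y/X\cong Q_1$ as the unique Kronecker module of dimension vector $(1,2)$ with simple socle $S(a)$. Thus ${}^{P_0}[\to Y\rangle^1$ is identified with the set of kernels of nonzero maps $Y\to Q_1$, and Proposition \ref{maximal} gives the bijection with $\mathcal R(|Y|-3)=\mathcal R(d)$, including uniqueness of the right equivalence class in terms of $X$. For Case B, the condition $\soc(Y/X)=S(b)=Q_0$ together with the injectivity of the simple module $Q_0$ forces $Y/X=Q_0$, so ${}^{P_1}[\to Y\rangle^1$ is the set of maximal submodules of $Y$; Proposition \ref{maximal} again supplies the bijection with $\mathcal R(|Y|-1)=\mathcal R(d)$.

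For Case C, write $K=\tau C=P_{i-2}$ and $Y=Q_j$, so that $d=2i+2j-2$. By Proposition \ref{identification} together with Proposition \ref{length-1} (using that $\Gamma(K)=\End(K)^{\text{op}}=k$, so the $\Gamma(K)$-socle of $\Ext^1(Y,K)$ is the whole space), the elements of ${}^C[\to Y\rangle^1$ are the right equivalence classes of right minimal epimorphisms $f\colon X\to Y$ with $\ker f\cong K$; automatically $|X|=d$ and $X$ has dimension vector $(i+j-1,i+j-1)$. Granted that such $X$ is always regular (the task of the next paragraph), Lemma \ref{characterization} supplies both directions of the claim at once: since $K$ is preprojective and contains no simple regular submodule, the lemma yields that $f$ and $f'$ are right equivalent whenever their sources are isomorphic; and, conversely, for any strongly regular $X$ of length $d$, it produces a map $g\colon X\to Y$ with no simple regular in its kernel, which must be epi (as $|X|>|Y|$) with kernel preprojective of dimension vector $(i-1,i-2)$, forcing $\ker g\cong P_{i-2}=K$, whereupon $|g|_C=1$ follows from Proposition \ref{length-1}.

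The main obstacle is the regularity of $X$ in Case C; my plan is to rule out both preprojective and preinjective direct summands. If $X=M\oplus Q$ with $Q$ indecomposable preinjective, then $\Hom(Q,K)=0$ forces $f|_Q\ne 0$; as a nonzero morphism between indecomposable preinjective Kronecker modules, it is automatically surjective, and its kernel has defect zero and is therefore regular, yet also sits inside the preprojective $K$, which admits no nonzero regular submodule. Hence $\ker(f|_Q)=0$ and $f|_Q$ is an isomorphism, giving a section of $f$ and contradicting non-splitness. The absence of a preprojective summand follows from the same argument applied to the dual sequence $0\to DY\to DX\to DK\to 0$: the Kronecker duality $D$ exchanges indecomposable preprojective and preinjective modules of the same length, so $DY=P_j$ is preprojective and $DK=Q_{i-2}$ is preinjective, and the argument applies verbatim to show that $DX$, and hence $X$, has no preprojective summand. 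Thus $X$ is regular, and so strongly regular by Lemma \ref{characterization}, completing the proof.
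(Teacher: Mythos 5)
Your proof is correct and follows essentially the same route as the paper's: the projective cases $C=P_0,P_1$ are reduced to Proposition \ref{maximal}, and the case $i\ge 2$ rests on Proposition \ref{identification}, the regularity of $X$, and Lemma \ref{characterization}. Two remarks. First, to exclude preprojective direct summands of $X$ the paper does not dualize: once preinjective summands are ruled out, every indecomposable direct summand of $X$ has defect $\le 0$, and $\delta(X)=\delta(K)+\delta(Y)=-1+1=0$ then forces every summand to have defect $0$, i.e.\ to be regular. Your duality argument is valid (the Kronecker algebra is isomorphic to its opposite, and $D$ exchanges $P_i$ and $Q_i$), but the defect count is shorter. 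Second, in Case A your justification of $Y/X\cong Q_1$ has a small gap: the two constraints you state, namely $\dim(Y/X)_a=1$ and $\soc(Y/X)=S(a)$, only show that $Y/X$ embeds into $Q(a)=Q_1$, so a priori $Y/X$ could be $S(a)$ or a regular module of length $2$ rather than all of $Q_1$; in particular the dimension vector $(1,2)$ has not yet been established. You need the additional observation that every nonzero quotient of the indecomposable preinjective module $Y$ is a direct sum of indecomposable preinjective modules (because $\Hom$ from a preinjective Kronecker module to a preprojective or regular one vanishes); since $S(a)=P_0$ and the length-two regular modules are not preinjective, this forces $Y/X\cong Q_1$ and hence $|X|=|Y|-3=d$. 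With that line added, everything is in order.
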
 
	
\begin{proof} First, consider the case when $C$ is not projective, thus $|C| \ge 5$,
therefore $d > |Y|.$ 

Since $\Lambda$ is hereditary and $C$ has no indecomposable projective direct
summand, any right $C$-determined map $f: X \to Y$ is surjective 
and its kernel belongs to $\add K$, 
where $K = \tau C$. If we assume that $|f|_C = 1$, then the kernel of $f$
has to be equal to $K$. It follows that $\delta(X) = \delta(Y)+\delta(K) = 0.$
Since all indecomposable submodules of $X$ are preprojective or regular, we
see that $X$ has to be regular. Of course, its length is just $d$.
Note that no simple regular submodule $R$ of $X$ can be contained in the kernel
of $f$, since by assumption the kernel of $f$ is $K$, thus preprojective. 
The Lemma \ref{characterization} shows that $X$ is strongly regular, thus the isomorphism class
of $X$ belongs to $\mathcal  R(d)$. 

Conversely, assume that $X$ is a strongly regular
module of length $|X| = d.$ 
According to the Lemma \ref{characterization} there exists a morphism $f: X \to Y$
such that its kernel contains no simple regular module.
Since $|X| = d > |Y|$, the map $f$ cannot be a monomorphism, thus it is an
epimorphism. Since the kernel of $f$ does not contain a simple regular module,
it is the direct sum of indecomposable preprojective modules. Since the defect
of the kernel is $\delta(X)-\delta(Y) = -1$, we see that the kernel of
$f$ is indecomposable preprojective. The length of the kernel is $|X|-|Y| =
|C|-4 = |\tau C|$,
thus the kernel of $f$ is isomorphic to $\tau C$. Altogether, we have shown:
if the isomorphism class of $X$ belongs to $\mathcal  R(d)$, then there is 
$f: X \to Y$ right minimal, right $C$-determined with  $|f|_C = 1$.
 
Finally, assume that $f: X \to Y$ and $f': X' \to Y$ are 
right minimal, right $C$-determined maps with  $|f|_C = |f'|_C = 1$. 
Then $X, X'$ are strongly regular. 
If these maps are right equivalent, then clearly $X,X'$ are isomorphic. 
Conversely, assume that
$X,X'$ are isomorphic. In order to show that $f,f'$ are right equivalent, we may
assume that $X' = X.$ Lemma \ref{characterization} asserts that $f,f'$ are right equivalent.
	
Now assume that $C$ is projective, thus $|C| \le 3$,
therefore $d < |Y|.$ In case $C = P_1$, we have to consider the submodules $X$
of $Y$ with $Y/X = Q_0$, in case $C = P_0$, we have to consider the submodules
$X$ of $Y$ with $Y/X = Q_1$. This has been done in the previous proposition. 
\end{proof} 

\begin{proof}[Proof of proposition \ref{main}]
The proposition is an immediate consequence of \ref{identification} and \ref{main0}.
\end{proof} 
	
\begin{Cor} Let $\Lambda$ be the Kronecker algebra, 
$C$ indecomposable preprojective,
$Y$ indecomposable preinjective. Then $\sigma\eta_{CY}^{-1}$ yields a bijection
\Rahmen{\sigma\eta_{CY}^{-1}:  \mathcal  S_m\Hom(C,Y) \longrightarrow \mathcal  R(i)
 \quad\text{\it with}\quad i=|C|+|Y|-4.}
\end{Cor}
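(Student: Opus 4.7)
My plan is to assemble the corollary from two bijections already established in this section, so very little extra work is required. The composite $\sigma\eta_{CY}^{-1}$ factors through the intermediate set ${}^C[\to Y\rangle^1$ of right equivalence classes of right minimal, right $C$-determined maps $f$ ending in $Y$ with $|f|_C=1$, and each of the two pieces has already been given.

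The first step is to recall that $\eta_{CY}$ restricts to a bijection
$$ {}^C[\to Y\rangle^1 \longrightarrow \mathcal{S}_m\Hom(C,Y), $$
as noted in the corollary immediately after Proposition \ref{length}: an element $[f\rangle$ of ${}^C[\to Y\rangle$ has right $C$-length $1$ precisely when $\eta_{CY}(f)$ is a maximal $\Gamma(C)$-submodule of $\Hom(C,Y)$. Taking inverses, $\eta_{CY}^{-1}$ gives a bijection $\mathcal{S}_m\Hom(C,Y) \to {}^C[\to Y\rangle^1$.

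The second step is to invoke Proposition \ref{main}, which says exactly that the source map $\sigma$ restricts to a bijection
$$ \sigma:\ {}^C[\to Y\rangle^1 \longrightarrow \mathcal{R}(i),\qquad i=|C|+|Y|-4, $$
when $C$ is indecomposable preprojective and $Y$ is indecomposable preinjective. Composing with $\eta_{CY}^{-1}$ gives the claimed bijection $\sigma\eta_{CY}^{-1}:\mathcal{S}_m\Hom(C,Y) \to \mathcal{R}(i)$.

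There is no real obstacle here; both ingredients are already in place. The only point to verify is well-definedness of $\sigma$ on right equivalence classes, but this is automatic since we use right minimal representatives (Proposition \ref{minimal}) and any two right minimal representatives of a class have isomorphic sources. Thus the proof is a one-line composition of the two bijections, and the corollary follows.
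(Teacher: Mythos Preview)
Your proposal is correct and matches the paper's approach: the paper states the corollary without a separate proof, treating it as immediate from Proposition~\ref{main} together with the bijection ${}^C[\to Y\rangle^1 \leftrightarrow \mathcal S_m\Hom(C,Y)$ established in section~\ref{sec:7}. Your decomposition into these two pieces is exactly the intended argument.
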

	
Note that $\Gamma(C) = k$, thus $\Hom(C,Y)$, considered as a $\Gamma(C)$-module
is just a vector space, thus $\mathcal  S_m\Hom(C,Y)$ is the set of the maximal
subspaces of a vector space, and therefore a projective space. We see that
{\it we obtain a parameterization of the set $\mathcal  R(2i)$ of all
strongly regular Kronecker modules of length $2i$ by the projective space
$\mathbb P_i$.}
	\bigskip

\noindent 
{\bf Remark.} This parameterization of $\mathcal  R(d)$ extends the well-known 
description of the geometric quotient of the ``open sheet'', when dealing
with conjugacy classes of $(n\times n)$-matrices with coefficients in a
field, see for example Kraft \cite{[Kt]}. 

Let us mention some details: Let $R = k[T]$
be the polynomial ring in one variable $T$ with coefficients in the field $k$,
and let us assume that $k$ is algebraically closed. 
We consider $R$ as the path algebra of the quiver with one vertex and one loop; in this way, the 
$R$-modules of dimension $n$ are just pairs $(V,\phi)$, where $V$ is a $k$-space
of dimension $n$ and $\phi: V \to V$ an endomorphism of $V$, or, after 
choosing a basis of $V$, we just deal with $(n\times n)$-matrices with
coefficients in $k$. Isomorphism of $R$-modules translates to equivalence
(or conjugacy) of matrices. The assertions of Lemma \ref{strongly-regular} can be reformulated
in this context, {\it the properties mentioned there characterize just the
cyclic $R$-modules of finite length.} Note that the category of 
$k[T]$-modules of finite length is equivalent to the full subcategory $\mathcal  R'$ 
of all regular Kronecker modules without eigenvalue $\infty$
(to be precise: let us  denote the two arrows 
of the Kronecker quiver by $\alpha,\beta$
and let $\mathcal  T_\infty$ be the Auslander-Reiten component which 
contains the indecomposable regular 
representation $M$ with $M_\alpha = 0$. The subcategory $\mathcal  R'$ consists
of all regular representations with no indecomposable direct summand 
in $\mathcal  T_\infty$, or equivalently, it is the full subcategory of
all representations $M$ such that $M_\alpha$ is bijective. 
Under the equivalence
of the category of finite-dimensional $k[T]$-modules and $\mathcal  R'$
$k[T]$-module $N$ corresponds to the representation $M$ such that $M_\alpha: N \to N$
is the identity map, and $M_\beta: N \to N$ the multiplication by $T$). 
A representation $M$ in $\mathcal  R'$ is strongly regular if and only if $M$
corresponds under this equivalence to a cyclic $k[T]$-module. 
Thus, one may be tempted to call
the strongly regular Kronecker modules ``cyclic'' modules, but this 
would be in conflict with standard terminology.
	\bigskip 

\noindent 
{\bf Remark. ``Modules determined by morphisms''.}
A bijection of two sets can always be read in two different directions.
This survey is concerned with the Auslander bijection
$$
 \eta_{CY}: {}^C[\to Y\rangle \longrightarrow \mathcal  S\Hom(C,Y),
$$
In the previous sections, the focus was going from left to right: Any right minimal
morphism $f\in {}^C[\to Y\rangle$ yields under $\eta_{CY}$ a submodule of $\Hom(C,Y)$ 
and is uniquely determined by this submodule,
this is the philosophy of saying that morphisms as elements of $[\to Y\rangle$)
are determined by modules.

The considerations in the present section point into the reverse direction: 
we use sets of morphisms
as convenient indices for parameterizing isomorphism classes of modules. 
We recall from section \ref{sec:7} that the restriction of $\eta_{CY}$ furnishes a bijection
$$
\eta_{CY}: {}^C[\to Y\rangle^1 \longrightarrow \mathcal  S_m\Hom(C,Y),
$$
thus we can use the right hand set
in order to parametrize the left hand set.

Our interest lies in the
special case where maps $f: M \to Y$ and $f': M' \to Y$ of right $C$-length $1$
are right equivalent only in case $M$ and $M'$ are isomorphic. In this case,
the maximal submodule $\eta_{CY}(f: M \to Y)$ (thus a set of morphisms)
uniquely determines the module $M$. In addition, we will assume that $C$ is a brick,
or at least that $C$ is indecomposable and $\rad\End(C)$ annihilates $\Hom(C,Y)$.
In this case, $\mathcal  S_m\Hom(C,Y)$ is a projective space (namely the projective
$d$-space over the division ring $\End(C)/\rad\End(C)$, provided $\Hom(C,Y)$
is a module of length $d+1$). 
%Here is the relevant definition:
	\bigskip
	
Let us determine $\eta_{CY}^{-1}(0)$ for $C = P_i,\ Y = Q_j$
with $i,j\in \mathbb N_0$. Note that
$$
 \dim\Hom(P_{i-1},Q_j) = i+j-1,
$$ 
where $P_{-1} = 0$. Thus, the universal map from $\add P_{i-1}$ 
to $Q_j$ is of the form $P_{i-1}^{i+j-1} \to Q_j$ (for $j=0$, it is a map
of the form $P_{i-1}^{i-1} \to Q_0$). 
	
\begin{Prop}\label{universal}
Let $\Lambda$ be the Kronecker algebra.
Let $C = P_i,\ Y = Q_j$ with $i,j\in \mathbb N_0$. Let 
$f: P_{i-1}^{i+j-1} \to Q_j$ be the universal map from $\add P_{i-1}$ 
to $Q_j$. Then 
\Rahmen{\eta_{CY}^{-1}(0) = [f: P_{i-1}^{i+j-1} \to Q_j\rangle.}
\end{Prop}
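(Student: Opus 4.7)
The plan is to verify that the universal map $f: P_{i-1}^{i+j-1} \to Q_j$ (with the convention $P_{-1}=0$) satisfies the three properties characterising $\eta_{P_i,Q_j}^{-1}(0)$: it is right minimal, right $P_i$-determined, and satisfies $\eta_{P_i,Q_j}(f) = 0$; the injectivity of the Auslander bijection on ${}^{P_i}[\to Q_j\rangle$ then forces $[f\rangle = \eta_{P_i,Q_j}^{-1}(0)$. Right minimality is automatic for a right minimal right $\add P_{i-1}$-approximation, and $\eta_{P_i,Q_j}(f) = f\cdot \Hom(P_i, P_{i-1}^{i+j-1}) = 0$ is immediate from $\Hom(P_i, P_{i-1}) = 0$ (row~1 of the table of Proposition~\ref{Kronecker}, since $i > i-1$).

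For the decisive case $i \geq 2$, rather than verifying right-determinedness through the determiner formula directly, I would invoke Proposition~\ref{length-KRS} with $K = P_{i-2} = \tau P_i$. The hypotheses hold: $\End(P_{i-2}) = k$ is semisimple, and $\Hom(P_i,\mathcal{P},Q_j)=0$ because $\Hom(P_i, P_0) = \Hom(P_i, P_1) = 0$ for $i \geq 2$. The proposition then identifies $\eta_{P_i,Q_j}^{-1}(0)$ as the class of the universal extension from below
\[
 0 \longrightarrow P_{i-2}^{n} \longrightarrow X \longrightarrow Q_j \longrightarrow 0,
\]
with $n = \dim_k \Ext^1(Q_j, P_{i-2}) = \dim \Hom(P_i, Q_j) = i+j$ by the Auslander-Reiten formula $\Ext^1(Q_j,P_{i-2}) \cong D\,\underline{\Hom}(P_i, Q_j) = D\Hom(P_i, Q_j)$. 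It remains to identify $X$ with $P_{i-1}^{i+j-1}$ and to recognise the quotient map of this extension as (right-equivalent to) the universal map $f$.

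The identification of $X$ is the main obstacle, because dimension vectors alone do not distinguish preprojective decompositions (for instance $P_{i-3}\oplus P_{i-1}$ and $P_{i-2}^{2}$ share a dimension vector). Applying $\Hom(P_i,-)$ to the short exact sequence and using $\Hom(P_i,P_{i-2})=0$, combined with the vanishing $\Hom(P_i,X) = \eta_{P_i,Q_j}(f) = 0$ guaranteed by Proposition~\ref{length-KRS}, constrains the indecomposable summands of $X$: by the Hom computations of Proposition~\ref{Kronecker} and the Euler-form analysis in its proof, $\Hom(P_i, M) \neq 0$ for every indecomposable Kronecker module $M$ that is preinjective, regular, or preprojective $P_m$ with $m \geq i$, so all indecomposable summands of $X$ are of the form $P_m$ with $m \leq i-1$. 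A direct dimension-vector check yields $\dim X = (i+j)\dim P_{i-2} + \dim Q_j = (i+j-1)\dim P_{i-1}$, and the resulting linear relations $\sum a_m = i+j-1$ and $\sum a_m\cdot m = (i-1)(i+j-1)$, together with $m \leq i-1$, force the multiplicities to concentrate at $m = i-1$, so $X \cong P_{i-1}^{i+j-1}$. A further application of $\Hom(P_{i-1},-)$ to the sequence (using $\Hom(P_{i-1}, P_{i-2}) = \Ext^1(P_{i-1}, P_{i-2}) = 0$) then shows that $\Hom(P_{i-1}, \pi)$ is an isomorphism where $\pi$ is the quotient map of the extension, identifying $\pi$ up to right equivalence with the right minimal right $\add P_{i-1}$-approximation $f$.

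The edge cases $i \in \{0,1\}$ (where $C = P_i$ is projective) are handled directly from Corollary~\ref{projective}: a right minimal right $P_i$-determined map ending in $Q_j$ is the inclusion of a submodule $X \hookrightarrow Q_j$ with $\soc(Q_j/X)$ generated by the simple $S = P_i/\rad P_i$, and $\eta_{P_i,Q_j}^{-1}(0)$ is the smallest such $X$. For $i = 1$ (so $S = S(b)$), the condition forces $X \supseteq \soc Q_j = V_a \cong S(a)^j = P_0^j$, and $X = P_0^j$ is already valid since $Q_j/V_a \cong V_b \cong S(b)^{j+1}$; this matches $P_{i-1}^{i+j-1} = P_0^{j}$. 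For $i = 0$ (so $S = S(a)$), the condition is already satisfied by $X = 0$, since $\soc Q_j$ sits entirely at vertex $a$, again matching $P_{-1}^{j-1} = 0$ for $j \geq 1$.
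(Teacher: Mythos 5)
Your proof is correct and passes through the same intermediate object as the paper's --- the universal extension $0 \to P_{i-2}^{\,i+j} \to X \to Q_j \to 0$, recognised as $\eta_{CY}^{-1}(0)$ via Proposition \ref{length-KRS} --- but you identify the middle term $X$ by a genuinely different mechanism. The paper computes $\bdim X = (i+j-1)\bdim P_{i-1}$ and then appeals to rigidity: the middle term of a universal extension satisfies $\Ext^1(X,X)=0$, and since $\Ext^1(P_{i-1},P_{i-1})=0$ as well, two rigid modules with equal dimension vectors must be isomorphic. You instead observe (correctly) that the dimension vector alone is inconclusive for preprojective decompositions, and extract the extra constraint $\Hom(P_i,X)=0$ from $\eta_{CY}(f)=0$ combined with $\Hom(P_i,P_{i-2})=0$; by the Hom table of Proposition \ref{Kronecker} this confines the indecomposable summands of $X$ to $P_m$ with $m\le i-1$, after which the relations $\sum a_m = i+j-1$ and $\sum a_m m = (i-1)(i+j-1)$ give $\sum a_m(i-1-m)=0$ with nonnegative terms, forcing $a_m=0$ for $m<i-1$. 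Your route is more elementary and self-contained, at the cost of a slightly longer computation; the paper's is shorter but silently uses the fact that rigid Kronecker modules are determined by their dimension vectors. Your treatment of $i\in\{0,1\}$ via Corollary \ref{projective} agrees in substance with the paper's description of the minimal element for projective $C$ as an intersection of kernels. (One small remark applying equally to both arguments: the case $i=j=0$ is degenerate, since $\Hom(P_0,Q_0)=0$ makes $\eta_{CY}^{-1}(0)=[1_{Q_0}\rangle$ while the exponent $i+j-1$ is negative; the statement should be read with $j\ge 1$ there.)
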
 

\begin{proof} 
For $i = 0,$ the intersection of the kernels of the maps $Q_j \to Q_1$ is zero. 
For $i = 1,$ the module $P_0 = P_{i-1}$ is simple projective, thus, for $j \ge 1$,
the universal map $f: P_0^{j} \to Q_j$ is just the embedding of the socle of $Q_j$
into $Q_j$ and the socle is the intersection of the kernels of the maps
$Q_j \to Q_0$. For $i=1$ and $j= 0$, the intersection of the kernels of the maps
$Q_j \to Q_0$ is zero, but also $P_{i-1}^{i+j-1}$ is zero. 

Assume now that $i\ge 2.$ We have 
$$
 \dim\Ext^1(Q_j,P_{i-2}) = \dim \Hom(P_{i},Q_j)= \dim\Hom(P_{i+j},Q_0) = i+j,
$$
thus the universal extension of $Q_j$ from below using copies of $P_{i-2}$
looks as follows
$$
 0 \to  P_{i-2}^{i+j} \to  X \overset f\to  Q_j \to   0
$$
with a module $X$ such that $\Ext^1(X,X) = 0.$ The dimension vector of $X$
is
$$
 \bdim X = (i+j)\bdim P_{i-2} + \bdim Q_j = (i+j-1)\bdim P_{i-1}.
$$
Since also $\Ext^1(P_{i-1},P_{i-1}) = 0,$ 
it follows that $X = P_{i-1}^{i+j-1}.$ Since $f$ is right minimal and
$\dim\Hom(P_{i-1},Q_j) = i+j-1$, we see that $f$ has to be the 
universal map from $\add P_{i-1}$ to $Q_j$.
\end{proof} 

\noindent 
{\bf Example 11:} We consider the special case of
the Kronecker modules $C = P_4,\ Y = Q_0.$ Note that $\bdim Y = (0,1)$ and 
$\bdim P_4 = (5,4)$, thus $\tau C = P_2$ has dimension vector $(3,2)$
and $\Hom(\tau C,Y) = 2$. Here is a sketch of $^C[\to Y\rangle$.
In any layer $^C[\to Y\rangle_t$ with $1\le t \le 3$,
we indicate the elements $[f:X \to Y\rangle$ 
in the form $P_2^t \to  \bdim X \to  Y$. The map in the layer $t=0$
has been described in Proposition \ref{universal}.

$$
{\beginpicture
\setcoordinatesystem units <1cm,1cm>
\multiput{$\bullet$} at 0.5 2  1 1  1 2  1 3  1.5 1  1.5 2  1.5 3  3 0  2 1  2 3  3 4  
   5 1  5 3  5.5 2  /
\plot 3 0  1 1  0.5 2  1 3  3 4   5 3  5.5 2  5 1  3 0 /
\plot 1 1  1 3 /
\plot 3 0  1.5 1  0.5 2  /
\plot 1.5 3   3 4 /
\plot 1.5 3  0.5 2 /
\plot  3 4  2 3  1 2  2 1  3 0 /
\plot  1 3  1.5 2  1.5 1  /
\plot 2 1  1.5 2 /
\multiput{$\cdots$} at 4 1  4 3  3.5 2  5 2 /
\multiput{} at -2 0  10 0 /
\put{$0$} at 10 0
\put{$1$} at 10 1
\put{$2$} at 10 2
\put{$3$} at 10 3
\put{$4$} at 10 4
\put{$t$} at 10 4.8
\put{$^C[\to Y\rangle$} at -.5 4.4
\put{$0\to Y\to Y$} [l] at 3.2  4.1
\put{$P_2 \to (3,3) \to Y$} [l] at 5.2  3.1
\put{$P_2^2 \to (6,5) \to Y$} [l] at 5.75  2
\put{$P_2^3 \to (9,7) \to Y$} [l] at 5.2  0.9
\put{$P_2^4 \to P_3^3 \to Y$} [l] at 3.2  -0.1
\multiput{$\cdots$} at 3.5 1  3.5 3  3 2  4.5 2 /
\multiput{} at -2 0  10 0 /
\endpicture}
$$
Let us exhibit some of the short exact sequences $P_1^t \to X \to Y$.

For $(3,3)$, the module $X$ must be a strongly regular module, thus, 
there are three different kinds: 
direct sums of three pairwise 
non-isomorphic indecomposable modules of length 2,
direct sums of an indecomposable module $M$ of length 4 and an 
indecomposable module $M'$ of length 2 such that $\Hom(M,M') = 0$, 
and finally indecomposable modules of length 6.

For $(6,5)$ and $(9,7)$, we deal with direct sums of preprojective and 
regular modules. We consider the case $\bdim X = (6,5)$, thus $t=2$,
in detail. Let $X = X_a, X_b; X_\alpha, X_\beta)$ 
be a Kronecker module with a submodule $U$ isomorphic to
$P_2^2$ such that $X/U$ is isomorphic to $Q_0$.

We start with a basis of $U_b$ and add an element $x$ 
in order to obtain a basis of $X_b$, thus
$X_a = U_a,$ and $X_b = U_b\oplus \langle x\rangle.$
What we have to describe are the elements $\alpha(x)$
and $\beta(x)$ in $U_a$, and we have to provide a 
decomposition of $X$ into indecomposables. In this way, we
also will see that the map $X \to Q_0$ with kernel
$U$ is right minimal. 

Since $U$ is isomorphic to $P_2^2$, we can exhibit $U$ as follows:
$U_a$ has a basis $v_0,\dots,v_5$ and $U_b$ has a basis
$u_1,u_2,u_4,u_5$ and  $\alpha(u_i) = v_{-1}$ and
$\beta(u_i) = v_i.$
In order to describe $X$, we have to discuss possible values for $\alpha(x)$
and $\beta(x)$. 
Four different cases will be of interest. 
In the first three cases, let $\alpha(x) = v_2$.

(1) If we define $\beta(x) = v_3$, then clearly $X = P_5.$

(2) If we define $\beta(x) = v_4$, then we get a decomposition of $X$
as follows: The elements $u_1,u_2,x,u_5,v_0,v_1, v_2,v_4,v_5$
yield a submodule of the form $P_4$, the elements $x-u_4, v_2-v_3$
also yield a submodule (since $\beta(x-u_4) = 0$). These two
submodules provide a direct decomposition.

(3) If we define $\beta(x) = v_5,$ then we see that the elements
$u_1,u_2,x;v_0,v_1, v_2, v_5$ yield a submodule of the form $P_3$.
The elements $u_2-u_4, x-u_5, v_1-v_3, v_2-v_5$ yield a 4-dimensional
indecomposable submodule, and we obtain in this way a direct decomposition.

(4) Finally, let $\alpha(x) = v_1, \beta(x) = v_4$. Then we get a submodule
of $X$ 
with basis $u_1,x,u_5, v_0,v_1,v_4,v_5$ which is of the form $P_3$
as well as two indecomposable submodules of length 2, namely with basis
$x-u_1, v_4-v_2$ and with basis $x-u_4, u_1-u_3$.

In this way, we obtain short exact sequences
$$
 0 \to P_2^2 \to X \to  Q_0 \to 0
$$
such that $X$ is of the form $P_5,$ of the form $P_4\oplus R$,
of the form $P_3\oplus R''$, or finally of the form $P_3\oplus R\oplus R'$,
where $R, R'$ are both regular of length 2, and $R''$ is regular of length 4.

%=================================================================
\section{Lattices of height at most 2.}
\label{sec:15}

The height of $\mathcal  S\Hom(C,Y)$ is the length of the
$\Gamma(C)$-module $\Hom(C,Y)$.
If $\Lambda$ is a $k$-algebra, $k$ is algebraically closed,
and $C$ is multiplicity-free (what we can assume),
then the height of $\mathcal  S\Hom(C,Y)$ is the $k$-dimension of $\Hom(C,Y)$.

Our main interest will concern the height 2 lattices which are not distributive,
since this is the first time that one may encounter infinite families. 
Here is a discussion of the lattices of height at most 2, in
general. 
	\medskip 

\noindent 
{\bf Height 0.} The lattice $\mathcal  S M$ has height 0 if and only if $M = 0.$ Thus, to 
say that $\mathcal  S\Hom(C,Y)$ has height zero means that $\Hom(C,Y) = 0$.
		\medskip 

\noindent 
{\bf Height 1.} The lattice $\mathcal  S M$ has height 1 if and only if $M$
is a simple module. Thus, in our case $M = \Hom(C,Y)$,
we deal with a simple $\Gamma(C)$-module. Note that $\Hom(C,Y)$ is a simple
$\Gamma(C)$-module if any only if there is a right minimal, right $C$-determined
map $f: X \to Y$ which is not an isomorphism, such that for any 
right minimal, right $C$-determined
map $f': X' \to Y$ which is not an isomorphism, there is an
isomorphism $h: X \to X'$ such that $f = f'h.$
	
Three cases should be noted: 

(1) $f$ may be an epimorphism. For example, take the path algebra
 of the quiver of type $\mathbb A_2$ as exhibited in example 1.
 Let $C = Y = S(b)$ (thus $\tau C = S(a)$). 
 Then the epimorphism $f: P(b) \to S(b) = Y$ is, up to isomorphism,
the only right minimal, right $C$-determined
 map ending in $Y$ which is not an isomorphism.

(2) $f$ may be a monomorphism. To obtain an example, take again 
the path algebra  of the quiver of type $\mathbb A_2$.
 Let $C = S(a)$ and $Y = P(b)$, thus now $C$ is projective.
 The monomorphism $f: S(a) \to P(b)= Y$ is, up to isomorphism,
the only right minimal, right $C$-determined
 map ending in $Y$ which is not an isomorphism.

(3) $f$ is neither epi nor mono. As an example, take the radical square zero
algebra with the linearly oriented quiver of type $\mathbb A_3$, see example 6.
Let $Y= P(c)$ and $C = S(b)$ (thus $\tau C = S(a)$).
The non-zero map $f: P(b) \to P(c)$ is, up to right equivalence,
the only right minimal, right $C$-determined
map ending in $Y$ which is not an isomorphism.
	\medskip

\noindent 
{\bf Height 2.} A lattice of height 2 may be either a chain (thus of the form 
$\mathbb I(2)$) or not. If the lattice is not a chain, the lattice still may be distributive
(case III) or not (case IV). The submodule lattices $\mathcal  SM$ of type III and IV 
occur for a semisimple module $M$ of length 2; in case $M$ is the
direct sum of two non-isomorphic simple modules, we deal with case III, 
otherwise $M$ is the direct sum of two isomorphic simple modules and then we deal
with case IV. Since the lattices we are interested are submodule lattices (here
of a module of length 2), we distinguish also in the case of a serial module $M$
of length 2, whether the two composition factors are isomorphic (case I) or 
not (case II). 

Altogether, we see that for a lattice $\mathcal  S\Hom(C,Y)$ of height 2,
there are the following four cases:
$$
{\beginpicture
\setcoordinatesystem units <1cm,1cm>
%============================1
\put{\beginpicture
\plot 0 0  0 2 /
\multiput{$\bullet$} at 0 0  0 1  0 2 /
\put{$C_1$} at 0.3 1.5 
\put{$C_2$} at 0.3 0.5 
\put{I} at 0 -.8
\endpicture} at 0 0
%============================2
\put{\beginpicture
\plot 0 0  0 2 /
\multiput{$\bullet$} at 0 0  0 1  0 2 /
\put{$C_1$} at 0.3 1.5 
\put{$C_1$} at 0.3 0.5 
\put{II} at 0 -.8
\endpicture} at 2 0
%============================3
\put{\beginpicture
\plot 0 0  -1 1   0 2  1 1  0 0  /
\multiput{$\bullet$} at 0 0  -1 1  1 1  0 2 /
\put{$C_1$} at -.7 0.2 
\put{$C_2$} at 0.7 0.2 
\put{III} at 0 -.8
\endpicture} at 5 0
%============================4
\put{\beginpicture
\plot 0 0  -1 1   0 2  1 1  0 0  /
\multiput{$\bullet$} at 0 0  -1 1  1 1  0 2 
 -0.6 1  -0.2 1  /
\plot 0 0  -.6 1  0 2  -.2 1  0 0 /
\put{$\cdots$} at .4 1
\put{$C_1$} at -.7 0.2 
\put{$C_1$} at 0.7 0.2 
\put{IV} at 0 -.8

\endpicture} at 9 0

%============================4
\put{\beginpicture
\multiput{} at 0 2.15 /
\put{type} at 0 -.8
\endpicture} at -1.5 0

\endpicture}
$$
here, the labels $C_1,C_2$ concern the type of the corresponding pair of neighbors: 
$C_1,C_2$ are non-isomorphic indecomposable modules.
	\medskip 

\noindent 
{\bf Type I: Example 12.} Take the linearly oriented quiver of type $\mathbb A_3$ as
discussed in Examples 5. 
Let $Y = S(3)$ and $C = Q(2)\oplus S(3)$. Then
$\Gamma(C)$ is the path algebra of the quiver of type $\mathbb A_2$ and
$\Hom(C,Y)$ is the indecomposable $\Gamma(C)$ module of length $2$.
The lattices ${}^C[\to Y\rangle$ and $\mathcal  S\Hom(C,Y)$ look as follows:
$$
{\beginpicture
\setcoordinatesystem units <1cm,1.3cm>
%============================1
\put{\beginpicture
\put{$Y = S(3)$} at 0 2
\put{$Q(2)$} at 0 1
\put{$Q(1)$} at 0 0
\arr{0 0.3}{0 0.7}
\arr{0 1.3}{0 1.7}
\put{$f'$} at -.3 1.5
\endpicture} at 0 0
%============================1
\put{\beginpicture

\multiput{$\bullet$} at 0 0  0 1  0 2 /
\plot 0 0  0 2 /
\put{$\Hom(C,Y)$} [l] at 0.2 2
\put{$f'\Hom(C,Q(2))$} [l] at 0.3 1
\put{$0$} [l] at 0.2 0
\put{$\bigcirc$} at 0 1

\endpicture} at 4 0
\endpicture}
$$

{\bf Type II.} In the last section, we have seen such examples 
for the Kronecker algebra, namely:
if $C,Y$ are modules with $C$ indecomposable, 
such that ${}^C[\to Y\rangle$ is of the form
$\mathbb I(2)$, then we must be in type II.
	\medskip 

Another example has been presented already in section 7
when dealing
with Riedtmann-Zwara degenerations, namely example 9. There, 
we have chosen (non-projective) 
indecomposable modules
$C$ and $Y$ such that $\Hom(C,Y)$ was a cyclic module
of length $2$. There, an additional module $Y$ was considered with an
epimorphism $M' \to Y$ such that the composition $M \to M' \to Y$ is
non-zero. Note that this procedure fits into the consideration of families 
$\mathcal  M = \{M_i\mid i\in I\}$
of modules which is based on dealing with fixed morphisms $f_i: M_i \to Y$
for some module $Y$.
	\bigskip

\noindent 
{\bf Type III. Example 13.} 
Take the quiver of type $\mathbb A_3$ with two sinks:
$$
{\beginpicture
\setcoordinatesystem units <.8cm,.6cm>
\put{$b$} at 1 0
\put{$a_1$} at 0 1
\put{$a_2$} at 0 -1
\arr{0.75 0.2}{0.25 0.85} 
\arr{0.75 -.2}{0.25 -.85} 
\endpicture}
$$
and let $\Lambda$ be its path algebra. Here is the Auslander-Reiten quiver of $\Lambda$:
$$
{\beginpicture
\setcoordinatesystem units <1cm,.8cm>
%============================1
\put{\beginpicture
\put{$S(a_1)$} at 0 1
\put{$P(b)$} at 1 0
\put{$S(a_2)$} at 0 -1
\arr{0.3 0.7}{0.7 0.3}
\arr{0.3 -.7}{0.7 -.3}
\arr{1.3 0.3}{1.7 .7} 
\arr{1.3 -.3}{1.7 -.7} 

\put{$Q(a_2)$} at 2 1
\put{$S(b)$} at 3 0
\put{$Q(a_1)$} at 2 -1
\arr{2.3 0.7}{2.7 0.3}
\arr{2.3 -.7}{2.7 -.3}

\endpicture} at 6 0
\endpicture}
$$

Let $Y = S(b)$ and $C = Q(a_1)\oplus Q(a_2)$. As usual, let us show
both ${}^C[\to Y\rangle$ as well as $\mathcal  S\Hom(C,Y):$
$$
{\beginpicture
\setcoordinatesystem units <1cm,1cm>
%============================1
\put{\beginpicture
\put{$S(b)$} at 0 2
\put{$Q(a_1)$} at -1 1
\put{$Q(a_2)$} at  1 1
\put{$P(b)$} at 0 0
\arr{-.3 0.3}{-.7 0.7}
\arr{-.7 1.3}{-.3 1.7}
\arr{.3 0.3}{.7 0.7}
\arr{.7 1.3}{.3 1.7}
\endpicture} at 0 0
%============================1
\put{\beginpicture
\put{$\Hom(C,Y)$} [l] at 0.2 2.1
\multiput{$\bullet$} at 1 1  0 0  -1 1  1 1  0 2  /
\put{$0$} [l] at 0.2 -.1
\plot 0 0  -1 1  0 2  1 1  0 0 /
\endpicture} at 5 0
\endpicture}
$$

{\bf Example 14.} Here is a second example of type III.
In contrast to the previous
example, here the two incomparable right minimal maps $f: X \to Y$ and
$f': X' \to Y$ 
have the property that the modules $X$ and $X'$ are isomorphic. We consider
the Kronecker algebra (see section 11).
Let $Y = S(b)$ and $C = R\oplus R'$, where $R, R'$ are non-isomorphic regular
modules of length 2. We may consider $R, R'$ as submodules of $Q(a)$
and we denote by $f: Q(a) \to S(b)$ the projection with kernel $R$, by
$f': Q(a) \to S(b)$ the projection with kernel $R'$. The pullback of $f$ and $f'$
is the preinjective module $Q_2$ (with dimension vector $(2,3)$).
$$
{\beginpicture
\setcoordinatesystem units <1cm,1cm>
%============================1
\put{\beginpicture
\put{$S(b)$} at 0 2
\put{$Q(a)$} at -1 1
\put{$Q(a)$} at  1 1
\put{$Q_2$} at 0 0
\put{$f$} at -.8 1.6
\put{$f'$} at .85 1.6
\arr{-.3 0.3}{-.7 0.7}
\arr{-.7 1.3}{-.3 1.7}
\arr{.3 0.3}{.7 0.7}
\arr{.7 1.3}{.3 1.7}
\endpicture} at 0 0
%============================1
\put{\beginpicture
\put{$\Hom(C,Y)$} [l] at 0.2 2.1
\multiput{$\bullet$} at 1 1  0 0  -1 1  1 1  0 2  /
\put{$0$} [l] at 0.2 -.1
\plot 0 0  -1 1  0 2  1 1  0 0 /
\endpicture} at 5 0
\endpicture}
$$
	\medskip

\noindent 
{\bf Type IV.} Let $C,Y$ be a pair of modules such that ${}^C[\to Y\rangle$
is of type IV. As we will see, and this is our main concern, the behaviour of the
modules present in ${}^C[\to Y\rangle ^1$ may be quite different.
		\medskip 

\noindent 
{\bf Examples 15, where the modules present in ${}^C[\to Y\rangle ^1$ are all isomorphic.}
As we have seen in section 11, there are many
such examples for the Kronecker algebra, thus let $\Lambda$ be the Kronecker algebra,
let $P_0,P_1,P_2,\dots$ be the indecomposable preprojective modules, and
$Q_0, Q_1, Q_2,\dots$ the indecomposable preinjective modules,
with both $P_i$ and $Q_i$ being of length $2i+1.$ 
	\medskip 

First, let $C = P_i$ and $Y = P_{i+1}$, for some $i \ge 0$, 
thus $\dim\Hom(C,Y) = 2$ and therefore $\mathcal  S\Hom(C,Y)$ is of the form IV.

If $i=0$, then we deal with the lattice of submodules $U$ of $P_1$ such that
the socle of $P_1/U$ is generated by $P_0$. Such a submodule is either $0$ or simple
(thus of the form $P_0$) or equal to $P_1$:
$$
{\beginpicture
\setcoordinatesystem units <1.3cm,1.3cm>
%============================
\put{\beginpicture

\put{$P_1$} at 2 2
\multiput{$P_0$} at 1 1  3 1  1.5 1   2 1   /
\put{$0$} at 2 0
\put{$\cdots$} at 2.4 1 
\arr{1.7 0.3}{1.3 0.7}
\arr{2.3 0.3}{2.7 0.7}
\arr{2.7 1.3}{2.3 1.7}
\arr{1.3 1.3}{1.7 1.7}
\arr{1.85 0.3}{1.65 0.7}  
\arr{1.65 1.3}{1.85 1.7}
\arr{2 0.3}{2 0.7}
\arr{2 1.3}{2 1.7}
\arr{2.15 0.3}{2.35 0.7}
\arr{2.35 1.3}{2.15 1.7}
\endpicture} at 0 0
%============================
\put{\beginpicture
\multiput{$\bullet$} at  1 1  2 0  3 1  2 2   1.5 1  2 1   /
\put{$\Hom(C,\mathcal  P,Y) = \Hom(C,Y)$} [r] at 1.7 2 
\put{$0$} [r] at 1.7 0
\put{$\cdots$} at 2.5 1 
\plot 1 1  2 0  3 1  2 2  /
\plot 1 1  2 2 /
\plot 2 0  1.5 1  2 2 /
\plot 2 0  2 1  2 2 /
\plot 2 0  2.25 .5   /
\plot 2.25 1.5  2 2 /
\put{$\bigcirc$} at 2 2 
\endpicture} at 4 0
\endpicture}
$$

If $i=1$, then we deal with the lattice of all submodules $U$ of $P_2$ which contain
the socle of $P_2$ (these are just the submodules $U$ of $P_2$ such that the socle
of $P_2/U$ is generated by $P_1$). Note that such a submodule is either the socle itself,
thus isomorphic to $P_0{}^3$, or of the form $N = P_0\oplus P_1$, or $P_2$ itself.
$$
{\beginpicture
\setcoordinatesystem units <1.3cm,1.3cm>
%============================
\put{\beginpicture

\put{$P_2$} at 2 2
\multiput{$N$} at 1 1  3 1  1.5 1   2 1   /
\put{$P_0{}^3$} at 2 0
\put{$\cdots$} at 2.4 1 
\arr{1.7 0.3}{1.3 0.7}
\arr{2.3 0.3}{2.7 0.7}
\arr{2.7 1.3}{2.3 1.7}
\arr{1.3 1.3}{1.7 1.7}
\arr{1.85 0.3}{1.65 0.7}  
\arr{1.65 1.3}{1.85 1.7}
\arr{2 0.3}{2 0.7}
\arr{2 1.3}{2 1.7}
\arr{2.15 0.3}{2.35 0.7}
\arr{2.35 1.3}{2.15 1.7}
\endpicture} at 0 0
%============================
\put{\beginpicture
\multiput{$\bullet$} at  1 1  2 0  3 1  2 2   1.5 1  2 1   /
\put{$\Hom(C,\mathcal  P,Y) = \Hom(C,Y)$} [r] at 1.7 2 
\put{$0$} [r] at 1.7 0
\put{$\cdots$} at 2.5 1 
\plot 1 1  2 0  3 1  2 2  /
\plot 1 1  2 2 /
\plot 2 0  1.5 1  2 2 /
\plot 2 0  2 1  2 2 /
\plot 2 0  2.25 .5   /
\plot 2.25 1.5  2 2 /
\put{$\bigcirc$} at 2 2 
\endpicture} at 4 0
\put{with $N = P_0\oplus P_1$} at 0 -1.45
\endpicture}
$$

Whereas for the cases $i=0,1$ 
all the maps shown in the lattice ${}^C[\to Y\rangle$
are inclusion maps, 
the maps exhibited in ${}^C[\to Y\rangle$ for $i\ge 2$ are all epimorphisms,
see Proposition 6.6 and Lemma 6.8.

So let us assume that $i\ge 2$. 
A map $f: X \to Y$
is right $P_i$-determined if and only if its kernel is a direct sum of copies
of $P_{i-2}$. Since $\Ext^1(P_{i+1},P_{i-2}) =D\Hom(P_i,P_{i+1})$ is a
two-dimensional vector space, we see that we deal with short exact sequences of the form
$$
{\beginpicture
\setcoordinatesystem units <1.3cm,.7cm>
\put{$0 \to P_{i-2} \to N \to P_{i+1} \to 0$} at 0 1
\put{$0 \to P_{i-2}{}^2 \to M \to P_{i+1} \to 0$} at 0 0 
\endpicture}
$$
such that the maps $N\to P_{i+1}$ and $M \to P_{i+1}$ are right minimal.
It follows easily that all these modules $N$ have to be of the form $P_{i-1}\oplus P_i$,
and $M$ has to be of the form $P_{i-1}{}^3$. Thus, the 
lattice ${}^C[\to Y\rangle$ looks as 
follows: 
$$
{\beginpicture
\setcoordinatesystem units <1.3cm,1.3cm>
%============================
\put{\beginpicture

\put{$P_{i+1}$} at 2 2
\multiput{$N$} at 1 1  3 1  1.5 1   2 1   /
\put{$P_{i-1}{}^3$} at 2 0
\put{$\cdots$} at 2.4 1 
\arr{1.7 0.3}{1.3 0.7}
\arr{2.3 0.3}{2.7 0.7}
\arr{2.7 1.3}{2.3 1.7}
\arr{1.3 1.3}{1.7 1.7}
\arr{1.85 0.3}{1.65 0.7}  
\arr{1.65 1.3}{1.85 1.7}
\arr{2 0.3}{2 0.7}
\arr{2 1.3}{2 1.7}
\arr{2.15 0.3}{2.35 0.7}
\arr{2.35 1.3}{2.15 1.7}
\endpicture} at 0 0
\put{with $N = P_{i-1}\oplus P_i$} at 0 -1.45
%============================
\put{\beginpicture
\multiput{$\bullet$} at  1 1  2 0  3 1  2 2   1.5 1  2 1   /
\put{$\Hom(C,Y)$} [r] at 1.7 2 
\put{$\Hom(C,\mathcal  P,Y) = 0$} [r] at 1.7 0
\put{$\cdots$} at 2.5 1 
\plot 1 1  2 0  3 1  2 2  /
\plot 1 1  2 2 /
\plot 2 0  1.5 1  2 2 /
\plot 2 0  2 1  2 2 /
\plot 2 0  2.25 .5   /
\plot 2.25 1.5  2 2 /
\put{$\bigcirc$} at 2 0 
\endpicture} at 4 0
\endpicture}
$$

Let us describe the right minimal maps $N = P_1\oplus P_2 \to P_3$ in detail. We have
$\dim\Hom(P_1\oplus P_2,P_3) = 3,$ but actually only the homomorphisms $P_2 \to P_3$
matter and $\Hom(P_2,P_3) = 2.$ Why only the homomorphisms $P_2 \to P_3$
matter? We need an epimorphism $f: P_1\oplus P_2 \to P_3,$ 
write it as $f = [f_1,f_2]$ with $f_1: P_1 \to P_3$ and
$f_2: P_2 \to P_3.$ In order that $f$ is an epimorphism, the following two conditions
have to be satisfied:

(1) The restriction $f_2$ of $f$ to $P_2$ has to be non-zero.

(2) The image $f_1(P_1)$ is not contained in $f_2(P_2)$, or, equivalently
(since $P_1$ is projective) $f_1$ does not factor through $f_2.$
 
If two such maps $[f_1,f_2]$ and $[f'_1,f'_2]$ are given with both $f_2$ and $f'_2$
non-zero, and  $f_1(P_1) \not\subset f_2(P_2)$ as well as
$f'_1(P_1) \not\subset f'_2(P_2)$, then $[f_1,f_2]$ is right equivalent to 
$[f'_1,f'_2]$ if and only if $f_2$ is right equivalent to $f'_2$, if and only if
there is a scalar $c\in k^*$ such that $f'_2 = cf_2.$ 

It follows that the existence of the
one-parameter family of right minimal maps $N \to P_3$ 
comes from the fact that $\dim\Hom(P_2,P_3) = 2.$ 
	\medskip 	

Observe that the lattices ${}^C[\to Y\rangle$ for $C = P_i, Y = P_{i+1}$ and
all $i\ge 0$ have the same form, provided we set $P_{-1} = 0$, 
	\medskip 

\noindent 
{\bf Examples 16, where the modules in ${}^C[\to Y\rangle ^1$ are pairwise
non-isomorphic.}
Again, we deal with the Kronecker algebra.
Let $Y = Q_0$ and $C = P_2$, thus
$\dim\Hom(C,Y) = 2$ and $K = \tau C = P_0$.
The right minimal right $C$-determined morphisms ending in $Y$ are 
epimorphisms with kernel in $\add K$. Here are the lattices in question:
$$
{\beginpicture
\setcoordinatesystem units <1.3cm,1.3cm>
%============================
\put{\beginpicture

\put{$Q_0$} at 2 2
\put{$R$} at 1 1 
%\put{$\bullet$} at 3 1 
\put{$R'$} at 1.5 1  
\put{$R''$} at 2 1  
\put{$P_1$} at 2 0
\put{$\cdots$} at 2.7 1 
\arr{1.7 0.3}{1.3 0.7}
\arr{2.3 0.3}{2.7 0.7}
\arr{2.7 1.3}{2.3 1.7}
\arr{1.3 1.3}{1.7 1.7}
\arr{1.85 0.3}{1.65 0.7}  
\arr{1.65 1.3}{1.85 1.7}
\arr{2 0.3}{2 0.7}
\arr{2 1.3}{2 1.7}
\arr{2.15 0.3}{2.35 0.7}
\arr{2.35 1.3}{2.15 1.7}
\endpicture} at 0 0
%============================
\put{\beginpicture
\multiput{$\bullet$} at  1 1  2 0  3 1  2 2   1.5 1  2 1   /
\put{$\Hom(C,Y)$} [r] at 1.7 2 
\put{$\Hom(C,\mathcal  P,Y) = 0$} [r] at 1.7 0
\put{$\cdots$} at 2.5 1 
\plot 1 1  2 0  3 1  2 2  /
\plot 1 1  2 2 /
\plot 2 0  1.5 1  2 2 /
\plot 2 0  2 1  2 2 /
\plot 2 0  2.25 .5   /
\plot 2.25 1.5  2 2 /
\put{$\bigcirc$} at 2 0 
\endpicture} at 4 0
\endpicture}
$$
with  pairwise non-isomorphic indecomposable
representations $R,R',R'',\dots$ of length 2.
	\bigskip

\noindent 
{\bf Examples 17, where the modules in ${}^C[\to Y\rangle^1$ belong to 
a finite number of isomorphism classes with a fixed dimension vector.}

Take the $3$-subspace quiver as considered in example 9. 
Let $Y = Q(a)$, and $C$  the maximal indecomposable module, thus $C = \tau Q(a)$
and $\dim\Hom(C,Y) = 2.$ Then $\mathcal  S\Hom(C,Y)$ is 
the non-distributive lattice of height $2$, and the elements of height 1
form a $\mathbb P_1$-family.

This $\mathbb P_1$-family in $\mathcal  S\Hom(C,Y)$
contains three special elements, namely
the three subspaces which are 
generated by the composition of irreducible maps $C \to \tau^{-}P(b_i)$ and
$\tau^{-}P(b_i) \to Y,$ for $1\le i \le 3.$
The remaining elements of the $\mathbb P_1$-family are generated by the
surjective maps $C \to Y$. 

Correspondingly, in ${}^C[\to Y\rangle_1$, there are the right equivalence classes
$[f_i\rangle$  of surjective maps $f_i: M(i) \to Y$, where 
$M(i) = P(b_i)\oplus\tau^{-}P(b_i)$ for $1\le i \le 3$; 
the remaining elements of the $\mathbb P_1$-family are the right equivalence classes of
the surjective maps $C \to Y$. Note that all the modules $M(1), M(2), M(3)$
as well as $C$ have the same dimension vector, namely  
$\smallmatrix  &1\cr
               2 &1 \cr
               &1  \endsmallmatrix$.

The zero element of the lattice ${}^C[\to Y\rangle$ is the projective cover $P(Y) \to Y$.
In the submodule lattice $\mathcal  S\Hom(C,Y)$, the zero element is $\Hom(C,\mathcal  P,Y) = 0.$
$$
{\beginpicture
\setcoordinatesystem units <2.2cm,1.1cm>
%============================
\put{\beginpicture

\put{$Y$} at 2 2
\put{$M(1)$} at 1.1 1 
\put{$M(2)$} at 1.55 1 
\put{$M(3)$} at 2 1 
\multiput{$C$} at 2.4 1  2.85 1 /
\put{$P(Y)$} at 2 0
\put{$\cdots$} at 2.65 1 
\arr{1.7 0.3}{1.3 0.7}
\arr{2.3 0.3}{2.7 0.7}
\arr{2.7 1.3}{2.3 1.7}
\arr{1.3 1.3}{1.7 1.7}
\arr{1.85 0.3}{1.65 0.7}  
\arr{1.65 1.3}{1.85 1.7}
\arr{2 0.3}{2 0.7}
\arr{2 1.3}{2 1.7}
\arr{2.15 0.3}{2.35 0.7}
\arr{2.35 1.3}{2.15 1.7}
\endpicture} at 0 0
%============================
\put{\beginpicture
\multiput{$\bullet$} at  1 1  2 0  3 1  2 2   1.5 1  2 1  2.4 1   /
\put{$\Hom(C,Y)$} [r] at 1.7 2 
%\put{$$} [r] at 0.7 1
\put{$\Hom(C,\mathcal  P,Y) = 0$} [r] at 1.7 0
\put{$\cdots$} at 2.65 1 
\plot 1 1  2 0  3 1  2 2  /
\plot 1 1  2 2 /
\plot 2 0  1.5 1  2 2 /
\plot 2 0  2 1  2 2 /
\plot 2 0  2.25 .5   /
\plot 2.25 1.5  2 2 /
\put{$\bigcirc$} at 2 0 
\endpicture} at 2.7 0

\endpicture}
$$
	\medskip 

\noindent 
{\bf Examples 18, where the modules in ${}^C[\to Y\rangle^1$ 
form a family of modules with varying dimension vectors.}
Again, we take the $3$-subspace quiver as considered in example 9.
Now let $C = P(a)$ and $Y = \tau Q(a)$ the maximal indecomposable module.

Since $C$ is the projective module $P(a)$, the right minimal right $C$-determined
maps $f: X \to Y$ are the inclusions maps of submodules $X$ of $Y$ such that the
socle of $Y/X$ is  a direct sum of copies of $S(a)$. The indecomposable submodules
of $Y$ are $P(b_1), P(b_2), P(b_3)$ as well as submodules isomorphic to $S(a)$.
In order that the socle of $Y/X$ is a direct sum of copies of $S(a)$, either
$X = Y$ (and $Y/X = 0$), or $X = 0$ (and $Y/X = Y$), or $X$ has to be indecomposable.
If $X$ is an indecomposable proper submodule of $Y$, then the socle of $Y/X$
is either isomorphic to $S(0)$ or else $X$ is contained in one of the modules 
$P(b_1), P(b_2), P(b_3)$.
It follows that the $1$-parameter family in the middle of ${}^C[\to Y\rangle$
consists of the indecomposable proper submodules $X$ of $Y$ such that the socle of
$Y/X$ is isomorphic to $S(a)$, thus either $X$ is one of $P(b_1), P(b_2), P(b_3)$,
or else $X$ is a simple submodule of $Y$ not contained in $P(b_1), P(b_2),P(b_3).$
$$
{\beginpicture
%\setcoordinatesystem units <2.5cm,1.3cm>
\setcoordinatesystem units <2.3cm,1.1cm>
%============================
\put{\beginpicture

\put{$Y$} at 2 2
\put{$P(b_1)$} at 1.1 1 
\put{$P(b_2)$} at 1.55 1 
\put{$P(b_3)$} at 2 1 
\multiput{$S(a)$} at 2.37 1  2.9 1 /
\put{$0$} at 2 0
\put{$\cdots$} at 2.65 1 
\arr{1.7 0.3}{1.3 0.7}
\arr{2.3 0.3}{2.7 0.7}
\arr{2.7 1.3}{2.3 1.7}
\arr{1.3 1.3}{1.7 1.7}
\arr{1.85 0.3}{1.65 0.7}  
\arr{1.65 1.3}{1.85 1.7}
\arr{2 0.3}{2 0.7}
\arr{2 1.3}{2 1.7}
\arr{2.15 0.3}{2.35 0.7}
\arr{2.35 1.3}{2.15 1.7}
\endpicture} at 0 0
%============================
\put{\beginpicture
\multiput{$\bullet$} at  1 1  2 0  3 1  2 2   1.5 1  2 1  2.4 1 /
\put{$\Hom(C,\mathcal  P,Y) =\Hom(C,Y)$} [r] at 1.7 2 
%\put{$$} [r] at 0.7 1
\put{$ 0$} [r] at 1.7 0
\put{$\cdots$} at 2.65 1 
\plot 1 1  2 0  3 1  2 2  /
\plot 1 1  2 2 /
\plot 2 0  1.5 1  2 2 /
\plot 2 0  2 1  2 2 /
\plot 2 0  2.25 .5   /
\plot 2.25 1.5  2 2 /
\put{$\bigcirc$} at 2 2 
\endpicture} at 2.4 0

\endpicture}
$$
	\medskip 
	
\noindent 
{\bf Example 19. Another example where the modules in ${}^C[\to Y\rangle^1$ 
form a family of modules with varying dimension vectors.}
Take the one-point extension $\Lambda$ of the Kronecker algebra 
using a regular module $R$ of length 2, say $R = R_\infty$
$$
{\beginpicture
\setcoordinatesystem units <2cm,1cm>
%============================
\put{$a$} at 0 0 
\put{$b$} at 1 0 
\put{$c$} at 2 0 
\arr{0.7 0.1}{0.3 0.1}
\arr{0.7 -.1}{0.3 -.1}
\arr{1.7 0}{1.3 0}
\setdots <1mm>
\setquadratic
\plot 0.5 -.2  1 -.5  1.5 -.2 /
\put{} at 0 -.3
\endpicture}
$$
The vertex $c$ is the extension vertex and $\rad P(c) = R_\infty$.
The regular Kronecker modules of length 2 
different from $R_\infty$ will be denoted by
$R_\lambda$ with $\lambda\in k.$

Let $K = S(a)$ and $C = \tau^{-}K$, this is the indecomposable Kronecker-module $P_2$,
its dimension vector (as a $\Lambda$-module) is $(3,2,0)$. 
Let $Y = P(c)/S(a)$,  its dimension vector
is $(0,1,1).$ We have $\dim\Hom(C,Y) = 2$. Since $\End(C) = k$, the
submodule lattice $\mathcal  S\Hom(C,Y)$ is just the projective line.
Under $\eta_{SY}$ we obtain a $\mathbb P_1$-family of right minimal maps ending in
$Y$, namely those with the following short exact sequences:
$$
{\beginpicture
\setcoordinatesystem units <1.3cm,.7cm>
\put{$K \longrightarrow P(c) \overset p \longrightarrow Y$} [l] at 0 1
\put{$K \longrightarrow R_\lambda \overset{f'_\lambda} \longrightarrow S(b)$} [l] at 0 0
\put{for $\lambda\in k$} [l] at 3 -.16
\endpicture}
$$
Here is, on the left,
 ${}^C[\to Y\rangle$, and, on the right, $\mathcal  S\Hom(C,Y)$ 
$$
{\beginpicture
\setcoordinatesystem units <1.3cm,1.3cm>
%============================
\put{\beginpicture

\multiput{$\bullet$} at    1.5 1   2 1   /

\put{$[1_Y\rangle$} at 2 2
\put{$[p\rangle$} at 1 1 
\put{$[f_\lambda\rangle$} at 3 1 
\put{$[g\rangle$} at 2 0
\put{$\cdots$} at 2.4 1 
\arr{1.7 0.3}{1.3 0.7}
\arr{2.3 0.3}{2.7 0.7}
\arr{2.7 1.3}{2.3 1.7}
\arr{1.3 1.3}{1.7 1.7}
\arr{1.85 0.3}{1.65 0.7}  
\arr{1.65 1.3}{1.85 1.7}
\arr{2 0.3}{2 0.7}
\arr{2 1.3}{2 1.7}
\arr{2.15 0.3}{2.35 0.7}
\arr{2.35 1.3}{2.15 1.7}
\endpicture} at 0 0
%============================
\put{\beginpicture
\multiput{$\bullet$} at  1 1  2 0  3 1  2 2   1.5 1  2 1   /
\put{$\Hom(C,Y)$} [r] at 1.7 2 
\put{$\Hom(C,\mathcal  P,Y)$} [r] at 0.7 1
\put{$0$} [r] at 1.7 0
\put{$\cdots$} at 2.5 1 
\plot 1 1  2 0  3 1  2 2  /
\plot 1 1  2 2 /
\plot 2 0  1.5 1  2 2 /
\plot 2 0  2 1  2 2 /
\plot 2 0  2.25 .5   /
\plot 2.25 1.5  2 2 /
\put{$\bigcirc$} at 1 1 
\endpicture} at 4 0

\endpicture}
$$
The $0$-subspace of $\Hom(C,Y)$ corresponds to a map
$g: P(b) \to Y$ with image $S(b)$, namely to the 
projective presentation of $S(b)$
$$
 K^2 \longrightarrow  P(b) \overset{g'}\longrightarrow S(b). \qquad\qquad
$$

It should be noted that the short exact sequence
$$
  K \longrightarrow  R_\infty \overset{f'_\infty}\longrightarrow S(b) \qquad\qquad
$$
yields the map $R_\infty \overset{f'\infty}\longrightarrow S(b)\ \subset\ Y$ 
which we denote by $f_\infty$ and which is not $C$-determined.
Namely, there is the following commutative diagram
$$
{\beginpicture
\setcoordinatesystem units <3cm,1.2cm>
\arr{0.3 1}{0.75 1}
\arr{0.25 0}{0.8 0}
\arr{0 .7}{0 .3}
\arr{1 .7}{1 .3}

\put{$\rad P(c)$} at 0 1
\put{$P(c)$} at 1 1
\put{$R_\infty$} at 0 0
\put{$Y$} at 1 0

\put{$\ssize \iota$} at 0.5 1.2
\put{$\ssize f_\infty$} at 0.5 .2
\put{$$} at -.15 .5
\put{$\ssize p$} at 1.1 .5
\endpicture}
$$
with inclusion map $\iota$. Since $p$ does not factor through $f_\infty$, 
we see that $p$ almost factors
through $f_\infty$, thus the theory asserts that $P(c)$ has to belong to any
determiner of $f_\infty$  and therefore $f_\infty$ cannot belong to 
${}^C[\to Y\rangle.$
	\bigskip

As we have noted, $P(c)$ has to belong to any
determiner of $f_\infty$.
Let us add $P(c)$ to $C$ and consider the Auslander bijection for
$C\oplus P(c)$ and $Y$. We have $\dim\Hom(C\oplus P(c),Y) = 3$. Note that the
endomorphism ring of $C\oplus P(c)$ is hereditary of type $\mathbb A_2$
and the submodule structure of $\Hom(C\oplus P(c),Y)$ looks as follows:
$$
{\beginpicture
\setcoordinatesystem units <1cm,1cm>
%============================
\multiput{$\bullet$} at 0 2  1 1  2 0  3 1  2 2  1 3  1.5 1  2 1   /
\put{$\cdots$} at 2.5 1 
\plot 0 2  2 0  3 1  1 3  0 2 /
\plot 1 1  2 2 /
\plot 2 0  1.5 1  2 2 /
\plot 2 0  2 1  2 2 /
\plot 2 0  2.25 .5   /
\plot 2.25 1.5  2 2 /
\put{$\bigcirc$} at 0 2 
\endpicture}
$$
For the proof, we only have to verify that a non-trivial map $C \to P(c)$
does not annihilate the module $\Hom(C\oplus P(c),Y)$.
The encircled vertex is the submodule $\Hom(C\oplus P(c),\mathcal  P,Y).$

The corresponding diagram in ${}^{C\oplus P(c)}[\to Y\rangle$ looks as follows;
here, we write its elements as short exact sequences ending in a submodule
of $Y$:

$$
{\beginpicture
\setcoordinatesystem units <1.5cm,1.5cm>
%============================
\put{\beginpicture
%\multiput{$\bullet$} at 0 2  1 1  2 0  3 1  2 2  1 3  1.5 1  2 1   /
\put{$\cdots$} at 2 .93
\arr{1.7 0.3}{1.3 0.7}
\arr{0.7 1.3}{0.3 1.7}
\arr{0.3 2.3}{0.7 2.7}
\arr{1.3 1.3}{1.7 1.7}
\arr{2.3 0.3}{2.7 0.7}
\arr{2.7 1.3}{2.3 1.7}
\arr{1.7 2.3}{1.3 2.7}
\arr{2 0.3}{2 0.7}
\arr{2 1.2}{2 1.7}
\arr{1.85 0.3}{1.7 0.7}
\arr{1.7 1.2}{1.85 1.7}

\put{$(a^2\to P(b)\to b)$} at 2 0
\put{$(a\to  R_\infty \overset{f'_\infty}\longrightarrow  b)$} at .6 1
\put{$(a\to   R_\lambda \overset{f'_\lambda}\longrightarrow  b)$} at 3.4 1
\put{$(a\to   P(c)\overset{p}\to  Y)$} at -.3 2
\put{$(0\to Y\to Y)$} at 1 3
\put{$(0\to b\to b)$} at 2.2 2
\endpicture} at 0 0 

%============================
\put{\beginpicture
\multiput{$\bullet$} at 0 2  1 1  2 0  3 1  2 2  1 3  1.5 1  2 1   /
\put{$\cdots$} at 2.5 1 
\plot 0 2  2 0  3 1  1 3  0 2 /
\plot 1 1  2 2 /
\plot 2 0  1.5 1  2 2 /
\plot 2 0  2 1  2 2 /
\plot 2 0  2.25 .5   /
\plot 2.25 1.5  2 2 /
\put{$\bigcirc$} at 0 2 
\endpicture} at 4.5 0
\endpicture}
$$
We may label the lines of the Hasse diagram of $\mathcal  S\Hom(C\oplus P(c),Y)$
by the corresponding type, thus either by
$C$ or by $P(c)$ 
$$
{\beginpicture
\setcoordinatesystem units <1cm,1cm>
%============================
\multiput{$\bullet$} at 0 2  1 1  2 0  3 1  2 2  1 3  1.5 1  2 1   /
\put{$\cdots$} at 2.5 1 
\plot 0 2  2 0  3 1  1 3  0 2 /
\plot 1 1  2 2 /
\plot 2 0  1.5 1  2 2 /
\plot 2 0  2 1  2 2 /
\plot 2 0  2.25 .5   /
\plot 2.25 1.5  2 2 /
\multiput{$C$} at  1.3 0.3  2.7  0.3 / 
\multiput{$P(c)$} at 0.2 1.3 /
\endpicture}
$$

We should add that conversely, 
we can recover $\mathcal  S\Hom(C,Y)$ from $\mathcal  S\Hom(C\oplus P(c),Y)$
by deleting the shaded part:
$$
{\beginpicture
\setcoordinatesystem units <1cm,1cm>
%============================
\multiput{$\bullet$} at 0 2  1 1  2 0  3 1  2 2  1 3  1.5 1  2 1   /
\put{$\cdots$} at 2.5 1 
\plot 0 2  2 0  3 1  1 3  0 2 /
\plot 1 1  2 2 /
\plot 2 0  1.5 1  2 2 /
\plot 2 0  2 1  2 2 /
\plot 2 0  2.25 .5   /
\plot 2.25 1.5  2 2 /

\setshadegrid span <.3mm>
\vshade 0.7  1  1  <,z,,>  1 0.7  1.3  <z,z,,> 2 1.7 2.3  <z,,,> 2.3 2 2 / 
\endpicture}
$$
	\bigskip\bigskip

\centerline{\huge \bf III. Special cases.}

%======================================================
\section{The module $C$ being a generator.}
\label{sec:16}

The special case $C = \Lambda$ has been discussed already at the end of section 4.
In this case, $\mathcal  S \Hom(C,Y)$ is just the lattice of all submodules of $Y$.
	
\begin{Prop}  
Let $C$ be a generator. Then $f: X \to Y$ is right $C$-determined
if and only if the intrinsic kernel of $f$ is in $\add\tau C$.
\end{Prop}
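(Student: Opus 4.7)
The plan is to reduce the statement directly to the determiner formula (Theorem~\ref{determiner}), using the hypothesis on $C$ to eliminate the projective contribution. Recall that this formula asserts $f$ is right $C$-determined if and only if $C(f) \in \add C$, where $C(f)$ splits into two types of indecomposable summands: the modules $\tau^{-}K$ with $K$ an indecomposable summand of the intrinsic kernel of $f$, and the indecomposable projective modules that almost factor through $f$.

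First I would observe that $C$ being a generator means $\Lambda \in \add C$ (the splitting of any surjection $C^n \to \Lambda$ via projectivity of $\Lambda$), and therefore every indecomposable projective $\Lambda$-module belongs to $\add C$. Consequently the ``almost factoring'' summands of $C(f)$ automatically lie in $\add C$, no matter what $f$ is, so the constraint $C(f)\in\add C$ reduces to requiring $\tau^{-}K \in \add C$ for each indecomposable summand $K$ of the intrinsic kernel.

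Next comes the only point that needs care: the intrinsic kernel has no injective indecomposable direct summand. Indeed, if $f'\colon X'\to Y$ is a right minimalisation of $f$ and $I$ were an injective indecomposable summand of $\Ker(f')$, then $I \hookrightarrow X'$ would split (injectivity), producing a nonzero direct summand of $X'$ killed by $f'$, in contradiction with right minimality. Given this, every indecomposable summand $K$ of the intrinsic kernel is non-injective, so $\tau^{-}K$ is a non-projective indecomposable with $\tau\tau^{-}K \cong K$. Using that $\tau$ and $\tau^{-}$ are inverse bijections between isomorphism classes of non-injective and non-projective indecomposables, one has $\tau^{-}K \in \add C$ if and only if $K \in \add \tau C$. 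Applied summand-by-summand, this yields: the non-projective part of $C(f)$ lies in $\add C$ precisely when the intrinsic kernel lies in $\add\tau C$, which in conjunction with the automatic projective part completes the equivalence.

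I do not expect a genuine obstacle here; the proof is essentially a translation through Theorem~\ref{determiner}, and the only subtle step is the right-minimality argument ruling out injective summands in the intrinsic kernel, which is needed to ensure that passing from $K$ to $\tau^{-}K$ loses no information.
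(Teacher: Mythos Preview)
Your proof is correct and follows essentially the same route as the paper: both reduce the statement to the determiner formula (Theorem~\ref{determiner}) and use that $C$ being a generator forces all indecomposable projectives into $\add C$, so the condition $C(f)\in\add C$ collapses to the condition on the $\tau^{-}K$ summands. The paper handles the two implications separately while you package them as a single biconditional chain, and you spell out the right-minimality argument excluding injective summands from the intrinsic kernel, which the paper leaves implicit; but the substance is the same.
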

	
\begin{proof} According to \ref{determiner}, the map $f$ is right $C$-determined if and only if 
$C(f) \in \add C$. Thus, assume that $C(f)\in \add C$. Let $K$ be an indecomposable direct
summand of the intrinsic kernel of $f$. By definition of $C(f)$, we know that
$\tau^{-}K$ belongs to $\add C(f)$, and $K$ is not injective.
Thus $K = \tau\tau^{-}K$ belongs to $\add \tau C(f) \subseteq \add C.$ This shows that
the intrinsic kernel of $f$ belongs to $\add \tau C.$

Conversely, assume that the intrinsic kernel belongs to $\add \tau C,$ 
in particular any indecomposable direct summand of the intrinsic kernel $K$ belongs
to $\add\tau C$, thus $\tau^{-}K$ belongs to $\add \tau^{-}\tau C \subseteq \add C$.
Also, since $C$ is a generator, any indecomposable projective module belongs to
$\add C$. As a consequence, $C(f)$ belongs to $\add C.$ This completes the proof. 
\end{proof}

Everyone admits that the concept of being determined is not very intuitive,
however in the special case when $C$ is a generator (and this is often the only
important case), one knows: The maps in $^C[\to Y\rangle$ can be described by the exact 
sequences
$$
 0 \to K \to X \overset e\to  Y' \to 0
$$
where $K$ is in $\add \tau C$ and $Y'$ is a submodule of $Y$ say with
inclusion map $m: Y' \to Y$, the map in $^C[\to Y\rangle$ in question 
is the composition $me$. To repeat: {\it If $C$ is a generator, a right minimal
is right
$C$-determined if and only if its kernel belongs to $\add \tau C$.}

This means: for $C$ a generator, the set $^C[\to Y\rangle$ 
can be visualized very well.
Unfortunately, this seems to be difficult in general, but the notion of 
right determination just allows to have
the prolific bijection $  {}^C[\to Y\rangle \leftrightarrow \mathcal  S\Hom(C,Y)$.
	
\begin{Prop}  Let $C$ be a generator and $u: Y'\to Y$ a
monomorphism. Then $f\mapsto uf$ defines an embedding $u_*: {}^C[\to Y'\rangle \to 
{}^C[\to Y\rangle$. The image of this embedding is an ideal of the lattice 
${}^C[\to Y\rangle$ and the following diagram commutes:
\Rahmen{\beginpicture
\setcoordinatesystem units <3.7cm,1.2cm>
%======================================
\put{$^C[\to Y\rangle$} at 0 1 
\put{$\mathcal  S\Hom(C,Y)$} at 1 1 
\put{$^C[\to Y'\rangle$} at 0 0 
\put{$\mathcal  S\Hom(C,Y')$} at 1 0
\arr{0.3 0}{0.6 0}
\arr{0.3 1}{0.6 1}
%\multiput{$\leftrightarrow$} at 0.5 0  0.5 1 / 
\arr{0 0.3}{0 0.65}
\arr{1 0.3}{1 0.65}
\put{$\ssize \eta_{CY}$} at 0.45 1.15
\put{$\ssize \eta_{CY'}$} at 0.45 0.15
\put{$\ssize \mathcal  S\Hom(C,u)$} [l] at 1.07 0.47
\put{$u_*$} at -.08 0.47
%\put{$f$} at -.6 0
%\put{$uf$} at -.6 1
%\arr{-.6 0.35}{-.6 0.6}
%\plot -.62 0.35  -.58 0.35 /
\endpicture}
\end{Prop}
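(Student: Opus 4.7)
The plan is to verify four things in turn: well-definedness of $u_*$ on right equivalence classes, commutativity of the square, the embedding property, and the ideal property. I would organize the argument around the commutative square, since everything else flows from it.

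First, I would check well-definedness. For $f\colon X\to Y'$ a right minimal, right $C$-determined morphism, I need to show that $uf$ is again right minimal and right $C$-determined ending in $Y$. Since $u$ is a monomorphism, any direct summand $X_0 \subseteq X$ with $uf(X_0)=0$ satisfies $f(X_0)=0$, so right minimality of $f$ transfers to $uf$; similarly $\Ker(uf) = \Ker(f)$, so the two maps have the same intrinsic kernel. Using the preceding proposition, which exploits the hypothesis that $C$ is a generator to reduce right $C$-determination to the condition that the intrinsic kernel lies in $\add\tau C$, the map $uf$ is right $C$-determined. Right equivalence is preserved because if $f = f'h$ and $f' = fh'$ then $uf = uf'h$ and $uf' = ufh'$; injectivity of $u_*$ follows by the reverse implication, using that $u$ is mono to cancel it on the left.

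Next I would verify that the square commutes. For $[f\colon X\to Y'\rangle$ one computes directly
\[
\eta_{CY}(uf) \;=\; (uf)\cdot\Hom(C,X) \;=\; u\cdot\bigl(f\Hom(C,X)\bigr) \;=\; \mathcal{S}\Hom(C,u)\bigl(\eta_{CY'}(f)\bigr),
\]
as required. Because $\Hom(C,u)$ is injective (since $u$ is), the induced map $\mathcal{S}\Hom(C,u)$ is a lattice embedding. Combined with Auslander's Second Theorem, which identifies both $\eta_{CY}$ and $\eta_{CY'}$ with lattice isomorphisms onto their images, this commutative square forces $u_*$ to be a lattice embedding.

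Finally, for the ideal property, I would take $[g\colon Z\to Y\rangle \le [u_*(f)\rangle$ in ${}^C[\to Y\rangle$ with $g$ right minimal and right $C$-determined, and produce a preimage. The inequality gives a factorization $g = (uf)k$ for some $k\colon Z\to X$; setting $f' := fk\colon Z\to Y'$ yields $g = uf'$. The same arguments as above (using that $u$ is mono) show that $f'$ is right minimal, and that its intrinsic kernel coincides with that of $g$, hence lies in $\add\tau C$, so $f'$ is right $C$-determined. Therefore $[g\rangle = u_*([f'\rangle)$ lies in the image.

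The principal point one has to be careful about is the interplay between right minimality, the intrinsic kernel, and left-composition with $u$; the hypotheses that $u$ is a monomorphism and that $C$ is a generator are used precisely to make this interplay transparent, by allowing the simpler intrinsic-kernel criterion in place of the full Auslander--Reiten--Smal\o{} determiner formula.
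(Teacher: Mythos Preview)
Your proof is correct and follows essentially the same route as the paper's: both arguments use the preceding proposition (the intrinsic-kernel criterion for generators) to transfer right $C$-determination along $u$, verify the commutative square by the direct computation $\eta_{CY}(uf)=uf\Hom(C,X)$, and obtain the ideal property by factoring $g=ufk$ and setting $f'=fk$. The only organizational difference is that you deduce the embedding property from the commutative square together with the Auslander bijections, whereas the paper argues directly that $uf\preceq uf'$ implies $f\preceq f'$ by cancelling the monomorphism $u$; your version is slightly less elementary but equally valid, and you are in fact more careful than the paper in explicitly checking that $f'=fk$ is again right $C$-determined.
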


\begin{proof} Let $f: X\to Y'$ be right $C$-determined. Since the intrinsic kernel of
$f$ and of $uf$ are the same, also $uf$ is right $C$-determined by 14.1.
If $f\preceq f'$ are maps ending in $Y'$, then there is a map $h$ such that 
$f = f'h$, therefore $uf = uf'h$, thus $uf \preceq uf'$. This shows that $f\mapsto uf$
yields a map $u_*: {}^C[\to Y'\rangle \to {}^C[\to Y\rangle$. In order to see that
$u_*$ is an embedding, let us assume that $f,f'$ are maps ending in $Y$ such that
$uf \preceq uf'$. Then there is $h$ with $uf = uf'h$, but this implies that
$f = f'h$ (since $u$ is a monomorphism), and therefore $f \preceq f'$.
In order to see that the image of $u_*$ is an ideal, let $f$ be right $C$-determined
and ending in $Y'$ and $g$ right $C$-determined ending in $Y$ such that $g \preceq 
u_*(f).$ We want to show that $g$ is in the image of $u_*$. But $g \preceq uf$
means that there is $h$ with $g = ufh$, thus $g = u_*(fh).$ 

It remains to show that the diagram commutes. Let $f: X\to Y'$ be right 
$C$-determined. Then we have $\eta_{CY'}(f) = f\Hom(C,X)$. The map $\Hom(C,u)$
send any $\phi\in \Hom(C,Y')$ to $\Hom(C,u)(\phi) = u\phi$, thus $\mathcal  S\Hom(C,u)$
sends $f\Hom(C,X)$ to $uf\Hom(C,X).$ On the other hand, we also have $\eta_{CY}(u_*(f)) =
\eta_{CY}(uf) = uf\Hom(C,X).$
\end{proof}

If  $u: Y'\to Y$ is a
monomorphism, but $C$ is not a generator, then given $[f\rangle \in {}^C[\to Y'\rangle$,
the right equivalence class of the composition $uf$ usually will not belong 
to ${}^C[\to Y\rangle.$ 
	\medskip 

\noindent 
{\bf Example 20.} Take the quiver of type $\mathbb A_2$ as considered in example 1
and take $C = Y = S(2)$ and $Y' = 0$ with inclusion map $u: 0 \to S(2)$. 
The zero map $0\to 0$ yields an element of 
${}^C[\to Y'\rangle$, but $u$ is not right $C$-determined, since $P(2)$ almost factors through
$u$, so that $P(2)$ has to be a direct summand of $C(u)$ and therefore of any right
determiner of $u$.  

%===============================================================
\section{The case of $[\to Y\rangle$ being of finite height.}
\label{sec:17}

Here we consider modules $Y$ such that there are only finitely many indecomposable 
modules $M_i$ with
$\Hom(M_i,Y) \neq 0$. Let $M$ be the direct sum of these modules and consider
the Auslander bijection
$$
 {}^M[\to Y\rangle \quad \leftrightarrow \quad \mathcal  S\Hom(M,Y),
$$
it maps $\bigoplus_{i,j} M_i^{n_i} \overset{(f_{ij})}\longrightarrow  Y$ to the $\Gamma(M)$-submodule
of $\Hom(M,Y)$ generated by the maps $f_{ij}$ (considered as maps
$M \to M_i \to Y$ where the map $M \to M_i$ is the canonical projection onto the
direct summand).

\begin{Prop}\label{finite-height}
Let $Y$ be a module.

{\rm(a)} If $[\to Y\rangle$ is of height $h$, then the number of 
isomorphism classes of indecomposable modules $M_i$
with $\Hom(M_i,Y) \neq 0$ is at most $h$.

{\rm (b)} If $M_1,\dots, M_{t}$ are all the indecomposable modules $M_i$
with $\Hom(M_i,Y) \neq 0$, one from each isomorphism class, and 
$M = \bigoplus_i^{t} M_i$, then $[\to Y\rangle = {}^M[\to Y\rangle$
and any module $C$ with  $[\to Y\rangle = {}^C[\to Y\rangle$ satisfies 
$M \in \add C.$
\end{Prop}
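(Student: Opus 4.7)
For (a), I would exhibit a strict chain of length $n$ in $[\to Y\rangle$ as follows. Given $n$ pairwise non-isomorphic indecomposable modules $M_1,\dots,M_n$ with $\Hom(M_i,Y)\neq 0$, set $N=\bigoplus_{i=1}^n M_i$; it suffices to show $|\Hom(N,Y)|\ge n$ as a $\Gamma(N)$-module, for then the Auslander bijection $\eta_{NY}$ transports a composition series into a strict chain of length $n$ in ${}^N[\to Y\rangle\subseteq [\to Y\rangle$. For each $i$, fix a nonzero $f_i:M_i\to Y$: the induced $\Gamma(N)$-linear map $\Hom(N,M_i)\to\Hom(N,Y)$ given by $\psi\mapsto f_i\circ\psi$ is nonzero, so its image is a nonzero quotient of the indecomposable projective $\Gamma(N)$-module $\Hom(N,M_i)$ and therefore has simple top $S(M_i)$. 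Since the simples $S(M_1),\dots,S(M_n)$ are pairwise non-isomorphic, each appears as a composition factor of $\Hom(N,Y)$, yielding $|\Hom(N,Y)|\ge n$.

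For the first assertion of (b), the argument is essentially immediate from Proposition \ref{non-zero-map}: every indecomposable direct summand of the minimal right determiner $C(f)$ of a morphism $f$ ending in $Y$ has nonzero $\Hom$ to $Y$, hence is isomorphic to some $M_i$, so $C(f)\in\add M$. By the determiner formula \ref{determiner}, every $f$ ending in $Y$ is then right $M$-determined, proving $[\to Y\rangle={}^M[\to Y\rangle$.

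For the second assertion, let $C$ satisfy $[\to Y\rangle={}^C[\to Y\rangle$; it suffices to produce, for each $i$, a morphism $f_i$ ending in $Y$ with $M_i\in\add C(f_i)$, whence $M_i\in\add C$. If $M_i=P(S_i)$ is projective, then $S_i$ must be a composition factor of $Y$, so there is a nonzero $g:Y\to Q(S_i)$, and I take $f_i$ to be the inclusion $\ker g\hookrightarrow Y$. A projective lift $\eta:M_i\to Y$ of the simple embedding $S_i=\soc(Y/\ker g)\hookrightarrow Y/\ker g$ satisfies $\eta(\rad M_i)\subseteq\ker g$ (since it projects to $\rad S_i=0$) while $\Img\eta\not\subseteq\ker g$, exhibiting $M_i$ as almost factoring through $f_i$ and giving $M_i\in C(f_i)$. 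If $M_i$ is non-projective, set $K=\tau M_i$ (non-injective) and look for a non-split extension $0\to K\to X\to Y'\to 0$ with $Y'\subseteq Y$: the composition $X\to Y'\hookrightarrow Y$ is right minimal (indecomposability of $K$ together with non-splitness prevents any summand of $X$ from being killed) with intrinsic kernel $K$, so $M_i=\tau^{-}K\in C(f_i)$. Via the Auslander-Reiten formula $\Ext^1(Y',K)\cong D\underline{\Hom}(M_i,Y')$, existence of such an extension reduces to finding $Y'\subseteq Y$ with $\underline{\Hom}(M_i,Y')\neq 0$.

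The main obstacle is this last point, and I would handle it by proving the following lemma: an indecomposable non-projective module $M$ admits no nonzero surjective map $M\twoheadrightarrow Y'$ that factors through a projective. Granting the lemma, one takes any nonzero $g:M_i\to Y$ and sets $Y'=\Img g$; the corestriction $M_i\twoheadrightarrow Y'$ cannot factor through a projective, so $\underline{\Hom}(M_i,Y')\neq 0$. To prove the lemma, suppose $g=g_2 g_1$ with $P$ projective; since $g$ is surjective so is $g_2$, and lifting $g_2$ through the projective cover $p:P(Y')\to Y'$ yields $q:P\to P(Y')$ which is surjective (as $\ker p$ is superfluous) and hence split (both $P$ and $P(Y')$ are projective). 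After adjusting the factorization one reduces to $g=p\circ h$ with $h:M\to P(Y')$, which must again be surjective by superfluousness of $\ker p$; but an indecomposable non-projective module cannot admit a nonzero surjection to a projective, since such a surjection would split and exhibit the projective as a direct summand of $M$, contradicting either indecomposability or non-projectivity.
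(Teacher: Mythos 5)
Your proof is correct. Part (a) and the first assertion of (b) follow essentially the same route as the paper: counting the pairwise non-isomorphic simple composition factors $S(M_i)=\tp\Hom(N,M_i)$ of the $\Gamma(N)$-module $\Hom(N,Y)$ and transporting a composition series through the Auslander bijection for (a), and invoking Proposition \ref{non-zero-map} together with the determiner formula for $[\to Y\rangle={}^M[\to Y\rangle$. Where you genuinely diverge is the second assertion of (b). The paper first reduces, via the ``supporting'' remark of section \ref{sec:4}, to the case where $C$ is multiplicity-free with all summands mapping nontrivially to $Y$, hence a direct summand of $M$, and then rules out a proper summand by a strict inequality of lengths $|\Hom(C,Y)|<|\Hom(M,Y)|$, i.e.\ by comparing heights of the two lattices. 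You instead produce, for each indecomposable $M_i$ with $\Hom(M_i,Y)\neq 0$, an explicit morphism $f_i$ ending in $Y$ with $M_i\in\add C(f_i)$: for projective $M_i=P(S_i)$ the inclusion of $\Ker(Y\to Q(S_i))$, through which $P(S_i)$ almost factors; for non-projective $M_i$ a non-split extension of $\Img(g)$ by $\tau M_i$, whose existence you secure from the Auslander--Reiten formula once you know $\underline{\Hom}(M_i,\Img(g))\neq 0$ --- and your auxiliary lemma, that a surjection from an indecomposable non-projective module onto a nonzero module cannot factor through a projective (lift to the projective cover, use that the kernel of the cover is superfluous, and split the resulting surjection onto a projective), is correct and closes that gap. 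Your route is longer and needs the AR formula, but it buys a constructive explanation of exactly why each $M_i$ is indispensable as a summand of any global determiner --- the very point the paper's own remark after the proposition singles out --- whereas the paper's counting argument establishes the same fact more quickly but less transparently.
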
 

\begin{proof} 
Let $M_1,\dots, M_t$ be pairwise non-isomorphic 
indecomposable modules with $\Hom(M_i,Y) \neq 0$, and let $M = \bigoplus_i^t M_i.$
Let $\Gamma(M) = \End(M)^{\text{op}}$.  Then the indecomposable projective 
$\Gamma(M)$-modules $P(i) = \Hom(M,M_i)$ are pairwise non-isomorphic, and
$\Hom_{\Gamma(M)}(P(i),\Hom(M,Y)) \neq 0$, thus the $\Gamma(M)$-module 
$\Hom(M,Y)$ has length 
at least $t$. Now, according to the Auslander bijection, the poset
$\mathcal  S M$ is isomorphic to the subposet ${}^M[\to Y\rangle$ of $[\to Y\rangle$. 
The length of $\mathcal  SM$
is at least $t$, the length of $[\to Y\rangle$ is $h$. This shows that $t \le h.$
This shows (a). 

In order to show (b), we recall from \ref{non-zero-map} that given a map $f$ ending in $Y$,
any indecomposable direct summand $C_0$ of $C(f)$  satisfies $\Hom(C_0,Y) \neq 0$,
thus $C_0$ is in $\add M$ and therefore $C(f)\in \add M$.
Thus any map $f: X \to Y$ is right $M$-determined and therefore 
$[\to Y\rangle = {}^M[\to Y\rangle$. 

On the other hand, let us assume that
$[\to Y\rangle = {}^C[\to Y\rangle$ for some module $C$.
Using again 3.9, we can assume that
any indecomposable direct summand $C_i$ of $C$ satisfies $\Hom(C_i,Y) \neq 0.$
Thus we can assume that $C$ is a direct summand of $M$. 
But if $C$ is a proper direct summand of $M$, say $C = \bigoplus_{i=1}^{t'} M_i$
with $t' < t$, then 
$$
{\beginpicture
\setcoordinatesystem units <1.5cm,.7cm>
\put{$|[\to Y\rangle| =  |{}^C[\to Y\rangle| = |\Hom(C,Y)|$} [l] at 0 2
\put{$= \sum_{i=1}^{t'} |\Hom(M_i,M)| 
 < \sum_{i=1}^{t} |\Hom(M_i,M)|$} [l] at 1 1
\put{$= |\Hom(M,Y)| = |{}^M[\to Y\rangle| \le 
 |[\to Y\rangle|$} [l] at 1 0 
\endpicture}
$$
a contradiction. This shows that $C = M$. 
\end{proof} 

\noindent 
{\bf Remark.} As we see, the indecomposable modules $M_i$ which occur as
direct summands of a minimal module $C$ with $[\to Y\rangle = {}^C[\to Y\rangle$
are modules with $\Hom(M_i,Y)\neq 0.$ But this is not surprising, since
the minimal right determiner $C(f)$ of any morphism $f$ has as indecomposable direct
summands only modules $C_i$ with $\Hom(C_i,Y) \neq 0$, see 3.9. Also
the converse should be stressed, namely the following part of the
assertion (b): any 
indecomposable module $M_i$ with $\Hom(M_i,Y) \neq 0$ is needed as a direct
summand of $C$. 
	
\begin{Cor} 
 Let $Y$ be a module.
The following conditions are equivalent:
\item{\rm (i)} There are only finitely many isomorphism classes of
 indecomposable modules $X$ with  $\Hom(X,Y)\neq 0.$
\item{\rm (ii)} The lattice $[\to Y\rangle$ has finite height.
\item{\rm (iii)} There is a module $C$ with $[\to Y\rangle = {}^C[\to Y\rangle$.
\end{Cor}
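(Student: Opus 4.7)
The plan is to derive this corollary almost entirely from Proposition \ref{finite-height} together with the Auslander bijection, cycling (i) $\Rightarrow$ (iii) $\Rightarrow$ (ii) $\Rightarrow$ (i).

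For (i) $\Rightarrow$ (iii), I would collect representatives $M_1,\dots,M_t$ of the isomorphism classes of indecomposable modules with $\Hom(M_i,Y)\neq 0$ and set $M=\bigoplus_{i=1}^t M_i$. Then Proposition \ref{finite-height}(b) gives $[\to Y\rangle = {}^M[\to Y\rangle$, so $C=M$ works.

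For (iii) $\Rightarrow$ (ii), suppose $[\to Y\rangle = {}^C[\to Y\rangle$ for some module $C$. By the Auslander bijection (Auslander's Second Theorem) we have a lattice isomorphism ${}^C[\to Y\rangle \cong \mathcal{S}\Hom(C,Y)$, and $\mathcal{S}\Hom(C,Y)$ has finite height equal to the length of the $\Gamma(C)$-module $\Hom(C,Y)$, which is finite since $C$ and $Y$ are of finite length over an artin algebra.

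For (ii) $\Rightarrow$ (i), this is precisely the content of Proposition \ref{finite-height}(a): if $[\to Y\rangle$ has height $h$, then the number of isomorphism classes of indecomposables $M_i$ with $\Hom(M_i,Y)\neq 0$ is at most $h$, hence finite.

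I do not anticipate any real obstacle, since every implication is an immediate consequence of results already proved: the bulk of the work was done in Proposition \ref{finite-height}, whose proof combines the Auslander bijection with the observation (Proposition \ref{non-zero-map}) that every indecomposable summand $C_0$ of a minimal right determiner $C(f)$ satisfies $\Hom(C_0,Y)\neq 0$. The only thing worth being careful about is making sure that in (iii) $\Rightarrow$ (ii) one really gets a finite bound, but this is guaranteed because $\Hom(C,Y)$ is a finite length module over $\Gamma(C)$.
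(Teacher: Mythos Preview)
Your proposal is correct and matches the paper's own proof essentially verbatim: the paper also derives (ii) $\Rightarrow$ (i) from Proposition \ref{finite-height}(a), (i) $\Rightarrow$ (iii) from Proposition \ref{finite-height}(b), and notes that (iii) $\Rightarrow$ (ii) because any lattice of the form ${}^C[\to Y\rangle$ has finite height (via the Auslander bijection with $\mathcal S\Hom(C,Y)$). Your exposition is slightly more explicit about why ${}^C[\to Y\rangle$ has finite height, but the argument is the same.
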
 
	
\begin{proof} 
 The implication (ii) $\implies$ (i) has been shown in \ref{finite-height}(a), the
implication (i) $\implies$ (iii) in 15.1(b). Any lattice of the form
${}^C[\to Y\rangle$ is of finite height, thus obviously (iii) implies (ii).
\end{proof} 
	
If $S$ is simple and $[\to Q(S)\rangle$ is of finite height, then 
we deal just with the hammock corresponding to $S$, as considered in \cite{[RV]}.
	\medskip

\noindent 
{\bf Example 21.} Let $\Lambda$ be the 3-subspace quiver as considered in
example 9 and let $Y = Q(a).$  As before, we denote the indecomposable modules
which are neither projective nor injective by $N(i) = \tau Q(b_i)$ with $1\le i \le 3$
and $M = \tau Q(a)$. 
Here is the lattice $[\to Y\rangle$
$$
{\beginpicture
\setcoordinatesystem units <1.2cm,1cm>
\put{$0$} at 0 0
\put{$P(a)$} at 0 1
\put{$P(b_1)$} at -1 2
\put{$P(b_2)$} at 0 2
\put{$P(b_3)$} at 1 2
\multiput{$M$} at -2 4  1 4  2 4 /
\put{$N(3)$} at -3 5
\put{$N(2)$} at 2 5
\put{$N(1)$} at 3 5
\put{$Q(a)$} at 0 10
\setdots <.8mm>
\plot 0 0  0 1  -1 2  -1 3  -3  5  -1 7  -1 8  0 9  0 10 /
\plot  0 1  1 2  1 3  3  5  1 7  1 8  0 9  /
\plot -1 2  0 3  1 2 /
\plot -1 3  0 2  1 3 /

\plot -1 7  0 8  1 7 /
\plot -1 8  0 7  1 8 /
\plot -2 4  1 7 /
\plot 2 4  -1 7 /

\plot -1 3  2 6 /
\plot 1 3  -2 6 /

\plot 0 1  0 2 /
\plot 0 3  0 4 /

\plot 0 6  0 7 /
\plot 0 8  0 9 /

\plot 0 4  0 6 /

\plot 0 3  2 5  0 7 /

\plot 1 4  0 5  1 6 /

\multiput{$\bullet$} at -1 3  0 3  1 3  0 4  -1 5  0 5  1 5 
       -2 6  0 6  2 6   -1 7  0 7  1 7  -1 8  0 8  1 8  0 9   1 6 /
\multiput{$M$} at -.65 5  .65 5 /
\multiput{$\cdots$} at -.3 5  .3 5 /
\plot 0 4  -.7 5  0 6 /
\plot 0 4  .7 5  0 6 /
\plot 0 4  -.4 5  0 6 /
\plot 0 4  .4 5  0 6 /
\put{$\bigcirc$} at 0 4 
\endpicture}
$$
Let $A$ be the direct sum of all indecomposable $\Lambda$-modules,
one from each isomorphism class. Then
the $\Gamma(A)$-module $\Hom(A,Y)$ is of length $10$ with $9$ different 
composition factors, they correspond to the indecomposables 
$$
 P(a); P(b_1), P(b_2), P(b_3); M; N(1), N(2), N(3); Q(a),
$$
and the composition factor corresponding to $M$
occurs with multiplicity 2 in $\Hom(A,Y)$.

In the picture above, only join irreducible elements (as well as the zero element) 
have been labeled. Note that in the middle layer
of the non-distributive interval of length 2, all but three elements are
join irreducible, and have the label $M$; here we deal with the various
epimorphisms $M \to Q(0)$.
Altogether there are 18 non-zero elements which are not join-irreducible. 

Note that the picture is obtained from the free modular lattice in 3 generators
as presented by Dedekind \cite{[D]} in 1900 by inserting in the non-distributive interval
of length $2$ further diagonals (one may call it the free $k$-modular lattice
in 3 generators).
	\medskip

There is an obvious action of the symmetric group $S_3$ of order 6 on the
3-subspace quiver, and thus also on the lattice $[\to Y\rangle$. Six vertices of $[\to Y\rangle$ are
invariant under this action, five of them correspond to important maps 
 ending in $Y$:
$$
{\beginpicture
\setcoordinatesystem units <.6cm,0.5cm>
\multiput{} at 0 0  0 10 /
\setdots <.8mm>
\plot 0 0  0 1  -1 2  -1 3  -3  5  -1 7  -1 8  0 9  0 10 /
\plot  0 1  1 2  1 3  3  5  1 7  1 8  0 9  /
\plot -1 2  0 3  1 2 /
\plot -1 3  0 2  1 3 /

\plot -1 7  0 8  1 7 /
\plot -1 8  0 7  1 8 /
\plot -2 4  1 7 /
\plot 2 4  -1 7 /

\plot -1 3  2 6 /
\plot 1 3  -2 6 /

\plot 0 1  0 2 /
\plot 0 3  0 4 /

\plot 0 6  0 7 /
\plot 0 8  0 9 /

\plot 0 4  0 6 /

\plot 0 3  2 5  0 7 /

\plot 1 4  0 5  1 6 /

\multiput{$\cdots$} at -.3 5  .3 5 /
\plot 0 4  -.7 5  0 6 /
\plot 0 4  .7 5  0 6 /
\plot 0 4  -.4 5  0 6 /
\plot 0 4  .4 5  0 6 /
\multiput{$\bigstar$} at 0 0  0 1  0 4  0 6  0 9  0 10 /
\put{identity map $1_Y$} [l] at 4 10
\put{minimal right almost split map} [l] at 4 9
\put{} [l] at 4 6
\put{projective cover of $Y$} [l] at 4 4
\put{embedding of the socle of $Y$} [l] at 4 1
\put{zero map $0 \to Y$} [l] at 4 0
\endpicture}
$$
the remaining one is the universal map from $\add M$ to $Y$. 

Instead of looking at representatives $f$ in right equivalence classes,
we may also draw the attention on the right equivalence classes $[f\rangle$ themselves.
For example, $[1_Y\rangle$ is the class
of all split epimorphisms ending in $Y$. If $g: X\to Y$ is minimal right almost split,
then $[g\rangle$ is the class of all right almost split maps ending in $Y$ and finally, 
$[0\to Y\rangle$ is the class 
all zero maps ending in $Y$.
	\bigskip

\noindent 
{\bf Remark.} An attempt to deal in a similar way with the $n$-subspace quivers 
for $n\ge 4$ was given by Gelfand and Ponomarev in a series of papers 
 (\cite{[GP2],[GP3],[GP4],[GP5]}), see also
\cite{[DR]} and \cite{[R3]}. In order to get information about the free modular lattice in
$n$ generators, they described part of the lattice $[\to Q(a)\rangle$ where 
$\Delta$ is the $n$-subspace quiver and $Q(a)$ is the indecomposable injective 
$k\Delta$-module with $a$ the sink of $\Delta$. Of course, for $n\ge 4$, this lattice 
$[\to Q(a)\rangle$ is of infinite height! It may be worthwhile to look for 
an interpretation of the results of Gelfand-Ponomarev in terms of the
Auslander bijections. 

%============================================
\section{Some serial modules $\Hom(C,Y)$.}
\label{sec:18}

The Auslander bijections are defined for any pair of $\Lambda$-modules $C,Y$
and one of the posets involved is $\mathcal  S\Hom(C,Y)$. Assume that $J$ is an ideal of
$\Lambda$ which annihilates both modules $C,Y$ so that we may consider
$C$ and $Y$ as $\Lambda'$-modules, with $\Lambda' = \Lambda/J$. On the one hand, we
have $\Hom_\Lambda(C,Y) = \Hom_{\Lambda'}(C,Y)$. On the other hand, we have to distinguish
the set ${}^C[\to Y\rangle_{\Lambda'}$ of right equivalence
classes of right $C$-determined $\Lambda'$-modules ending in $Y$ from 
${}^C[\to Y\rangle_{\Lambda} = {}^C[\to Y\rangle$. Using the Auslander bijections
for $\Lambda$ as well as for $\Lambda'$, it is clear that the posets
${}^C[\to Y\rangle_{\Lambda'}$ and ${}^C[\to Y\rangle_{\Lambda}$ are isomorphic,
however the modules present in ${}^C[\to Y\rangle_{\Lambda'}$ usually will be
completely different from those present in ${}^C[\to Y\rangle_{\Lambda}$.
The following examples will show such deviations. 
	\medskip 

Let $\Lambda$ be a local uniserial ring.  When dealing with a local uniserial ring,
the indecomposable module of length $n$ will be denoted just be $n$. Also, if $C$
is a module, we will write $\rad^t(C,C)$ instead of
$(\rad\End(C))^t.$
	\medskip

\noindent 
{\bf Example 22. Let $C = Y = 4$.} First, let $\Lambda$ be of length at least 8,
so that $\Hom(4,\mathcal  P,4) = 0.$
$$
{\beginpicture
\setcoordinatesystem units <2.5cm,1.5cm>
%================================================
\put{\beginpicture
\put{$0$} at 1 4
\multiput{$4$} at 1 3  1 2  1 1  1 0  2 4  3 4  3 3 3 2  2 4   3 1  3 0 /
\put{$0$} at 1 4
\put{$5\oplus 3$} at 2 3
\put{$6\oplus 2$} at 2 2
\put{$7\oplus 1$} at 2 1
\put{$8$} at 2 0
\arr{1.2 4}{1.7 4}
\arr{1.2 3}{1.7 3}
\arr{1.2 2}{1.7 2}
\arr{1.2 1}{1.7 1}
\arr{1.2 0}{1.7 0}

\plot 2.3 4.03  2.8 4.03 /
\plot 2.3 3.97  2.8 3.97 /
\arr{2.3 3}{2.8 3}
\arr{2.3 2}{2.8 2}
\arr{2.3 1}{2.8 1}
\arr{2.3 0}{2.8 0}

\arr{1 0.3}{1 0.7}
\arr{1 1.3}{1 1.7}
\arr{1 2.3}{1 2.7}
\arr{1 3.3}{1 3.7}

\arr{2 0.3}{2 0.7}
\arr{2 1.3}{2 1.7}
\arr{2 2.3}{2 2.7}
\arr{2 3.3}{2 3.7}

\plot 3.02 2.3  3.02 2.7 /
\plot 2.98 2.3  2.98 2.7 /

\plot 3.02 3.3  3.02 3.7 /
\plot 2.98 3.3  2.98 3.7 /

\plot 3.02 1.3  3.02 1.7 /
\plot 2.98 1.3  2.98 1.7 /

\plot 3.02 0.3  3.02 0.7 /
\plot 2.98 0.3  2.98 0.7 /

\multiput{$\left[\smallmatrix m \cr -e \endsmallmatrix \right]$} at 1.45 1.2  
    1.45 2.2  1.45 3.2 /
\put{$\left[\smallmatrix e \cr 0 \endsmallmatrix \right]$} at 2.15 0.5
\multiput{$\left[\smallmatrix e & m \endsmallmatrix \right]$} at 2.2 3.5 
    2.6 3.2  2.6 2.2  2.6 1.2    /
\put{$\left[\smallmatrix e \cr 
                                 & m \endsmallmatrix \right]$} at 2.2 1.5   
\put{$\left[\smallmatrix e \cr 
                                 & m \endsmallmatrix \right]$} at  2.2 2.5 
\put{$\ssize m$} at 1.45 0.15
\put{$\ssize e$} at 2.6 0.15
\multiput{$\ssize \phi$} at 1.08 0.45  1.08 1.45  1.08 2.45 /

\put{(AR)} [l] at 3.2 3
%\put{max extensions} [l] at 3.5 2

\endpicture} at 0 0
%==========================================
\put{\beginpicture
\multiput{$\bullet$} at 0 0  0 1  0 2  0 3  0 4  /
\plot 0 0  0 4 /
\put{$\Hom(C,C)$} [l]  at 0.1 4
\put{$\rad(C,C)$} [l]  at 0.1 3
\put{$\rad^2(C,C)$}  [l] at 0.1 2
\put{$\rad^3(C,C)$}  [l] at 0.1 1
\put{$0 = \Hom(C,\mathcal  P,C)$} [l]  at 0.1 0
\put{$\bigcirc$} at 0 0
\endpicture} at  2.5 0 

\endpicture}
$$
here, we have denoted by $m$ the canonical inclusion maps, by $e$ the canonical
projections and $\phi$ is a radical generator $\End(C)$. 
The Auslander-Reiten sequence is marked as (AR). 
	\bigskip

Second, let $\Lambda$ be of length 6, so that $\Hom(4,\mathcal  P,4) = \rad^2(C,C).$
$$
{\beginpicture
\setcoordinatesystem units <2.5cm,1.5cm>
%================================================
\put{\beginpicture
\put{$0$} at 1 4
\multiput{$4$} at 1 3  1 2  1 1  1 0  2 4  3 4  3 3 3 2  2 4 /
\put{$0$} at 1 4
\put{$3$} at 3 1 
\put{$2$} at 3 0
\put{$5\oplus 3$} at 2 3
\put{$6\oplus 2$} at 2 2
\put{$6\oplus 1$} at 2 1
\put{$6$} at 2 0
\arr{1.2 4}{1.7 4}
\arr{1.2 3}{1.7 3}
\arr{1.2 2}{1.7 2}
\arr{1.2 1}{1.7 1}
\arr{1.2 0}{1.7 0}

\plot 2.3 4.03  2.8 4.03 /
\plot 2.3 3.97  2.8 3.97 /
\arr{2.3 3}{2.8 3}
\arr{2.3 2}{2.8 2}
\arr{2.3 1}{2.8 1}
\arr{2.3 0}{2.8 0}

\arr{1 0.3}{1 0.7}
\arr{1 1.3}{1 1.7}
\arr{1 2.3}{1 2.7}
\arr{1 3.3}{1 3.7}

\arr{2 0.3}{2 0.7}
\arr{2 1.3}{2 1.7}
\arr{2 2.3}{2 2.7}
\arr{2 3.3}{2 3.7}

\arr{3 0.3}{3 0.7}
\arr{3 1.3}{3 1.7}
\plot 3.02 2.3  3.02 2.7 /
\plot 2.98 2.3  2.98 2.7 /

\plot 3.02 3.3  3.02 3.7 /
\plot 2.98 3.3  2.98 3.7 /

\multiput{$\left[\smallmatrix m \cr -e \endsmallmatrix \right]$} at 1.45 1.2  1.45 2.2  1.45 3.2 /
\put{$\left[\smallmatrix 1 \cr 0 \endsmallmatrix \right]$} at 2.15 0.5
\multiput{$\left[\smallmatrix e & m \endsmallmatrix \right]$} at 2.2 3.5 
    2.6 3.2  2.6 2.2  2.6 1.2    /
\put{$\left[\smallmatrix \phi \cr 
                                 & m \endsmallmatrix \right]$} at 2.2 1.5   
\put{$\left[\smallmatrix e \cr 
                                 & m \endsmallmatrix \right]$} at  2.2 2.5 
\multiput{$\ssize m$} at 1.45 0.15  3.08 0.45  3.08 1.45  /
\put{$\ssize e$} at 2.6 0.15

\put{(AR)} [l] at 3.2 3
\multiput{$\ssize \phi$} at 1.08 0.45  1.08 1.45  1.08 2.45 /
%\put{max extensions} [l] at 3.5 2
\endpicture} at 0 0
%==========================================
\put{\beginpicture
\multiput{$\bullet$} at 0 0  0 1  0 2  0 3  0 4  /
\plot 0 0  0 4 /
\put{$\Hom(C,C)$} [l]  at 0.1 4
\put{$\rad(C,C)$} [l]  at 0.1 3
\put{$\rad^2(C,C) = \Hom(C,\mathcal  P,C)$}  [l] at 0.1 2
\put{$\rad^3(C,C)$}  [l] at 0.1 1
\put{$0$} [l]  at 0.1 0
\put{$\bigcirc$} at 0 2

\endpicture} at  2.5 0 

\endpicture}
$$
again, $m$ stands for a canonical inclusion map, $e$ for a canonical
projection map and $\phi$ for a radical generator of the endomorphism ring of
a uniserial module. 

Finally, let $\Lambda$ be of length 4, thus $C$ is projective and
therefore $\Hom(C,\mathcal  P,C) = \Hom(C,C).$ Since $C$ is projective,
all the right minimal, right $C$-determined maps are inclusion maps: 
$$
{\beginpicture
\setcoordinatesystem units <2.5cm,1.5cm>
%================================================
\put{\beginpicture
\multiput{$0$} at 1 4  1 3  1 2  1 1  1 0  2 0  3 0 /
\multiput{$4$} at 2 4  3 4 /
\multiput{$3$} at 2 3  3 3 /
\multiput{$2$} at 2 2  3 2 /
\multiput{$1$} at 2 1  3 1 /
\arr{1.2 4}{1.7 4}
\arr{1.2 3}{1.7 3}
\arr{1.2 2}{1.7 2}
\arr{1.2 1}{1.7 1}
\arr{1.2 0}{1.7 0}

\plot 2.3 4.03  2.8 4.03 /
\plot 2.3 3.97  2.8 3.97 /

\plot 2.3 3.03  2.8 3.03 /
\plot 2.3 2.97  2.8 2.97 /

\plot 2.3 2.03  2.8 2.03 /
\plot 2.3 1.97  2.8 1.97 /

\plot 2.3 1.03  2.8 1.03 /
\plot 2.3 0.97  2.8 0.97 /

\plot 2.3 0.03  2.8 0.03 /
\plot 2.3 -.03  2.8 -.03 /

%\arr{2.3 3}{2.8 3}
%\arr{2.3 2}{2.8 2}
%\arr{2.3 1}{2.8 1}
%\arr{2.3 0}{2.8 0}

\arr{1 0.3}{1 0.7}
\arr{1 1.3}{1 1.7}
\arr{1 2.3}{1 2.7}
\arr{1 3.3}{1 3.7}

\arr{2 0.3}{2 0.7}
\arr{2 1.3}{2 1.7}
\arr{2 2.3}{2 2.7}
\arr{2 3.3}{2 3.7}

\arr{3 0.3}{3 0.7}
\arr{3 1.3}{3 1.7}
\arr{3 2.3}{3 2.7}
\arr{3 3.3}{3 3.7}

\put{} [l] at 3.4 3
\multiput{$\ssize m$} at  2.08 1.45   2.08 2.45   2.08 3.45  
                          3.08 1.45   3.08 2.45   3.08 3.45  /

\endpicture} at 0 0
%==========================================
\put{\beginpicture
\multiput{$\bullet$} at 0 0  0 1  0 2  0 3  0 4  /
\plot 0 0  0 4 /
\put{$\Hom(C,C) = \Hom(C,\mathcal  P,C)$} [l]  at 0.1 4
\put{$\rad(C,C)$} [l]  at 0.1 3
\put{$\rad^2(C,C)$}  [l] at 0.1 2
\put{$\rad^3(C,C)$}  [l] at 0.1 1
\put{$0$} [l]  at 0.1 0
\put{$\bigcirc$} at 0 4

\endpicture} at  2.5 0 

\endpicture}
$$
	\bigskip

\noindent 
%=========================================================================
{\bf Example 23. Let $C = 1\oplus 2$ and $Y = 3$.}
The ring $\Gamma(C) = \End(C)^{\text{op}}$ is the Nakayama algebra with Kupisch series $2,3$.
The $\Gamma(C)$-module $\Hom(C,Y)$ is the indecomposable 
projective module of length 3.
	\bigskip 
If we work over a uniserial ring $\Lambda$ of length at least 5,
so that $\Hom(C,\mathcal  P,Y) = 0$, then the situation is as follows:
$$
{\beginpicture
\setcoordinatesystem units <2.5cm,1.5cm>
%================================================
\put{\beginpicture
\put{$0$} at 1 4
\multiput{$3$} at   2 4  3 4  3 3  3 2  3 1 /
\put{$1$} at 1 2
\multiput{$2$} at 1 1  1 3  / 
\multiput{$4$} at  2 2 /

\put{$4\oplus 1$} at 2 3
\put{$5\oplus 1$} at 2 1
\arr{1.2 4}{1.7 4}
\arr{1.2 3}{1.7 3}
\arr{1.2 2}{1.7 2}
\arr{1.2 1}{1.7 1}

\plot 2.3 4.03  2.8 4.03 /
\plot 2.3 3.97  2.8 3.97 /
\arr{2.3 3}{2.8 3}
\arr{2.3 2}{2.8 2}
\arr{2.3 1}{2.8 1}

\arr{1 1.3}{1 1.7}
\arr{1 2.3}{1 2.7}
\arr{1 3.3}{1 3.7}

\arr{2 1.3}{2 1.7}
\arr{2 2.3}{2 2.7}
\arr{2 3.3}{2 3.7}

\plot 3.02 2.3  3.02 2.7 /
\plot 2.98 2.3  2.98 2.7 /

\plot 3.02 1.3  3.02 1.7 /
\plot 2.98 1.3  2.98 1.7 /

\plot 3.02 3.3  3.02 3.7 /
\plot 2.98 3.3  2.98 3.7 /

%\put{AR-sequence} [l] at 3.5 3
%\put{max extensions} [l] at 3.5 2
\multiput{$\left[\smallmatrix m \cr -e \endsmallmatrix \right]$} at   1.45 3.2 /
\put{$\left[\smallmatrix 1 \cr 0 \endsmallmatrix \right]$} at 2.15 2.5
\multiput{$\left[\smallmatrix e & m \endsmallmatrix \right]$} at 2.2 3.5  2.6 3.2 
     2.2 1.5  2.6 1.2 /
%\put{$\left[\smallmatrix \phi \cr 
%                                 & m \endsmallmatrix \right]$} at 2.2 1.5   
\multiput{$\ssize m$} at 1.45 1.15  1.45 2.15  /
\multiput{$\ssize e$} at 2.6 2.15 /
%\put{$\phi$} at 2.15 1.5 
\endpicture} at 0 0
%==========================================
\put{\beginpicture
\multiput{$\bullet$} at 0 1  0 2  0 3  0 4  /
\plot 0 1  0 4 /
\put{$\Hom(C,Y)$} [l] at 0.1 4
\put{$0 = \Hom(C,\mathcal  P,Y)$} [l] at 0.1 1
\put{} at -.2 1
\put{$\bigcirc$} at 0 1
\endpicture} at  2.5 0 

\put{} at 3.2 0 

\endpicture}
$$

Next, assume that $\Lambda$ is of length 4. 
$$
{\beginpicture
\setcoordinatesystem units <2.5cm,1.5cm>
%================================================
\put{\beginpicture
\put{$0$} at 1 4
\multiput{$3$} at   2 4  3 4  3 3  3 2  /
\put{$1$} at 1 2
\multiput{$2$} at 1 1  1 3  3 1 / 
\multiput{$4$} at 2 1  2 2 /

\put{$4\oplus 1$} at 2 3
\arr{1.2 4}{1.7 4}
\arr{1.2 3}{1.7 3}
\arr{1.2 2}{1.7 2}
\arr{1.2 1}{1.7 1}

\plot 2.3 4.03  2.8 4.03 /
\plot 2.3 3.97  2.8 3.97 /
\arr{2.3 3}{2.8 3}
\arr{2.3 2}{2.8 2}
\arr{2.3 1}{2.8 1}

\arr{1 1.3}{1 1.7}
\arr{1 2.3}{1 2.7}
\arr{1 3.3}{1 3.7}

\arr{2 1.3}{2 1.7}
\arr{2 2.3}{2 2.7}
\arr{2 3.3}{2 3.7}

\arr{3 1.3}{3 1.7}
\plot 3.02 2.3  3.02 2.7 /
\plot 2.98 2.3  2.98 2.7 /

\plot 3.02 3.3  3.02 3.7 /
\plot 2.98 3.3  2.98 3.7 /

%\put{AR-sequence} [l] at 3.5 3
%\put{max extensions} [l] at 3.5 2
\multiput{$\left[\smallmatrix m \cr -e \endsmallmatrix \right]$} at   1.45 3.2 /
\put{$\left[\smallmatrix 1 \cr 0 \endsmallmatrix \right]$} at 2.15 2.5
\multiput{$\left[\smallmatrix e & m \endsmallmatrix \right]$} at 2.2 3.5  2.6 3.2  /
%\put{$\left[\smallmatrix \phi \cr 
%                                 & m \endsmallmatrix \right]$} at 2.2 1.5   
\multiput{$\ssize m$} at 1.45 1.15  1.45 2.15  3.1  1.45 /
\multiput{$\ssize e$} at 2.6 1.15  2.6 2.15 /
\put{$\ssize \phi$} at 2.1 1.45 
\endpicture} at 0 0
%==========================================
\put{\beginpicture
\multiput{$\bullet$} at 0 1  0 2  0 3  0 4  /
\plot 0 1  0 4 /
\put{$\Hom(C,Y)$} [l] at 0.1 4
\put{$0$} [l] at 0.1 1
\put{$\Hom(C,\mathcal  P,Y)$} [l] at .1 2
\put{$\bigcirc$} at 0 2
\endpicture} at  2.5 0 

\put{} at 3.2 0 
\endpicture}
$$

Finally, we consider the case where $\Lambda$ is of length 3.
$$
{\beginpicture
\setcoordinatesystem units <2.5cm,1.5cm>
%================================================
\put{\beginpicture
\put{$0$} at 1 4
\multiput{$3$} at   2 4  3 4  2 2  2 1  /
\multiput{$1$} at 1 2  3 1 /
\multiput{$2$} at 1 1  1 3  3 2  3 3  / 
\multiput{$3$} at 2 1  2 2 /

\put{$3\oplus 1$} at 2 3
\arr{1.2 4}{1.7 4}
\arr{1.2 3}{1.7 3}
\arr{1.2 2}{1.7 2}
\arr{1.2 1}{1.7 1}

\plot 2.3 4.03  2.8 4.03 /
\plot 2.3 3.97  2.8 3.97 /
\arr{2.3 3}{2.8 3}
\arr{2.3 2}{2.8 2}
\arr{2.3 1}{2.8 1}

\arr{1 1.3}{1 1.7}
\arr{1 2.3}{1 2.7}
\arr{1 3.3}{1 3.7}

\arr{2 1.3}{2 1.7}
\arr{2 2.3}{2 2.7}
\arr{2 3.3}{2 3.7}

\arr{3 1.3}{3 1.7}
\plot 3.02 2.3  3.02 2.7 /
\plot 2.98 2.3  2.98 2.7 /

\arr{3 3.3}{3 3.7}

%\put{AR-sequence} [l] at 3.5 3
%\put{max extensions} [l] at 3.5 2
\multiput{$\left[\smallmatrix m \cr -e \endsmallmatrix \right]$} at   1.45 3.2 /
\put{$\left[\smallmatrix 1 \cr 0 \endsmallmatrix \right]$} at 2.15 2.5
\multiput{$\left[\smallmatrix \phi & m \endsmallmatrix \right]$} at 2.2 3.5  /
\multiput{$\left[\smallmatrix e & m \endsmallmatrix \right]$} at 2.6 3.2  /
%\put{$\left[\smallmatrix \phi \cr 
%                                 & m \endsmallmatrix \right]$} at 2.2 1.5   
\multiput{$\ssize m$} at 1.45 1.15  1.45 2.15   3.1  1.45  3.1 3.45 /
\multiput{$\ssize e$} at 2.6 1.15  2.6 2.15 /
\put{$\ssize \phi$} at 2.1 1.45 

\endpicture} at 0 0
%==========================================
\put{\beginpicture
\multiput{$\bullet$} at 0 1  0 2  0 3  0 4  /
\plot 0 1  0 4 /
\put{$\Hom(C,Y) = \Hom(C,\mathcal  P,Y)$} [l] at 0.1 4
\put{$0$} [l] at 0.1 1
\put{} at -.8 1
\put{$\bigcirc$} at 0 4
\endpicture} at  2.5 0 

\put{} at -1.7 0 
\endpicture}
$$

%=============================================================
\section{Appendix: Modular lattices.}
\label{sec:19}

Let $L$ be a modular lattice of height $h$. The set of elements in $L$ of height $i$
will be denoted by $L_i$ or also $L^{h-i}.$
Here are some typical modular lattices:
	\medskip

The {\it chain} $\mathbb I(h)$ of height $h$, this is the poset of the integers
  $0,1\,\dots,h$ with the usual order relation. Note that 
$\mathbb I(h)_i$ consists of a single
  element, for $0 \le i \le h$.
Here are the lattices $\mathbb I(i)$ with $0\le i \le 4$.
$$
{\beginpicture
\setcoordinatesystem units <1.2cm,1cm>
%============================
\put{$\mathbb I(0)$} at 3.8 7.8
\put{$\mathbb I(1)$} at 5.2 7.8
\put{$\mathbb I(2)$} at 7 7.8
\put{$\mathbb I(3)$} at 9 7.8
\put{$\mathbb I(4)$} at 11 7.8

\put{$\bullet$} at 3.8 9 

\multiput{\beginpicture
\setcoordinatesystem units <.5cm,.5cm>
%============================
\multiput{$\bullet$} at  0 0  0 1 /
\setsolid
\plot 0 0  0 1  /
\endpicture
} at  5.2 9   /

\multiput{\beginpicture
\setcoordinatesystem units <.5cm,.5cm>
%============================
\multiput{$\bullet$} at  0 0  0 1  0 2  /
\setsolid
\plot 0 0  0 1  0 2  /
\endpicture
} at  7 9   /

\multiput
{\beginpicture
\setcoordinatesystem units <.37cm,.37cm>
\multiput{$\ssize \bullet$} at  0 0  0 1  0 2  0 3 /
\plot 0 0  0 3 /
\endpicture} at   9 9  /

\multiput
{\beginpicture
\setcoordinatesystem units <.3cm,.3cm>
\multiput{$\ssize \bullet$} at  0 0  0 1  0 2  0 3  0 4 /
\plot 0 0  0 4 /
\endpicture} at   11 9 /

\endpicture}
$$

Let $k$ be a field. A {\it  projective geometry} over $k$ is the 
lattice of all subspaces of a vector space over $k$. 
In accordance to the notation used throughout the paper, we 
denote the lattice of subspaces of the
$d$-dimensional vector space $k^d$ by $\mathcal S(k^{d})$ or also just by $\mathbb G(d)$
(this is called
the projective geometry over $k$ of {\it dimension} $d\!-\!1$, it is 
a lattice of height $d$). 
Here are the lattices $\mathbb G(d)$ for $0\le d \le 4$:
$$
{\beginpicture
\setcoordinatesystem units <1.2cm,.7cm>
%============================
\put{$\mathbb G(0)$} at 3.8 7.4
\put{$\mathbb G(1)$} at 5.2 7.4
\put{$\mathbb G(2)$} at 7 7.4
\put{$\mathbb G(3)$} at 9 7.4
\put{$\mathbb G(4)$} at 11 7.4

\put{$\bullet$} at 3.8 9 

\multiput{\beginpicture
\setcoordinatesystem units <.5cm,.42cm>
%============================
\multiput{$\bullet$} at  0 0  0 1 /
\setsolid
\plot 0 0  0 1  /
\endpicture
} at  5.2 9   /

\multiput{\beginpicture
\setcoordinatesystem units <.5cm,.43cm>
%============================
\multiput{$\bullet$} at  1 1  2 0  3.5 1  2 2   1.5 1   0.5 1 /
\put{$\cdots$} at 2.5 1 
\setsolid
\plot 1 1  2 0  3.5 1  2 2  /
\plot 1 1  2 2 /
\plot 2 0  1.5 1  2 2 /
\plot 2 0  0.5 1  2 2 /
\plot 2 0  2.25 .5   /
\plot 2.25 1.5  2 2 /
\endpicture
} at  7 9   /

\multiput
{\beginpicture
\setcoordinatesystem units <.37cm,.37cm>
\multiput{$\ssize \bullet$} at  1 1   1 2  1.5 1   1.5 2  3 0  2 1  2 2  3 3 
    5 1  5 2    /
\setsolid
\plot 3 0  1 1  1 2  3 3   5 2  5 1  3 0 /
\plot 2 1  3 0  1.5 1 /
\plot 2 2  3 3  1.5 2 /
\plot 1 1  1.5 2  2 1  2 2  1.5 1  1 2 /
\multiput{$\cdots$} at 3.5 1  3.5 2   /
\endpicture} at   9 9  /

\multiput
{\beginpicture
\setcoordinatesystem units <.3cm,.31cm>
\multiput{$\ssize \bullet$} at 0.5 2  1 1  1 2  1 3  1.5 1  1.5 2  1.5 3  3 0  2 1  2 3  3 4     5 1  5 3  5.5 2  /
\setsolid
\plot 3 0  1 1  0.5 2  1 3  3 4   5 3  5.5 2  5 1  3 0 /
\plot 1 1  1 3 /
\plot 3 0  1.5 1  0.5 2  /
\plot 1.5 3   3 4 /
\plot 1.5 3  0.5 2 /
\plot  3 4  2 3  1 2  2 1  3 0 /
\plot  1 3  1.5 2  1.5 1  /
\plot 2 1  1.5 2 /
\multiput{$\cdots$} at 3.5 1  3.5 3  3 2  4.5 2 /
\endpicture} at   11 9 /

\multiput{} at 3.2 9 /

\endpicture}
$$
Of course, for $d\ge 2$, the number of elements of $\mathbb G(d)$ depends on the cardinality
of $k$. If $k$ is a field with $q$ elements, 
the cardinality of $\mathbb G(2)_1 = \mathbb G(2)^1$ is $q+1$ (here, $q$
is an arbitrary power of a prime number, thus if $\mathbb G(2)$ is finite, then
$|\mathbb G(2)_1| = 3,4,5,6,8,9,10,12,14,\dots).$

The subset $\mathbb G(d))_i$ is the
set of subspaces of dimension $i$ of a $k$-space of dimension $d$, it is often denoted by 
 $\mathbb G(i,d\!-\!i)$ and called a {\it Grassmannian;} both sets 
$\mathbb G(d)_1$ and $\mathbb G(d)^1$ are also denoted by $\mathbb P^{d-1}$ and called
the {\it projective space  of dimension $d\!-\!1$:}
$$
{\beginpicture
\setcoordinatesystem units <1cm,.7cm>
\put{$\mathbb G(d)_{d-1}$} [l] at 0 2
\put{$=\mathbb G(d)^1 \quad=\  \mathbb P^{d-1}$} [l] at 2 2
\put{$\mathbb G(d)_i$} [l] at 0 1 
\put{$=\mathbb G(d)^{d-i} =\ \mathbb G(i,d-i)$} [l] at 2 1
\put{$\mathbb G(d)_{1}$} [l] at 0 0
\put{$=\mathbb G(d)^{d-1} =\  \mathbb P^{d-1}.$} [l] at 2 0 
\endpicture}
$$

%=====================================
\section{Final remarks.}
\label{sec:20}
	
\noindent 
{\bf 20.1. Duality.} For all the results presented here, there is a dual
version which has the same importance. Note that we deal with
an artin algebra $\Lambda$ and consider finitely generated modules,
thus there is a duality functor. Namely, by assumption, $\Lambda$
is a module-finite $k$-algebra, where $k$ is a commutative artinian ring.
A minimal injective cogenerator $Q$ in $\mo k$ yields the duality
functor $\Hom_k(-,Q)$. 

We did not attempt to formulate the dual definitions and statements, 
but the reader is
advised to do so. To start with, we need the notion of left equivalence in order to
introduce $\langle Y \to ]$ as 
the set of left equivalence classes of maps starting in $Y$ (or of left minimal
maps). Then we need the notion of left $C$-determination in order to define
$\langle Y\rightarrow]^C$ as the set of the left equivalence classes
maps starting in $Y$ which are left $C$-determined.
	\medskip 

\noindent 
{\bf 20.2. Proofs of Auslander's two Main Theorems.}
Both results are presented in detail in the Philadelphia notes
\cite{[A1]}, see also \cite{[A2]}. There is a different treatment in the book \cite{[ARS]} of
Auslander-Reiten-Smal\o: Auslander's Second Theorem is presented in Theorem
XI.3.9., for the First Theorem, see Theorem XI.2.10
and Proposition XI.2.4 (this actually improves the assertion given in the
Philadelphia Notes). 
A concise proof of the First Theorem can also be found in \cite{[R9]}. 
As one of the main ingredients for the proof of the Second Theorem, one
may use Auslander's defect formula. For a direct approach to the defect
formula, we recommend the paper \cite{[K1]} by Krause. 
	\medskip

\noindent 
{\bf 20.3. The universal character of the Auslander bijections.}
The Auslander bijections are Auslander's approach to describe say the module
category for an artin algebra completely: not just to provide some invariants.
The importance of using invariants usually relies on the fact that they
will allow to distinguish certain objects, but they may not say much about
other ones --- the most effective invariants are often those which attach 
to objects just one of the values $1$ or $0$ (thus ``yes'' or ``no'').
Of course, the use of such an invariant 
is restricted to some specific problem. Now Auslander's approach is an attempt to
describe a module category completely, thus we may ask whether it does not have
to be tautological: if we don't forget some of the structure, we will not see
the remaining structure in more detail. Indeed, the Auslander bijections 
focus on parts of the category, namely the sets ${}^C[\to Y\rangle$,
but the decisive feature is the possibility to 
change the focus by enlarging $C$ (adding direct summands). The
universality of this approach is due to the fact that the category is covered
completely by such subsets.  
	\medskip 

\noindent 
{\bf 20.4. The irritation of the wording ``morphisms determined by modules''.}
Let us insert a short discussion of our hesitation to say that a morphism $f$ 
is right determined by a module $C$. One possible interpretation is to assert that 
even if the morphism $f$ itself is 
not determined by $C$, there is a certain factorization property of $f$ which
is determined by $C$. But there is another way out:

When Auslander asserts that 
{\it every morphism in $\mo\Lambda$ is right determined by a $\Lambda$-module}, 
one expects a classification of the (right minimal) morphisms in $\mo\Lambda$
using as invariants just $\Lambda$-modules. One even may strengthen Auslander's assertion
by saying {\it every morphism in $\mo\Lambda$ is right determined by the
isomorphism class of a multiplicity-free $\Lambda$-module.} Clearly, such a formulation is
irritating, since the set of isomorphism classes of multiplicity-free modules
may outnumber the set of right equivalence classes of right minimal morphisms
by far: Consider just the special case of a representation finite artin algebra
of uncountable cardinality, then
there are only finitely many isomorphism classes of multiplicity-free modules,
but usually uncountably many right equivalence classes of right minimal morphisms.
So how should it be possible that finitely many modules $C$ determine uncountably
many morphisms $\alpha$? The solution is rather simple: It is not just the
module $C$ which is needed to recover a morphism $\alpha: X \to Y$ 
but one actually needs a submodule of $\Hom(C,Y),$ with $\Hom(C,Y)$
being considered as an $\End(C)^{\text{op}}$-module. In the setting where $\Lambda$
is representation-finite and uncountable, one should be aware that usually
the modules $\Hom(C,Y)$ will have uncountably many submodules, thus we are no longer
in trouble. So if we assert that the morphism 
$\alpha: X \to Y$ is determined by the module $C$, then 
one should keep in mind that $C$ is only part of the data which are required
to recover $\alpha$; in addition to $C$ one will need a submodule of $\Hom(C,Y)$.
	\medskip 

\noindent 
{\bf 20.5. Modules versus morphisms, again.} 
The following feature seems to be of interest: The concept of the determination of
morphisms by modules concerns the category of maps with fixed target $Y$, namely one
wants to decide whether two elements in $[\to Y\rangle$ are comparable. The theory
asserts that there is a test set of modules, namely the 
indecomposable direct summands of $C(f)$. For the testing procedure,
they are just modules, but any such object $L$ comes equipped with a non-zero 
(thus right minimal) map $L \to Y$. 
	\medskip 

\noindent 
{\bf 20.6. Logic and category theory.} Let us stress that the setting of the Auslander bijections is well-accepted both in mathematical logic and in category theory. The lattice of $\Lambda$-pointed modules, as studied in
model theory (see for example \cite{[P]})
is precisely the lattice $\langle\Lambda \to]$. In this way, all the
results concerning pp-formulae and pp-definable subgroups concern the
Auslander setting.

In the terminology of abstract category theory, 
we deal with a comma category, namely 
the category of {\it objects over $Y$}:
its objects are the maps $X \to Y$, a map from $f: X\to Y$ to $f': X'\to Y$ being given by a map $h: X\to X'$ such that $f = f'h.$ 

Of course, the use of the representable functors 
$\Hom(X,-)$ and $\Hom(-,Y)$ is standard  in representation theory.
To provide here all relevant references would overload our presentation
due to the abundance of such papers. So we restrict to mention only few
names: of course Auslander himself, 
but also Gabriel \cite{[G]} as well as Gelfand-Ponomarev.
	\medskip

\noindent 
{\bf 20.7. Generalisations.} In this survey we have restricted
the attention on the module category of an artin algebra $\Lambda$, or, what is the same,
a length $k$-category $\mathcal  C$, where $k$ is  a commutative artinian ring,
such that $\mathcal  C$ is $\Hom$- and $\Ext$-finite, has only finitely many simple
objects and all objects have bounded Loewy length. Auslander's investigations
were devoted to larger classes of rings, and many of our considerations should be
of interest in a broader context. 

Krause \cite{[K2]} has considered the general setting of dualising varieties
in the sense of Auslander and Reiten \cite{[AR]} and has obtained 
the precise analogues of the Auslander Theorems which form the basis of our survey.
In particular, he was looking also at triangulated categories and showed the relationship
to the existence of Serre duality. 
The interested reader may try to work out in which way our presentation 
has to be modified in this context. 
	\bigskip

\noindent 
{\bf Acknowledgment.} Some of the material presented here has been exhibited in 2012
in lectures at SJTU, Shanghai, at USTC, Hefei, and at the ICRA conference
at Bielefeld, in 2013 in lectures at Woods Hole and at NEU, Boston. 
The author is grateful to the audience for questions and
remarks. He also wants to thank Xiao-Wu Chen, Lutz Hille, Henning Krause and Idun Reiten
for helpful comments.

\hrule
\medskip

\noindent 2010 {\it Mathematics Subject Classification}:
Primary 
16G70, %Auslander-Reiten theory. 
18E10, %abelian categories
18A25, %comma categories
18A32, %factorization of morphisms
16G60. %representation type
Secondary 
16G20, %quiver, poset representation
18A20, %special classes of morphisms
06C05, %modular lattices, 
14M15, %Grassmannian
19A49, %K_0$ of other rings
03C60. %Model-theoretic algebra
	\medskip

\noindent {\emph Keywords:} Auslander bijections, Auslander-Reiten theory.
Right factorization lattice. Morphisms determined by modules. 
Finite length categories: global directedness, local symmetries. 
Representation type, Brauer-Thrall conjectures. Riedtmann-Zwara degenerations.
Hammocks. Kronecker quiver. Quiver Grassmannians, Auslander varieties. 
Modular lattices, meet semi-lattices.

\end{document}